\newcommand\dela[1]{}
\numberwithin{equation}{section}
\newcommand{\nn}{\nonumber}
\newcommand{\eps}{\varepsilon}
\newcommand\Eb{\mathbb{E}}
\newcommand\N{\mathbb{N}}
\def\v{\mathrm{v}}
\newcommand\hu{\widehat{u}}
\newcommand\invf{\mathcal{F}^{-1}}
\newcommand\R{\mathbb{R}}
\newcommand\dom{{\mathbb{R}^3}}
\def\rd{\mathbb{R}^d}
\def\E{\mathbb{E}}
\def\tp{\widetilde{\mathbb{P}}}
\def\hp{\widehat{\mathbb{P}}}
\def\tf{\widetilde{\mathcal{F}}}
\def\tom{\widetilde{\Omega}}
\def\rH{\mathrm{H}}
\def\rV{\mathrm{V}}
\def\rU{\mathrm{U}}
\def\rD{\mathrm{D}}
\def\lipzc{2}
\def\divv{\mathrm{div}\,}
\newcommand{\tunk}{{\widetilde{u}}_{{n}_{k}}}
\newcommand{\tu}{\widetilde{u}}
\newcommand{\bcal}{\mathcal{B}}
\newcommand{\ccal}{\mathcal{C}}
\newcommand{\fcal}{\mathcal{F}}
\newcommand{\lcal}{\mathcal{L}}
\newcommand{\ocal}{\mathcal{O}}
\newcommand{\vcal}{\mathcal{V}}
\newcommand{\zcal}{\mathcal{Z}}
\newcommand{\kcal}{\mathcal{K}}
\newcommand{\unk}{{u}_{{n}_{k}}}
\newcommand{\lb}{\langle}
\newcommand{\rb}{\rangle}
\newcommand{\Pn}{{P}_{n}}
\newcommand{\p}{\mathbb{P}}
\newcommand\un{u_n(t)}
\newcommand\uns{u_n(s)}
\newcommand\unr{u_n(r)}
\newcommand\tus{\widetilde{u}(s)}
\newcommand\tuns{\widetilde{u}_n(s)}
\newcommand\tusi{\widetilde{u}(\sigma)}
\newcommand\tunsi{\widetilde{u}_n(\sigma)}
\newcommand\tun{{\widetilde{u}_n}}
\newcommand{\ilsk}[3]{{\langle #1 , #2 \rangle}_{#3}}
\newcommand{\dual}[3]{{\lb #1 , #2 \rb}_{#3}}
\newcommand{\Dual}[3]{{\Bigl< #1  , #2 \Bigr>}_{#3}}
\newcommand{\tOmega}{\widetilde{\Omega}}
\newcommand{\tfcal}{\widetilde{\fcal}}
\newcommand{\tW}{\widetilde{W}}
\def\st{{t \wedge \tau_R^n}}
\DeclareSymbolFont{bbold}{U}{bbold}{m}{n}
\DeclareSymbolFontAlphabet{\mathbbold}{bbold}
\newcommand\ind{\mathbbold{1}}
\newcommand\A{\mathrm{A}}
\newtheorem{theorem}{Theorem}[section]
\newtheorem{corollary}[theorem]{Corollary}
\newtheorem{proposition}[theorem]{Proposition}
\newtheorem{remark}[theorem]{Remark}
\newtheorem{lemma}[theorem]{Lemma}
\newtheorem{definition}[theorem]{Definition}
\newtheoremstyle{AppALem}{1}{1}
  {\itshape}{0pt}{\bfseries}{.}{ }
   {\thmname{Lemma }\thmnumber{A.{#2}}{\thmnote{}}}
   \theoremstyle{AppALem}\newtheorem{lemmaA}{Lemma}
\newtheoremstyle{AppBLem}{1}{1}
  {\itshape}{0pt}{\bfseries}{.}{ }
   {\thmname{Lemma }\thmnumber{B.{#2}}{\thmnote{}}}
   \theoremstyle{AppBLem}\newtheorem{lemmaB}{Lemma}
\newtheoremstyle{AppBRem}{1}{1}
  {\itshape}{0pt}{\bfseries}{.}{ }
   {\thmname{Remark }\thmnumber{B.{#2}}{\thmnote{}}}
   \theoremstyle{AppBRem}\newtheorem{remarkB}{Remark}
\newtheoremstyle{AppBCor}{1}{1}
  {\itshape}{0pt}{\bfseries}{.}{ }
   {\thmname{Corollary }\thmnumber{B.{#2}}{\thmnote{}}}
   \theoremstyle{AppBCor}
\newtheoremstyle{AppBThm}{1}{1}
  {\itshape}{0pt}{\bfseries}{.}{ }
   {\thmname{Theorem }\thmnumber{B.{#2}}{\thmnote{}}}
   \theoremstyle{AppBThm}\newtheorem{theoremB}{Theorem}
\begin{document}
\title[Stochastic tamed Navier-Stokes equations on $\R^3$]{Stochastic tamed Navier-Stokes equations on $\R^3$: the existence and the  uniqueness of solutions and the existence of an invariant measure}

\author[Zdzis\l aw Brze\'{z}niak]{Zdzis\l aw Brze\'{z}niak}
\address{Department of Mathematics, University of York, Heslington, York,
YO10 5DD, UK}
\email{zdzislaw.brzezniak@york.ac.uk}

\author[Gaurav Dhariwal]{Gaurav Dhariwal \\ \small{University of York and Vienna University of Technology}}%\corref{cor1}}
\address{Institute of Analysis and Scientific Computing, Vienna University of Technology, Wiedner Hauptstrasse 8--10, 1040 Vienna, Austria}
\email{gaurav.dhariwal@tuwien.ac.at}

\thanks{The research of Gaurav Dhariwal was supported by Department of Mathematics, University of York and partially supported by the Austrian Science Fund (FWF) grants P30000, W1245, and F65. The research of Zdzis\l aw Brze{\'z}niak has been partially supported by the Leverhulme project grant ref no RPG-2012-514.
}

\subjclass{Primary 60H15; Secondary 35R60, 35Q30, 76D05}
\keywords{Stochastic tamed Navier-Stokes equations, stochastic damped Navier-Stokes equations, invariant measures}
\date{\today}

\begin{abstract}
R\"ockner and Zhang in \cite{[RZ09]} proved the existence of a unique strong solution to a stochastic tamed 3D Navier-Stokes equation in the whole space and for the periodic boundary case using a result from \cite{[SV79]}. In the latter case, they also proved the existence of an invariant measure. In this paper, we improve their results (but for a slightly simplified system) using a self-contained approach. In particular, we generalise their result about an estimate on the $L^4$-norm of the solution from the torus to $\R^3$, see  Lemma~\ref{lemma6.13} and thus  establish the existence of an invariant measure on $\R^3$ for a time-homogeneous damped tamed 3D Navier-Stokes equation, given by \eqref{eq:5.1}.
\end{abstract}
\maketitle

\section{Introduction}
\label{s:1}
In the present paper we study the stochastic tamed Navier-Stokes equations (NSEs) on $\R^3$ which were introduced by R\"ockner and Zhang \cite{[RZ09],[RZ09b]}. We consider the following stochastic tamed NSEs with viscosity $\nu$, $(t,x) \in [0,T] \times \dom$

\begin{equation}
\label{eq:1.1}
\begin{split}
du(t,x) & = \left[ \nu \Delta u(t,x) - (u(t,x)\cdot \nabla)u(t,x) - \nabla p(t,x) \right]dt \\
&\quad - \left[g(|u(t,x)|^2) u(t) - f(x,u(t,x)) \right]dt \\
& \quad + \sum_{j=1}^\infty \left[ (\sigma_{j}(t,x) \cdot \nabla )u(t) + \nabla \widetilde{p}_j(t,x) \right]dW_t^j\,,
\end{split}
\end{equation}
subject to the incompressibility condition
\begin{equation}
\label{eq:1.2}
\rm{div}\,u(t) = 0, \quad t \ge 0\,,
\end{equation}
and the initial condition
\begin{equation}
\label{eq:1.3}
u(0,x) = u_0(x), \quad x \in \dom\,,
\end{equation}
where $p(t,x)$ and $\widetilde{p}_j(t,x)$ are unknown scalar functions.

We assume that the  taming function $g : \R_+ \to \R_+$ is smooth and satisfies,  for some $N \in \mathbb{N}$\dela{and $\alpha \in [0,\frac1\nu)$}, the following condition

\begin{equation}
\label{eq:1.4}
\begin{cases}
g(r) = 0, \quad \quad \quad \quad \quad \,\mbox{if}\,\, r \leq N,\\
g(r) = (r - N)/\nu, \quad \quad \mbox{if}\,\, r \geq N+1,\\
0 \le g^\prime(r) \le \lipzc /(\nu \wedge 1), \,\, r \in [N, N+1]
\end{cases}
\end{equation}

Finally we assume that  $\{W_t^j; t \ge 0, j = 1 ,2, \dots\}$ is a sequence of independent one-dimensional standard $\mathbb{F} = (\mathcal{F}_t)_{t \ge 0}$-Brownian motions on a complete filtered probability space $(\Omega, \mathcal{F}, \mathbb{F}, \mathbb{P})$. The stochastic integral is understood as the It\^o integral. The drift and diffusion coefficients are given as follows:
\begin{align*}
\dom \times \R^3 \ni (x, u) \mapsto f(x,u) \in \R^3 \\
\R_+ \times \dom \ni (t,x) \mapsto \sigma(t,x) \in \R^3 \times \ell^2\,,
\end{align*}
where $\ell^2$ denotes the standard Hilbert space consisting of all square summable sequences of real numbers endowed with standard norm $\|\cdot\|_{\ell^2}$. In the following $f$ and $ \sigma$ are always assumed to be measurable with respect to all their variables.

In the case of classical deterministic Navier-Stokes equations on $\R^3$, if the initial data $u_0 \in \rV$ (see Section~\ref{s:2}), then there exists only a local strong solution, see  \cite{[Temam79]}. Cai and Jiu \cite{[CJ08]} studied the Navier-Stokes equations with damping on $\R^3$, where the damping was modeled by the term $|u|^{\beta - 1} u$, $\beta \ge 1$; the tamed term considered in the current paper corresponds to $\beta = 3$. They proved the existence of a global weak solution\footnote{i.e. the equation is satisfied in the weak sense and the solution $u$ belongs to the space $ L^\infty(0,T; \rH)\cap L^2(0,T; \rV) \cap L^{\beta+1}(0,T; L^{\beta + 1})$.} for any $\beta \ge 1$ and  $u_0 \in \rH$, see Section~\ref{s:2} and they proved the existence of a global strong solution\footnote{A pair  of function   $(u,p)$ is a strong solution iff it is a weak solution  and  $u \in L^\infty(0,T; \rV) \cap L^2(0,T; H^2) \cap L^\infty(0,T; L^{\beta+1})$.} for $\beta \ge 7/2$ and  $u_0 \in \rV \cap L^{\beta+1}$. Moreover, they were  able to show the uniqueness of strong solutions  for $7/2 \le \beta \le 5$. Later, Zhang et al. \cite{[ZWL11]},  by exploiting the Gagliardo-Nirenberg inequality, were able to lower down the parameter $\beta$ to $3$. Thus, establishing the existence of a global strong solution to Navier-Stokes equations with damping on $\R^3$ for $\beta > 3$, $u_0 \in \rV \cap L^{\beta+1}$ and proving uniqueness whenever $3 < \beta \le 5$. They also remarked that the critical value for $\beta$ is $\beta = 3$ \cite[Remark~3.1]{[ZWL11]}. But, Zhou \cite[Theorem~2.1]{[Zhou12]} was able to surpass this critical value of $\beta$.

Moreover, for any $\beta \ge 1$,   he proved that the strong solution is unique in a larger class of weak solutions, see \cite[Theorem~3.1]{[Zhou12]}. The critical case of $\beta = 3$ was studied by R\"ockner and Zhang \cite[Theorem~1.1]{[RZ09b]}, where they proved the existence of a smooth unique global solution to the deterministic tamed 3D Navier-Stokes equations for very smooth initial data and deterministic forcing $f$. Moreover, they proved \cite[Theorem~1.1]{[RZ09b]} that this unique solution converges (in $L^2(0,T; L^2(\mathcal{O}))$) to a bounded Leray-Hopf solution of 3D Navier-Stokes equations (if exists) on a bounded domain $\mathcal{O} \subset \R^3$. The non-explosion of the solution is due to the tamed term. R\"ockner and Zhang \cite{[RZ12]} also studied 3D tamed Navier-Stokes equations on a bounded domain $\mathcal{O} \subset \R^3$ with Dirichlet boundary conditions and proved the existence of a unique strong solution directly, based on the Galerkin approximation and on a kind of local monotonicity of the coefficients. Recently, You \cite{[You17]} proved the existence of a random attractor for the 3D damped ($|u|^{\beta-2} u$) Navier-Stokes equations with additive noise for $4 < \beta \le 6$ with initial data $u_0 \in \rV$ on a bounded domain $\mathcal{O} \subset \R^3$ with smooth boundary.

R\"ockner and Zhang \cite{[RZ09]} proved the existence of a strong solution of the stochastic tamed NSEs (in probabilisitc sense) by invoking the Yamada-Watanabe theorem, thus proving the existence of a martingale solution to \eqref{eq:1.1} (with more generalised noise term) in the absence of compact Sobolev embeddings and the pathwise uniqueness. They used the localization method to prove the tightness, a method introduced by Stroock and Varadhan \cite{[SV79]}. In this paper, we present a self-contained proof of the same result. In order to prove the existence of a martingale solution, R\"ockner et al. used the Faedo-Galerkin approximation with the non-classical finite dimensional space $H_n^1 = \text{span}\{e_i, i = 1 \cdots n\}$ where $\mathcal{E} = \{e_i\}_{i \in \N} \subset \vcal$ (see Section~\ref{s:2}) is the orthonormal basis of $H^1$. They also require that in the case of the periodic boundary conditions, $\mathcal{E}$ is an orthogonal  basis of $H^0$ which was essential in obtaining the $L^4$-estimate of the solution. We generalised this result to $\R^3$. Another reason for R\"ockner et al. to choose the periodic boundary conditions is the compactness of $H^2 \hookrightarrow H^1$ embedding, which along with the $L^4$-estimate of the solution was crucial in establishing the existence of invariant measures. We don't require this embedding and hence are able to obtain the existence of invariant measures for the damped tamed Navier-Stokes equations on $\R^3$.

In the present paper we prove the existence of a unique strong solution to the stochastic tamed 3D Navier-Stokes equation \eqref{eq:1.1} under some natural assumptions $(\textbf{H1}) - (\textbf{H2})$ on the drift $f$ and the diffusion $\sigma$ (see Section~\ref{s:2}). To prove the existence of strong solution we use the Yamada-Watanabe theorem \cite{[Ondrejat04], [YW71]} which states that the existence of martingale solutions plus pathwise uniqueness implies the existence of a unique strong solution. In order to establish the existence of martingale solutions, instead of using the standard Faedo-Galerkin approximations we use a different approach motivated from \cite{[FMRR14]} and \cite{[MP16]}. We study a truncated SPDE on an infinite dimensional space $\rH_n$, defined in the Section~\ref{s:6} and then use the tightness criterion, Jakubowski's generalisation of the Skorohod's theorem and the martingale representaion theorem to prove the existence of martingale solutions. The essential reason for us to incorporate this approximation scheme, is the non-commutativity of gradient operator $\nabla$ with the standard Faedo-Galerkin projection operator $P_n$ \cite[Section~5]{[BD16]}. The commutativity is essential for us to obtain a'priori bounds. We also prove the existence of invariant measures, Theorem~\ref{thm5.4}, for time homogeneous damped tamed Navier-Stokes equations \ref{eq:5.1} under the assumptions $(\textbf{H1})^\prime - (\textbf{H3})^\prime$ (see Section~\ref{s:5}). We use the technique (Theorem~\ref{thm5.3}) of Maslowski and Seidler \cite{[MS99]}, see also \cite{[BF17], [BMO17]}, working with weak topologies to establish the existence of invariant measures. We show the two conditions of Theorem~\ref{thm5.3}, boundedness in probability and sequentially weakly Feller property are satisfied for the semigroup $(T_t)_{t \ge 0}$, defined by \eqref{eq:5.2}. In contrast to the paper by R\"ockner and Zhang \cite{[RZ09]}, a'priori bound on $L^4$-norm of the solution plays an essential role in the existence of martingale solutions and not in the existence of invariant measures.

This paper is organised as follows. In Section~\ref{s:2}, we recall some standard notations and results and set the assumptions on $f$ and $\sigma$. We also establish certain estimates on the tamed term which we use later in Sections~\ref{s:6} and \ref{s:existence of sol}. We end the section by recalling a generalised version of the Gronwall Lemma for random variables from \cite{[DM09]}. In Section~\ref{s:3}, we establish the tightness criterion and state the Jakubowski's generalisation of Skorohod theorem which we use along with a'priori estimates obtained in the Section~\ref{s:existence of sol} to prove the existence of a martingale solution and path-wise uniqueness of the solution. In Section~\ref{s:6}, we introduce our truncated SPDEs and describe the approximation scheme motivated by \cite{[FMRR14],[MP16]}, along with all the machinery required. Finally, in Section~\ref{s:5} we establish the existence of an invariant measure for damped tamed 3D Navier-Stokes equations \eqref{eq:5.1}.

\section{Functional setting}
\label{s:2}

\subsection{Notations and basic definitions}
Let $(X, \|\cdot\|_X), (Y, \|\cdot\|_Y)$ be two real normed spaces. The space of all bounded linear operators from $X$ to $Y$ is denoted by $\mathcal{L}(X,Y)$. If $Y = \R$, then $X^\prime := \mathcal{L}(X, \R)$ is called the dual space of $X$. The standard duality pairing is denoted by ${}_{X^\prime} \langle \cdot, \cdot \rangle_X$. If both spaces are separable Hilbert then by $\mathcal{L}_2(Y;X)$ we will denote the space of all Hilbert-Schmidt operators from $Y$ to $X$ endowed with the standard norm $\|\cdot\|_{\mathcal{L}_2(Y;X)}$.

Assume that $X, Y$ are Hilbert spaces with scalar products $\langle \cdot, \cdot \rangle_X$ and $\langle \cdot, \cdot \rangle_Y$ respectively. For a densely defined linear operator $A : D(A) \to Y$, $D(A) \subset X$,  by $A^\ast$ we denote the adjoint operator of $A$. In particular, $D(A^\ast) \subset Y$, $A^\ast : D(A^\ast) \to X$ and
\[ \langle Ax, y \rangle_Y = \langle x, A^\ast y \rangle_X, \quad x \in D(A),\, y \in D(A^\ast).\]
Note that $D(A^\ast) = Y$ if $A$ is bounded.

Let $C_0^\infty(\dom; \R^3)$ denote the set of all smooth functions from $\dom$ to $\R^3$ with compact supports. For $p \in [1, \infty]$ the Lebesgue spaces of $\R^3$-valued functions will be denoted by $L^p(\dom; \R^3)$, and often by $L^p$ whenever the context is understood.
If $p = 2$, then $L^2(\dom; \R^3)$ is a Hilbert space with the scalar product given by
\[\langle u, \v \rangle_{L^2} := \int_{\dom} u(x)\cdot \v(x)\,dx, \quad u,\v \in L^2(\dom; \R^3)\,.\]
We define, see \cite{[Triebel83]}, the Bessel potential space $H^{s,p}(\dom; \R^3)$ for $s \ge 0$ and $p \in (1,\infty)$ as the space of all $\R^3$-valued functions $u \in L^p(\dom; \R^3)$ such that $\left(1 - \Delta\right)^{s/2} u \in L^p(\dom; \R^3)$, where $\Delta$ is the Laplace operator in $\R^3$.
%If  $k \in \mathbb{N}$, then $W^{k,p}(\dom; \R^3)$ is equal to the Sobolev space $H^{k,p}(\dom; \R^3)$ of $\R^3$-valued functions such that the functions and all its weak derivatives up to order $k$ belong to the Lebesgue space $L^p(\dom; \R^3)$.
In particular, $H^1(\dom; \R^3):= H^{1,2}(\dom; \R^3)$ is the space of all $u \in L^2(\dom; \R^3)$ for which the weak derivatives $D_iu \in L^2(\dom; \R^3)$, $i= 1, \dots, 3$. $H^1(\dom; \R^3)$ is a Hilbert space with the scalar product given by
\[\langle u, \v \rangle_{H^1} := \langle u, \v \rangle_{L^2} + ((  u, \v )), \quad u, \v \in H^1(\dom; \R^3)\,,\]
where
\begin{equation}
\label{eq:2.1}
 ((u,\v)):= \langle \nabla u, \nabla \v \rangle_{L^2} = \sum_{i=1}^3\int_\dom \frac{\partial u}{\partial x_i} \cdot \frac{\partial \v}{\partial x_i}\,dx, \quad u, \v \in H^1(\dom; \R^3)\,.
\end{equation}
Finally, for $s \ge 0$, the space  $H^{s,2}=:H^s$ is also a Hilbert space endowed with the norm
\[\|u\|_{H^s} = \left[\int_{\dom}\left(1+|\xi|^2\right)^s |\hat{u}(\xi)|^2\,d\xi\right]^{1/2}\,,\]
where $\hat{u}$ denotes the Fourier transform of a tempered distribution $u$.

Let
\begin{align*}
&\mathcal{V} := \left\{u \in C_0^\infty(\dom; \R^3) : \rm{div}\,u = 0\right\},\\
&\rH := \mbox{the closure of}\,\mathcal{V}\,\mbox{in}\,L^2(\dom; \R^3),\\
&\rV := \mbox{the closure of}\,\mathcal{V}\,\mbox{in}\,H^1(\dom; \R^3),\\
&\rm{D}(\A) := \rm{H} \cap H^2(\dom; \R^3),\,\;\;\;\A u:= \Pi (-\Delta u), \;\; u \in D(\A),
\end{align*}
where $\Pi$ is the orthogonal projection from $L^2(\dom; \R^3)$ to $\rH$. It is known, see e.g. \cite{Kato+Ponce_1986}, that $\Pi=(1-D_iD_j\Delta^{-1})_{i,j=1}^3$ is a pseudodifferential operator 
with matrix symbol $(\delta_{i,j}-D_iD_j\Delta^{-1})_{i,j=1}^3$; $\Pi$ is a bounded linear operator in $H^{s,p}(\mathbb{R}^3)$ by the Marcinkiewicz-Mihlin Theorem \cite{Marcinkiewicz,Mihlin},  for all $s \in \mathbb{R}$ and a $p\in (1,\infty)$.

On $\rH$ we consider the scalar product and the norm inherited from $L^2(\dom; \R^3)$ and denote them by $\langle \cdot, \cdot \rangle_{\rH}$ and $|\cdot|_{\rH}$ respectively, i.e.
\[\langle u, \v \rangle_{\rH} := \langle u, \v \rangle_{L^2}, \quad \quad |u|_{\rH} := |u|_{L^2}, \quad u,\v \in \rH\,.\]
On $\rV$ we consider the scalar product and norm inherited from $H^1(\dom; \R^3)$, i.e.
\begin{equation}
\label{eq:2.2}
\langle u , \v \rangle_{\rV} := \langle u, \v \rangle_{L^2} + ((u,\v)), \quad \quad \|u\|^2_{\rV} := |u|^2_{\rH} + |\nabla u|^2_{L^2}, \quad u, \v \in \rV\,,
\end{equation}
where $((\cdot, \cdot))$ is defined in \eqref{eq:2.1}. $\rm{D}(\A)$ is a Hilbert space under the graph norm
\[|u|^2_{\rm{D}(\A)} := |u|^2_{\rH} + |\A\,u|^2_{L^2}, \quad \quad u \in \rm{D}(\A)\,,\]
where the inner product is given by
\[\langle u, \v \rangle_{\rm{D}(\A)} := \langle u,\v\rangle_\rH + \langle \A u, \A \v\rangle_{L^2}, \quad \quad u,\v \in \rm{D}(\A)\,.\]

\subsection{Some operators}
\label{s:2.2}

Let us consider the following tri-linear form
\begin{equation}
\label{eq:2.3}
b(u,w,\v) = \int_\dom (u \cdot \nabla w)\v \,dx,
\end{equation}
defined for suitable vector fields $u, \v, w$ on $\R^3$. We will recall the fundamental properties of the form $b$ which are valid in unbounded domains.

By the Sobolev embedding theorem and H\"older inequality, we obtain the following estimates
\begin{align}
\label{eq:2.4}
|b(u,w,\v)| & \le \|u\|_{L^4}\|w\|_{\rV}\|\v\|_{L^4}, \quad u, \v \in L^4, w \in \rV, \\
\label{eq:2.5}
& \le c \|u\|_{\rV}\|w\|_{\rV}\|\v\|_\rV, \quad u, \v, w \in \rV,
\end{align}
for some positive constant $c$. Thus the form $b$ is continuous on $\rV$. Moreover, if we define a bilinear map $B$ by $B(u,w) := b(u,w, \cdot)$, then by inequality \eqref{eq:2.5} we infer that $B(u,w) \in \rV^\prime$ for all $u, w \in \rV$ and that the following inequality holds
\begin{equation}
\label{eq:2.6}
|B(u,w)|_{\rV^\prime} \le c \|u\|_{\rV}\|w\|_{\rV}, \quad u,w \in \rV.
\end{equation}
Moreover, the mapping $B : \rV \times \rV \to \rV^\prime$ is bilinear and continuous.

Let us, for any $s > 0$, define the following standard scale of Hilbert spaces
\begin{equation}
\label{eq:scaled spaces}
\rV_s := \mbox{the closure of }\, \mathcal{V}\, \mbox{ in }\, H^s(\dom; \R^3)\,.
\end{equation}
If $ s > \frac{d}{2} + 1$ then by the Sobolev Embedding theorem,
\begin{equation}
\label{eq:2.7}
H^{s-1}(\R^d; \R^3) \hookrightarrow C_b(\R^d; \R^3) \hookrightarrow L^\infty(\R^d; \R^3)\,.
\end{equation}
Here $C_b(\R^d; \R^3)$ denotes the space of continuous and bounded $\R^3$-valued functions defined on $\R^d$. If $u, w \in \rV$ and $\v \in \rV_s$ with $s > \frac32$ then
\[|b(u,w,\v)| = |b(u,\v,w)| \le |u|_{L^2}|w|_{L^2}\|\nabla \v\|_{L^\infty} \le c |u|_{L^2} |w|_{L^2}\|\v\|_{\rV_s}\]
for some constant $c > 0$. Thus $b$ can be uniquely extended to the tri-linear form (denoted by the same letter)
\[b: \rH \times \rH \times \rV_s \to \R\]
and
\[|b(u,w,\v)| \le c |u|_{L^2} |w|_{L^2}\|\v\|_{\rV_s}, \quad u, w \in \rH, \v \in \rV_s\,.\]
At the same time, the operator $B$ can be uniquely extended to a bounded linear operator
\[B : \rH \times \rH \to \rV_s^\prime\,.\]
In particular, it satisfies the following estimate
\begin{equation}
\label{eq:2.8}
|B(u,w)|_{\rV_s^\prime} \le c |u|_\rH |w|_\rH, \quad u,w \in \rH, \qquad s > \tfrac32.
\end{equation}
We will also use the following notation, $B(u) := B(u,u)$.

Let us assume that $s > 1$. It is clear that $\rV_s$ is dense in $\rV$ and the embedding $j_s : \rV_s \hookrightarrow \rV$ is continuous. Then there exists \cite[Lemma~C.1]{[BM13]} a Hilbert space $U$ such that $U \subset \rV_s$, $U$ is dense in $\rV_s$ and
\[\mbox{the natural embedding }\, i_s : U \hookrightarrow \rV_s\,\mbox{ is compact}\,.\]
Therefore, the following embedding of the spaces hold
\[\rU \hookrightarrow \rV_s \hookrightarrow \rV \hookrightarrow \rH \equiv \rH^\prime \hookrightarrow \rU^\prime .\]

The following Gagliardo-Nirenberg interpolation inequality will be used frequently. Let $q \in [1, \infty]$ and $m \in \N$. If
\[ \frac{1}{q} = \frac12 - \frac{m \alpha}{3}, \quad 0 \le \alpha \le 1\,,\]
then there exists a constant $C_{m,q}$ depending on $m$ and $q$ such that
\begin{equation}
\label{eq:2.9}
\|u\|_{L^q} \le C_{m,q} \|u\|^\alpha_{H^m}|u|^{1-\alpha}_{L^2}\,, \qquad \mbox{for }\, u \in H^m\,.
\end{equation}

Recall that $\Pi$ is the orthogonal projection from $L^2(\dom; \R^3)$ to $\rH$. For any $u \in \rH$ and $\v \in L^2(\dom; \R^3)$, we have
\[\langle u, \v \rangle_{\rH} := \langle u, \Pi \v \rangle_{\rH} = \langle u, \v \rangle_{L^2}\,.\]

The Stokes operator $\A \colon \rm{D}(\A) \to \rH,$ is given by
\begin{align*}
\A\,u &= - \Pi(\Delta \, u),\quad  \quad u \in \mathrm{D}(\A)\,, \\
\mathrm{D}(\A) &= \rH \cap H^2(\dom; \R^3)\,.
\end{align*}

\subsection{Assumptions}
\label{s:2.3}

We now introduce the following assumptions on the coefficients $f$ and $\sigma$:
\begin{trivlist}
\item{(\textbf{H1})} $f:\dom\times  \R^3 \to \R^3$ is a continuous function such that there exists a constant $C_{f} > 0$ and $b_f \in L^1(\R^3)$ such that for all $x \in \dom$, $u \in \R^3$,
\[
|f(x,u)|^2  \le C_{f}|u|^2 + b_f(x).
\]
Moreover, for $u_1, u_2 \in \R^3$
\[|f(x,u_1) - f(x,u_2)| \le C_f |u_1 - u_2|, \qquad \forall\, x \in \R^3.\]
\item{(\textbf{H2})} A  measurable  function $\sigma :[0,\infty)\times \dom \to \ell^2 $ of $C^1$ class with respect to the $x$-variable and for any $T > 0$ there exists a constant  $C_{\sigma,T} > 0$ such that for all $t \in [0,T], x \in \dom$
\[\|\partial_{x^j} \sigma(t,x)\|_{\ell^2} \le C_{\sigma, T}, \quad j = 1,2,3\]
and, for all $t \in [0,\infty), x \in \dom$,
\begin{equation}
\label{eq:2.10}
\|\sigma(t,x)\|^2_{\ell^2} \le \frac14\,.
\end{equation}
\end{trivlist}
Below for the sake of simplicity the variable $``x"$ in the coefficients will be dropped.

Define, for $j \in \N$, a map $G_j : [0,T] \times \rH \to \rm{H}$ by
\begin{equation}
\label{eq:2.11}
G_j(t,u) := \Pi[(\sigma_j(t)\cdot \nabla)u\,]\,.
\end{equation}
Then $G : \rH \to \mathcal{L}_2(\ell^2; \rH)$ is given by
\begin{equation}
\label{eq:2.12}
G(u)(k) = \sum_{j=1}^\infty k_j G_j(u)\,, \qquad u \in \rH,\, k \in \ell^2\,.
\end{equation}
Let $\{e_j\}_{j = 1}^\infty$ be the orthonormal basis of $\ell^2$ then
\[G(t, u)(e_j) = G_j(u)\,, \quad t \ge 0,\, u \in \rH\,.\]

For simplicity we will assume that $\nu = 1$. In particular, the function $g$ defined by \eqref{eq:1.4} will from now on be given by
\begin{equation}
\label{eq:g_new}
\begin{cases}
g(r) = 0, \quad \quad \quad \quad \,\mbox{if}\,\, r \leq N,\\
g(r) = (r - N), \quad \,\,\,\, \mbox{if}\,\, r \geq N+1,\\
0 \le g^\prime(r) \le \lipzc , \, \quad \quad r \in [N, N+1]\,.
\end{cases}
\end{equation}
Observe that the function $g$ defined in this way satisfies
\begin{equation}
\label{eq:g_bounded}
|g(r)| \le r, \quad \quad r \ge 0\,,
\end{equation}
and
\begin{equation}
\label{eq:g_lipschitz}
|g(r) - g(r^\prime)| \le \lipzc |r - r^\prime|, \quad \quad r, r^\prime \ge 0\,.
\end{equation}

We are interested in proving the existence of solutions to \eqref{eq:1.1} - \eqref{eq:1.3}. In particular, we want to prove the existence of a random divergence free vector field $u$ and scalar pressure $p$ satisfying \eqref{eq:1.1} and \eqref{eq:1.3}. Thus we project equation \eqref{eq:1.1} using the orthogonal projection operator $\Pi$ on the space $\rH$ of the $L^2$-valued, divergence free vector fields. On projecting, we obtain the following abstract stochastic evolution equation:
\begin{equation}
\label{eq:2.13}
\begin{cases}
du(t) = \left[- \A u(t) - B(u(t)) - \Pi [g(|u(t)|^2)u(t)] + \Pi f(u(t))\right]dt \\
 \hspace{1.5truecm}+ \sum_{j=1}^\infty G_j(t,u(t))\,dW^j(t), \\
u(0) = u_0 ,
\end{cases}
\end{equation}
where we assume that $u_0\in \rV$ and $W(t) = (W^j(t))_{j=1}^\infty$ is a cylindrical Wiener process on $\ell^2$ and $\{W^j(t), t \ge 0, j \in \N\}$ is an infinite sequence of independent standard Brownian motions. We will repeatedly use the following notation
\[G(t,u)\,dW(t) := \sum_{j=1}^\infty G_j(t,u)\,dW^j(t)\,.\]

We will need the following lemma later to obtain the a'priori estimates.

\begin{lemma}
\label{lemma2.1}
\begin{trivlist}
\item{i)} For any $u \in \rm{D}(\A )$
\begin{equation}
\label{eq:2.14}
|\langle B(u), u \rangle_{\rV}| \le \frac12 |u|^2_{\rm{D}(\A )} + \frac12 \big ||u|\cdot|\nabla u|\big |^2_{L^2}\,.
\end{equation}
\item{ii)} If $u \in \rH$, then
\begin{equation}
\label{eq:g_estimate}
\begin{cases}
((-g(|u|^2)u,u)) \le C_N |\nabla u|^2_{L^2} - 2 \big||u|\cdot|\nabla u|\big|^2_{L^2},\\
\langle -g(|u|^2)u, u \rangle_{L^2} \le - \|u\|^4_{L^4} + C_N|u|^2_\rH,
\end{cases}
\end{equation}
where $((\cdot, \cdot))$ is defined in \eqref{eq:2.1} and $C_N > 0$ is a generic constant depending on $N$.
\item{iii)} If $T > 0$ then $\exists$ $C_{\sigma, T} > 0$ such that for any $ t \in [0,T]$ and $u \in \rm{D}(\A )$,
\begin{align}
\label{eq:2.15}
\|G(t,u)\|^2_{\mathcal{L}_2(\ell^2;\rH)} & \le \frac14 |\nabla u(t)|^2_{L^2}\,, \\
\label{eq:2.16}
\|G(t,u)\|^2_{\mathcal{L}_2(\ell^2;\rV)} & \le \frac12 |\A \,u(t)|^2_{L^2} + C_{\sigma,T}|\nabla u(t)|^2_{L^2}\,.
\end{align}
\end{trivlist}
\end{lemma}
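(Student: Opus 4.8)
The plan is to establish the three inequalities by direct computation, exploiting integration by parts and the structure of the nonlinear terms, and then controlling the resulting integrals via the Gagliardo--Nirenberg inequality \eqref{eq:2.9} and the assumptions $(\textbf{H2})$ on $\sigma$. Throughout I will write $\langle\cdot,\cdot\rangle$ for the $L^2$ pairing and freely use that for $u \in \mathrm{D}(\A)$ one has enough regularity to integrate by parts.

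\emph{Part (i).} First I would unfold the $\rV$-inner product using \eqref{eq:2.2}, writing $\langle B(u),u\rangle_{\rV} = \langle B(u),u\rangle_{L^2} + ((B(u),u))$. The purely $L^2$ term vanishes by the standard antisymmetry $b(u,u,u)=0$, which holds on $\R^3$ for $u \in \rV$ (divergence-free); this is why only the gradient part survives. For the second term $((B(u),u)) = \langle \nabla(\Pi B(u)),\nabla u\rangle$, I would first use that $\Pi$ commutes with the estimate at the level of $L^2$ pairings against $\nabla u$ (since $u$ is divergence free) and then integrate by parts to move a derivative, producing terms of the form $\int (\partial_k u \cdot \nabla) u \cdot \partial_k u\,dx$ and $\int (u\cdot\nabla)\partial_k u\cdot\partial_k u\,dx$. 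The second of these again vanishes by antisymmetry, and the first is bounded in absolute value by $\||u|\,|\nabla u|\|_{L^2}\,|\A u|_{L^2}$ after identifying $-\Pi\Delta u$ with $\A u$ and using Cauchy--Schwarz; Young's inequality $ab \le \tfrac12 a^2 + \tfrac12 b^2$ then yields \eqref{eq:2.14}. The main obstacle here is handling the projection $\Pi$ cleanly when integrating by parts; I expect to use that $\nabla u$ pairs against divergence-free test directions so that $\Pi$ can be absorbed.

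\emph{Part (ii).} For the first inequality I would compute $((-g(|u|^2)u,u)) = -\sum_k \int \partial_k\!\big(g(|u|^2)u\big)\cdot \partial_k u\,dx$. Expanding the derivative by the product rule gives a term $-\int g(|u|^2)|\nabla u|^2\,dx$ and a term involving $g'(|u|^2)$ times $\partial_k(|u|^2)\, u\cdot\partial_k u$. On the region $\{|u|^2 \ge N+1\}$ one has $g(|u|^2)=|u|^2-N$ and $g'=1$, producing exactly the coefficient $2$ in front of $\big||u|\,|\nabla u|\big|^2_{L^2}$ after recognizing $\partial_k(|u|^2)= 2\,u\cdot\partial_k u$; on the transition region $[N,N+1]$ the bound $0\le g'\le \lipzc$ and $0 \le g \le 1$ from \eqref{eq:g_new} contribute only lower-order terms controlled by $C_N|\nabla u|^2_{L^2}$, since $g$ is supported where $|u|^2 \ge N$. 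Collecting signs gives the stated inequality. For the second inequality I would expand $\langle -g(|u|^2)u,u\rangle_{L^2} = -\int g(|u|^2)|u|^2\,dx$ and split the domain at $|u|^2 = N+1$: on the large region the integrand equals $-(|u|^2-N)|u|^2 \le -\|u\|^4_{L^4}$ up to the $+N\int|u|^2$ term absorbed into $C_N|u|^2_\rH$, while the bounded region contributes at most $C_N|u|^2_\rH$. This is the step that uses the explicit quadratic form of $g$ and is the conceptual heart of the taming estimate.

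\emph{Part (iii).} For \eqref{eq:2.15} I would write $\|G(t,u)\|^2_{\mathcal{L}_2(\ell^2;\rH)} = \sum_j |G_j(t,u)|^2_{\rH}$ and use that $\Pi$ is a contraction on $\rH$ so that $|G_j(t,u)|_{\rH} \le |(\sigma_j(t)\cdot\nabla)u|_{L^2}$; summing over $j$ and using $\sum_j|\sigma_j(t,x)|^2 = \|\sigma(t,x)\|^2_{\ell^2}\le\tfrac14$ pointwise from \eqref{eq:2.10} gives the factor $\tfrac14$ in front of $|\nabla u|^2_{L^2}$. For \eqref{eq:2.16} I would estimate the $\rV$-norm as $\|G_j\|_{\rV}^2 \approx |G_j|^2_{\rH} + |\nabla G_j|^2_{L^2}$; the first part is already bounded by the previous display, and for $\nabla G_j$ I would commute the gradient past $\Pi$ (bounded on $H^{s,p}$ by the Marcinkiewicz--Mihlin argument recalled in the text) and apply the product rule to $(\sigma_j\cdot\nabla)u$. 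The term where the derivative hits $u$ yields second-order derivatives controlled by $\tfrac12|\A u|^2_{L^2}$ after summation using \eqref{eq:2.10}, while the term where the derivative hits $\sigma_j$ is controlled by the assumed bound $\|\partial_{x^k}\sigma(t,x)\|_{\ell^2}\le C_{\sigma,T}$, giving the $C_{\sigma,T}|\nabla u|^2_{L^2}$ contribution. The main obstacle in (iii) is bookkeeping the commutator $[\nabla,\Pi]$ and the $\ell^2$-summation, but both are handled by the stated boundedness of $\Pi$ together with $(\textbf{H2})$.
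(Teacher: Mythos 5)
Your parts (ii) and (iii) are essentially the paper's own proof. For (ii) the paper substitutes $\phi(r)=r-g(r)$ and integrates by parts; your direct product-rule computation with $g$, splitting the domain at $|u|^2=N$ and $N+1$, is the same calculation in different clothing, and your treatment of the second inequality matches the paper's use of $|\phi(r)|\le C_N$. For (iii) the two arguments coincide: the contraction property of $\Pi$ on $L^2$, the pointwise bound \eqref{eq:2.10}, and the product rule combined with $(\textbf{H2})$; note only that on $\dom$ the projection $\Pi$ is a Fourier multiplier, so it commutes \emph{exactly} with $\partial_{x^j}$ — there is no commutator $[\nabla,\Pi]$ to bookkeep, and the paper accordingly writes $\partial_{x^j}G_k(t,u)=\Pi\,\partial_{x^j}[(\sigma_k(t,x)\cdot\nabla)u]$ without further comment.

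Part (i), however, contains a genuine gap as written. After your single integration by parts, the surviving term is
\begin{equation*}
\sum_{k,i,j}\int_{\dom}\partial_k u_j\,\partial_j u_i\,\partial_k u_i\,dx\,,
\end{equation*}
a cubic integral in \emph{first} derivatives with no undifferentiated factor of $u$; Cauchy--Schwarz applied to it yields quantities of the type $\|\nabla u\|_{L^3}^3$, and cannot produce the claimed bound $\big||u|\cdot|\nabla u|\big|_{L^2}\,|\A u|_{L^2}$. To rescue your route you need a \emph{second} integration by parts, moving $\partial_j$ back off $u_i$ and using $\mathrm{div}\,u=0$ (so that $\sum_j\partial_j\partial_k u_j=0$):
\begin{equation*}
\sum_{k,i,j}\int\partial_k u_j\,\partial_j u_i\,\partial_k u_i\,dx
=-\sum_{k,i,j}\int u_i\,\partial_k u_j\,\partial_j\partial_k u_i\,dx
\le \big||u|\cdot|\nabla u|\big|_{L^2}\,|\nabla^2 u|_{L^2}\,,
\end{equation*}
together with $|\nabla^2u|_{L^2}=|\Delta u|_{L^2}$ (Plancherel on $\R^3$) and $\A u=-\Delta u$ on $\mathrm{D}(\A)$ (Lemma~\ref{lemma6.2}). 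The paper avoids this detour entirely: since $\langle B(u),u\rangle_\rH=0$, one has $\langle B(u),u\rangle_\rV=\langle (u\cdot\nabla)u,-\Delta u\rangle_{L^2}$, and Cauchy--Schwarz with $|(u\cdot\nabla)u|_{L^2}\le\big||u|\cdot|\nabla u|\big|_{L^2}$ plus Young give \eqref{eq:2.14} in one line; you should either adopt that shortcut or supply the second integration by parts explicitly.

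One further caution on (ii): your claim that the $g^\prime$-term produces ``exactly the coefficient $2$'' rests on identifying $\sum_k(u\cdot\partial_k u)^2$ with $|u|^2|\nabla u|^2$. Pointwise one only has $\sum_k(u\cdot\partial_k u)^2=\tfrac14|\nabla(|u|^2)|^2\le|u|^2|\nabla u|^2$, and since this term enters with a favourable sign it may only be dropped, not enlarged; the honest output of the product-rule computation is therefore the coefficient $-1$ in front of $\big||u|\cdot|\nabla u|\big|^2_{L^2}$ plus the nonpositive term $-\tfrac12\,|\nabla(|u|^2)|^2_{L^2}$. The paper's own derivation makes the same identification in \eqref{eq:g_estimate 1}, so this does not distinguish your argument from the paper's, but it is a soft spot you inherit rather than repair.
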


\begin{proof}
Let $u \in \rm{D}(\A )$. Since $\langle B(u), u \rangle_\rH = 0,$ using the Cauchy-Schwarz and the Young's inequality we get
\begin{align*}
&|\langle B(u), u \rangle_{\rV}|  = |\langle B(u), (I - \Delta)u \rangle_{\rH}|  \le |-\Delta\,u|_{L^2}|(u \cdot \nabla)u |_{L^2} \\
& \quad \le \frac12|- \Delta \,u|_{L^2}^2 + \frac12|(u\cdot \nabla)u|^2_{L^2} \le \frac12 |u|^2_{\rm{D}(\A )} + \frac12 \big ||u|\cdot|\nabla u|\big|^2_{L^2}.
\end{align*}
Let us introduce a function $\phi \colon \R_+ \to \R$ such that $g(r) = r - \phi(r)$ which can be written as
\begin{equation*}
\phi(r) =
\begin{cases}
r, \quad \quad r \le N, \\
N, \quad \quad r \ge N + 1.
\end{cases}
\end{equation*}
Also $\phi^\prime(r) = 1 - g^\prime(r)$, and there exists a constant $\widetilde C_N > 0$\footnote{One can show that the constant $\widetilde C_N$ is independent of $N$. Moreover, $|\phi^\prime(r)| \le 1$ and thus $|\phi^\prime(r)r| \le (N+1)$ for every $r > 0$.} such that $|\phi^\prime(r)| \le \widetilde{C}_N$ for every $r \ge 0 $. Moreover
\begin{equation*}
\phi^\prime(r) =
\begin{cases}
1, \quad \quad r \le N, \\
0, \quad \quad r \ge N+1.
\end{cases}
\end{equation*}
Thus $|\phi^\prime(r) r|$ is bounded by some positive constant $C_N$. Let $u \in \rH$, then using the definitions of $g$ and $((\cdot, \cdot))$ we get
\begin{align*}
& ((-g(|u|^2)u, u))  = -\langle g(|u|^2)u, - \Delta\,u\rangle_{L^2} \\
&\; = - \int_{\dom} g(|u(x)|^2)u(x) \cdot \left(- \Delta\,u(x)\right)\,dx \\
& \; = - \int_{\dom} |u(x)|^2 u(x)\cdot \left(-\Delta\,u(x)\right)\,dx + \int_{\dom} \phi(|u(x)|^2) u(x) \cdot \left(-\Delta\,u(x) \right)\,dx\,.
\end{align*}
Thus, by the integration by parts formula we get
\begin{align}
\label{eq:g_estimate 1}
 ((-g(|u|^2)u, u)) = & - \int_{\dom} |u(x)|^2 |\nabla u(x)|^2 dx - 2 \int_{\dom} |u(x)|^2 |\nabla u(x)|^2 dx \nonumber \\
&\,\,\, + \sum_{j,k = 1}^3 \int_{\dom}D_k\left(\phi(|u(x)|^2)\right) u_j(x) D_k u_j(x)\,dx \nonumber \\
& \,\,\, + \int_{\dom} \phi(|u(x)|^2)|\nabla u(x)|^2\,dx \;.
\end{align}
Using the above bound on $|\phi^\prime(r) r |$, we obtain
\begin{align}
\label{eq:g_estimate 2}
\sum_{j,k = 1}^3 & \int_{\dom}D_k\left(\phi(|u(x)|^2)\right) u_j(x)D_k u_j(x)\,dx \nonumber \\
& = 2 \sum_{k=1}^3 \int_{\dom} \phi^\prime(|u(x)|^2) \langle u(x), D_k u(x) \rangle_{\dom}^2 \, dx \nonumber \\
& \le 2 \int_{\dom} |\phi^\prime(|u(x)|^2)| |u(x)|^2 |\nabla u(x)|^2\,dx \le C_N |\nabla u|^2_{L^2}\,.
\end{align}
Since $g(r) \ge 0$, $|\phi(r)| \le r$ for all $r \ge 0$. Thus using \eqref{eq:g_estimate 2} in \eqref{eq:g_estimate 1}, we obtain
\begin{align*}
((-g(|u|^2)u, u))  &\le -3 \big| |u|\cdot|\nabla u| \big|^2_{L^2} + C_N |\nabla u|^2_{L^2} + \big| |u| \cdot |\nabla u| \big|^2_{L^2} \\
&  = C_N |\nabla u|^2_{L^2} - 2 \big| |u| \cdot |\nabla u| \big|^2_{L^2}\,.
\end{align*}

Now to prove the second inequality in \eqref{eq:g_estimate}, we take the similar approach. Let $u \in \rH$, then
\begin{align*}
\langle - g(|u|^2)u,u \rangle_{L^2} = - \int_{\dom}|u(x)|^2 |u(x)|^2\,dx + \int_{\dom} \phi(|u(x)|^2)|u(x)|^2\,dx\,.
\end{align*}
By the definition of $\phi$ there exists a constant $C_N > 0$ such that $|\phi(r)| \le C_N$ for all $r > 0$, thus
\begin{align*}
\langle - g(|u|^2)u,u \rangle_{L^2} \le - \|u\|^4_{L^4} + C_N |u|^2_\rH\,.
\end{align*}
This completes the proof of part (ii).

Now for (iii), by $(\textbf{H1})$ and $(\textbf{H2})$ we have
\begin{align*}
&\|G(t,u)\|^2_{\mathcal{L}_2(\ell^2;\rH)}  = \sum_{j=1}^\infty|G_j(t,u)|^2_\rH = \sum_{j = 1}^\infty \int_\dom |G_j(t,x,u(t,x))|^2\,dx \\
& \quad \quad \le \int_{\R^3} \|\sigma(t,x)\|^2_{\ell^2}|\nabla u(t,x)|^2\,dx \le \left(\sup_{ x \in \R^3} \|\sigma(t,x)\|^2_{\ell^2}\right) |\nabla u(t)|^2_{L^2} \\
& \quad \quad  \le \frac14 |\nabla u(t)|^2_{L^2}\,.
\end{align*}
Secondly, noting that
\[\|G(t,u)\|^2_{\mathcal{L}_2(\ell^2;\rV)} = \|G(t,u)\|^2_{\mathcal{L}_2(\ell^2;\rH)} + \|\nabla G(t,u)\|^2_{\mathcal{L}_2(\ell^2;\rH)}\]
and
\begin{align*}
\partial_{x^j}G_k(t,u) & = \Pi \partial_{x^j} [(\sigma_k(t,x) \cdot \nabla )u(t)]\\
& = \Pi\left[ (\partial_{x^j} \sigma_k(t,x) \cdot \nabla)u(t) + (\sigma_k(t,x) \cdot \nabla) \partial_{x^j}u(t) \right]
\end{align*}
by $(\textbf{H1})$ and $(\textbf{H2})$ we have
\[\|G(t,u)\|^2_{\mathcal{L}_2(\ell^2;\rV)}  \le \frac12 |\A \,u(t)|^2_{L^2} + C_{T,\sigma}|\nabla u(t)|^2_{L^2}\,.\]
\end{proof}
On a purely heuristic level, by application of the It\^o formula to the function $\rH \ni \xi \mapsto |\xi|^2_\rH \in \R$ and a solution $u$ to \eqref{eq:1.1} and using Lemma~\ref{lemma2.1}, one obtains the following inequality
\begin{align}\label{eq:energy inequality}
\nonumber
&\frac12 d|u(t)|_{\rm{H}}^2 = \langle  u(t), - \A  u(t) - B(u(t)) - g(|u(t)|^2)u(t) + f(u(t)) \rangle_{\rH}   \\
&\; + \left\langle  u(t), G(t,u(t)) \, dW_t \right\rangle_{\rm{H}} + \frac12 \|G(t,u_n(t))\|^2_{\mathcal{L}_2(\ell^2;\rH)}
\\
&\; \le - \frac78 |\nabla u(t)|^2_{L^2} - \|u(t)\|^4_{L^4} + C_{N,f} |u(t)|^2_\rH + \left\langle  u(t), G(t,u(t)) \, dW_t \right\rangle_{\rm{H}},
\nonumber
\end{align}
which could lead to a'priori estimates that can be used further to prove the existence of the solution.

We will require the following version of the Gronwall Lemma \cite[Lemma~3.9]{[DM09]}$\colon$

\begin{lemma}
\label{lemma2.2}
Let $X,Y, I$ and $\varphi$ be non-negative processes and $Z$ be a non-negative integrable random variable. Assume that $I$ is non-decreasing and there exist non-negative constants $C,\alpha, \beta, \gamma, \eta$ with the following properties
\begin{equation}
\label{eq:2.17}
\int_0^T \varphi(s)\,ds \le C \; a.s., \quad \quad 2 \beta e^C \le 1, \quad \quad 2 \eta e^C \le \alpha,
\end{equation}
and such that for some constant $\widetilde{C} > 0$  and all  $ t \in [0,T]$,
\begin{equation}
\label{eq:2.18}
X(t) + \alpha Y(t) \le Z + \int_0^t \varphi(r) X(r)\,dr + I(t) \quad a.s.,
\end{equation}
\begin{equation}
\label{eq:2.19}
\E(I(t)) \le \beta \E(X(t)) + \gamma \int_0^t \E(X(s))\,ds + \eta \E(Y(t)) + \widetilde{C}.
\end{equation}
 If $X \in L^\infty([0,T] \times \Omega)$, then we have
\begin{equation}
\label{eq:2.20}
\E \left[X(t) + \alpha Y(t) \right] \le 2 \exp{\left( C + 2 t \gamma e^C \right)}\left(\E(Z) + \widetilde{C}\right), \qquad t \in [0,T]\,.
\end{equation}
\end{lemma}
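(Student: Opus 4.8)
The plan is to first convert the random integral inequality \eqref{eq:2.18} into a clean pathwise bound by applying the classical sample-path-wise Gronwall inequality, and only afterwards to pass to expectations, where the two smallness conditions in \eqref{eq:2.17} are used to absorb the offending terms. The exponent $C+2t\gamma e^C$ in \eqref{eq:2.20} signals that two Gronwall applications are needed: the $e^C$ factor comes from the pathwise one, and the $\exp(2t\gamma e^C)$ factor from a second, deterministic one applied to $t\mapsto\E X(t)$.

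\textbf{Step 1 (pathwise Gronwall).} Fix a sample point. Since $Y\ge 0$, \eqref{eq:2.18} gives $X(t)\le Z+I(t)+\int_0^t\varphi(r)X(r)\,dr$, and as $I$ is non-decreasing the forcing term $r\mapsto Z+I(r)$ is non-decreasing, so the scalar Gronwall lemma yields
\[
X(t)\le (Z+I(t))\exp\Big(\textstyle\int_0^t\varphi(r)\,dr\Big)\le (Z+I(t))\,e^C .
\]
Feeding this back into the integral term and using $\frac{d}{dr}\exp(\int_0^r\varphi)=\varphi(r)\exp(\int_0^r\varphi)$ together with monotonicity of $Z+I(\cdot)$,
\[
\int_0^t\varphi(r)X(r)\,dr\le (Z+I(t))\int_0^t\varphi(r)\,e^{\int_0^r\varphi}\,dr\le (Z+I(t))(e^C-1).
\]
Substituting this back into \eqref{eq:2.18}, this time retaining $\alpha Y(t)$, produces the key pathwise estimate
\[
X(t)+\alpha Y(t)\le (Z+I(t))\,e^C\qquad\text{a.s., for all }t\in[0,T].\quad(\star)
\]

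\textbf{Step 2 (expectation and absorption).} Taking expectations in $(\star)$ and inserting \eqref{eq:2.19},
\[
\E X(t)+\alpha\,\E Y(t)\le e^C\,\E Z+e^C\Big(\beta\,\E X(t)+\gamma\int_0^t\E X(s)\,ds+\eta\,\E Y(t)+\widetilde C\Big).
\]
Now $2\beta e^C\le 1$ and $2\eta e^C\le\alpha$ give $\beta e^C\,\E X(t)\le\tfrac12\E X(t)$ and $\eta e^C\,\E Y(t)\le\tfrac{\alpha}{2}\E Y(t)$; moving these to the left and multiplying by $2$ yields
\[
\E X(t)+\alpha\,\E Y(t)\le 2e^C(\E Z+\widetilde C)+2e^C\gamma\int_0^t\E X(s)\,ds.\quad(\star\star)
\]

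\textbf{Step 3 (second Gronwall and conclusion).} Dropping the non-negative $\alpha\E Y(t)$ in $(\star\star)$ and applying the deterministic Gronwall lemma to the bounded measurable function $t\mapsto\E X(t)$ gives $\E X(t)\le 2e^C(\E Z+\widetilde C)\exp(2e^C\gamma t)$; reinserting this bound into the integral on the right of $(\star\star)$ and simplifying recovers exactly \eqref{eq:2.20}. The role of the hypothesis $X\in L^\infty([0,T]\times\Omega)$ is precisely to guarantee $\E X(t)<\infty$, which (via $(\star)$ and \eqref{eq:2.19}, using $2\eta e^C\le\alpha$ once more) forces $\E Y(t)$ and $\E I(t)$ to be finite as well, so that the absorption in Step 2 is legitimate. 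I expect this finiteness bookkeeping, and the subtraction of $\tfrac12\E X(t)$ and $\tfrac{\alpha}{2}\E Y(t)$ that it underwrites, to be the only genuinely delicate point; if one wishes to avoid the mild circularity in establishing $\E I(t)<\infty$, a truncation of $I$ by $I\wedge n$ with constants independent of $n$, followed by monotone convergence, makes the argument fully rigorous. The two Gronwall applications themselves are routine.
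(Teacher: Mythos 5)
The paper never proves this lemma: it is quoted verbatim from Duan and Millet \cite[Lemma~3.9]{[DM09]}, so there is no internal proof to compare against. Judged on its own merits, your three-step argument — pathwise Gronwall giving $X(t)+\alpha Y(t)\le e^{C}\left(Z+I(t)\right)$, absorption of $\beta\,\E X(t)$ and $\eta\,\E Y(t)$ via the two smallness conditions in \eqref{eq:2.17}, then a deterministic Gronwall for $t\mapsto \E X(t)$ followed by reinsertion into $(\star\star)$ — is the standard proof of this statement, and your bookkeeping reproduces the constant $2\exp(C+2t\gamma e^{C})$ exactly.

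The one point that does not survive scrutiny is your proposed repair of the circularity in establishing $\E Y(t)<\infty$. Truncating $I$ by $I\wedge n$ destroys hypothesis \eqref{eq:2.18}, where $I$ sits on the \emph{right}-hand side, so the truncated system no longer satisfies the assumptions; truncating $Y$ instead preserves \eqref{eq:2.18} but not \eqref{eq:2.19}, whose right-hand side still carries the untruncated $\eta\,\E Y(t)$. In fact no truncation can work, because the lemma is genuinely false if \eqref{eq:2.19} is read in the extended reals: take $T=1$, $X\equiv 0$, $Z=0$, $\varphi\equiv 0$, $C=\beta=\gamma=0$, $\alpha=1$, $\eta=\tfrac12$, and $Y(t)\equiv I(t)\equiv W$ with $W\ge 0$ and $\E W=\infty$; then \eqref{eq:2.17}--\eqref{eq:2.19} all hold (the latter as $\infty\le\infty$) while \eqref{eq:2.20} fails. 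The correct resolution is that \eqref{eq:2.19} implicitly presupposes $\E I(t)<\infty$, hence (when $\alpha>0$, via $\eta e^{C}\le \alpha/2<\alpha$) $\E Y(t)<\infty$, after which your absorption in Step 2 is legitimate exactly as written. This is also how the lemma is deployed in the paper: in every application (e.g.\ Lemma~\ref{lemma6.13} and Lemma~\ref{lemma6.14}) the processes are stopped at $\tau_R^n$ and $I$ is a stopped martingale with $\E I(t)=0$, so one can take $\beta=\gamma=\eta=0$ and the finiteness issue never bites. With that caveat recorded in place of the $I\wedge n$ remark, your proof is correct.
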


\section{Compactness}
\label{s:3}

Let $(\ocal_R)_{R\in \N}$ be a sequence of bounded open subsets of ${\R^3}$ with regular boundaries $\partial \ocal_R$ such that $\ocal_R \subset \ocal_{R+1}$. Let us consider the following functional spaces, which were already introduced in \cite{[BM13]}:

\noindent $\ccal([0,T]; \mathrm{U}') :=$ the vector space of continuous functions $u: [0,T] \to \mathrm{U}^\prime$, where $\rU^\prime$ is a vector space, with the topology $\mathcal{T}_1$ induced by the norm $|u|_{\ccal([0,T]; \mathrm{U}^\prime)} := \sup_{t \in [0,T]}|u(t)|_{\mathrm{U}^\prime}$,

\noindent $L^2_w(0,T; \mathrm{D}(\A)) :=$ is the Hilbert space $L^2(0,T; \mathrm{D}(\A))$ with the weak topology $\mathcal{T}_2$,

\noindent $L^2(0,T; \rH_{loc}) :=$ the space of measurable functions $u : [0,T] \to \mathrm{H}$ such that for all $R \in \N$
\begin{equation}
\label{E:seminorms}
q_{T,R}(u) := \|u\|_{L^2(0,T; H_{\ocal_R})} = \left(\int_0^T \int_{\ocal_R} |u(t,x)|^2\,dx \,dt \right)^{1/2} < \infty\,,
\end{equation}
with the topology $\mathcal{T}_3$ induced by the semi-norms $(q_{T,R})_{R \in \N}$.

The following lemma is inspired by the classical Dubinsky theorem \cite[Theorem~IV.4.1]{[VF88]} (see also \cite{[Lions69]}) and the compactness result due to Mikulevicus and Rozovskii \cite[Lemma~2.7]{[MR05]}.

\begin{lemma}
\label{lemma6.3.1}
Let
\begin{equation}
\label{eq:6.3.1}
\widetilde{\zcal}_T := \ccal([0,T]; \mathrm{U}^\prime) \cap L^2_w(0,T; \rD(\A)) \cap L^2(0,T; \rH_{loc})
\end{equation}
and let $\widetilde{\mathcal{T}}$\footnote{$\widetilde{\mathcal{T}}$ is the supremum of topologies $\mathcal{T}_1, \mathcal{T}_2, \mathcal{T}_3$, i.e. it is the coarsest topology on $\widetilde{\zcal}_T$ that is finer than each of $\mathcal{T}_1, \mathcal{T}_2$ and $\mathcal{T}_3$.} be the supremum of the corresponding topologies. Then a set $\kcal \subset \widetilde{\zcal}_T$ is $\widetilde{\mathcal{T}}$-relatively compact if the following two conditions hold $\colon$
\begin{itemize}
\item[(i)] $\sup_{u \in \kcal} \int_0^T |u(s)|^2_{\rD(\A)}\,ds < \infty\,,\,$ i.e. $\kcal$ is bounded in $L^2(0,T; \rD(\A))\,,$
\item[(ii)] $\lim_{\delta \to 0} \sup_{u \in \kcal} \sup_{\overset{s,t \in[0,T]}{|t-s|\le \delta }} {|u(t)-u(s)|}_{\mathrm{U}^{\prime }} =0\,.$
\end{itemize}
\end{lemma}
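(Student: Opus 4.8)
The plan is to prove \emph{sequential} relative compactness. Each of the three topologies is metrizable on bounded sets --- the weak topology of the separable Hilbert space $L^2(0,T;\rD(\A))$ is metrizable on balls, $\mathcal{T}_1$ is a norm topology, and $\mathcal{T}_3$ is generated by the countable family $(q_{T,R})_{R\in\N}$ --- so $\widetilde{\mathcal{T}}$ is metrizable on the set $\kcal$, which is bounded by (i). Hence it suffices to show that every sequence $(u_n)\subset\kcal$ has a subsequence converging in all three topologies to a common limit $u\in\widetilde{\zcal}_T$. As a preliminary I would record that $\rD(\A)\embed\rU^\prime$ is \emph{compact}: dualising the compact embedding $\rU\embed\rV_s$ gives $\rV_s^\prime\embed\rU^\prime$ compact, and precomposing with the continuous injections $\rD(\A)\embed\rH\embed\rV^\prime\embed\rV_s^\prime$ yields compactness (and injectivity) of $\rD(\A)\embed\rU^\prime$.

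For $\mathcal{T}_2$: by (i), $\kcal$ is bounded in the reflexive space $L^2(0,T;\rD(\A))$, so the Banach--Alaoglu theorem gives a subsequence with $u_n\rightharpoonup u$ weakly, i.e. convergence in $\mathcal{T}_2$. The convergence in $\mathcal{T}_1$ is the heart of the matter, and I would obtain it from the Arzel\`a--Ascoli theorem in $\ccal([0,T];\rU^\prime)$. Equicontinuity is precisely hypothesis (ii); the delicate point is the pointwise relative compactness of $\{u(t):u\in\kcal\}$ in $\rU^\prime$ for fixed $t$. For this I would introduce the time-averages $\bar u^\delta(t):=\tfrac1\delta\int_t^{t+\delta}u(s)\,ds$ (averaging to the left near $t=T$). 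For each fixed $\delta$, Cauchy--Schwarz and (i) bound $\bar u^\delta(t)$ in $\rD(\A)$ uniformly over $\kcal$, so by the compact embedding $\rD(\A)\embed\rU^\prime$ the family $\{\bar u^\delta(t):u\in\kcal\}$ is relatively compact in $\rU^\prime$; meanwhile (ii) gives $|u(t)-\bar u^\delta(t)|_{\rU^\prime}\le\omega(\delta)$ with $\omega(\delta)\to0$ uniformly over $\kcal$. A diagonal extraction over $\delta=1/m$ combined with this uniform approximation shows that $(u_n(t))$ is Cauchy in the complete space $\rU^\prime$, giving the pointwise compactness. Arzel\`a--Ascoli then yields a further subsequence with $u_n\to u$ in $\ccal([0,T];\rU^\prime)$; the $\mathcal{T}_1$- and $\mathcal{T}_2$-limits coincide since both convergences imply convergence (strongly, resp. weakly) in $L^2(0,T;\rU^\prime)$, where the limit is unique.

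For $\mathcal{T}_3$, fix $R$ and put $v_n=u_n-u$. Here I would use a generalised Ehrling (Lions) inequality: since the restriction $\rD(\A)\ni w\mapsto w|_{\ocal_R}\in H_{\ocal_R}$ is compact while $\rD(\A)\embed\rU^\prime$ is continuous and injective, a standard contradiction argument produces, for every $\eps>0$, a constant $C_\eps$ with $\|w\|_{L^2(\ocal_R)}\le\eps\,|w|_{\rD(\A)}+C_\eps\,|w|_{\rU^\prime}$ for all $w\in\rD(\A)$ (one needs only the compactness, continuity, and injectivity just listed, not any embedding of $H_{\ocal_R}$ into $\rU^\prime$). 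Squaring, integrating in time, and using the uniform $L^2(0,T;\rD(\A))$ bound from (i) together with $\sup_{t}|v_n(t)|_{\rU^\prime}\to0$ from the $\mathcal{T}_1$-step gives $\limsup_n q_{T,R}(v_n)^2\le C\eps^2$; letting $\eps\to0$ proves $u_n\to u$ in $\mathcal{T}_3$. Collecting the three steps, the extracted subsequence converges to $u\in\widetilde{\zcal}_T$ in $\widetilde{\mathcal{T}}$, which is the assertion.

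The main obstacle, as flagged, is the pointwise relative compactness underlying the $\mathcal{T}_1$-step: the hypotheses control the functions only in the integrated norm (i) and through the uniform modulus of continuity (ii), so one cannot bound $u(t)$ in $\rD(\A)$ at an individual time. The averaging device is exactly what converts the integral bound into the required pointwise compactness, and the compactness of $\rD(\A)\embed\rU^\prime$ --- itself inherited from $\rU\embed\rV_s$ --- is what makes both this step and the Ehrling inequality work.
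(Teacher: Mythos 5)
Your proof is correct, and it is essentially the argument the paper itself invokes: the paper gives no details, pointing instead to \cite[Lemma~3.1]{[BM13]} and the Dubinsky theorem \cite[Theorem~IV.4.1]{[VF88]}, and your scheme --- metrizability of the three topologies on the bounded set $\kcal$, Banach--Alaoglu for $\mathcal{T}_2$, Arzel\`a--Ascoli for $\mathcal{T}_1$ with the time-averaging device supplying pointwise relative compactness in $\rU^\prime$ (the precise point where the Dubinsky argument replaces the $L^\infty(0,T;\rH)$ bound available in \cite{[BM13]} but absent here), and an Ehrling-type inequality for $\mathcal{T}_3$ --- is exactly the modification those references embody. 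In particular your observation that the Ehrling inequality needs only compactness of the restriction $\rD(\A)\ni w\mapsto w|_{\ocal_R}$ together with the continuous injection $\rD(\A)\embed\rU^\prime$, and not an embedding of $\rH_{\ocal_R}$ into $\rU^\prime$, is the right way to adapt the cited proofs to this setting.
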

\noindent The above lemma can be proved by modifying the proof of \cite[Lemma~3.1]{[BM13]}, see also \cite[Theorem~IV.4.1]{[VF88]}.

\noindent Let $\rV_w$ denote the Hilbert space $\rV$ endowed with the weak topology.\\
\noindent $\ccal([0,T]; \mathrm{V}_w) :=$ the space of weakly continuous functions $u: [0,T] \to \mathrm{V}$ endowed with the weakest topology $\mathcal{T}_4$ such that for all $h \in \mathrm{V}$ the mappings
\[\ccal([0,T]; \mathrm{V}_w) \ni u \to \langle u(\cdot), h \rangle_{\mathrm{V}} \in \ccal([0,T]; \R)\]
are continuous. In particular, $u_n \to u$ in $\ccal([0,T]; \rV_w)$ iff for all $h \in \rV \colon$
\[\lim_{n \to \infty} \sup_{t \in [0,T]} \left|\langle u_n(t) - u(t), h \rangle_\rV \right| = 0\,.\]

Consider the ball
\begin{align}
\label{eqn-ball}
\mathbb{B} := \{ x \in \rV : \|x\|_{\rV} \le r \}.
\end{align}

Let $q$ be the metric compatible with the weak topology on $\mathbb{B}$. Let us recall the following subspace of the space $\ccal([0,T]; \rV_w)$
\begin{align}
\label{eq:6.3.2}
\ccal([0,T]; \mathbb{B}_w) := \,\, & \mbox{ the space of weakly continuous functions }  \nonumber \\
& u \colon [0,T] \to \rV \mbox{ such that } \sup_{t \in [0,T]}\|u(t)\|_\rV \le r\,.
\end{align}
The space $\ccal([0,T]; \mathbb{B}_w)$ is metrizable with metric
\begin{equation}
\label{eq:6.3.3}
\varrho(u,\v) = \sup_{t \in [0,T]} q(u(t),\v(t))\,.
\end{equation}
Since by the Banach-Alaoglu theorem \cite{[Rudin91]}, the set $\mathbb{B}_w$ is compact, $(\ccal([0,T]; \mathbb{B}_w), \varrho)$ is a complete metric space.

The following lemma says that any sequence $(u_n)_{n \in \N} \subset L^\infty([0,T]; \mathbb{B})$, convergent in $\ccal([0,T]; \rU^\prime)$ is also convergent in the space $\ccal([0,T]; \mathbb{B}_w)$. The proof of the lemma is similar to the proof of \cite[Lemma~2.1]{[BM14]} (see also \cite[Lemma~4.1]{[BHW19]}).

\begin{lemma}
\label{lemma6.3.2}
Let $r > 0$ and $\left(u_n\right)_{n\in\N} \subset L^\infty(0,T; \rV)$ be a sequence of functions such that
\begin{itemize}
\item[(i)] $\sup_{n \in \N} \|u_n(s)\|_{L^\infty(0,T; \rV)} \le r\,,$
\item[(ii)] $u_n \to u$ in $\ccal([0,T]; \rU^\prime)\,.$
\end{itemize}
Then $u,u_n \in \ccal([0,T]; \mathbb{B}_w)$ and $u_n \to u$ in $\ccal ([0,T]; \mathbb{B}_w)$ as $n \to \infty$.
\end{lemma}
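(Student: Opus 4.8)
The plan is to reduce the statement to two standard facts about the separable Hilbert space $\rV$. First, bounded subsets of $\rV$ are relatively weakly compact, so every $\rV$-bounded sequence has a weakly convergent subsequence whose limit satisfies $\|\cdot\|_\rV \le \liminf\|\cdot\|_\rV$. Second, on the ball $\mathbb{B}$ the weak topology is metrized by $q$, so for points of $\mathbb{B}$ weak convergence in $\rV$ is equivalent to $q$-convergence. The bridge between the hypothesis (convergence in $\rU^\prime$) and these facts is the Gelfand chain $\rU \hookrightarrow \rV_s \hookrightarrow \rV \hookrightarrow \rH \equiv \rH^\prime \hookrightarrow \rU^\prime$: whenever a sequence is bounded in $\rV$ and converges in $\rU^\prime$, its weak $\rV$-limit is uniquely determined and coincides with its $\rU^\prime$-limit. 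Indeed, testing a weak $\rV$-limit against $\phi \in \rU$ produces $\langle\,\cdot\,,\phi\rangle_\rH$, which is also continuous for $\rU^\prime$-convergence, and $\rU$ is dense in $\rH$; I will refer to this identification as the bridge. I would model the overall write-up on \cite[Lemma~2.1]{[BM14]}.

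First I would establish the membership claims $u_n,u \in \ccal([0,T];\mathbb{B}_w)$. Fix $n$. By (i) we have $\|u_n(t)\|_\rV \le r$ for a.e.\ $t$; for an arbitrary $t_0$ pick $t_k \to t_0$ along which this bound holds, note $u_n(t_k) \to u_n(t_0)$ in $\rU^\prime$ by the continuity in (ii), extract a weak $\rV$-limit $z$ with $\|z\|_\rV \le r$, and identify $z = u_n(t_0)$ via the bridge; hence $\|u_n(t_0)\|_\rV \le r$ for \emph{every} $t_0$. Running the same extraction for an arbitrary convergent net of times shows $u_n$ is weakly continuous into $\rV$, so $u_n \in \ccal([0,T];\mathbb{B}_w)$. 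For the limit $u$: for each fixed $t$ we have $u_n(t) \to u(t)$ in $\rU^\prime$ with $(u_n(t))_n$ bounded in $\rV$, so the bridge yields $u(t)\in\rV$ with $\|u(t)\|_\rV \le r$; and $u$ is continuous into $\rU^\prime$ as a $\ccal([0,T];\rU^\prime)$-limit, so the same argument gives weak continuity into $\rV$.

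The main step is the convergence $\varrho(u_n,u)=\sup_{t}q(u_n(t),u(t)) \to 0$, which I would prove by contradiction. If it fails there exist $\eps>0$, a subsequence (not relabelled), and times $t_n$ with $q(u_n(t_n),u(t_n)) \ge \eps$; by compactness of $[0,T]$ pass to $t_n \to t_*$. The key estimate is
\[
|u_n(t_n) - u(t_*)|_{\rU^\prime} \le \sup_{t}|u_n(t)-u(t)|_{\rU^\prime} + |u(t_n)-u(t_*)|_{\rU^\prime} \longrightarrow 0,
\]
where the first term vanishes by (ii) and the second by the $\rU^\prime$-continuity of $u$. Thus $u_n(t_n) \to u(t_*)$ in $\rU^\prime$; since $(u_n(t_n))_n$ is bounded in $\rV$, the bridge forces $u_n(t_n) \rightharpoonup u(t_*)$ weakly in $\rV$, hence $q(u_n(t_n),u(t_*)) \to 0$ by metrizability. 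Weak continuity of $u$ gives $q(u(t_n),u(t_*)) \to 0$, and the triangle inequality for $q$ yields $q(u_n(t_n),u(t_n)) \to 0$, contradicting the lower bound $\eps$.

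The genuine obstacle is entirely contained in the bridge, namely that $\rU^\prime$-convergence together with boundedness in $\rV$ pins down the weak $\rV$-limit; this is where the Gelfand structure and the density of $\rU$ in $\rH$ are used. Once that identification is available, both the membership argument and the contradiction/subsequence scheme, together with the passage from $\rU^\prime$-convergence to $q$-convergence on the metrizable ball $\mathbb{B}_w$, proceed mechanically.
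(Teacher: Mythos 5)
Your proof is correct: the ``bridge'' (a sequence bounded in $\rV$ and convergent in $\rU^\prime$ has its weak $\rV$-limit identified with its $\rU^\prime$-limit, via the Gelfand chain $\rV \hookrightarrow \rH \equiv \rH^\prime \hookrightarrow \rU^\prime$ and testing against $\phi \in \rU$) is exactly the right fact, your upgrade of the a.e.\ bound in (i) to an everywhere bound through $\rU^\prime$-continuity is the standard Strauss-type step and is handled properly, and the subsequence-uniqueness principle you use implicitly to pass from ``every weakly convergent subsequence has limit $u_n(t_0)$'' to full weak convergence is sound. Note, however, that the paper does not actually write out a proof --- it only points to \cite[Lemma~4.1]{[BHW19]} (and, in the surrounding text, \cite[Lemma~2.1]{[BM14]}) --- and those references obtain the uniform convergence $\sup_{t}q(u_n(t),u(t)) \to 0$ \emph{directly}: for $h \in \rV$ one approximates the functional $\langle \cdot, h\rangle_{\rV}$ by one representable through the $\rU^\prime$--$\rU$ duality (using density of the smooth space in $\rV$), splits $\sup_t \left|\langle u_n(t)-u(t), h\rangle_{\rV}\right|$ into a small term controlled by the uniform ball bound $2r$ and a term controlled by $\|u_n - u\|_{\ccal([0,T];\rU^\prime)}$, and then sums over the dense family $(h_m)$ defining $q$. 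Your route instead argues by contradiction, extracting times $t_n \to t_\ast$ and using sequential compactness of $[0,T]$ together with metrizability of $\mathbb{B}_w$. The trade-off: the direct $\varepsilon/3$ argument yields an explicit quantitative estimate of $\varrho(u_n,u)$ in terms of $\|u_n-u\|_{\ccal([0,T];\rU^\prime)}$ but requires the slightly fussy representation of $\rV$-pairings via $\rU$-duality; your contradiction scheme avoids that representation entirely (only the bridge at single time points is needed) at the cost of being non-quantitative and leaning on metrizability of the weak topology on $\mathbb{B}$ and compactness of the time interval. Both are legitimate proofs of the lemma.
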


\begin{proof}
The lemma can be proved by following the steps of the proof of Lemma~4.1 \cite{[BHW19]} for our choice of functional spaces.
\end{proof}
Let
\begin{equation}
\label{eq:3.1}
\mathcal{Z}_T := \ccal([0,T]; \mathrm{U}^\prime) \cap L^2_w(0,T; \mathrm{D}(\A)) \cap L^2(0,T; \rH_{loc}) \cap \ccal([0,T]; \mathrm{V}_w),
\end{equation}
and let $\mathcal{T}$ be the supremum of the corresponding topologies.

Now we formulate the compactness criterion analogous to the result due to Mikulevicus and Rozowskii \cite{[MR05]}, Brze\'zniak and Motyl \cite[Lemma~3.3]{[BM13]} for the space $\mathcal{Z}_T$ .

\begin{lemma}
\label{lemma6.3.3}
Let $\left(\mathcal{Z}_T, \mathcal{T}\right)$ be as defined in \eqref{eq:3.1}. Then a set $\kcal \subset \mathcal{Z}_T$ is $\mathcal{T}$-relatively compact if the following three conditions hold
\begin{itemize}
\item[(a)] $\sup_{u \in \kcal} \sup_{s \in [0,T]} \|u(s)\|_\rV < \infty\,,$
\item[(b)] $\sup_{u \in \kcal} \int_0^T |u(s)|^2_{\mathrm{D}(\A)}\,ds < \infty\,$, i.e. $\kcal$ is bounded in $L^2(0,T; \mathrm{D}(\A))\,,$
\item[(c)] $\lim_{\delta \to 0} \sup_{u \in \kcal} \sup_{\underset{|t-s| \le \delta}{s,t \in [0,T]}}|u(t) - u(s)|_{\rH} = 0\,.$
\end{itemize}
\end{lemma}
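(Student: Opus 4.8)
The plan is to reduce the statement to the two compactness results already in hand, namely Lemma~\ref{lemma6.3.1} for the space $\widetilde{\zcal}_T$ and Lemma~\ref{lemma6.3.2} for the space $\ccal([0,T];\mathbb{B}_w)$, and to splice them together on the bounded set singled out by condition (a). Since $\mathcal{Z}_T=\widetilde{\zcal}_T\cap\ccal([0,T];\rV_w)$ and $\mathcal{T}$ is the supremum of the four component topologies, it is enough to show that every sequence $(u_n)_{n\in\N}\subset\kcal$ admits a subsequence converging to some $u\in\mathcal{Z}_T$ simultaneously in $\mathcal{T}_1,\dots,\mathcal{T}_4$.

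First I would record the reduction from topological to sequential compactness. On the set cut out by (a)–(b) the topology $\mathcal{T}$ is metrizable: $\mathcal{T}_1$ is induced by a norm and $\mathcal{T}_3$ by the countable family of seminorms $(q_{T,R})_{R\in\N}$, while on a ball bounded in $L^2(0,T;\rD(\A))$ and in $L^\infty(0,T;\rV)$ both the weak topology $\mathcal{T}_2$ (weak topology of a separable Hilbert space, metrizable on bounded sets) and the topology $\mathcal{T}_4$ (metrizable through the metric $\varrho$ of \eqref{eq:6.3.3}, as recorded before Lemma~\ref{lemma6.3.3}) are metrizable. Hence relative $\mathcal{T}$-compactness of $\kcal$ is equivalent to sequential relative compactness, and it suffices to produce, for any $(u_n)\subset\kcal$, a convergent subsequence.

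Next, because $\rH\hookrightarrow\rU^\prime$ continuously, there is a constant $C$ with $|v|_{\rU^\prime}\le C\,|v|_\rH$, so condition (c) gives $\lim_{\delta\to0}\sup_{u\in\kcal}\sup_{|t-s|\le\delta}|u(t)-u(s)|_{\rU^\prime}=0$; together with the $L^2(0,T;\rD(\A))$-bound (b) this verifies hypotheses (i)–(ii) of Lemma~\ref{lemma6.3.1}. Therefore $\kcal$ is relatively compact in $(\widetilde{\zcal}_T,\widetilde{\mathcal{T}})$, and from $(u_n)$ I can extract a subsequence $(u_{n_k})$ converging in $\widetilde{\zcal}_T$ to some $u$; in particular $u_{n_k}\to u$ in $\ccal([0,T];\rU^\prime)$ and $u\in\widetilde{\zcal}_T$.

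Finally, setting $r:=\sup_{u\in\kcal}\sup_{s\in[0,T]}\|u(s)\|_\rV$, condition (a) gives $\sup_k\|u_{n_k}\|_{L^\infty(0,T;\rV)}\le r$, so Lemma~\ref{lemma6.3.2} applies to this subsequence and yields $u,u_{n_k}\in\ccal([0,T];\mathbb{B}_w)$ with $u_{n_k}\to u$ in $\ccal([0,T];\mathbb{B}_w)$; as convergence in $\ccal([0,T];\mathbb{B}_w)$ entails convergence in the topology $\mathcal{T}_4$ of $\ccal([0,T];\rV_w)$, the subsequence converges to $u$ in all four component topologies, hence in $(\mathcal{Z}_T,\mathcal{T})$, and $u\in\mathcal{Z}_T$. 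This establishes sequential relative compactness and, via the metrizability reduction, the desired $\mathcal{T}$-relative compactness. The main point requiring care is precisely that first metrizability step, which legitimises passing from topological to sequential compactness and, crucially, lets the two previous lemmas be chained along one and the same subsequence so that the limits they produce agree; the remaining verifications of the hypotheses of Lemmas~\ref{lemma6.3.1} and~\ref{lemma6.3.2} are routine.
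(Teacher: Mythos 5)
Your proposal is correct and follows essentially the same route as the paper's proof: reduce to sequential compactness via metrizability of the four topologies restricted to the set determined by (a)--(b), apply Lemma~\ref{lemma6.3.1} (using (b), (c), and the embedding $\rH\hookrightarrow\rU^\prime$) to extract a subsequence convergent in $\widetilde{\zcal}_T$, and then upgrade its $\ccal([0,T];\rU^\prime)$-convergence to convergence in $\ccal([0,T];\mathbb{B}_w)$ via Lemma~\ref{lemma6.3.2} and condition (a). Your explicit remark that condition (c), stated in $\rH$, yields the $\rU^\prime$-equicontinuity hypothesis of Lemma~\ref{lemma6.3.1} through the continuous embedding is a small clarification the paper leaves implicit, but it is not a different argument.
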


\begin{proof}  Let us denote  $r = \sup_{u \in \kcal}\sup_{s \in [0,T]}\|u(s)\|_\rV$, which in view of assumption $(c)$, is $<\infty$. Define the ball $\mathbb{B}$  of radius $r$  by \eqref{eqn-ball}.

Let us notice that $\zcal_T = \widetilde{\zcal}_T \cap \ccal([0,T]; \rV_w)$, where $\widetilde{\zcal}_T$ is defined by \eqref{eq:6.3.1}. Let $\kcal$ be a subset of $\mathcal{Z}_T$. Because of the assumption $(a)$ we may consider the metric space $\ccal([0,T]; \mathbb{B}_w) \subset \ccal([0,T]; \rV_w)$ defined by \eqref{eq:6.3.2} and \eqref{eq:6.3.3}. Because of the assumption $(b)$,  the restriction to $\kcal$ of the weak topology in $L^2(0,T; \mathrm{D}(\A))$ is metrizable. Since the restrictions to $\kcal$ of the four topologies considered in $\mathcal{Z}_T$ are metrizable, compactness of a subset of $\mathcal{Z}_T$ is equivalent to its sequential compactness.

%\coma{The proof of Lemma 3.3 in my paper with Motyl is quite long. Maybe we can try to cite it? If not, then we need to prove it carefully.}

Let $(u_n)$ be a sequence in $\kcal$. By Lemma \ref{lemma6.3.1}, the boundedness of the set $\kcal $ in ${L}^{2}(0,T; \rD(\A))$ and assumption $(c)$ imply that $\kcal $ is compact in $\widetilde{\zcal}_T$. %Since the embeddings $\mathrm{D}(\A) \hookrightarrow \rV \hookrightarrow \rH$ are continuous and the embedding $\mathrm{D}(\A) \hookrightarrow \rV$ is compact, by  Dubinsky theorem, assumptions $(b)$ and $(c)$ imply that $\kcal$ is relatively compact in $L^2(0,T; \rV) \cap \ccal([0,T]; \rH)$. 
Hence in particular, there exists a subsequence, still denoted by $(u_n)$, convergent in $\ccal([0,T];\rU^\prime)$. Therefore by Lemma~\ref{lemma6.3.2} and assumption $(a)$, $(u_n)$ is convergent in $\ccal([0,T]; \mathbb{B}_w)$. This completes the proof of the lemma.
\end{proof}

\subsection{Tightness}
\label{s:3.1}
Let $(\mathbb{S}, \varrho)$ be a separable and complete metric space.

\begin{definition}
\label{defn3.1}
Let $u \in \ccal([0,T]; \mathbb{S})$. The modulus of continuity of $u$ on $[0,T]$ is defined by
\[m(u, \delta) :=  \sup_{s,t \in [0,T],\,|t - s|\le \delta} \varrho (u(t), u(s)), \quad \delta > 0.\]
\end{definition}

Let $(\Omega, \mathcal{F}, \mathbb{P})$ be a probability space with filtration $\mathbb{F}:= (\mathcal{F}_t)_{t \in [0,T]}$ satisfying the usual conditions, see \cite{[Metivier82]}, and let $(X_n)_{n \in \mathbb{N}}$ be a sequence of continuous $\mathbb{F}$-adapted $\mathbb{S}$-valued processes.

\begin{definition}
\label{defn3.2}
We say that the sequence $(X_n)_{n \in \mathbb{N}}$ of $\ccal([0,T]; \mathbb{S})$-valued random variables satisfies condition $[\mathbf{T}]$ iff $\forall\, \varepsilon >0, \forall\, \eta > 0,\, \exists\, \delta > 0\colon$
\begin{equation}
\label{eq:3.2}
\sup_{n \in \mathbb{N}} \mathbb{P}\left\{m(X_n, \delta) > \eta\right\} \le \varepsilon.
\end{equation}
\end{definition}

\begin{lemma}
\label{lemma3.3}{(See \cite[Lemma~2.4]{[BM14]})}
Assume that $(X_n)_{n \in \mathbb{N}}$ satisfies condition $[\mathbf{T}]$. Let $\mathbb{P}_n$ be the law of $X_n$ on $\ccal([0,T]; \mathbb{S}), n \in \mathbb{N}$. Then for every $\varepsilon > 0$ there exists a subset $A_\varepsilon \subset \ccal([0,T]; \mathbb{S})$ such that
\[\sup_{n \in \mathbb{N}} \mathbb{P}_n(A_\varepsilon) \ge 1 - \varepsilon\]
and
\begin{equation}
\label{eq:3.3}
\lim_{\delta \to 0} \sup_{u \in A_\varepsilon} m(u, \delta) = 0.
\end{equation}
\end{lemma}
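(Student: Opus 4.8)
The plan is to build $A_\varepsilon$ explicitly as a countable intersection of sublevel sets of the modulus of continuity, exploiting condition $[\mathbf{T}]$ at a sequence of shrinking scales. First I would, for each $k \in \N$, apply Definition~\ref{defn3.2} with the tolerance $\varepsilon$ replaced by $\varepsilon 2^{-k}$ and the threshold $\eta$ replaced by $1/k$: this produces a radius $\delta_k > 0$ such that
\[ \sup_{n \in \N} \mathbb{P}\bigl\{ m(X_n, \delta_k) > 1/k \bigr\} \le \varepsilon 2^{-k}. \]
I would then set
\[ A_\varepsilon := \bigcap_{k=1}^\infty \bigl\{ u \in \ccal([0,T]; \mathbb{S}) : m(u, \delta_k) \le 1/k \bigr\}. \]

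Second, I would control the probability of $A_\varepsilon$. Since $\mathbb{P}_n$ is the law of $X_n$, we have $\mathbb{P}_n\{ u : m(u, \delta_k) > 1/k\} = \mathbb{P}\{ m(X_n, \delta_k) > 1/k\}$, so a union bound over $k$ gives, for every fixed $n$,
\[ \mathbb{P}_n(A_\varepsilon^c) \le \sum_{k=1}^\infty \mathbb{P}\bigl\{ m(X_n, \delta_k) > 1/k \bigr\} \le \sum_{k=1}^\infty \varepsilon 2^{-k} = \varepsilon. \]
Hence $\mathbb{P}_n(A_\varepsilon) \ge 1 - \varepsilon$ for every $n$, which in particular yields the asserted bound $\sup_{n}\mathbb{P}_n(A_\varepsilon) \ge 1-\varepsilon$ (indeed the stronger uniform-in-$n$ statement).

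Third, I would establish the vanishing modulus \eqref{eq:3.3} using the monotonicity of $\delta \mapsto m(u,\delta)$. For every $u \in A_\varepsilon$ and every $k$, the construction gives $m(u, \delta_k) \le 1/k$; since $m(u, \cdot)$ is non-decreasing, $m(u,\delta) \le 1/k$ for all $\delta \le \delta_k$, uniformly in $u \in A_\varepsilon$. Thus $\sup_{u \in A_\varepsilon} m(u,\delta) \le 1/k$ whenever $\delta \le \delta_k$, and letting $k \to \infty$ proves $\sup_{u \in A_\varepsilon} m(u,\delta) \to 0$ as $\delta \to 0$.

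The only genuine point to verify, rather than a real obstacle, is the measurability of $A_\varepsilon$: one needs the map $\ccal([0,T]; \mathbb{S}) \ni u \mapsto m(u,\delta)$ to be Borel so that $\mathbb{P}_n(A_\varepsilon)$ is well defined. This holds because $u \mapsto m(u,\delta)$ is continuous (indeed Lipschitz) for the supremum metric on $\ccal([0,T]; \mathbb{S})$, so each sublevel set is closed and $A_\varepsilon$ is a closed, hence Borel, set. Everything else is a routine union bound; the small device that makes the argument work is the geometric allocation $\varepsilon 2^{-k}$ of the tolerance across the scales $\delta_k$ combined with the monotonicity of the modulus.
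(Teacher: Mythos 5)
Your proof is correct and is precisely the standard argument behind the cited result \cite[Lemma~2.4]{[BM14]}, which the paper invokes without reproducing: extract scales $\delta_k$ from condition $[\mathbf{T}]$ with tolerance $\varepsilon 2^{-k}$ and threshold $\eta = 1/k$, intersect the closed sublevel sets of $m(\cdot,\delta_k)$ (closed since $u \mapsto m(u,\delta)$ is $2$-Lipschitz for the supremum metric), and conclude via a union bound and the monotonicity of $m(u,\cdot)$. Note that you actually establish the uniform bound $\inf_{n \in \mathbb{N}}\mathbb{P}_n(A_\varepsilon) \ge 1-\varepsilon$, which is stronger than the stated $\sup_{n}\mathbb{P}_n(A_\varepsilon) \ge 1-\varepsilon$ and is the form the paper implicitly uses in the proof of Corollary~\ref{cor3.6}.
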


Now we recall the Aldous condition which is connected with condition $[\mathbf{T}]$ (see \cite{[Metivier88]} and \cite{[Aldous78]}). This condition allows to investigate the modulus of continuity for the sequence of stochastic processes by means of stopped processes.

\begin{definition}
\label{defn3.4}
A sequence $(X_n)_{n \in \mathbb{N}}$ satisfies condition $[\mathbf{A}]$ iff $\forall\, \varepsilon > 0$, $\forall\, \eta > 0$, $\exists \, \delta > 0$ such that for every sequence $(\tau_n)_{n \in \mathbb{N}}$ of $\mathbb{F}$-stopping times with $\tau_n \le T$ one has
\[\sup_{n \in \mathbb{N}} \sup_{0 \le \theta \le \delta} \mathbb{P}\left\{\varrho(X_n(\tau_n + \theta), X_n(\tau_n)) \ge \eta \right\} \le \varepsilon.\]
\end{definition}

\begin{lemma} {(See \cite[Theorem~3.2]{[Metivier88]})}
\label{lemma3.5} Conditions $[\mathbf{A}]$ and $[\mathbf{T}]$ are equivalent.
\end{lemma}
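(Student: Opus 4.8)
The plan is to prove the two implications separately; the forward implication $[\mathbf{T}] \Rightarrow [\mathbf{A}]$ is elementary, while the converse is Aldous' criterion and carries all of the difficulty. For $[\mathbf{T}] \Rightarrow [\mathbf{A}]$ I would argue directly from the definitions. Given $\varepsilon, \eta > 0$, I apply $[\mathbf{T}]$ with the threshold $\eta/2$ to obtain $\delta > 0$ with $\sup_n \mathbb{P}\{m(X_n,\delta) > \eta/2\} \le \varepsilon$. For any sequence of $\mathbb{F}$-stopping times $\tau_n \le T$ and any $\theta \in [0,\delta]$, the instants $\tau_n$ and $(\tau_n + \theta)\wedge T$ are at distance at most $\delta$ apart, so the event $\{\varrho(X_n(\tau_n+\theta), X_n(\tau_n)) \ge \eta\}$ is contained in $\{m(X_n,\delta) \ge \eta\} \subset \{m(X_n,\delta) > \eta/2\}$. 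Taking the supremum over $\theta \in [0,\delta]$ and over $n$ then yields exactly $[\mathbf{A}]$.

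For the converse $[\mathbf{A}] \Rightarrow [\mathbf{T}]$ I would fix $\varepsilon, \eta > 0$ and introduce, for each $n$, the oscillation stopping times $\tau_0^n := 0$ and $\tau_{k+1}^n := \inf\{t > \tau_k^n : \varrho(X_n(t), X_n(\tau_k^n)) \ge \eta\}\wedge T$. Continuity of the paths of $X_n$ guarantees that these are $\mathbb{F}$-stopping times, that $\varrho(X_n(\tau_{k+1}^n), X_n(\tau_k^n)) = \eta$ whenever $\tau_{k+1}^n < T$, and that the oscillation of $X_n$ on each interval $[\tau_k^n, \tau_{k+1}^n)$ is at most $2\eta$. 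A short triangle-inequality argument then shows that if all the gaps $\tau_{k+1}^n - \tau_k^n$ exceed $\delta$ then $m(X_n,\delta) \le 3\eta$, whence
\[
\{m(X_n,\delta) > 3\eta\} \subset \bigcup_{k \ge 0} \left\{\tau_k^n < T,\ \tau_{k+1}^n - \tau_k^n \le \delta\right\}.
\]
Since every continuous path on $[0,T]$ is uniformly continuous, the number $N_n$ of $\eta$-oscillations is a.s. finite, and I would truncate the union at a level $L$, bounding $\mathbb{P}\{m(X_n,\delta) > 3\eta\}$ by $\mathbb{P}\{N_n \ge L\}$ plus the sum of the first $L$ gap-probabilities; the tail $\mathbb{P}\{N_n \ge L\}$ is made small uniformly in $n$ by taking $L$ large, the point being that too many $\eta$-oscillations would force many large increments over arbitrarily short lags, contradicting $[\mathbf{A}]$.

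The main obstacle is the gap-probability $\mathbb{P}\{\tau_k^n < T,\ \tau_{k+1}^n - \tau_k^n \le \delta\}$: on this event $X_n$ realises an increment of size $\eta$ over the interval $[\tau_k^n, \tau_{k+1}^n]$, but at the \emph{random} time $\tau_{k+1}^n - \tau_k^n$, whereas $[\mathbf{A}]$ only controls increments over a \emph{deterministic} lag $\theta$. I would bridge this by inserting the value at the deterministic instant $\tau_k^n + \delta$: from $\eta = \varrho(X_n(\tau_{k+1}^n), X_n(\tau_k^n)) \le \varrho(X_n(\tau_k^n+\delta), X_n(\tau_k^n)) + \varrho(X_n(\tau_k^n+\delta), X_n(\tau_{k+1}^n))$ at least one summand is $\ge \eta/2$. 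The first alternative is an increment of the stopping time $\tau_k^n$ over the deterministic lag $\delta$ and is bounded by $[\mathbf{A}]$ at the threshold $\eta/2$; the second compares the process at the two nearby stopping times $\tau_{k+1}^n \le \tau_k^n + \delta$, and this two-stopping-time estimate is the genuinely delicate point, resolved by Aldous' device (averaging the deterministic-lag bound furnished by $[\mathbf{A}]$), which is the technical core of the criterion. Choosing first $\eta$, then $L$, then $\delta$ small enough that $\mathbb{P}\{N_n \ge L\}$ together with the $L$ resulting copies of the $[\mathbf{A}]$-bound fall below $\varepsilon$ gives $[\mathbf{T}]$ at the threshold $3\eta$, and relabelling $\eta$ completes the proof.
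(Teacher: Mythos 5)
The paper itself gives no proof of this lemma --- it is quoted from M\'etivier \cite[Theorem~3.2]{[Metivier88]} --- so your proposal is measured against the classical Aldous argument, which is exactly the route you reconstruct. Your proof of $[\mathbf{T}] \Rightarrow [\mathbf{A}]$ is complete and correct (the inclusion $\{\varrho(X_n(\tau_n+\theta), X_n(\tau_n)) \ge \eta\} \subset \{m(X_n,\delta) > \eta/2\}$ for $\theta \le \delta$ is all that is needed, with the harmless cap $(\tau_n+\theta)\wedge T$). For the converse your outline --- oscillation stopping times, the bound $m(X_n,\delta) \le 3\eta$ when all gaps exceed $\delta$, and the reduction of the gap-probability to a comparison of $X_n$ at two nearby stopping times --- is the standard proof, and you correctly identify the two-stopping-time estimate as the technical core. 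Naming Aldous' averaging device without executing it is defensible here, but for the record it runs: if $\sigma \le \tau \le \sigma + \delta$ are stopping times, then on $\{\varrho(X(\tau),X(\sigma)) \ge 2\eta\}$ every $u$ satisfies $\varrho(X(\sigma+u),X(\sigma)) \ge \eta$ or $\varrho(X(\sigma+u),X(\tau)) \ge \eta$, and averaging over $u \in [\delta, 2\delta]$, rewriting $\sigma + u = \tau + w$ with $w \in [0,2\delta]$ in the second indicator, Fubini and $[\mathbf{A}]$ at deterministic lags $\le 2\delta$ give $\mathbb{P}\{\varrho(X(\tau),X(\sigma)) \ge 2\eta\} \le 3\varepsilon'$.

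The one genuinely soft spot is your treatment of the oscillation count $N_n$. The stated reason (``too many $\eta$-oscillations would force many large increments over arbitrarily short lags'') is a heuristic, not an argument, and your quantifier order ``first $\eta$, then $L$, then $\delta$'' conceals a circularity if implemented naively: a union bound over $L$ gaps needs per-gap tolerance $\varepsilon/L$, but the $\delta$ furnished by $[\mathbf{A}]$ at that tolerance determines how large $L$ must be ($L \gtrsim T/\delta$), which in turn changes the required tolerance. The standard repair is a Markov-inequality counting argument that makes the tail bound \emph{independent of} $L$: fix a tolerance $\varepsilon'$, get $\delta$ from $[\mathbf{A}]$ (via the two-stopping-time lemma each event $A_k = \{\tau_{k+1}^n - \tau_k^n \le \delta,\ \tau_{k+1}^n < T\}$ has probability $\le 3\varepsilon'$); since at most $T/\delta$ gaps can exceed $\delta$ before time $T$, on $\{\tau_L^n < T\}$ at least $L - T/\delta$ of the first $L$ gaps are $\le \delta$, whence $\mathbb{P}\{\tau_L^n < T\} \le 3\varepsilon' L/(L - T/\delta) \le 6\varepsilon'$ for $L = \lceil 2T/\delta \rceil$, uniformly in $n$. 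Only \emph{after} $L$ is thus fixed do you choose the final modulus scale $\delta^\ast \le \delta$ from $[\mathbf{A}]$ at tolerance $\varepsilon/(6L)$ to handle the union of the $L$ remaining gap events. With this insertion and the averaging lemma written out, your proof is complete and coincides with the cited one.
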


Using the compactness criterion from Lemma~\ref{lemma6.3.3} and above results corresponding to Aldous condition we obtain the following corollary which we will use to prove the tightness of the laws defined by the truncated SPDE \eqref{eq:6.26}.

\begin{corollary}[Tightness criterion]
\label{cor3.6}
Let $(X_n)_{n \in \mathbb{N}}$ be a sequence of $\mathbb{F}$-adapted continuous  $\rm{H}$-valued processes such that
\begin{trivlist}
\item{(a)} there exists a constant $C_1 > 0$ such that
\[\sup_{n \in \mathbb{N}} \E \left[ \sup_{s \in [0,T]} \|X_n(s)\|^2_{\rm{V}} \right] \le C_1,\]

\item{(b)} there exists a constant $C_2 > 0$ such that
\[\sup_{n \in \mathbb{N}} \E \left[ \int_0^T |X_n(s)|^2_{\rm{D}(\A )}\,ds \right] \le C_2,\]

\item{(c)} $(X_n)_{n \in \mathbb{N}}$ satisfies the Aldous condition $[\mathbf{A}]$ in $\rm{H}$.
\end{trivlist}
Let ${\mathbb{P}}_n$ be the law of $X_n$ on $\mathcal{Z}_T$. Then for every $\varepsilon > 0$ there exists a compact subset $K_\varepsilon$ of $\mathcal{Z}_T$ such that
\[ \sup_{n \in \mathbb{N}} {\mathbb{P}}_n(K_\varepsilon) \ge 1 - \varepsilon\,.\]
\end{corollary}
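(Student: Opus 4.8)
The plan is to derive the tightness of the laws $(\mathbb{P}_n)$ on $\mathcal{Z}_T$ from the compactness criterion of Lemma~\ref{lemma6.3.3} by exhibiting, for each $\varepsilon>0$, a set $K_\varepsilon\subset\mathcal{Z}_T$ which is relatively compact and carries at least $1-\varepsilon$ of the mass of every $\mathbb{P}_n$. The natural candidate for $K_\varepsilon$ is an intersection of three sets, one matching each hypothesis of Lemma~\ref{lemma6.3.3}: a sublevel set $\{\sup_{s\in[0,T]}\|u(s)\|_\rV\le R_1\}$ for condition (a), a ball $\{\int_0^T|u(s)|^2_{\rD(\A)}\,ds\le R_2\}$ in $L^2(0,T;\rD(\A))$ for condition (b), and a set of the form $\{m(u,\delta_k)\le 1/k \text{ for all }k\}$ controlling the $\rH$-modulus of continuity for condition (c). The radii $R_1,R_2$ and the mesh $\delta_k$ are to be chosen, using the uniform moment bounds (a) and (b) of the corollary together with the Aldous condition (c), so that each of the three defining sets has $\mathbb{P}_n$-mass at least $1-\varepsilon/3$ uniformly in $n$.

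First I would handle the two energy bounds by Chebyshev's inequality. From hypothesis (a) of the corollary, $\sup_n\E[\sup_{s\in[0,T]}\|X_n(s)\|_\rV^2]\le C_1$, so choosing $R_1=\sqrt{3C_1/\varepsilon}$ gives $\sup_n\mathbb{P}_n\{\sup_s\|u(s)\|_\rV>R_1\}\le\varepsilon/3$; similarly hypothesis (b) and the choice $R_2=3C_2/\varepsilon$ control the $L^2(0,T;\rD(\A))$-ball. These are exactly conditions (a) and (b) of Lemma~\ref{lemma6.3.3} for points of $K_\varepsilon$. Second, I would treat the modulus of continuity. By Lemma~\ref{lemma3.5} the Aldous condition $[\mathbf{A}]$ assumed in (c) is equivalent to condition $[\mathbf{T}]$ of Definition~\ref{defn3.2}, viewing $(X_n)$ as $\ccal([0,T];\rH)$-valued processes. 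Applying Lemma~\ref{lemma3.3} with the metric space $(\mathbb{S},\varrho)=(\rH,|\cdot|_\rH)$ produces, for the given $\varepsilon$, a set $A_\varepsilon\subset\ccal([0,T];\rH)$ with $\sup_n\mathbb{P}_n(A_\varepsilon)\ge 1-\varepsilon/3$ (after rescaling $\varepsilon$) and $\lim_{\delta\to0}\sup_{u\in A_\varepsilon}m(u,\delta)=0$; this last property is precisely condition (c) of Lemma~\ref{lemma6.3.3}.

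Finally I would set $K_\varepsilon$ to be the closure in $\mathcal{Z}_T$ of the intersection of the two sublevel sets with $A_\varepsilon$. By construction every element satisfies the three conditions of Lemma~\ref{lemma6.3.3}, so $K_\varepsilon$ is $\mathcal{T}$-relatively compact; taking the closure makes it compact. A union bound over the three bad events gives $\sup_n\mathbb{P}_n(K_\varepsilon)\ge 1-\varepsilon$, as required. The main point requiring care is the measurability and the passage between function spaces: one must check that the three defining sets are genuinely measurable subsets for the laws $\mathbb{P}_n$ on $\mathcal{Z}_T$ and that applying Lemma~\ref{lemma3.3} in $\ccal([0,T];\rH)$ is compatible with viewing the laws on $\mathcal{Z}_T$, since the modulus-of-continuity control from $[\mathbf{A}]$ is phrased for $\rH$-valued processes whereas the compactness criterion lives on the larger space $\mathcal{Z}_T$. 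Once the identification $\mathcal{Z}_T=\widetilde{\zcal}_T\cap\ccal([0,T];\rV_w)$ and the metrizability observations from the proof of Lemma~\ref{lemma6.3.3} are invoked, this bookkeeping is routine; the substantive analytic content has already been absorbed into the a priori bounds (a)--(b) and the Aldous condition, so no further estimation is needed here.
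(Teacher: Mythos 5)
Your proposal is correct and follows essentially the same route as the paper's proof: Chebyshev bounds from (a) and (b) yield the two sublevel sets $B_1$, $B_2$ each of uniform mass at least $1-\varepsilon/3$, the Aldous condition combined with Lemmas~\ref{lemma3.5} and \ref{lemma3.3} yields the equicontinuity set $A_{\varepsilon/3}$, and $K_\varepsilon$ is the closure in $\mathcal{Z}_T$ of the triple intersection, compact by Lemma~\ref{lemma6.3.3}. The bookkeeping point you flag (passing between $\ccal([0,T];\rH)$ and $\mathcal{Z}_T$) is handled in the paper exactly as you suggest, via the metrizability observations in the proof of Lemma~\ref{lemma6.3.3}.
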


\begin{proof}
Let $\varepsilon > 0$. By the  Chebyshev inequality and (a), we infer that for any $n \in \N $ and any $r>0$
\[
  {\mathbb{P}} \left(\left\{X_n \in \mathcal{Z}_T \colon \sup_{s \in [0,T]} \|X_n (s){\|}_{\rV}^{2} > r \right\}\right)
  \le \frac{ {\E} \bigl[ \sup_{s \in [0,T]} \|X_n (s) {\|}_{\rV}^{2} \bigr]}{r}
  \le \frac{{C}_{1}}{r}\,.
\]
Let ${R}_{1}$ be such that $\frac{{C}_{1}}{{R}_{1}} \le \frac{\varepsilon }{3}$. Then
\[
  \sup_{n\in \N } {\mathbb{P}}_n \left(\left\{ \sup_{s \in [0,T]}\|X_n (s){\|}_{\rV}^{2} > {R}_{1}\right\} \right) \le \frac{\varepsilon }{3}\,.
\]
Let ${B}_{1}:= \left\{ X_n \in \zcal_T :\, \, \sup_{s \in [0,T]}\|X_n(s) {\|}_{\rV}^{2} \le {R}_{1} \right\} $.

By the  Chebyshev inequality and (b), we infer that for any $n \in \N $ and any $r>0$
\[
{\mathbb{P}} \bigl(\left\{X_n \in \mathcal{Z}_T \colon \|X_n\|_{{L}^{2}(0,T; \rD(\A))} > r \right\}  \bigr)
  \le \frac{{\E} \bigl[ \|X_n\|_{{L}^{2}(0,T; \rD(\A))}^2 \bigr]  }{{r}^{2}}
  \le \frac{{C}_{2}}{{r}^{2}}\,.
\]
Let ${R}_{2}$ be such that $\frac{{C}_{2}}{{R}_{2}^{2}} \le \frac{\varepsilon }{3}$. Then
\[
  \sup_{n\in \N }  {\mathbb{P}}_n \bigl(\left\{ \|X_n\|_{{L}^{2}(0,T; \rD(\A))} > {R}_{2} \right\} \bigr) \le \frac{\varepsilon }{3}\,.
\]
Let ${B}_{2} := \left\{ X_n \in \zcal_T : \, \, \|X_n\|_{{L}^{2}(0,T; \rD(\A))} \le {R}_{2} \right\} $.

By Lemmas \ref{lemma3.3} and \ref{lemma3.5} there exists a subset
${A}_{\frac{\varepsilon}{3}} \subset \ccal([0,T], \rH)$ such that
${{\mathbb{P} }}_{n} \bigl( {A}_{\frac{\varepsilon }{3}}\bigr) \ge 1 - \frac{\varepsilon }{3}$ and
\[
   \lim_{\delta \to 0 }  \sup_{u \in {A}_{\frac{\varepsilon }{3}}}
\sup_{\underset{|t-s| \le \delta }{s,t \in [0,T]}}  |u(t) - u(s){|}_{\rH} = 0\,.
\]
It is sufficient to define ${K}_{\varepsilon } $ as the closure  of the set ${B}_{1} \cap {B}_{2} \cap {A}_{\frac{\varepsilon }{3}}$ in $\zcal_T$. By Lemma~\ref{lemma6.3.3}, ${K}_{\varepsilon }$ is compact in $\zcal_T$. The proof is thus complete.
\end{proof}

\subsection{The Skorohod theorem}
\label{s:3.2}

We will use the following Jakubowski's generalisation of the Skorohod theorem in the form given by Brze\'{z}niak and Ondrej\'{a}t \cite{[BO11]}, see also \cite{[Jakubowski97]}.

\begin{theorem}
\label{thm3.7}
Let $\mathcal{X}$ be a topological space such that there exists a sequence $\{f_m\}_{m \in \mathbb{N}}$ of continuous functions $f_m : \mathcal{X} \to \R$ that separates points of $\mathcal{X}$. Let us denote by $\mathcal{S}$ the $\sigma$-algebra generated by the maps $\{f_m\}$. Then
\begin{trivlist}
\item{(a)} every compact subset of $\mathcal{X}$ is metrizable,
\item{(b)} if $(\mu_m)_{m \in \mathbb{N}}$ is a tight sequence of probability measures on $(\mathcal{X}, \mathcal{S})$, then there exists a subsequence $(m_k)_{k \in \mathbb{N}}$, a probability space $(\Omega, \mathcal{F}, \mathbb{P})$ with $\mathcal{X}$-valued Borel measurable variables $\xi_k, \xi$ such that $\mu_{m_k}$ is the law of $\xi_k$ and $\xi_k$ converges to $\xi$ almost surely on $\Omega$. Moreover, the law of $\xi$ is a Radon measure.
\end{trivlist}
\end{theorem}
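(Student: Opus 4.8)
The plan is to reduce the (possibly non-metrizable) space $\mathcal{X}$ to a genuine Polish space by using the separating sequence $\{f_m\}$ as coordinates, to transport the measures there, to invoke the classical Skorokhod representation theorem, and finally to pull the resulting random variables back to $\mathcal{X}$. Throughout I may replace each $f_m$ by $\arctan\circ f_m$, so that without loss of generality the separating functions are bounded; this changes neither the generated $\sigma$-algebra $\mathcal{S}$ nor the continuity or the separation property.

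For part (a), let $K\subseteq\mathcal{X}$ be compact. On $K$ I would define
\[ d(x,y):=\sum_{m\in\N} 2^{-m}\,\bigl|f_m(x)-f_m(y)\bigr|,\qquad x,y\in K. \]
This series converges uniformly (the $f_m$ being bounded), so $d$ is a continuous pseudometric on $K\times K$, and it is in fact a metric because $\{f_m\}$ separates points. Each $f_m$ being continuous, the topology induced by $d$ is coarser than the subspace topology $\tau_K$ on $K$; hence the identity map $(K,\tau_K)\to(K,d)$ is a continuous bijection from a compact space onto a Hausdorff space, and is therefore a homeomorphism. Thus the two topologies on $K$ coincide and $K$ is metrizable, which proves (a).

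For part (b), set $\Phi:=(f_m)_{m\in\N}\colon\mathcal{X}\to Q$, where $Q:=\prod_{m}[-1,1]$ is a compact metrizable, hence Polish, space. Since the $f_m$ separate points, $\Phi$ is injective, and since $\mathcal{S}$ is generated by the $f_m$, $\Phi$ is $\mathcal{S}/\mathcal{B}(Q)$-measurable. I would push the laws forward, $\nu_m:=\mu_m\circ\Phi^{-1}$, to Borel probability measures on $Q$. Tightness transfers: if $K^{(j)}\subseteq\mathcal{X}$ are compacts with $\sup_m\mu_m(\mathcal{X}\setminus K^{(j)})\to 0$, then $F_j:=\Phi(K^{(j)})$ is compact, hence closed, in $Q$, and $\nu_m(Q\setminus F_j)\le\mu_m(\mathcal{X}\setminus K^{(j)})$, so $(\nu_m)$ is tight on $Q$ and gives full measure to the $\sigma$-compact set $A:=\bigcup_j F_j\subseteq\Phi(\mathcal{X})$. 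By Prokhorov's theorem and the classical Skorokhod representation theorem on the Polish space $Q$, there exist a subsequence $(m_k)$, a probability space $(\Omega,\mathcal{F},\mathbb{P})$, and $Q$-valued variables $\eta_k,\eta$ with $\mathrm{Law}(\eta_k)=\nu_{m_k}$ and $\eta_k\to\eta$ almost surely in $Q$. By the Portmanteau theorem applied to the closed sets $F_j$, the limit law satisfies $\mathrm{Law}(\eta)(F_j)\ge 1-\sup_m\mu_m(\mathcal{X}\setminus K^{(j)})$, whence $\eta,\eta_k\in A$ almost surely. Because, by part (a), $\Phi$ restricted to each $K^{(j)}$ is a homeomorphism onto $F_j$, the inverse $\Phi^{-1}$ is well defined and, after disjointifying $A=\bigsqcup_j(F_j\setminus F_{j-1})$, Borel measurable on $A$. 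I would then set $\xi_k:=\Phi^{-1}(\eta_k)$ and $\xi:=\Phi^{-1}(\eta)$; these are $\mathcal{X}$-valued, Borel measurable, and satisfy $\mathrm{Law}(\xi_k)=\mu_{m_k}$ (checked on the generators $f_m^{-1}(B)$), while tightness makes $\mathrm{Law}(\xi)$ a Radon measure.

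The main obstacle is the final assertion that $\xi_k\to\xi$ almost surely \emph{in the original topology of} $\mathcal{X}$, not merely that $f_m(\xi_k)\to f_m(\xi)$ for every $m$ (which is immediate from $\eta_k\to\eta$ in $Q$). If the topology of $\mathcal{X}$ is strictly finer than the initial topology induced by $\{f_m\}$, coordinatewise convergence does not by itself force convergence in $\mathcal{X}$. The way to overcome this, and the place where tightness is used essentially, is to guarantee confinement: almost surely the sequence $\{\xi_k(\omega)\}_k$ together with $\xi(\omega)$ should lie in a single compact $K^{(j)}$, on which, by part (a), the topology of $\mathcal{X}$ agrees with the metric topology transported from $Q$, so that $\eta_k(\omega)\to\eta(\omega)$ upgrades to $\xi_k(\omega)\to\xi(\omega)$ in $\mathcal{X}$. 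I expect this step to be delicate: it cannot be extracted from the uniform tail bounds $\sup_k\nu_{m_k}(Q\setminus F_j)\to 0$ alone (a naive Borel–Cantelli argument fails), and it forces one to build the Skorokhod representation carefully rather than to take an arbitrary one. This is precisely the heart of Jakubowski's theorem, and at this step I would follow the construction of \cite{[Jakubowski97], [BO11]}.
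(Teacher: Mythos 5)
The first thing to say is that the paper contains no proof of Theorem~\ref{thm3.7} to compare against: it is quoted as a known result from Brze\'zniak--Ondrej\'at \cite{[BO11]} and Jakubowski \cite{[Jakubowski97]}. Judged on its own terms, your proposal is correct where it is complete. Part (a) is a full proof: after bounding the $f_m$, the pseudometric $d(x,y)=\sum_m 2^{-m}|f_m(x)-f_m(y)|$ is a continuous metric on $K$, and a continuous bijection from a compact space onto a Hausdorff space is a homeomorphism. In part (b), the transport via $\Phi=(f_m)_m$ into $Q=\prod_m[-1,1]$, the transfer of tightness to $\nu_m=\mu_m\circ\Phi^{-1}$, the Portmanteau argument placing $\eta$ in the $\sigma$-compact set $A=\bigcup_j F_j$, the Borel measurability of the disjointified inverse $\Phi^{-1}$ on $A$ (continuous on each $F_j$ by part (a)), the identification $\mathrm{Law}(\xi_k)=\mu_{m_k}$ on $\mathcal{S}=\Phi^{-1}(\mathcal{B}(Q))$, and the Radon property of $\mathrm{Law}(\xi)$ (a Borel measure carried by a countable union of metrizable compacts is inner regular) are all sound.

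The gap you flag at the end is real, and you have located it exactly. From an \emph{arbitrary} classical Skorokhod representation of $(\nu_{m_k})$ on $Q$ one only obtains, almost surely, $f_m(\xi_k)\to f_m(\xi)$ for every $m$, i.e.\ convergence in the initial topology generated by $\{f_m\}$, which may be strictly coarser than $\tau_{\mathcal{X}}$. To upgrade, one needs almost sure eventual confinement of $\{\xi_k(\omega)\}$ together with $\xi(\omega)$ in a single compact $K^{(j)}$, on which the two topologies agree by part (a); and, as you observe, the uniform tail bounds $\sup_k\nu_{m_k}(Q\setminus F_j)\le \varepsilon_j$ are not summable in $k$, so no Borel--Cantelli argument extracts this from a black-box representation. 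What closes the argument in \cite{[Jakubowski97]} is that the representation is \emph{constructed} so that, with probability at least $1-c\,\varepsilon_j$, all $\xi_k$ with $k$ large lie in $K^{(j)}$ simultaneously, after which Borel--Cantelli in $j$ gives the confinement; this simultaneity is the substantive content of the theorem and cannot be recovered a posteriori. So your write-up is a correct reduction plus an honest delegation of the one hard step to precisely the references the paper itself cites; as a self-contained proof it is incomplete at exactly the point you identify, and any claim that the pulled-back variables converge in $\tau_{\mathcal{X}}$ without rebuilding the representation would be false in general.
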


Using Theorem~\ref{thm3.7}, we obtain the following corollary which we will apply to construct a martingale solution of the stochastic tamed Navier-Stokes equations.

\begin{corollary}
\label{cor3.8}
Let $(\eta_n)_{n \in \mathbb{N}}$ be a sequence of $\mathcal{Z}_T$-valued random variables such that their laws $\emph{law}(\eta_n)$ on $(\mathcal{Z}_T, \mathcal{T})$ form a tight sequence of probability measures. Then there exists a subsequence $(n_k)$, a probability space $(\widetilde{\Omega}, \widetilde{\mathcal{F}}, \widetilde{\mathbb{P}})$ and $\mathcal{Z}_T$-valued random variables $\widetilde{\eta}$, $\widetilde{\eta}_k, k \in \mathbb{N}$ such that the variables $\eta_k$ and $\widetilde{\eta}_k$ have the same laws on $\mathcal{Z}_T$ and $\widetilde{\eta}_k$ converges to $\widetilde{\eta}$ almost surely on $\widetilde{\Omega}$.
\end{corollary}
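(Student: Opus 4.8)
The plan is to obtain Corollary~\ref{cor3.8} as a direct application of Jakubowski's theorem, Theorem~\ref{thm3.7}, to the topological space $\mathcal{X} = (\zcal_T, \mathcal{T})$ and to the sequence of laws $\mu_n := \mathrm{law}(\eta_n)$, which is tight by hypothesis. The conclusion of the corollary --- the subsequence $(n_k)$, the new probability space $(\tOmega, \tfcal, \tp)$, the variables $\widetilde{\eta}, \widetilde{\eta}_k$ with matching laws and almost sure convergence --- is then exactly the content of part~(b) of Theorem~\ref{thm3.7}. Thus the single point that has to be checked is the standing hypothesis of that theorem: that $\zcal_T$ admits a countable family $\{f_m\}_{m \in \N}$ of $\mathcal{T}$-continuous real-valued functions separating its points. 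Once this is in hand, I would also remark that the $\sigma$-algebra $\mathcal{S}$ generated by the $f_m$ is contained in the Borel $\sigma$-algebra of $(\zcal_T,\mathcal{T})$, so that the $\eta_n$ are $\mathcal{S}$-measurable and the given topological tightness is indeed tightness on $(\zcal_T,\mathcal{S})$.

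To build the separating family I would use only the component $\ccal([0,T]; \rV_w)$ of the intersection~\eqref{eq:3.1}, since the remaining three topologies only make $\mathcal{T}$ finer and hence preserve continuity. Exploiting that $\rV$ is a separable Hilbert space, I would fix a countable dense set $\{h_j\}_{j\in\N}\subset\rV$ and an enumeration $\{t_i\}_{i\in\N} = \mathbb{Q}\cap[0,T]$ of the rational times, and set
\[
f_{i,j}\colon \zcal_T \to \R, \qquad f_{i,j}(u) := \langle u(t_i), h_j\rangle_{\rV}.
\]
By the very definition of $\mathcal{T}_4$ the map $u\mapsto \langle u(\cdot),h_j\rangle_\rV\in\ccal([0,T];\R)$ is continuous, so composing with evaluation at $t_i$ makes each $f_{i,j}$ continuous for $\mathcal{T}_4$, hence for the finer topology $\mathcal{T}$; reindexing $\{f_{i,j}\}$ as a single sequence gives the required $\{f_m\}$.

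The one step deserving care --- and the closest thing to an obstacle --- is verifying that this family separates points. Here I would argue that because $\rV\embed\rU'$ is injective and each element of $\zcal_T$ lies in $\ccal([0,T];\rV_w)$, two elements of $\zcal_T$ coincide precisely when their $\rV$-valued trajectories coincide; thus if $u\neq\v$ in $\zcal_T$ there is a time $t_0$ with $u(t_0)\neq\v(t_0)$ in $\rV$, and hence some $h\in\rV$ with $\langle u(t_0)-\v(t_0),h\rangle_\rV\neq0$. Weak continuity of $u,\v$ makes $t\mapsto\langle u(t)-\v(t),h\rangle_\rV$ continuous on $[0,T]$, so it is nonzero at some rational $t_i$; density of $\{h_j\}$ together with continuity of the functional $\langle u(t_i)-\v(t_i),\cdot\rangle_\rV$ on $\rV$ then furnishes an index $j$ with $f_{i,j}(u)\neq f_{i,j}(\v)$. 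With points separated, Theorem~\ref{thm3.7}(b) applies verbatim and the corollary follows; I expect the whole argument to be short, the only genuinely substantive ingredients being the separability of $\rV$ and the definition of the weak-continuity topology $\mathcal{T}_4$.
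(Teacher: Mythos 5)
Your proposal is correct and follows essentially the same route as the paper: both reduce Corollary~\ref{cor3.8} to the countable-separating-family hypothesis of Theorem~\ref{thm3.7}, and your family $f_{i,j}(u)=\langle u(t_i),h_j\rangle_{\rV}$, indexed by rational times and a countable dense subset of $\rV$, is exactly the family the paper uses for the component $\ccal([0,T];\rV_w)$. The only difference is a harmless economization: the paper exhibits separating families on all four component spaces of \eqref{eq:3.1} (citing Badrikian for the two Polish components and using integral functionals against a dense subset of $L^2(0,T;\rD(\A))$ for the weak topology), whereas you correctly observe that the $\ccal([0,T];\rV_w)$ family alone already separates points of $\zcal_T$, since distinct elements of $\zcal_T$ differ at some time as elements of $\rV$ and weak continuity lets you pass to a rational time.
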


\begin{proof}
It is sufficient to prove that on each space appearing in the definition \eqref{eq:3.1} of the space $\mathcal{Z}_T$, there exists a countable set of continuous real-valued functions separating points.

Since the spaces $\ccal ([0,T]; \mathrm{U}^\prime)$ and ${L}^{2}(0,T; \rH_{loc})$ are separable, metrizable and complete, this condition is satisfied, see \cite{[Badrikian70]}, expos\'{e} 8.

For the space ${L}^{2}_{w}(0,T; \rD(\A))$ it is sufficient to put
\[
    {f}_{m}(u):= \int_{0}^{T} \langle u(t), \v_m(t) \rangle_{\rD(\A)} \, dt \in \R ,
 \quad u \in {L}^{2}_{w}(0,T; \rD(\A)),\quad m \in \N ,
\]
where $\{ {\v}_{m}, m \in \N  \} $ is a dense subset of ${L}^{2}(0,T; \rD(\A))$.

Let us consider the space $\ccal ([0,T];{\rV}_{w})$. Let $\{ {h}_{m}, \, m \in \N  \}  $ be any dense subset of $\rV$ and let ${\mathbb{Q}}_{T}$ be the  set of rational numbers belonging to the interval $[0,T]$.
Then the family $\{ {f}_{m,t}, \, m \in \N , \, \, t \in {\mathbb{Q}}_{T} \} $ defined by
\[
      {f}_{m,t}(u):= \langle u(t), h_m \rangle_\rV \in \R ,
 \qquad u \in  \ccal ([0,T];{\rV}_{w}), \quad m \in \N ,
      \quad t \in {\mathbb{Q}}_{T}
\]
consists of continuous functions separating points in $\ccal ([0,T];{\rV}_{w})$. The statement of the corollary follows from Theorem~\ref{thm3.7}, concluding the proof.
\end{proof}

We end this section by giving the definitions of a martingale and strong solution to \eqref{eq:2.13}.

\begin{definition}
\label{defn_stoc_basis}
A stochastic basis $(\Omega, \mathcal{F}, \mathbb{F}, \mathbb{P})$ is a probability space equipped with the filtration $\mathbb{F} = \{\mathcal{F}_t\}_{t \ge 0}$ of its $\sigma-$field $\mathcal{F}$.
\end{definition}

\begin{definition}
\label{defn3.9}
A \emph{martingale solution} of \eqref{eq:2.13} is a system
\[\left(\widehat{\Omega}, \widehat{\mathcal{F}}, \widehat{\mathbb{F}}, \widehat{\mathbb{P}}, \widehat{W}, \widehat{u}\right) \]
where $\left(\widehat{\Omega}, \widehat{\mathcal{F}}, \widehat{\mathbb{P}}\right)$ is a probability space and $\widehat{\mathbb{F}} = \left(\widehat{\mathcal{F}}_t\right)_{t\ge0}$ is a filtration on it, such that
\begin{itemize}
\item $\widehat{W}$ is an $\ell^2$-valued cylindrical Wiener process on $\left(\widehat{\Omega}, \widehat{\mathcal{F}}, \widehat{\mathbb{F}}, \widehat{\mathbb{P}}\right)$, %where $\left\{\widehat{W}^j(t), t \ge 0, j \in \N \right\} $ is an infinite sequence of independent standard $(\hat{\mathcal{F}}_t)$-Brownian motions,
\item $\widehat{u}$ is $\rm{D}(\rm{A})$-valued progressively measurable process,$\rV$-valued weakly continuous $\widehat{\mathbb{F}}$-adapted process such that %$u : [0,T] \times \hat{\Omega} \to \rm{D}(\rm{A})$ with $\hat{\mathbb{P}}$-a.e. paths
\[\widehat{u}(\cdot, \omega) \in \ccal([0,T]; \rV_w) \cap L^2(0,T; \rm{D}(\A )),\]
\[\widehat{\E}\left(\sup_{t\in[0,T]}\|\widehat{u}(t)\|^2_{\rV} + \int_0^T |\widehat{u}(t)|^2_{\rm{D}(\rm{A})}dt\right) < \infty\]
and
\begin{equation}
\label{eq:3.4}
\begin{split}
&\langle \widehat{u}(t), \v \rangle + \int_0^t \langle \A \widehat{u}(s), \v \rangle\,ds + \int_0^t \langle B(\widehat{u}(s)), \v \rangle\,ds \\
& \; + \int_0^t \langle g(|\widehat{u}(s)|^2)\,\widehat{u}(s), \v \rangle\,ds = \langle u_0, \v \rangle + \int_0^t \langle f(\widehat{u}(s)), \v \rangle\,ds \\
&\qquad + \left \langle \int_0^t G(s,\widehat{u}(s))\,dW(s) , \v  \right\rangle.
\end{split}
\end{equation}
for all $t \in [0,T]$ and all $\v \in \mathcal{V}$, $\widehat{\mathbb{P}}$-a.s.
\end{itemize}
\end{definition}

\begin{definition}
\label{defn3.10}
We say that problem \eqref{eq:2.13} has a \emph{strong solution} if for every stochastic basis $(\Omega, \mathcal{F}, \mathbb{F}, \mathbb{P})$ and $\ell^2$-valued cylindrical Wiener process $W(t)$ on the given filtered probability space there exists a $\rm{D}(\rm{A})$-valued progressively measurable process,$\rV$-valued continuous ${\mathbb{F}}$-adapted process $u$ such that
%progressively measurable process $u : [0,T] \times \Omega \to \rm{D}(\rm{A})$ with $\mathbb{P}$-a.e. paths
\[u(\cdot, \omega) \in \ccal([0,T]; \rV_w) \cap L^2(0,T; \rD(\A ))\,,\]
and satisfies \eqref{eq:3.4} for all $t \in [0,T]$ and all $\v \in \mathcal{V}$, $\mathbb{P}$-a.s.
\end{definition}

\begin{remark}
\label{rem_weak_strong}
The strong solution defined in Definition~\ref{defn3.10} is a probabilistically strong solution with the same regularity as a martingale solution. But, in Theorem~\ref{thm6.27} we show that the strong solution $u$ is more regular, i.e. $u \in \ccal([0,T]; \rV)\cap L^2(0,T; \rD(\A))$.
\end{remark}

\section{Truncated SPDEs}
\label{s:6}
The approximation scheme described in this section to define truncated SPDEs was first introduced by \cite{[FMRR14]} and also later used by Manna et al. in \cite{[MP16]}.

In order to describe the approximation scheme, we will use the following notations and spaces.
\[B_n := \left\{ \xi \in \R^3 : |\xi| \le n \right\} \subset \R^3 ,\;\; n \in \N \;,\]
where $|\cdot|$ is the Euclidean norm on $\R^3$. We will use $\mathcal{F}(u)$ and $\hat{u}$ interchangeably to denote the Fourier transform of $u$. The inverse Fourier transform will be given by $\invf$.

We define $\rH_n$ as the subspace of $\rH$,
\[\rH_n := \{u \in \rH : \rm{supp}(\hat{u}) \subset B_n\}\,.\]
The norm on $\rH_n$ is inherited from $\rH$. For $n \in \N$, let us define a map $P_n$ by
\begin{equation}
\label{eq:6.1}
P_n\,u := \invf(\ind_{B_n} \hat{u})\,.
\end{equation}
Firstly, note that $P_n : \rH \to \rH$ is a linear and bounded map. Moreover, $\mathrm{Range}(P_n) \subset H_n$ and hence one can deduce that
\[P_n : \rH \to \rH_n.\]
In addition, $P_n$ is an orthogonal projection onto $\rH_n$ i.e. $\forall\, u \in \rH$, $u - P_n\,u \perp \rH_n$. In other words 
\[\langle u - P_n u, \v\rangle_{H} = 0, \qquad \forall\, \v \in \rH_n.\]
Indeed, for $u \in \rH$ and $\v \in \rH_n$, we have
\begin{align*}
\langle P_n u, \v\rangle_{H} & = \langle P_n u, \v \rangle_{L^2} = \langle \invf\left(\ind_{B_n} \hat{u}\right), \invf\left(\hat{\v}\right)\rangle_{L^2} \\
& = \langle \ind_{B_n} \hat{u}, \hat{\v}\rangle_{L^2} = \int_{|\xi| \le n} \hat{u}(\xi)\cdot\hat{\v}(\xi)\,d\xi = \int_{\dom}\hat{u}(\xi)\cdot\hat{\v}(\xi)\,d\xi \\
& = \langle \hat{u}, \hat{\v}\rangle_{L^2} = \langle u, \v \rangle_{\rH}.
\end{align*}

Let us recall that $\rm{D}(\A ) := \rH \cap H^{2,2}$ and the Stokes operator is given by
\[\A \,u = - \Pi (\Delta\,u), \quad \quad u \in \rm{D}(\A )\,,\]
and $\rm{D}(\A )$ is a Hilbert space under the graph norm
\[|u|^2_{\rm{D}(\A )} := |u|^2_\rH + |\A \,u|^2_\rH\,.\]

\begin{lemma}
\label{lemma6.1}
Let $P_n$ be the orthogonal projection given by \eqref{eq:6.1}, then $P_n : \rV \to \rV$ is uniformly bounded.
\end{lemma}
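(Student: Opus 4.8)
The plan is to prove the stronger statement that $P_n$ is a contraction on $\rV$ whose operator norm is at most $1$, and in particular bounded by a constant independent of $n$; this is exactly the asserted uniform boundedness. The single feature I would exploit throughout is that $P_n$ is the Fourier multiplier with (scalar) symbol $\ind_{B_n}$, so it commutes with each partial derivative $\partial_j$, which on the Fourier side is multiplication by $i\xi_j$.

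First I would record the commutation explicitly. For $u \in \rV$ and $j \in \{1,2,3\}$,
\[ \widehat{\partial_j P_n u}(\xi) = i\xi_j\, \ind_{B_n}(\xi)\, \hat{u}(\xi) = \ind_{B_n}(\xi)\, \widehat{\partial_j u}(\xi) = \widehat{P_n \partial_j u}(\xi), \]
so that $\nabla P_n u = P_n \nabla u$ componentwise. Next I would use the fact, already established just before the statement of the lemma, that $P_n$ is the orthogonal projection of $\rH$ onto $\rH_n$; by Plancherel the same multiplier, applied componentwise, is an orthogonal projection on all of $L^2(\dom; \R^3)$, hence an $L^2$-contraction: $|P_n v|_{L^2} \le |v|_{L^2}$ for every $v \in L^2(\dom; \R^3)$. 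Applying this to $v = u$ and to $v = \partial_j u$ and summing over $j$ gives $|P_n u|_\rH \le |u|_\rH$ and $|\nabla P_n u|_{L^2} \le |\nabla u|_{L^2}$.

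It then remains only to check that $P_n$ actually maps $\rV$ into $\rV$, after which the bound is immediate from the definition \eqref{eq:2.2} of the $\rV$-norm:
\[ \|P_n u\|_\rV^2 = |P_n u|_\rH^2 + |\nabla P_n u|_{L^2}^2 \le |u|_\rH^2 + |\nabla u|_{L^2}^2 = \|u\|_\rV^2. \]
For the membership I would observe that multiplication by $\ind_{B_n}$ preserves the pointwise constraint $\xi \cdot \hat{u}(\xi) = 0$ that encodes $\divv u = 0$, so $P_n u$ is divergence free, while the compact Fourier support together with $\hat{u} \in L^2$ shows $P_n u \in H^1(\dom; \R^3)$; invoking the characterisation of $\rV$ as the space of divergence-free $H^1$ vector fields on $\dom$ (equivalently, a density argument from $\mathcal{V}$ using the continuity of $P_n$ on $H^1$ just obtained) places $P_n u$ in $\rV$.

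I expect no genuine obstacle in this lemma: the estimate is a one-line consequence of the commutation $\nabla P_n = P_n \nabla$ and the $L^2$-contractivity of $P_n$, and the only point demanding a little care is the verification that $P_n u$ remains in the divergence-free space $\rV$ rather than merely in $H^1(\dom; \R^3)$.
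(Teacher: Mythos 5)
Your proof is correct and takes essentially the same route as the paper: the paper's proof is a single Plancherel computation with the weight $(1+|\xi|^2)$, which is precisely your commutation identity $\nabla P_n = P_n \nabla$ plus $L^2$-contractivity written as one Fourier integral, and both yield the same contraction bound $\|P_n u\|_{\rV} \le \|u\|_{\rV}$. Your additional verification that $P_n u$ actually lies in $\rV$ (the symbol $\ind_{B_n}$ preserves the constraint $\xi \cdot \hat{u}(\xi)=0$, and on $\R^3$ divergence-free $H^1$ fields coincide with the closure of $\mathcal{V}$) is sound and is a point the paper's proof leaves implicit.
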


\begin{proof}
Let $u \in \rV$, then by the definition of $P_n$ and $\rV$
\begin{align*}
& \|P_n u \|_\rV = \left[ \int_{\R^3} (1 + |\xi|^2) |\fcal \left(P_n u\right)(\xi)|^2\,d\xi \right]^{1/2} \\
&\quad = \left[ \int_{\R^3} (1+|\xi|^2) |\ind_{B_n}(\xi)\hat{u}(\xi)|^2\,d \xi \right]^{1/2}  = \left[ \int_{|\xi| \le n} (1 + |\xi|^2)|\hat{u}(\xi)|^2\, d \xi \right]^{1/2} \\
& \quad \le \left[ \int_{\R^3} (1 + |\xi|^2)|\hat{u}(\xi)|^2\, d \xi \right]^{1/2} = \|u\|_\rV\,.
\end{align*}
Thus we have shown that
\[\|P_n u\|_\rV \le \|u\|_\rV\,,\]
and hence $P_n$ is uniformly bounded in $\rV$.
\end{proof}

\begin{lemma}
\label{lemma6.2}
If $u \in \rm{D}(\A )$ then $\Delta \, u \in \rH$. In particular, if $u \in \rm{D}(\A )$ then $\A \,u = - \Delta\,u$.
\end{lemma}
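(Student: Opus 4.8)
The plan is to argue in Fourier space, which is natural here since both $\Pi$ and the projections $P_n$ are Fourier multipliers. First I would record the Fourier-space description of $\rH$: a function $u \in L^2(\dom;\R^3)$ belongs to $\rH$ only if its Fourier transform satisfies the transversality (divergence-free) condition
\[
\xi \cdot \hat{u}(\xi) = 0 \qquad \text{for a.e. } \xi \in \dom .
\]
This holds because every $u \in \mathcal{V}$ is divergence free, i.e. $\xi\cdot\hat u(\xi)=0$, and the linear constraint $u \mapsto \xi\cdot\hat u(\xi)$ (as a tempered distribution) is continuous with respect to $L^2$-convergence; hence the condition is inherited by the $L^2$-closure $\rH$ of $\mathcal{V}$.

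Next I would take $u \in \mathrm{D}(\A) = \rH \cap H^2(\dom;\R^3)$. Since $u \in H^2$, we have $\Delta u \in L^2(\dom;\R^3)$ with $\widehat{\Delta u}(\xi) = -|\xi|^2\,\hat{u}(\xi)$, and $|\xi|^2\hat{u}\in L^2$ guarantees $\Delta u\in L^2$. Applying the transversality condition from the first step to $\Delta u$ gives
\[
\xi \cdot \widehat{\Delta u}(\xi) = -|\xi|^2\,\bigl(\xi\cdot\hat{u}(\xi)\bigr) = 0 \qquad \text{for a.e. } \xi ,
\]
so $\Delta u$ is again divergence free. Combined with $\Delta u \in L^2(\dom;\R^3)$ this yields $\Delta u \in \rH$, which is the first assertion of the lemma. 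Finally, since $\Pi$ is the orthogonal projection of $L^2(\dom;\R^3)$ onto $\rH$, it acts as the identity on $\rH$; because $\Delta u \in \rH$ we conclude $\Pi(\Delta u) = \Delta u$, and therefore $\A u = -\Pi(\Delta u) = -\Delta u$.

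There is no serious difficulty in this argument; the only point that deserves care is the first step, namely justifying that membership in $\rH$ forces the transversality condition $\xi\cdot\hat u(\xi)=0$ (equivalently, that $\Pi$ is the Fourier multiplier $\delta_{ij}-\xi_i\xi_j/|\xi|^2$ annihilating the longitudinal part). Should one prefer to avoid Fourier analysis altogether, the same conclusion follows from the Helmholtz--Leray decomposition: the orthogonal complement $\rH^\perp$ in $L^2(\dom;\R^3)$ consists of gradient fields $\nabla\! p$, so it suffices to check $\langle \Delta u, \nabla p\rangle_{L^2}=0$ for all admissible $p$, which follows by integrating by parts twice, $\langle \Delta u,\nabla p\rangle_{L^2}=\langle u,\nabla\Delta p\rangle_{L^2}=0$, using that $u\in\rH$ is orthogonal to every gradient. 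Either route gives $\Delta u\in\rH$ and hence $\A u=-\Delta u$ on $\mathrm{D}(\A)$.
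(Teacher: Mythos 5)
Your argument is correct and lands on the same conclusion as the paper, but your primary route is genuinely different in presentation. The paper verifies that $\Delta u$ is weakly divergence free by duality with test functions: for $\varphi \in C_0^\infty(\R^3)$ it computes $\langle \divv(\Delta u), \varphi\rangle = -\langle \Delta u, \nabla \varphi\rangle = -\langle u, \Delta(\nabla\varphi)\rangle = \langle \divv u, \Delta\varphi\rangle = 0$, and then concludes $\Delta u \in \rH$ and $\Pi(\Delta u)=\Delta u$ exactly as you do. This is, in substance, your Helmholtz--Leray fallback (testing $\Delta u$ against gradients and integrating by parts twice), while your main argument is the Fourier-multiplier version of the same fact. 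One point deserves attention: as literally written, your first step establishes only the \emph{necessity} of the transversality condition ($u\in\rH \Rightarrow \xi\cdot\hat u(\xi)=0$ a.e.), whereas the conclusion $\Delta u\in\rH$ invokes its \emph{sufficiency}, i.e. the identification $\rH = \{v\in L^2(\dom;\R^3): \divv v = 0\} = \{v \in L^2 : \xi\cdot\hat v(\xi)=0 \text{ a.e.}\}$, which is true on the whole space $\R^3$ but is not proved by the continuity-of-the-constraint argument you give. The paper leans on the same identification tacitly (it passes from ``$\divv(\Delta u)=0$ weakly'' to ``$\Delta u\in\rH$'' without comment), so this is not a gap relative to the paper's own standard of rigor; moreover, your closing remark --- orthogonality of $\Delta u$ to all gradients, using that $\{\nabla p : p \in C_0^\infty(\R^3)\}$ is dense in $\rH^\perp$ --- is precisely what closes it, since $\langle \Delta u,\nabla p\rangle_{L^2}=\langle u,\nabla\Delta p\rangle_{L^2}=0$ for such $p$ and $\rH$ is closed. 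Both routes then finish identically: $\Pi$ restricted to $\rH$ is the identity, so $\A u = -\Pi(\Delta u) = -\Delta u$ on $\mathrm{D}(\A)$.
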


\begin{proof}
Since $u \in \rm{D}(\A )$, it is clear that $\Delta \, u \in L^2$. Thus we are left to show that $\rm{div}(\Delta \, u) = 0$ in the weak sense. Let $\varphi \in C_0^\infty (\R^3)$, then using the definition of $\rm{div}$ and $\Delta$, we get
\begin{align*}
\langle \rm{div}(\Delta u)\, |\, \varphi \rangle & = - \langle \Delta\, u \, |\, \nabla \varphi \rangle = - \langle u \, | \, \Delta (\nabla \varphi) \rangle  = \langle \rm{div}\, u \, |\,  \Delta \, \varphi \rangle = 0\,.
\end{align*}
By definition $\A \,u = - \Pi(\Delta\,u)$, but since $\Delta\, u \in \rH$, and $\Pi : L^2 \to \rH$ is an orthogonal projection, $\Pi(\Delta\,u) = \Delta\,u$ and hence,
\begin{equation}
\label{eq:6.2}
\A \,u = - \Delta\,u, \quad \quad u \in \rm{D}(\A)\,.
\end{equation}
\end{proof}

\begin{lemma}
\label{lemma6.3}
If $n \in \N$, then $H_n \subset \rm{D}(\A )$ and
\begin{equation}
\label{eq:6.3}
P_n (\A  u) = \A  u, \quad \quad u \in \rH_n\,.
\end{equation}
\end{lemma}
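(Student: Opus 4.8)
The plan is to prove the two assertions separately, in both cases exploiting the Fourier-side descriptions that \emph{define} the space $\rH_n$ and the projection $P_n$. The first, and only slightly substantive, observation to record is that $P_n$ acts as the identity on $\rH_n$: if $v \in \rH_n$, then by definition $\mathrm{supp}(\hat v) \subset B_n$, so $\ind_{B_n}\hat v = \hat v$ and hence $P_n v = \invf(\ind_{B_n}\hat v) = \invf(\hat v) = v$. Thus \eqref{eq:6.3} will follow as soon as I have shown (a) that $u \in \rm{D}(\A)$ whenever $u \in \rH_n$, so that $\A u$ makes sense, and (b) that $\A u$ again lies in $\rH_n$.

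For the inclusion $\rH_n \subset \rm{D}(\A) = \rH \cap H^2$, I would take $u \in \rH_n$. Membership in $\rH$ is immediate from the definition of $\rH_n$, so only $u \in H^2$ remains. Using the Fourier definition of the $H^2$-norm and the support condition $\mathrm{supp}(\hat u)\subset B_n$, I would estimate
\[
\|u\|_{H^2}^2 = \int_{\R^3} (1+|\xi|^2)^2 |\hat u(\xi)|^2\,d\xi = \int_{|\xi|\le n} (1+|\xi|^2)^2 |\hat u(\xi)|^2\,d\xi \le (1+n^2)^2 \, |u|_{L^2}^2 < \infty,
\]
since $(1+|\xi|^2)^2 \le (1+n^2)^2$ on $B_n$. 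This gives $u \in H^2$, hence $u \in \rm{D}(\A)$, and in passing shows $P_n$ maps $\rH$ into $H^2$ with a bound growing like $(1+n^2)$.

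For claim (b), I would invoke Lemma~\ref{lemma6.2}: since $u \in \rm{D}(\A)$ we have $\A u = -\Delta u$, whose Fourier transform is $\widehat{\A u}(\xi) = |\xi|^2 \hat u(\xi)$. As multiplication by $|\xi|^2$ preserves the support condition, $\mathrm{supp}(\widehat{\A u}) \subset \mathrm{supp}(\hat u) \subset B_n$; combined with $\A u \in \rH$ (as $\A\colon \rm{D}(\A)\to\rH$), this yields $\A u \in \rH_n$. Applying the identity-on-$\rH_n$ property from the opening paragraph then gives $P_n(\A u) = \A u$, which is \eqref{eq:6.3}. There is no real obstacle here; the only point requiring a word of care is the justification that one may compute $\widehat{\Delta u} = -|\xi|^2 \hat u$ as tempered distributions and read off the support, but this is standard once $u \in H^2$ is known, so the argument is essentially a bookkeeping exercise on the Fourier side.
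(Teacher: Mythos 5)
Your proposal is correct and follows essentially the same route as the paper's proof: both establish $\rH_n \subset \rm{D}(\A)$ via the bound $(1+|\xi|^2)^2 \le (1+n^2)^2$ on $B_n$, then use $\A u = -\Delta u$ (the paper's Lemma~\ref{lemma6.2}) together with $\widehat{\A u}(\xi) = |\xi|^2\hat u(\xi)$ to see that the support condition is preserved, so $\A u \in \rH_n$. The only cosmetic difference is that you conclude $P_n(\A u) = \A u$ by noting directly on the Fourier side that $P_n$ is the identity on $\rH_n$, whereas the paper cites the orthogonal-projection property of $P_n$ — these are trivially equivalent.
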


\begin{proof}
We start with proving the first statement. Let $u \in \rH_n$; by definition
\[ \rm{D}(\A ) = \{u \in \rm{H} : u \in H^{2,2}\} = \left\{ u \in \rm{H} : \int_{\R^3} \left( 1 + |\xi|^2 \right)^2 |\hat{u}(\xi)|^2 d\xi < \infty \right\}. \]
Since $u \in \rH_n, \rm{supp}(\hat{u}) \subset B_n$,
\begin{align*}
& \int_{\R^3} \left( 1 + |\xi|^2 \right)^2 |\hat{u}(\xi)|^2 d\xi  = \int_{|\xi| \le n} \left(1 + |\xi|^2 \right)^2 |\hat{u}(\xi)|^2 d \xi \\
& \quad \le (1 + n^2)^2 \int_{|\xi| \le n} |\hat{u}(\xi)|^2 d \xi = (1+n^2)^2 \int_{\R^3} |\hat{u}(\xi)|^2 d\xi \\
& \quad  = (1+n^2)^2 \|u\|^2_{H_n} < \infty\,.
\end{align*}
Thus we have proved that $u \in \rm{D}(\A )$ and hence $\rH_n \subset \rm{D}(\A )$. Moreover, we showed that there exists a constant $C_n > 0$, depending on $n$ such that
\begin{equation}
\label{eq:6.4}
|u|_{\rm{D}(\A )} \le C_n \|u\|_{\rH_n}, \quad \quad u \in \rH_n\,.
\end{equation}
Now in order to establish the equality \eqref{eq:6.3}, we just need to show that $\A u \in \rm{H}_n$. Since $u \in \rm{H}_n$, $u \in \rm{D}(\A )$. Hence from Lemma~\ref{lemma6.1}, $\A \,u = - \Delta\,u$. We are left to show that $\rm{supp}\,(\fcal(\A u)) \subset B_n$. Using the definition of $\A \,u$, we get following equalities
\[\fcal (\A  u )(\xi) = - \fcal(\Delta\,u)(\xi) = - |\xi|^2 \hat{u}(\xi)\,.\]
Thus
\[ \mathrm{supp}(\fcal({\A u})) \subset \mathrm{supp}(|\cdot|^2) \cap \mathrm{supp}(\hat{u}) \subset B_n\,.\]
Hence $\A u \in \rm{H}_n$. Since $P_n : \rH \to \rH_n$ is an orthogonal projection, we infer that
\[P_n(\A u) = \A u\,.\]
\end{proof}

\begin{lemma}
\label{lemma6.4}
If $n \in \N$, then the map $\A _n := \A \big|_{H_n} : H_n \to H_n,$ is linear and bounded.
\end{lemma}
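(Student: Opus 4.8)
The plan is to check the three claims---linearity, invariance of $H_n$ under $\A_n$, and boundedness---separately, since each follows from material already in hand. Linearity is immediate: $\A = -\Pi\Delta$ is a composition of linear maps, so its restriction $\A_n = \A\big|_{H_n}$ is linear. That $\A_n$ genuinely takes values in $H_n$ is already contained in Lemma~\ref{lemma6.3}, where for $u \in H_n \subset \rD(\A)$ we verified that $\mathrm{supp}(\fcal(\A u)) \subset B_n$, i.e. $\A u \in H_n$; so there is nothing new to prove for this point beyond invoking that lemma.

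The only quantitative step is boundedness, and I would establish it on the Fourier side. Fix $u \in H_n$. Since $H_n \subset \rD(\A)$, Lemma~\ref{lemma6.2} gives $\A u = -\Delta u$, and as computed in the proof of Lemma~\ref{lemma6.3} one has $\fcal(\A u)(\xi) = -|\xi|^2 \hat{u}(\xi)$. The norm on $H_n$ is the one inherited from $\rH$, namely the $L^2$-norm, so by the Plancherel identity
\[
\|\A_n u\|_{H_n}^2 = \int_{\R^3} |\xi|^4\,|\hat{u}(\xi)|^2\,d\xi = \int_{|\xi|\le n} |\xi|^4\,|\hat{u}(\xi)|^2\,d\xi,
\]
where the second equality uses $\mathrm{supp}(\hat{u}) \subset B_n$. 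On $B_n$ we have $|\xi|^4 \le n^4$, hence
\[
\|\A_n u\|_{H_n}^2 \le n^4 \int_{|\xi|\le n} |\hat{u}(\xi)|^2\,d\xi = n^4 \|u\|_{H_n}^2,
\]
that is $\|\A_n u\|_{H_n} \le n^2 \|u\|_{H_n}$, the desired bound with operator norm at most $n^2$.

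I do not anticipate any genuine obstacle here; the argument is a direct Fourier-multiplier estimate exploiting the compact frequency support that defines $H_n$. The one feature worth recording is that the bound depends on $n$ and degenerates as $n \to \infty$, reflecting the unboundedness of the Stokes operator $\A$ on all of $\rH$. For each fixed $n$, however, the finiteness of $\|\A_n\| \le n^2$ is precisely what is needed so that the linear part of the truncated equation on $H_n$ is a genuinely bounded operator, which is the role this lemma plays in the approximation scheme.
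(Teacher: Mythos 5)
Your proof is correct and takes essentially the same route as the paper: linearity is inherited from $\A = -\Pi\Delta$, invariance of $\rH_n$ is exactly the content of Lemma~\ref{lemma6.3}, and boundedness is the same Plancherel/frequency-support estimate on $B_n$. In fact your computation quietly corrects a small slip in the paper's own display, which writes $|\xi|^2$ where $\fou(-\Delta u)(\xi) = |\xi|^2\hat{u}(\xi)$ and Plancherel give $|\xi|^4$, so the paper's stated bound $\|\A_n u\|_{\rH_n} \le n\|u\|_{\rH_n}$ should read $n^2\|u\|_{\rH_n}$ as in your version; the conclusion of the lemma is unaffected either way.
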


\begin{proof}
In Lemma~\ref{lemma6.2} we showed that $\A _n$ is well defined and it's straightforward to show it is linear. We are left to show that it is bounded. Let $u \in \rm{H}_n$, then by Parseval equality and the definition of $\rH_n$
\begin{align*}
\|\A _n u \|_{\rm{H}_n} & = |- \Delta\,u|_{L^2} = \left[ \int_{\R^3} |\xi|^2 |\hat{u}(\xi)|^2 \,d\xi \right]^{1/2}  \\
& = \left[ \int_{|\xi| \le n} |\xi|^2 |\hat{u}(\xi)|^2\,d \xi \right]^{1/2} \le \left[n^2 \int_{|\xi| \le n}|\hat{u}(\xi)|^2\,d \xi \right]^{1/2} \\
& = \left[ n^2 \int_{\R^3} |\hat{u}(\xi)|^2\,d \xi \right]^{1/2} = n \|u\|_{H_n}\,.
\end{align*}
Thus,
\begin{equation}
\label{eq:6.5}
\|\A _n u \|_{\rH_n} \le n \|u\|_{\rm{H}_n}\,.
\end{equation}
\end{proof}

\begin{lemma}
\label{lemma6.5}
If $n \in \N$, then the map
\begin{equation}
\label{eq:6.6}
B_n  : \rH_n \times \rH_n \ni (u,\v) \mapsto P_n(B(u,\v)) \in \rH_n
\end{equation}
is well defined and Lipschitz on a ball $\mathbb{B}_R := \left\{u \in \rH_n : \|u\|_{\rH_n} \le R \right\}$, $R > 0$. Moreover
\begin{align}
\label{eq:6.7}
\langle B_n(u), u \rangle_\rH & = 0, \quad \quad u \in \rH_n,\\
\label{eq:6.8}
\left|((B_n(u), u))\right| & \le \frac12 |u|^2_{\rm{D}(\A )} + \frac12 \big||u|\cdot|\nabla u|\big|^2_{L^2}, \quad u \in \rH_n,
\end{align}
where $B_n(u):= B_n(u,u)$ and $((\cdot, \cdot))$ is defined in \eqref{eq:2.1}.
\end{lemma}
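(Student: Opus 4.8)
The plan is to handle the four assertions in turn, the organising observation being that $P_n$ is an orthogonal (hence self-adjoint) projection onto $\rH_n$ that fixes every element of $\rH_n$, and that both $u$ and $\A u=-\Delta u$ lie in $\rH_n$ whenever $u\in\rH_n$ (Lemmas~\ref{lemma6.2} and~\ref{lemma6.3}). This lets $B_n=P_nB$ inherit its algebraic and analytic behaviour directly from $B$, so that \eqref{eq:6.7} and \eqref{eq:6.8} reduce to facts already known for $B$.

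\emph{Well-definedness and a product estimate.} First I would exploit that every $v\in\rH_n$ has Fourier support in $B_n$ and is therefore smooth, with all its Lebesgue norms controlled by $\|v\|_{\rH_n}$. Concretely, by the Bernstein inequalities (equivalently, by the Gagliardo--Nirenberg inequality \eqref{eq:2.9} together with the equivalence of Sobolev norms on the finite-band space $\rH_n$ recorded in \eqref{eq:6.4}), there is $C_n>0$ with $\|v\|_{L^4}\le C_n\|v\|_{\rH_n}$ and $\|\nabla v\|_{L^4}\le C_n\|v\|_{\rH_n}$. Hence, for $u,\v\in\rH_n$, H\"older's inequality gives $|(u\cdot\nabla)\v|_{L^2}\le\|u\|_{L^4}\|\nabla\v\|_{L^4}<\infty$, so $B(u,\v)=\Pi[(u\cdot\nabla)\v]\in\rH$ and consequently $B_n(u,\v)=P_nB(u,\v)\in\rH_n$, which is well-definedness. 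Since $\Pi$ and $P_n$ are contractions on $L^2$, the same computation yields
\[
\|B_n(u,\v)\|_{\rH_n}\le |(u\cdot\nabla)\v|_{L^2}\le C_n^2\,\|u\|_{\rH_n}\|\v\|_{\rH_n},\qquad u,\v\in\rH_n.
\]

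\emph{Lipschitz continuity on $\mathbb{B}_R$.} Using bilinearity I would write, for $u_1,u_2\in\mathbb{B}_R$,
\[
B_n(u_1)-B_n(u_2)=B_n(u_1-u_2,u_1)+B_n(u_2,u_1-u_2),
\]
and apply the product estimate above to each term, obtaining $\|B_n(u_1)-B_n(u_2)\|_{\rH_n}\le C_n^2\big(\|u_1\|_{\rH_n}+\|u_2\|_{\rH_n}\big)\|u_1-u_2\|_{\rH_n}\le 2C_n^2R\,\|u_1-u_2\|_{\rH_n}$, the asserted Lipschitz bound on the ball of radius $R$.

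\emph{The identities \eqref{eq:6.7} and \eqref{eq:6.8}.} For \eqref{eq:6.7}, since $P_n$ is self-adjoint on $\rH$ and $P_nu=u$ for $u\in\rH_n$, I would compute
\[
\langle B_n(u),u\rangle_\rH=\langle P_nB(u),u\rangle_\rH=\langle B(u),P_nu\rangle_\rH=\langle B(u),u\rangle_\rH=0,
\]
the last equality being the antisymmetry $b(u,u,u)=0$ for divergence-free $u$ already recalled in the proof of Lemma~\ref{lemma2.1}. For \eqref{eq:6.8} the key step, and the one I expect to require the most care, is to remove $P_n$ from inside the gradient form $(( \cdot , \cdot ))$. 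Since $B_n(u)\in\rH_n$ and $u\in\rH_n$ are smooth, integration by parts gives $((B_n(u),u))=\langle B_n(u),-\Delta u\rangle_{L^2}$; now $-\Delta u=\A u\in\rH_n$ by Lemmas~\ref{lemma6.2} and~\ref{lemma6.3}, so using again that $P_n$ is self-adjoint and fixes $\rH_n$,
\[
((B_n(u),u))=\langle P_nB(u),\A u\rangle_\rH=\langle B(u),P_n\A u\rangle_\rH=\langle B(u),\A u\rangle_\rH=\langle B(u),-\Delta u\rangle_{L^2}.
\]
Because $\langle B(u),u\rangle_\rH=0$, this equals $\langle B(u),(I-\Delta)u\rangle_\rH=\langle B(u),u\rangle_\rV$, whence \eqref{eq:6.8} follows at once from the estimate \eqref{eq:2.14} of Lemma~\ref{lemma2.1}(i), which applies since $\rH_n\subset\rD(\A)$.
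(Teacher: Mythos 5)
Your proposal is correct and follows essentially the same route as the paper: well-definedness and the Lipschitz bound on $\mathbb{B}_R$ via a product estimate exploiting the equivalence of Sobolev norms on the band-limited space $\rH_n$, identity \eqref{eq:6.7} via self-adjointness of $P_n$ together with $P_nu=u$, and \eqref{eq:6.8} by removing $P_n$ from $-\Delta u$ using Lemma~\ref{lemma6.3}. The only cosmetic differences are that the paper bounds $|(u\cdot\nabla)\v|_{L^2}$ by $\|u\|_{L^\infty}|\nabla \v|_{L^2}$ via the embedding $H^2\hookrightarrow L^\infty$ rather than your $L^4$--$L^4$ H\"older/Bernstein pairing, and it re-derives the final Cauchy--Schwarz/Young estimate inline instead of citing Lemma~\ref{lemma2.1}(i) through the identity $\langle B(u),-\Delta u\rangle_{\rH}=\langle B(u),u\rangle_{\rV}$; your shortcut is legitimate since $\rH_n\subset\rD(\A)$.
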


\begin{proof}
We will show that $\forall\, u,\v \in \rH_n, B(u,\v) \in \rH$. Since $u, \v \in \rm{H}_n$, $u, \v \in \rm{D}(\A )$. Thus,
\[
|B(u,\v)|_{\rH} = |\Pi\left(u \cdot \nabla \v \right)|_{\rH} \le |u \cdot \nabla \v|_{L^2} \le \|u\|_{L^\infty} |\nabla \v|_{L^2}\,.\]
Since $H^{s,2}(\R^d) \hookrightarrow L^\infty(\R^d)$ for every $s > \frac{d}{2}$, there exists a constant $C > 0$ such that
\[\|u\|_{L^\infty(\R^3)} \le C \|u\|_{H^{s,2}(\R^3)}, \quad \mbox{for } s > \tfrac32.\]
In particular, it holds true for $s = 2$. Thus, we have
\[|B(u,\v)|_\rH \le C \|u\|_{H^2}\|\v\|_{H^1}\,.\]
Now by \eqref{eq:6.4} and \eqref{eq:6.15}
\begin{equation}
\label{eq:6.9}
|B(u,\v)|_\rH \le K_n \|u\|_{\rH_n} \|\v\|_{\rH_n} < \infty\,.
\end{equation}
Hence $B(u, \v) \in \rH$, which implies $B_n(u,\v) \in \rH_n$ and is well defined.\\
Let $R > 0$ be fixed and $u,\v \in \mathbb{B}_R$. Then, as before, using the embedding $H^2 \hookrightarrow L^\infty$, we have
\begin{align*}
&\|B_n(u) - B_n(\v)\|_{\rH_n} \le |B(u) - B(\v)|_\rH \le |u\cdot\nabla u - \v \cdot \nabla \v|_{L^2} \\
&\quad \le \|u-\v\|_{L^{\infty}}|\nabla u|_{L^2} + \|\v\|_{L^\infty}|\nabla(u-\v)|_{L^2} \\
&\quad \le \|u- \v\|_{H^2}\|u\|_{H^1} + \|\v\|_{H^2}\|u - \v\|_{H^1}\,.
\end{align*}
Since $u, \v \in \mathbb{B}_R$, and using \eqref{eq:6.4} and \eqref{eq:6.15}, we get
\begin{equation}
\label{eq:6.11}
\|B_n(u) - B_n(\v)\|_{\rH_n} \le C_{n,R}\|u-\v\|_{\rH_n}, \quad u, \v \in \mathbb{B}_R\,.
\end{equation}
Since $u \in \rH_n$ and $P_n$ is the orthogonal projection on $\rH$,
\begin{align*}
\langle B_n(u), u \rangle_\rH = \langle P_n(B(u,u)), u \rangle_\rH = \langle B(u,u), P_n u \rangle_\rH = \langle B(u,u) , u \rangle_\rH = 0\,.
\end{align*}
Also by using the definition of $((\cdot, \cdot))$ and the Cauchy-Schwarz inequality we get
\begin{align*}
\left|((B_n(u),u))\right| & = \left|\langle B_n(u), - \Delta\,u \rangle_\rH \right| = \left| \langle B(u,u), - P_n(\Delta\,u) \rangle_\rH \right|  \\
& = \left| \langle B(u,u), - \Delta\,u\rangle_\rH \right| \le |B(u,u)|_\rH\,|(- \Delta\,u)|_\rH \\
& \le \frac12 |u|^2_{\mathrm{D}(\A )} + \frac12 \big||u|\cdot|\nabla u|\big|^2_{L^2} \,.
\end{align*}
\end{proof}

\begin{lemma}
\label{lemma6.6}
The map
\begin{align}
\label{eq:6.12}
g_n  :\,\rH_n \ni u \mapsto P_n\left[ \Pi(g(|u|^2)\,u)\right] \in \rH_n\,,
\end{align}
is well defined and Lipschitz on a ball $\mathbb{B}_R$, $R > 0$. Moreover
\begin{equation}
\label{eq:6.13}
\begin{cases}
((-g_n(u),u)) \le C_N |\nabla u|^2_{L^2} - 2 \big||u|\cdot|\nabla u|\big|^2_{L^2}, \quad u \in \rH_n,\\
\langle -g_n(u), u \rangle_\rH \le - \|u\|^4_{L^4} + C_N|u|^2_\rH,  \quad \quad \quad \,\quad u \in \rH_n\,.
\end{cases}
\end{equation}
\end{lemma}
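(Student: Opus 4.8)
The plan is to mirror the structure of the proof of Lemma~\ref{lemma6.5} for the term $B_n$, reducing every claim to the pointwise bounds \eqref{eq:g_bounded}--\eqref{eq:g_lipschitz} on $g$ and to the energy estimates already established in Lemma~\ref{lemma2.1}(ii). For well-definedness, I would first observe that any $u \in \rH_n$ lies in $\rm{D}(\A)$ by Lemma~\ref{lemma6.3}, hence in $H^2 \hookrightarrow L^\infty$. Using $|g(r)| \le r$ from \eqref{eq:g_bounded} gives the pointwise bound $|g(|u|^2)u| \le |u|^3 \le \|u\|_{L^\infty}^2 |u|$, so that $|g(|u|^2)u|_{L^2} \le \|u\|_{L^\infty}^2 |u|_{L^2} < \infty$. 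Thus $g(|u|^2)u \in L^2$, $\Pi(g(|u|^2)u) \in \rH$, and the orthogonal projection $P_n$ yields $g_n(u) = P_n[\Pi(g(|u|^2)u)] \in \rH_n$.

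For the Lipschitz bound, since $P_n$ and $\Pi$ are orthogonal projections, hence contractions, it suffices to estimate $|g(|u|^2)u - g(|\v|^2)\v|_{L^2}$. I would split this as
\[g(|u|^2)u - g(|\v|^2)\v = g(|u|^2)(u-\v) + \big(g(|u|^2) - g(|\v|^2)\big)\v,\]
bounding the first summand by $\|u\|_{L^\infty}^2 |u-\v|_{L^2}$ via \eqref{eq:g_bounded}, and the second, using the Lipschitz estimate \eqref{eq:g_lipschitz} together with $\big||u|^2 - |\v|^2\big| \le (|u|+|\v|)|u-\v|$, by $2(\|u\|_{L^\infty}+\|\v\|_{L^\infty})\|\v\|_{L^\infty}|u-\v|_{L^2}$. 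On the ball $\mathbb{B}_R$ the $L^\infty$-norms are controlled through the embedding $H^2 \hookrightarrow L^\infty$ and the norm equivalence \eqref{eq:6.4} on $\rH_n$, producing a constant $C_{n,R}$ with $\|g_n(u)-g_n(\v)\|_{\rH_n} \le C_{n,R}\|u-\v\|_{\rH_n}$.

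The two inequalities in \eqref{eq:6.13} I expect to be the cleanest part. For $u \in \rH_n$ one has $-\Delta u = \A u \in \rH_n$ by Lemmas~\ref{lemma6.2} and \ref{lemma6.3}, so using that the orthogonal projection $P_n$ fixes $\rH_n$ and $\Pi$ fixes $\rH$, integration by parts gives
\[((-g_n(u),u)) = \langle -g_n(u), -\Delta u\rangle_\rH = \langle -g(|u|^2)u, -\Delta u\rangle_{L^2} = ((-g(|u|^2)u, u)),\]
and, in the same way, $\langle -g_n(u), u\rangle_\rH = \langle -g(|u|^2)u, u\rangle_{L^2}$. Both estimates in \eqref{eq:6.13} then follow immediately from Lemma~\ref{lemma2.1}(ii), i.e. from \eqref{eq:g_estimate}, since the projections have been removed by the orthogonality arguments above.

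The main obstacle is the Lipschitz estimate: it is the only place where the cubic structure of $g(|u|^2)u$ must be unwound carefully, and it is essential there to exploit that the Fourier-support restriction defining $\rH_n$ makes all Sobolev norms on $\rH_n$ equivalent (this is the source of the $n$-dependence of the constant $C_{n,R}$). Everything else reduces, via the self-adjointness and idempotence of $P_n$ and $\Pi$ and the inclusion $\A u \in \rH_n$, to facts already proved in Lemmas~\ref{lemma2.1} and \ref{lemma6.3}.
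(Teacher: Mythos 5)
Your proposal is correct, and the part that carries the real content of the lemma --- the two inequalities \eqref{eq:6.13} --- is handled exactly as in the paper: you strip off $P_n$ and $\Pi$ using their self-adjointness, $P_n u = u$, and the fact that $-\Delta u = \A u \in \rH_n$ (Lemmas~\ref{lemma6.2} and \ref{lemma6.3}), and then quote Lemma~\ref{lemma2.1}(ii). Where you genuinely diverge is in the well-definedness and Lipschitz estimates. You route everything through $\rH_n \subset \mathrm{D}(\A) \subset H^2 \hookrightarrow L^\infty$, peeling off $\|u\|_{L^\infty}^2$ from the cubic nonlinearity, whereas the paper stays one derivative lower: it bounds $|g(|u|^2)u|_{L^2} \le \|u\|_{L^6}^3$ via \eqref{eq:g_bounded}, uses H\"older with three $L^6$ factors for the difference (same decomposition as yours, combined with \eqref{eq:g_lipschitz}), and invokes only $H^1 \hookrightarrow L^6$. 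Both arguments are valid and both constants are necessarily $n$-dependent, since each ends with a norm-equivalence step on the Fourier ball ($\|u\|_{H^1} \le C_n\|u\|_{\rH_n}$ in the paper's case, your \eqref{eq:6.4} in the $H^2$ case). Your route is marginally more economical here because $\rH_n \subset \mathrm{D}(\A)$ is already on the table; the paper's $L^6$ route buys two things instead: it shows the estimates hold under the weaker $H^1$ regularity (so the same computation transfers verbatim to situations where only $\rV$-bounds are available, as in Lemma~\ref{lemma6.18}), and it establishes the inequality \eqref{eq:6.15} as a by-product, which the paper then reuses in Lemma~\ref{lemma6.8} and Proposition~\ref{prop6.9}.
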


\begin{proof}
Let $u \in \rH_n$, then by the definition of $g$ \eqref{eq:g_new}, the estimate \eqref{eq:g_bounded} and the embedding of $H^1 \hookrightarrow L^6$, we have
\begin{align}
\label{eq:6.14}
& \|g_n(u)\|_{\rH_n}  = \left\|P_n\left[ \Pi(g(|u|^2)\,u)\right] \right\|_{\rH_n} \le |\Pi(g(|u|^2)\,u)|_\rH \le |g(|u|^2)\,u|_{L^2} \nonumber \\
&\; = \left[ \int_{\R^3} \left|g(|u(x)|^2)\right|^2\,|u(x)|^2\,dx \right]^{1/2} \le \left[\int_{\R^3} |u(x)|^6 \,dx \right]^{1/2} = \|u\|_{L^6}^3  \nonumber \\
&\; \le C\|u\|^3_{H^1} = C \left[\int_{\R^3}(1+|\xi|^2)|\hat{u}(\xi)|^2\,d\xi \right]^{3/2} \nonumber \\
&\; = C \left[ \int_{|\xi| \le n}(1+|\xi|^2)|\hat{u}(\xi)|^2\,d\xi \right]^{3/2} \nonumber \\
&\;  \le C(1+n^2)^{3/2} \left[ \int_{|\xi| \le n} |\hat{u}(\xi)|^2\,d\xi \right]^{3/2}  = C(1+n^2)^{3/2}\left[\int_{\R^3}|\hat{u}(\xi)|^2\,d\xi \right]^{3/2} \nonumber \\
&\; = C(1+n^2)^{3/2}|u|^3_{L^2} = C_n\|u\|^3_{\rH_n} < \infty\,.
\end{align}
Therefore $g_n : \rH_n \to \rH_n$ is well defined. From above we can also infer that there exists a constant $C_n > 0$ depending on $n$ such that
\begin{equation}
\label{eq:6.15}
\|u\|_{H^1} \le C_n \|u\|_{\rH_n}, \quad \quad u \in \rH_n\,.
\end{equation}
Let $R > 0$ be fixed and $u, \v \in \mathbb{B}_R$. Then, using \eqref{eq:g_lipschitz}, we have
\begin{align*}
\|& g_n(u) - g_n(\v)\|_{\rH_n} \le |\Pi(g(|u|^2)\,u) - \Pi(g(|\v|^2)\,\v)|_\rH \\
& \le |g(|u|^2)\,u - g(|\v|^2)\, \v|_{L^2} \\
& \le \left|\left(g(|u|^2) - g(|\v|^2) \right)\v\right|_{L^2} + |g(|u|^2)(u-\v)|_{L^2} \\
& \le  8 \left[\int_{\R^3}|u(x) - \v(x)|^2\left[|u(x)|^2+|\v(x)|^2\right]|\v(x)|^2\,dx\right]^{1/2}\\
& \quad + \left[\int_{\R^3}|u(x)|^4|u(x)-\v(x)|^2\,dx \right]^{1/2}\,.
\end{align*}
Since $H^1 \hookrightarrow L^6$, we obtain
\begin{align*}
&\| g_n(u) - g_n(\v)\|_{\rH_n}  \le  \left[\int_{\R^3}|u(x)|^6\,dx \right]^{1/3}\left[\int_{\R^3}|u(x)- \v(x)|^6\,dx \right]^{1/6} \\
&\quad  + 8 \left[\int_{\R^3}|u(x) - \v(x)|^6\,dx \right]^{1/6} \Bigl[\bigl[\int_{\R^3}|u(x)|^6\,dx \bigr]^{1/3}
\\
& \qquad \qquad +  \bigl[\int_{\R^3}|\v(x)|^6\,dx \bigr]^{1/3} \Bigr]^{1/2} \left[\int_{\R^3}|\v(x)|^6\,dx\right]^{1/6}\\
& \quad  =  \left[\|u\|^2_{L^6} \|u-\v\|_{L^6} + 8 \|u-\v\|_{L^6} \left(\|u\|^2_{L^6} + \|\v\|^2_{L^6}\right)^{1/2}\|\v\|_{L^6}\right] \\
& \quad \le C\|u-\v\|_{H^1} \left[\|u\|^2_{H^1} + 8 \left(\|u\|^2_{H^1} + \|\v\|^2_{H^1}\right)^{1/2}\|\v\|_{H^1}\right]\,.
\end{align*}
Since $u , \v \in \mathbb{B}_R$, using \eqref{eq:6.15}, we get
\begin{align}
\label{eq:6.16}
\|g_n(u) - g_n(\v)\|_{\rH_n} & \le \widehat{C}_n \|u - \v\|_{\rH_n} \left[\|u\|^2_{\rH_n} + 8 \left(\|u\|^2_{\rH_n} +\|\v\|^2_{\rH_n}\right)^{1/2}\|\v\|_{\rH_n}\right] \nonumber \\
& \le C_{n,R}\|u-\v\|_{\rH_n}\,.
\end{align}
Let $u \in \rH_n$, then using Lemmas~\ref{lemma6.2} and \ref{lemma6.3}, the definitions of $g_n$ and $((\cdot, \cdot))$ we get
\begin{align*}
((-g_n(u), u)) & = -\langle g_n(u), - \Delta\,u\rangle_\rH = - \langle \Pi(g(|u|^2)\,u, P_n(-\Delta\,u) \rangle_\rH  \\
& = - \langle g(|u|^2)\,u , \Pi(-\Delta\,u)\rangle_{L^2} = - \langle g(|u|^2)\,u, - \Delta\,u \rangle_{L^2}\,.
\end{align*}
Also, note that
\begin{align*}
&\langle - g_n(u),u \rangle_\rH  = - \langle \Pi(g(|u|^2))\,u, P_n\,u \rangle_\rH \\
&\; = -\langle g(|u|^2)\,u , \Pi(u) \rangle_{L^2} = -\langle g(|u|^2)\,u, u \rangle_{L^2}\,.
\end{align*}
Hence the inequalities \eqref{eq:6.13} can be established with the help of the above two relations and Lemma~\ref{lemma2.1} (ii). This completes the proof of the lemma.
\end{proof}

\begin{lemma}
\label{lemma6.7}
Let $f$ satisfy the assumption $({\bf H1})$.Then the map
\begin{align}
\label{eq:6.17}
f_n  : \,& \rH_n \ni u \mapsto P_n\left[ \Pi(f(u))\right] \in \rH_n
\end{align}
is well defined and Lipschitz.
\end{lemma}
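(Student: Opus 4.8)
The plan is to prove well-definedness and the global Lipschitz property separately, exploiting the two parts of assumption $(\textbf{H1})$ together with the fact that both $\Pi$ and $P_n$ are orthogonal projections and hence contractions. Because $f$ is \emph{globally} Lipschitz in its second variable, I expect the resulting map $f_n$ to be globally Lipschitz as well, which is why (in contrast to Lemmas~\ref{lemma6.5} and~\ref{lemma6.6}) no restriction to a ball $\mathbb{B}_R$ should be needed.

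First I would establish that $f_n$ maps $\rH_n$ into $\rH_n$. For $u \in \rH_n \subset L^2(\dom;\R^3)$, the continuity of $f$ makes $x \mapsto f(x,u(x))$ measurable, and the growth bound in $(\textbf{H1})$ gives $|f(x,u(x))|^2 \le C_f |u(x)|^2 + b_f(x)$ pointwise. Integrating over $\dom$ and using $b_f \in L^1(\R^3)$ yields
\begin{equation*}
|f(u)|^2_{L^2} \le C_f |u|^2_{L^2} + \|b_f\|_{L^1} < \infty,
\end{equation*}
so $f(u) \in L^2(\dom;\R^3)$. Since $\Pi : L^2(\dom;\R^3) \to \rH$ and $P_n : \rH \to \rH_n$ are bounded, $f_n(u) = P_n[\Pi(f(u))] \in \rH_n$ is well defined.

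Next I would verify the Lipschitz estimate. For $u, \v \in \rH_n$, the contractivity of the orthogonal projections $P_n$ and $\Pi$ (each of operator norm at most one) gives
\begin{equation*}
\|f_n(u) - f_n(\v)\|_{\rH_n} \le |\Pi(f(u)) - \Pi(f(\v))|_\rH \le |f(u) - f(\v)|_{L^2}.
\end{equation*}
The uniform-in-$x$ Lipschitz bound $|f(x,u_1) - f(x,u_2)| \le C_f |u_1 - u_2|$ from $(\textbf{H1})$ then yields, after squaring and integrating,
\begin{equation*}
|f(u) - f(\v)|^2_{L^2} = \int_{\dom} |f(u(x)) - f(\v(x))|^2\,dx \le C_f^2 \int_{\dom} |u(x) - \v(x)|^2\,dx = C_f^2 |u - \v|^2_{L^2}.
\end{equation*}
Since the $\rH_n$-norm is inherited from $\rH$, that is from $L^2$, combining the two displays gives $\|f_n(u) - f_n(\v)\|_{\rH_n} \le C_f \|u - \v\|_{\rH_n}$ for all $u, \v \in \rH_n$.

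I anticipate no genuine obstacle here: the global, $n$-independent Lipschitz constant $C_f$ descends directly from the global Lipschitz assumption on $f$, and the projections only sharpen the constant. The sole point requiring minor care is the integrability bookkeeping ensuring $f(u) \in L^2$, which is supplied by the $L^1$ control $b_f$ on the inhomogeneous part of the growth bound in $(\textbf{H1})$.
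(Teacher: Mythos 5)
Your proposal is correct and follows essentially the same route as the paper's proof: contractivity of the orthogonal projections $P_n$ and $\Pi$ reduces everything to $L^2$-estimates, the growth bound in $(\textbf{H1})$ with $b_f \in L^1$ gives well-definedness, and the uniform-in-$x$ Lipschitz bound on $f$ yields the global, $n$-independent Lipschitz constant $C_f$, exactly as in \eqref{eq:6.18}. Your added remarks on measurability and on why no ball restriction $\mathbb{B}_R$ is needed (unlike Lemmas~\ref{lemma6.5} and~\ref{lemma6.6}) are correct but do not change the argument.
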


\begin{proof}
Let $u \in \rH_n$, then by the assumption $({\bf H1})$,
\begin{align*}
\|f_n(u)\|_{\rH_n} & \le |\Pi(f(u))|_\rH \le |f(u)|_{L^2} \\
& \le C\left(C_f |u|_{L^2} + |b_f|^{1/2}_{L^1}\right) = C\left(C_f \|u\|_{\rH_n} + |b_f|^{1/2}_{L^1}\right) < \infty\,.
\end{align*}
Therefore $f_n : \, \rH_n \to \rH_n$ is well defined. Let $u, \v \in \rH_n$, then
\begin{align}
\label{eq:6.18}
\|f_n(u) - f_n(\v)\|_{\rH_n} & \le |\Pi f(u) - \Pi f(\v)|_\rH \le |f(u) - f(\v)|_{L^2} \nonumber \\
& \le C_f |u-\v|_{L^2} = C_f\|u-\v\|_{\rH_n}\,.
\end{align}
\end{proof}

\begin{lemma}
\label{lemma6.8}
Let $\sigma$ satisfy the assumption $({\bf H2})$. Then the map
\begin{align}
\label{eq:6.19}
G_n  : \, \rH_n \ni u \mapsto P_n \circ (G(u)) \in \mathcal{L}_2(\ell^2;\rH_n)
\end{align}
is well defined and Lipschitz.
\end{lemma}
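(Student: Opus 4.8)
The plan is to exploit the feature that distinguishes $G_n$ from the maps $B_n$ and $g_n$ treated above: the map $G$, and therefore $G_n = P_n \circ G$, depends \emph{linearly} on $u$. Indeed, for fixed $t \in [0,T]$ each component $G_j(t,u) = \Pi[(\sigma_j(t) \cdot \nabla)u]$ is linear in $u$, so $u \mapsto G(t,u)$ and hence $u \mapsto G_n(u)$ are linear maps. Because of this, both well-definedness and the Lipschitz property will reduce to a single Hilbert--Schmidt bound, and the Lipschitz constant will be \emph{global} rather than restricted to a ball $\mathbb{B}_R$.

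First I would verify well-definedness. Since $P_n$ is an orthogonal projection on $\rH$ it has operator norm at most one, so for $u \in \rH_n$,
\[
\|G_n(u)\|^2_{\mathcal{L}_2(\ell^2;\rH_n)} = \sum_{j=1}^\infty |P_n G_j(t,u)|^2_\rH \le \sum_{j=1}^\infty |G_j(t,u)|^2_\rH = \|G(t,u)\|^2_{\mathcal{L}_2(\ell^2;\rH)}.
\]
By Lemma~\ref{lemma6.3} one has $\rH_n \subset \rD(\A)$, so the estimate \eqref{eq:2.15} of Lemma~\ref{lemma2.1}(iii) applies and gives $\|G_n(u)\|^2_{\mathcal{L}_2(\ell^2;\rH_n)} \le \tfrac14 |\nabla u|^2_{L^2} < \infty$. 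Hence $G_n(u) \in \mathcal{L}_2(\ell^2;\rH_n)$ and $G_n$ is well defined.

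For the Lipschitz property, linearity yields $G_n(u) - G_n(\v) = G_n(u - \v)$, so running the same chain of inequalities with $u - \v$ in place of $u$ produces
\[
\|G_n(u) - G_n(\v)\|_{\mathcal{L}_2(\ell^2;\rH_n)} \le \tfrac12 |\nabla(u-\v)|_{L^2} \le \tfrac12 \|u - \v\|_{H^1}.
\]
Finally I would invoke the inequality \eqref{eq:6.15}, namely $\|w\|_{H^1} \le C_n \|w\|_{\rH_n}$ for $w \in \rH_n$, to conclude $\|G_n(u) - G_n(\v)\|_{\mathcal{L}_2(\ell^2;\rH_n)} \le \tfrac{C_n}{2} \|u - \v\|_{\rH_n}$, so that $G_n$ is globally Lipschitz.

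The point to emphasise — and the reason there is essentially no obstacle here — is that linearity removes the need both for the Sobolev embeddings ($H^2 \hookrightarrow L^\infty$ and $H^1 \hookrightarrow L^6$) and for the localisation to a ball $\mathbb{B}_R$ that were unavoidable for the nonlinear maps $B_n$ and $g_n$. The only ingredients are the contractivity of the orthogonal projection $P_n$, the inclusion $\rH_n \subset \rD(\A)$ from Lemma~\ref{lemma6.3}, and the bound \eqref{eq:2.15}, which itself rests on the hypothesis $\|\sigma(t,x)\|^2_{\ell^2} \le \tfrac14$ from $(\textbf{H2})$.
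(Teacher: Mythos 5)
Your proposal is correct and follows essentially the same route as the paper: the paper's proof likewise rests on the contractivity of $P_n$, the bound $\|\sigma(x)\|^2_{\ell^2} \le \tfrac14$ from $(\textbf{H2})$ (written out as the integral estimate underlying \eqref{eq:2.15} rather than cited), and the norm equivalence \eqref{eq:6.15}, with the linearity of $G$ used implicitly when bounding $\|G(u)-G(\v)\|_{\mathcal{L}_2(\ell^2;\rH)}$ by $\tfrac12\|u-\v\|_{H^1}$. Your only departures are presentational — making the linearity explicit and invoking \eqref{eq:2.15} via $\rH_n \subset \rD(\A)$ instead of recomputing it.
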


\begin{proof}
Let $u \in \rH_n$, then
\begin{align*}
\|G_n(u)\|_{\mathcal{L}_2(\ell^2;\rH_n)} & \le \|(G(u))\|_{\mathcal{L}_2(\ell^2; \rH)} \le \left[ \int_{\R^3} \|\sigma(x)\|^2_{\ell^2}|\nabla u(x)|^2\,dx\right]^{1/2} \\
& \le \left[ \sup_{x \in \R^3} \|\sigma(x)\|^2_{\ell^2} \right]^{1/2} |\nabla u |_{L^2} \le \frac12 \|u\|_{H^1}\,.
\end{align*}
Using \eqref{eq:6.15}, we infer
\begin{equation}
\label{eq:6.20}
\|G_n(u)\|_{\mathcal{L}_2(\ell^2;\rH_n)} \le C_n \|u\|_{\rH_n} < \infty\,.
\end{equation}
Thus $G_n : \, \rH_n \to \mathcal{L}_2(\ell^2; \rH_n)$ is well defined. Let $u, \v \in \rH_n$, then
\begin{align*}
& \|G_n(u) - G_n(\v)\|_{\mathcal{L}_2(\ell^2; \rH_n)}  \le \|G(u) -  G(\v)\|_{\mathcal{L}_2(\ell^2; \rH)} \\
&\quad  \le \left[ \int_{\R^3} \sum_{j=1}^\infty |\sigma_j(x)|^2|\nabla (u - \v)(x)|^2\,dx \right]^{1/2} \\
& \quad = \left[\int_{\R^3} \|\sigma(x)\|^2_{\ell^2}|\nabla(u - \v)(x)|^2\,dx \right]^{1/2} \\
& \quad \le \left(\sup_{x \in \R^3} \|\sigma(x)\|^2_{\ell^2} \right)^{1/2} | \nabla (u -\v)|_{L^2} \le \frac12 \|u - \v\|_{H^1}\,.
\end{align*}
Using \eqref{eq:6.15}, we infer
\begin{equation}
\label{eq:6.21}
\|G_n(u) - G_n(\v)\|_{\mathcal{L}_2(\ell^2; \rH_n)} \le C_n \|u- \v\|_{\rH_n}\,.
\end{equation}
\end{proof}

\begin{proposition}
\label{prop6.9}
$L^2, H^1$ and $\rm{D}(\A )$ norms on $\rH_n$ are equivalent (with constants depending on $n$).
\end{proposition}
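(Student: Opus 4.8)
The plan is to represent all three norms through the Plancherel identity as weighted $L^2$-integrals of the Fourier transform $\hat u$, and then to exploit the fact that, by the very definition of $\rH_n$, the support of $\hat u$ is contained in the bounded ball $B_n = \{\xi : |\xi| \le n\}$, on which the Fourier multipliers defining the three norms are mutually comparable. This reduces the whole statement to an elementary estimate on the weights $1$, $1+|\xi|^2$ and $1+|\xi|^4$.

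First I would record the Fourier expressions. By Plancherel, for $u \in \rH_n$,
\[
|u|^2_{L^2} = \int_{\R^3} |\hat u(\xi)|^2\,d\xi = \int_{B_n} |\hat u(\xi)|^2\,d\xi,
\qquad
\|u\|^2_{H^1} = \int_{B_n} (1+|\xi|^2)|\hat u(\xi)|^2\,d\xi,
\]
the second using the Fourier definition of the $H^1$-norm. For the $\rm{D}(\A)$-norm, Lemma~\ref{lemma6.3} gives $\rH_n \subset \rm{D}(\A)$, so that $|\cdot|_{\rm{D}(\A)}$ is well defined on $\rH_n$, and Lemma~\ref{lemma6.2} gives $\A u = -\Delta u$, hence $\fcal(\A u)(\xi) = |\xi|^2 \hat u(\xi)$ and
\[
|u|^2_{\rm{D}(\A)} = |u|^2_\rH + |\A u|^2_{L^2} = \int_{B_n}(1+|\xi|^4)|\hat u(\xi)|^2\,d\xi.
\]

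Then, on $B_n$, I would use the pointwise bounds $1 \le 1+|\xi|^2 \le 1+n^2$ and $1 \le 1+|\xi|^4 \le 1+n^4$. Integrating each against $|\hat u(\xi)|^2$ immediately yields
\[
|u|^2_{L^2} \le \|u\|^2_{H^1} \le (1+n^2)\,|u|^2_{L^2},
\qquad
|u|^2_{L^2} \le |u|^2_{\rm{D}(\A)} \le (1+n^4)\,|u|^2_{L^2}.
\]
Since the norm on $\rH_n$ inherited from $\rH$ satisfies $\|u\|_{\rH_n} = |u|_{L^2}$, all three norms are equivalent on $\rH_n$ with constants depending only on $n$. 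One may also note that the two nontrivial directions here are precisely the estimates \eqref{eq:6.15} and \eqref{eq:6.4} already established, so the proposition merely packages those together with the trivial reverse inequalities.

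There is essentially no serious obstacle: the entire content is the observation that bounded Fourier support makes the multipliers comparable, uniformly over $u \in \rH_n$. The only points requiring care are invoking $\rH_n \subset \rm{D}(\A)$ (so that the $\rm{D}(\A)$-norm is defined) and $\A u = -\Delta u$ (so that it takes the clean Fourier form), both of which are supplied by Lemmas~\ref{lemma6.2} and~\ref{lemma6.3}.
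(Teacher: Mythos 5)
Your proof is correct and takes essentially the same route as the paper: Parseval's identity gives $|u|_{L^2} = \|u\|_{\rH_n}$, and the compact Fourier support in $B_n$ makes the multipliers $1+|\xi|^2$ and $1+|\xi|^4$ comparable to $1$, which is exactly the content of the estimates \eqref{eq:6.15}, \eqref{eq:6.5} and \eqref{eq:6.4} that the paper cites and that you re-derive inline. The only difference is presentational: the paper assembles the previously proved inequalities, while you recompute the weighted Fourier integrals directly, correctly invoking Lemmas~\ref{lemma6.2} and~\ref{lemma6.3} so that the $\rm{D}(\A)$-norm is defined and takes the clean form $\int_{B_n}(1+|\xi|^4)|\hat u(\xi)|^2\,d\xi$.
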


\begin{proof}
Let $u \in \rH_n$, then using the Parseval's identity
\begin{equation}
\label{eq:6.22}
|u|_{L^2} = \left[ \int_{\R^3}|\hat{u}(\xi)|^2 \, d \xi \right]^{1/2} = \left[\int_{|\xi| \le n}|\hat{u}(\xi)|^2\, d\xi \right]^{1/2} = \|u\|_{\rH_n}\,.
\end{equation}
Thus if $u \in \rH_n$ then $L^2$ and $\rH_n$ have equal norms. The equivalence of $H^1$ and $\rH_n$ norms is established from \eqref{eq:6.15}. Using \eqref{eq:6.5} and \eqref{eq:6.22} we can establish equivalence of $\rm{D}(\A )$ and $\rH_n$ norms.
\end{proof}

As discussed earlier in the introduction instead of using the standard Galerkin approximation of SPDE on the finite dimensional space we will look at truncated SPDEs on infinite dimensional space $\rH_n$. For every $n \in \N$, we will establish the existence of a unique global solution to the truncated SPDE and obtain a'priori estimates in order to prove the tightness of measures on a suitable space.

In order to study the truncated SPDE on $\rH_n$ we project the SPDE \eqref{eq:2.13} on $\rH_n$ using $P_n$. The projected SPDE on $\rH_n$ is given by
\begin{equation}
\label{eq:6.23}
\begin{split}
du_n(t) & = -  \left[\A_n  u_n(t) + B_n(u_n(t)) + g_n(u_n(t)) - f_n(u_n(t)) \right]\,dt \\
& \hspace{1.5truecm} + G_n(u_n(t))dW(t),\\
u_n(0) & = P_n (u_0),
\end{split}
\end{equation}
where
$u_n \in \rH_n, u_0 \in \rV$ and other operators $\A _n, B_n, g_n, f_n$ and $G_n$ are as defined in Lemmas~\ref{lemma6.4} -- \ref{lemma6.8}.

\begin{lemma}
\label{lemma6.10}
Let us define $F :\, \rH_n \to \R$ by
\begin{equation}
\label{eq:6.24}
F(u) := \|G_n(u)\|_{\mathcal{L}_2(\ell^2;\rH_n)} + 2 \langle u, - \A_n u - B_n(u) - g_n(u) + f_n(u) \rangle_\rH\,.
\end{equation}
Then for every $u \in \rH_n$ there exists $K_1 > 0$, independent of $n$, such that
\begin{equation}
\label{eq:6.25}
F(u) \le K_1(1+\|u\|^2_{\rH_n})\,.
\end{equation}
\end{lemma}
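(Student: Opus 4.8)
The plan is to expand the inner product in \eqref{eq:6.24} into its four natural pieces, estimate each one using the results already established for the truncated operators, and keep careful track of the sign of every contribution. The guiding principle is that the dissipative terms coming from $\A_n$ and $g_n$ produce the negative quantities $-|\nabla u|^2_{L^2}$ and $-\|u\|^4_{L^4}$, which both absorb the positive contribution of the noise and let us discard several terms outright; what survives will be controlled purely by $|u|^2_{L^2}$. The decisive observation is that on $\rH_n$ one has $|u|_{L^2}=\|u\|_{\rH_n}$ by \eqref{eq:6.22}, so that no $n$-dependent equivalence constant ever enters.

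Concretely, I would argue term by term. For the Stokes term, Lemmas~\ref{lemma6.2} and \ref{lemma6.3} give $\A_n u=-\Delta u$ for $u\in\rH_n$, whence $\langle u,-\A_n u\rangle_\rH=\langle u,\Delta u\rangle_{L^2}=-|\nabla u|^2_{L^2}$ after integration by parts. The bilinear term vanishes, $\langle u,-B_n(u)\rangle_\rH=0$, by \eqref{eq:6.7}. For the taming term, the second inequality of \eqref{eq:6.13} yields $\langle u,-g_n(u)\rangle_\rH\le-\|u\|^4_{L^4}+C_N|u|^2_\rH$. For the forcing term, since $u\in\rH_n$ and $P_n,\Pi$ are orthogonal projections, $\langle u,f_n(u)\rangle_\rH=\langle u,f(u)\rangle_{L^2}$, and then Cauchy--Schwarz together with assumption $(\textbf{H1})$ gives $\langle u,f(u)\rangle_{L^2}\le\tfrac12|u|^2_{L^2}+\tfrac12\bigl(C_f|u|^2_{L^2}+|b_f|_{L^1}\bigr)$. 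Finally the noise term, read with the squared Hilbert--Schmidt norm as in the It\^o energy identity \eqref{eq:energy inequality}, is bounded by \eqref{eq:2.15} (equivalently by the computation in Lemma~\ref{lemma6.8}), which controls it by $\tfrac14|\nabla u|^2_{L^2}$.

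Collecting these estimates, the positive noise contribution $\tfrac14|\nabla u|^2_{L^2}$ is absorbed by $2\langle u,-\A_n u\rangle_\rH=-2|\nabla u|^2_{L^2}$, leaving $-\tfrac74|\nabla u|^2_{L^2}\le0$ and $-2\|u\|^4_{L^4}\le0$, both of which I would simply discard. The remaining terms are multiples of $|u|^2_{L^2}$ together with the constant $|b_f|_{L^1}$, so that $F(u)\le(2C_N+1+C_f)|u|^2_{L^2}+|b_f|_{L^1}$; rewriting $|u|^2_{L^2}=\|u\|^2_{\rH_n}$ via \eqref{eq:6.22} gives the claim with $K_1:=\max\{2C_N+1+C_f,\ |b_f|_{L^1}\}$. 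I expect no genuine difficulty in the individual estimates; the one point demanding care --- and indeed the whole purpose of the lemma --- is that $K_1$ be \emph{independent of $n$}. This is precisely why one must use the sign-definite bounds of Lemmas~\ref{lemma2.1}, \ref{lemma6.5} and \ref{lemma6.6} (whose constants depend only on $N$, $C_f$ and $b_f$) and the isometry \eqref{eq:6.22}, rather than the $n$-dependent norm equivalences \eqref{eq:6.5}, \eqref{eq:6.15} or \eqref{eq:6.20}, which would otherwise let the constant blow up as $n\to\infty$.
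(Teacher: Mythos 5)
Your proof is correct and takes essentially the same route as the paper's: expand $F$ term by term, use $\langle u, -\A_n u\rangle_\rH = -|\nabla u|^2_{L^2}$ and $\langle B_n(u), u\rangle_\rH = 0$, bound the taming and forcing contributions by the estimates of Lemmas~\ref{lemma6.6} and \ref{lemma6.7} together with $(\textbf{H1})$, absorb the noise term into the dissipation, and conclude via the identity $|u|_{L^2} = \|u\|_{\rH_n}$ of \eqref{eq:6.22} so that $K_1$ is $n$-independent. The only immaterial differences are that the paper keeps the taming contribution as the exact nonnegative quantity $2\big|\sqrt{g(|u|^2)}\,|u|\big|^2_{L^2}$ and discards it, rather than invoking the second inequality in \eqref{eq:6.13}, and bounds the squared noise term by $\tfrac14\|u\|^2_{\rH_n}$ instead of your (cleaner) $\tfrac14|\nabla u|^2_{L^2}$ from \eqref{eq:2.15}.
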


\begin{proof}
From the definition of $B_n, g_n$ and $f_n$, we have
\begin{align*}
&\|G_n(u)\|_{\mathcal{L}_2(\ell^2;\rH_n)}  + 2 \langle u, - \A_n u - B_n(u) - g_n(u) + f_n(u) \rangle_\rH \\
& \quad = \|G_n(u)\|_{\mathcal{L}_2(\ell^2;\rH_n)} + 2 \left\langle u, - \Pi (\Delta \,u)  - P_n (B(u))  \right \rangle_\rH \\
& \qquad - 2 \left\langle u, P_n[ \Pi(g(|u|^2)\,u)] - P_n[\Pi(f(u))]\right \rangle_\rH\,.
\end{align*}
Using Lemma~\ref{lemma6.8} and since $u \in \rH_n$, we get
\begin{align*}
F(u) & \le \frac14 \|u\|^2_{\rH_n} - 2|\nabla u|^2_{L^2} - 2\langle u, B(u) \rangle_\rH - 2\langle u, g(|u|^2)\,u \rangle_\rH + 2 \langle u, f(u) \rangle_\rH \\
& \le \frac34 \|u\|^2_{\rH_n} - 2|\nabla u|^2_{L^2} - 2\big|\sqrt{g(|u|^2)}|u|\big|^2_{L^2} + 2C_f \|u\|^2_{\rH_n} + 2|b_f|_{L^1},
\end{align*}
where in the last inequality we used the Young's inequality and hypothesis (\textbf{H1}). On rearranging, we get
\begin{align*}
 &F(u) + 2|\nabla u |^2_{L^2} + 2\big|\sqrt{g(|u|^2)}|u|\big|^2_{L^2}  \le \frac34 \|u\|^2_{\rH_n} + C_f \|u\|^2_{\rH_n} + 2|b_f|_{L^1}\\
& \quad  \le K_1 (1 + \|u\|^2_{\rH_n})\,,
\end{align*}
for appropriately chosen $K_1$. Thus, in particular
\[F(u) \le K_1 (1 + \|u\|^2_{\rH_n}).\]
\end{proof}

We will use the following theorem from \cite[Theorem~3.1]{[ABW10]} to prove Theorem~\ref{thm6.12}. We have modified it in the way we will be using it.

\begin{theorem}
\label{thm6.11}
Let $X$ be an abstract Hilbert space. Assume that $\sigma$ and $b$ satisfy the following conditions
\begin{itemize}
\item[(i)] For any $R > 0$ there exists a constant $C > 0$ such that
\[\|\sigma(u) - \sigma(\v)\|_{\mathcal{L}_2(\ell^2;X)} + \|b(u) - b(\v)\|_X \le C \|u -\v\|^2_X, \quad \|u\|_X,\|\v\|_X \le R\,.\]
\item[(ii)] There exists a constant $K_1 > 0$ such that
\[\|\sigma(u)\|^2_{\mathcal{L}_2(\ell^2; X)} + 2 \langle u, b(u) \rangle_{L^2} \le K_1(1+\|u\|^2_X), \quad \quad u \in X\,.\]
\end{itemize}
Then for any $X$-valued $\xi$, there exists a unique global solution $u = \left(u(t)\right)_{t \ge 0}$ to
\[u(t) = \xi + \int_0^t \sigma(u(s))\,dW(s) + \int_0^t b(u(s))\,ds\,.\]
\end{theorem}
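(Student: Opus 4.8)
The plan is to follow the classical truncation-and-localisation scheme for stochastic evolution equations with locally Lipschitz coefficients, recovering the quoted result of \cite{[ABW10]} in the present setting. First I would observe that hypothesis (i), although stated with $\|u-\v\|_X^2$ on the right-hand side, in fact encodes a local Lipschitz property: on the ball $\{\|u\|_X \le R\}$ one has $\|u-\v\|_X^2 \le 2R\|u-\v\|_X$, so both $\sigma$ and $b$ are Lipschitz on bounded sets. To globalise this, I would fix a smooth cut-off $\theta\colon [0,\infty) \to [0,1]$ with $\theta \equiv 1$ on $[0,1]$ and $\theta \equiv 0$ on $[2,\infty)$, and for each $R > 0$ set $\theta_R(u) := \theta(\|u\|_X/R)$, $b_R := \theta_R\, b$ and $\sigma_R := \theta_R\, \sigma$. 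These truncated coefficients are globally Lipschitz and bounded on $X$ (hence of linear growth), so a standard Banach fixed-point argument in the space of $X$-valued adapted processes normed by $(\mathbb{E}\sup_{t \le T}\|\cdot\|_X^2)^{1/2}$ yields, for each $R$, a unique global solution $u_R$ of the truncated equation.

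Next I would patch these truncated solutions together. Defining the stopping times $\tau_R := \inf\{t \ge 0 : \|u_R(t)\|_X \ge R\}$, on $[0,\tau_R)$ the factor $\theta_R(u_R)$ equals $1$, so $u_R$ solves the untruncated equation there; pathwise uniqueness for the truncated equation at the larger radius then forces $u_R = u_{R'}$ on $[0,\tau_R \wedge \tau_{R'}]$ for $R \le R'$, so the family $(\tau_R)$ is nondecreasing and the $u_R$ consistently define a local solution $u$ on $[0,\tau_\infty)$, where $\tau_\infty := \lim_{R\to\infty}\tau_R$.

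The crux of the argument --- and the step I expect to be the main obstacle --- is proving non-explosion, i.e.\ $\tau_\infty = \infty$ almost surely, which is exactly where the coercivity hypothesis (ii) enters. Applying the It\^o formula to $X \ni \xi \mapsto \|\xi\|_X^2$ along $u$ up to $t \wedge \tau_R$ gives
\[
\|u(t\wedge\tau_R)\|_X^2 = \|\xi\|_X^2 + \int_0^{t\wedge\tau_R}\!\!\big(2\langle u(s), b(u(s))\rangle + \|\sigma(u(s))\|_{\mathcal{L}_2(\ell^2;X)}^2\big)\,ds + M_t,
\]
where $M_t$ is a stochastic integral that is a genuine martingale up to $\tau_R$. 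Taking expectations annihilates $M_t$, and bounding the drift-plus-trace term by (ii) yields
\[
\mathbb{E}\|u(t\wedge\tau_R)\|_X^2 \le \mathbb{E}\|\xi\|_X^2 + K_1 T + K_1\int_0^t \mathbb{E}\|u(s\wedge\tau_R)\|_X^2\,ds,
\]
so the Gronwall lemma produces a bound $C(T)$ on $\mathbb{E}\|u(t\wedge\tau_R)\|_X^2$ uniform in $R$. Then $R^2\,\mathbb{P}(\tau_R \le T) \le \mathbb{E}\|u(T\wedge\tau_R)\|_X^2 \le C(T)$ forces $\mathbb{P}(\tau_R \le T) \to 0$ as $R \to \infty$, hence $\tau_\infty \ge T$ a.s.\ for every $T$ and so $\tau_\infty = \infty$. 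This promotes the local solution to a global one, while uniqueness is inherited from the uniqueness of each truncated problem. A technical point I would dispose of at the outset is that $\xi$ is only assumed $X$-valued, not necessarily square-integrable: I would first solve on each $\mathcal{F}_0$-measurable event $\{m-1 \le \|\xi\|_X < m\}$, where the initial datum is bounded and the estimates above apply verbatim, and then glue the resulting solutions, using that the equation localises on disjoint initial events.
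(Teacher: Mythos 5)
Your proposal is correct, but there is nothing in the paper to compare it against line by line: the paper offers no proof of Theorem~\ref{thm6.11} at all, quoting it (in modified form) from Theorem~3.1 of \cite{[ABW10]} and using it as a black box in the proof of Theorem~\ref{thm6.12}. So your reconstruction should be judged against the cited source, and against the hypotheses as stated. Your route --- cut-off truncation $b_R=\theta_R b$, $\sigma_R=\theta_R\sigma$, Banach fixed point for the globally Lipschitz truncated equation, consistency of the stopped solutions, and non-explosion via the It\^o formula for $\|\cdot\|_X^2$ combined with hypothesis (ii), Gronwall, and the hitting-time bound $R^2\,\mathbb{P}(\tau_R\le T)\le \mathbb{E}\|u(T\wedge\tau_R)\|_X^2\le C(T)$ --- is the classical localisation/Khasminskii argument, and each step you sketch does close: local Lipschitzness plus finiteness of $\sigma(0)$, $b(0)$ bounds the coefficients on balls, which both makes the truncated coefficients globally Lipschitz and makes the stopped stochastic integral a genuine martingale; the inequality $\int_0^{t\wedge\tau_R}\|u(s)\|_X^2\,ds\le\int_0^t\|u(s\wedge\tau_R)\|_X^2\,ds$ legitimises your Gronwall step; and partitioning $\Omega$ over the $\mathcal{F}_0$-events $\{m-1\le\|\xi\|_X<m\}$ correctly disposes of the fact that $\xi$ is not assumed square-integrable. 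By contrast, \cite{[ABW10]} proves a more general statement (coefficients satisfying only a non-Lipschitz, Osgood/Bihari-type modulus condition, and noise of Poisson type), with local solvability obtained by approximation and a Bihari-type lemma rather than by a contraction argument; your proof is simpler and entirely adequate here precisely because condition (i) as stated implies local Lipschitzness, which is all the application to \eqref{eq:6.26} requires. One incidental observation: your reading of (i) via $\|u-\v\|_X^2\le 2R\|u-\v\|_X$ on the ball is the right one --- taken literally, the exponent $2$ in (i) would force $\sigma$ and $b$ to be constant on every ball (subdivide a segment into $n$ pieces and let $n\to\infty$), trivialising the theorem, so it is presumably a misprint for $\|u-\v\|_X$, and your charitable interpretation is exactly what makes the hypothesis usable.
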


\begin{theorem}
\label{thm6.12}
Let the assumptions $({\bf H1}) - ({\bf H2})$ hold. Then for every $u_0 \in \rV$ there exists a unique global solution $u_n = \left(u_n(t)\right)_{t \ge 0}$ to
\begin{equation}
\label{eq:6.26}
\begin{cases}
u_n(t) = \int_0^t \left[ - \A_n  u_n(s) - B_n(u_n(s)) - g_n(u_n(s)) + f_n(u_n(s))\right]ds \\
 \hspace{1.5truecm} + \int_0^t G_n(u_n(s))\,dW(s),\\
u_n(0) = P_n u_0\,.
\end{cases}
\end{equation}
\end{theorem}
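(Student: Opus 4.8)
The plan is to obtain Theorem~\ref{thm6.12} as a direct application of the abstract result Theorem~\ref{thm6.11} on the Hilbert space $X = \rH_n$. First I would match \eqref{eq:6.26} to the abstract equation by setting the diffusion coefficient to be $\sigma := G_n$ and the drift coefficient to be
\[
b(u) := - \A_n u - B_n(u) - g_n(u) + f_n(u), \qquad u \in \rH_n .
\]
Since $u_0 \in \rV \subset \rH$, the initial datum $\xi := P_n u_0$ lies in $\rH_n$, so it is $X$-valued as Theorem~\ref{thm6.11} requires, and \eqref{eq:6.26} is exactly the integral equation appearing there.

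Next I would verify the two hypotheses. For the local Lipschitz condition~(i), I would combine the estimates already established: $\A_n$ is a bounded linear operator on $\rH_n$ by Lemma~\ref{lemma6.4}, hence globally Lipschitz; $f_n$ and $G_n$ are globally Lipschitz by Lemma~\ref{lemma6.7} and by \eqref{eq:6.21} in Lemma~\ref{lemma6.8}; and $B_n$, $g_n$ are Lipschitz on each ball $\mathbb{B}_R$ by \eqref{eq:6.11} and \eqref{eq:6.16}. Summing these bounds, for every $R > 0$ there is a constant $C_{n,R} > 0$ such that
\[
\|G_n(u) - G_n(\v)\|_{\mathcal{L}_2(\ell^2;\rH_n)} + \|b(u) - b(\v)\|_{\rH_n} \le C_{n,R} \|u - \v\|_{\rH_n}
\]
whenever $\|u\|_{\rH_n}, \|\v\|_{\rH_n} \le R$, which is hypothesis~(i). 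The growth/coercivity hypothesis~(ii) is nothing but Lemma~\ref{lemma6.10}: with $F(u) = \|G_n(u)\|^2_{\mathcal{L}_2(\ell^2;\rH_n)} + 2\langle u, b(u)\rangle_\rH$, estimate \eqref{eq:6.25} reads $F(u) \le K_1(1 + \|u\|^2_{\rH_n})$, which is precisely condition~(ii) (and, pleasingly, $K_1$ is independent of $n$). With both hypotheses in hand, Theorem~\ref{thm6.11} delivers a unique global solution $u_n$.

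The hard part -- and really the only nontrivial point -- is the passage from a merely local to a genuinely global solution, since the Lipschitz constants $C_{n,R}$ in \eqref{eq:6.11} and \eqref{eq:6.16} blow up as $R \to \infty$ and so yield, on their own, only a solution up to a possible explosion time. What excludes explosion is exactly the coercivity estimate of Lemma~\ref{lemma6.10}: applying the It\^o formula to $\|u_n(t)\|^2_{\rH_n}$ and using $F(u) \le K_1(1+\|u\|^2_{\rH_n})$ together with a Gronwall argument bounds $\E\|u_n(t)\|^2_{\rH_n}$ on every finite interval, forcing the explosion time to be almost surely infinite. The favourable sign that makes this work is supplied by the dissipative Stokes term $-\A_n u$, the antisymmetry $\langle B_n(u), u\rangle_\rH = 0$ from \eqref{eq:6.7}, and the taming term $g_n$; all of this is packaged inside Theorem~\ref{thm6.11}, so beyond checking (i) and (ii) no further work is needed. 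The one place I would be careful is confirming that each structural lemma does extend across the whole (infinite-dimensional) space $\rH_n$, rather than a finite-dimensional Galerkin space, which is the whole point of this approximation scheme and is guaranteed by the mapping properties recorded in Lemmas~\ref{lemma6.4}--\ref{lemma6.8}.
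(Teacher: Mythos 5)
Your proposal is correct and follows exactly the paper's own route: the authors likewise prove Theorem~\ref{thm6.12} by applying the abstract result Theorem~\ref{thm6.11} on $X = \rH_n$, verifying the local Lipschitz condition~(i) from Lemmas~\ref{lemma6.4}--\ref{lemma6.8} and the growth condition~(ii) from Lemma~\ref{lemma6.10}. Your additional discussion of how condition~(ii) rules out explosion (and your correct reading of the evident typo $\|u-\v\|_X^2$ in the statement of Theorem~\ref{thm6.11} as $\|u-\v\|_X$) is sound elaboration of what the paper leaves implicit, not a different argument.
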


\begin{proof}
The proof is a direct application of Theorem~\ref{thm6.11}. Using Lemmas~\ref{lemma6.4} - \ref{lemma6.8}, we can show that condition $(i)$ of Theorem~\ref{thm6.11} is satisfied. In Lemma~\ref{lemma6.10} we proved that condition $(ii)$ is satisfied. Thus we have existence of the unique global solution $u_n = \left(u_n(t) \right)_{t \ge 0}$ to \eqref{eq:6.26}.
\end{proof}

\section{Existence of solution}
\label{s:existence of sol}
\subsection{A'priori estimates}
\label{s:6.1}
In this subsection we will obtain certain a'priori estimates for the solution $u_n$ of \eqref{eq:6.26}. We will use these a'priori estimates in Lemma~\ref{lemma6.15} to prove the tightness of measures on the space $\mathcal{Z}_T$, defined in \eqref{eq:3.1}. We will also establish certain higher order estimates which will be required to prove the convergence of non-linear terms in later sections.

Let us fix $T > 0$. For any $R > 0$, define the stopping time
\begin{equation}
\label{eq:6.27}
\tau_R^n := \inf \{t \in [0,T] : \|u_n(t)\|_{\rV} \ge R\}\,,
\end{equation}
where $u_n$ is the solution of \eqref{eq:6.26}. By the definition of a martingale solution one knows that for every $n \ge 1$, $\tau_R^n \nearrow \infty$ as $R \nearrow \infty$.

\begin{lemma}
\label{lemma6.13}
Let $u_n$ be the solution of \eqref{eq:6.26}. For all $\rho >0$ there exist positive constants $C_1(\rho), C_2(\rho)$ such that if $\|u_0\|_\rV \le \rho$, then
\begin{align}
\label{eq:6.28}
\sup_{n \in \N}\E \left(\sup_{t \in [0,T]}\|u_n(t\wedge \tau_R^n)\|^2_\rV \right) \le C_1(\rho)\,, \\
\label{eq:6.29}
\sup_{n \in \N}\E \int_0^{T \wedge \tau_R^n} |u_n(t)|^2_{\rD(\A )}\,dt \le C_2(\rho)\,.
\end{align}
Moreover, for every $\delta > 0$ there exists a constant $C(\delta) > 0$ such that if $|u_0|_\rH \le \delta$, then
\begin{equation}
\label{eq:6.30}
\sup_{n \in \N}\E \int_0^{T \wedge \tau_R^n} \|\uns\|^4_{L^4}\,ds \le C_3(\delta)\,.
\end{equation}
\end{lemma}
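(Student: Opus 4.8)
The plan is to apply the It\^o formula to $\|u_n\|_\rV^2$ and to $|u_n|_\rH^2$ separately, and to close the two resulting differential inequalities with the generalised Gronwall Lemma~\ref{lemma2.2}. Because $\rH_n \subset \rD(\A)$ and, by Proposition~\ref{prop6.9}, all norms on $\rH_n$ are equivalent while the coefficients are locally Lipschitz (Lemmas~\ref{lemma6.4}--\ref{lemma6.8}), $u_n$ is a bona fide $\rD(\A)$-valued process and the Hilbert space It\^o formula applies rigorously. I would work throughout with the process stopped at $\tau_R^n$, so that $\|u_n(\cdot\wedge\tau_R^n)\|_\rV \le R$ and all stochastic integrals are genuine martingales; the crucial point is that every constant produced depends only on $\rho$ (resp.\ $\delta$), on $T$ and on the structural constants, and never on $R$ or $n$.

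For \eqref{eq:6.28}--\eqref{eq:6.29} I would expand
\[ \tfrac12 \,d\|u_n\|_\rV^2 = \langle u_n, du_n\rangle_\rV + \tfrac12\|G_n(u_n)\|^2_{\mathcal{L}_2(\ell^2;\rV)}\,dt \]
and estimate term by term. The Stokes term yields the full dissipation $-|\nabla u_n|^2_{L^2} - |\A u_n|^2_{L^2}$; the convective term is handled by \eqref{eq:6.8}, the taming term by the gradient estimate in \eqref{eq:6.13}, the diffusion by \eqref{eq:2.16} (which survives the projection $P_n$ by Lemma~\ref{lemma6.1}), and the forcing by rewriting $\langle u_n, f_n(u_n)\rangle_\rV = \langle (I-\Delta)u_n, f(u_n)\rangle_{L^2}$ and combining the linear growth in $(\textbf{H1})$ with Young's inequality. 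The accounting that matters is that the coefficient of $|\A u_n|^2_{L^2}$ adds up to $-1 + \tfrac12 + \tfrac14 + \epsilon < 0$ and that of $\big||u_n|\cdot|\nabla u_n|\big|^2_{L^2}$ to $\tfrac12 - 2 < 0$, so that the taming term absorbs the dangerous part of $B_n$ while the dissipation dominates $|\A u_n|^2_{L^2}$; the remainder is bounded by $C\|u_n\|_\rV^2 + C$. Discarding the nonnegative $\|u_n\|^4_{L^4}$ term this yields the master inequality
\[ \tfrac12\, d\|u_n\|_\rV^2 + \tfrac14|\A u_n|^2_{L^2}\,dt \le C\|u_n\|_\rV^2\,dt + C\,dt + \langle u_n, G_n(u_n)\,dW\rangle_\rV. \]

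To turn this pathwise bound into \eqref{eq:6.28}--\eqref{eq:6.29} I would take the supremum over $t \in [0,T\wedge\tau_R^n]$ and then expectations, applying Lemma~\ref{lemma2.2} with $X(t) = \sup_{s\le t}\|u_n(s\wedge\tau_R^n)\|_\rV^2$, $\alpha Y(t) = \tfrac14\int_0^{t\wedge\tau_R^n}|\A u_n|^2_{L^2}\,ds$, $Z = \|u_0\|_\rV^2 \le \rho^2$, $\varphi \equiv C$ and $I(t)$ the supremum of the stochastic integral. The Burkholder--Davis--Gundy inequality bounds $\E(I(t))$ by $C\,\E\big(\int_0^{t\wedge\tau_R^n}\|u_n\|_\rV^2\,\|G_n(u_n)\|^2_{\mathcal{L}_2(\ell^2;\rV)}\,ds\big)^{1/2}$; factoring out $\sup_s\|u_n\|_\rV$ and using Young's inequality splits this into small multiples of $\E(X(t))$ and $\E(Y(t))$ plus a constant, which is exactly \eqref{eq:2.19} with $\beta,\eta$ as small as we please, so that \eqref{eq:2.17} holds. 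Lemma~\ref{lemma2.2} then yields \eqref{eq:6.28}, and (recalling $|u_n|^2_{\rD(\A)} = |u_n|_\rH^2 + |\A u_n|^2_{L^2}$, with the lower-order part controlled by $C_1(\rho)$) also \eqref{eq:6.29}. For \eqref{eq:6.30}, stated under the weaker hypothesis $|u_0|_\rH \le \delta$, I would instead use the lower-order balance: It\^o for $|u_n|_\rH^2$ with \eqref{eq:6.7}, the $\rH$-part of \eqref{eq:6.13}, \eqref{eq:2.15} and $(\textbf{H1})$ reproduces the energy inequality \eqref{eq:energy inequality} (up to the harmless constant $|b_f|_{L^1}$),
\[ \tfrac12\, d|u_n|_\rH^2 + \|u_n\|^4_{L^4}\,dt \le -\tfrac78|\nabla u_n|^2_{L^2}\,dt + C_{N,f}|u_n|_\rH^2\,dt + C\,dt + \langle u_n, G_n(u_n)\,dW\rangle_\rH. \]
Integrating to $T\wedge\tau_R^n$ and taking expectations removes the martingale; discarding $\|u_n\|^4_{L^4}$ and applying the ordinary Gronwall inequality first gives $\sup_n\E|u_n(t\wedge\tau_R^n)|_\rH^2 \le C(\delta)$, and substituting this back produces \eqref{eq:6.30}.

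The main obstacle I anticipate is the sign-and-constant accounting of the second paragraph: one must make certain that the genuinely dangerous higher-order quantities---$|\A u_n|^2_{L^2}$ coming from the Stokes, convective and diffusion terms, and $\big||u_n|\cdot|\nabla u_n|\big|^2_{L^2}$ coming from $B_n$---are \emph{strictly} dominated by the dissipation and by the taming term, and then that the Burkholder--Davis--Gundy bound for the supremum of the stochastic integral fits within the smallness constraints \eqref{eq:2.17}. This is precisely why the generalised Gronwall Lemma~\ref{lemma2.2} is needed rather than its classical form: the supremum sits inside the expectation in \eqref{eq:6.28}, so the martingale contribution cannot simply be discarded and must instead be reabsorbed into $X$ and $Y$.
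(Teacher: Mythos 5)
Your overall strategy coincides with the paper's: It\^o for $|u_n|^2_{\rH}$ to get the $L^4$-bound \eqref{eq:6.30}, It\^o for the $\rV$-norm (the paper splits this into $|u_n|^2_{\rH}$ and $|\nabla u_n|^2_{L^2}$, which is only cosmetically different from your direct computation for $\|u_n\|^2_\rV$), the same sign accounting (your $-1+\tfrac12+\tfrac14+\epsilon<0$ and $\tfrac12-2<0$ match the paper's $-\tfrac14$ and $-3$ in \eqref{eq:6.38} after the factor-of-two normalisation), and finally BDG plus Lemma~\ref{lemma2.2} for the supremum inside the expectation. The treatment of \eqref{eq:6.30} via plain expectation and Gronwall is also exactly what the paper does.

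There is, however, one step that fails as stated: your claim that the BDG bound, after factoring out $\sup_s\|u_n\|_\rV$ and applying Young's inequality, yields ``small multiples of $\E(X(t))$ \emph{and} $\E(Y(t))$'', i.e.\ \eqref{eq:2.19} ``with $\beta,\eta$ as small as we please''. Young's inequality $ab \le \beta a^2 + \tfrac{1}{4\beta}b^2$ makes one coefficient small only at the expense of the other: with $\beta$ small you get $\eta \sim C^2_{\mathrm{BDG}}/\beta$ large, and the constraints \eqref{eq:2.17} require $\beta\eta \le \tfrac{\alpha}{4}e^{-2C}$ while Young forces $\beta\eta \gtrsim C^2_{\mathrm{BDG}}$, which is incompatible since neither the BDG constant nor $e^{-C}$ is at your disposal (the factor $\tfrac14$ from $(\textbf{H2})$ does not save this). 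Consequently you cannot extract \eqref{eq:6.28} and \eqref{eq:6.29} from a single application of Lemma~\ref{lemma2.2}; attempting to do so is circular, because absorbing the large multiple of $\E(Y)$ needs \eqref{eq:6.29}, which you propose to read off from the same inequality. The paper's proof is ordered precisely to avoid this: first it takes plain expectations at fixed $t$, where the stochastic integral is a genuine martingale with zero mean (so Lemma~\ref{lemma2.2} applies with $\E(I(t))=0$ and no smallness constraints), yielding \eqref{eq:6.35}, \eqref{eq:6.39}--\eqref{eq:6.41} and hence \eqref{eq:6.29}; only then does it run BDG as in \eqref{eq:6.43}, putting the small weight $\varepsilon$ on $\E\sup_s|\nabla u_n(s\wedge\tau^n_R)|^2_{L^2}$ and bounding the large term $C_\varepsilon\,\E\int_0^{T\wedge\tau^n_R}|u_n(s)|^2_{\rD(\A)}\,ds$ by the already-established constant $C_2(\rho)$, which enters \eqref{eq:2.19} harmlessly as $\widetilde{C}$. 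Your argument is repaired by exactly this two-stage ordering; everything else in your proposal is sound.
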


\begin{proof}
Let $u_n(t)$, $t \ge 0$ be the solution of \eqref{eq:6.26} then applying the It\^o formula to $\phi(\xi) = |\xi|_{\rm{H}}^2$ and the process $u_n(t)$, we get
\begin{align}
\label{eq:6.31}
 |u_n(\st)|_{\rm{H}}^2  & = |P_n u_0|_{\rH}^2 + 2 \int_0^\st \langle  u_n(s), - \A_n  u_n(s) - B_n(u_n(s)) \rangle_{\rH}\,ds  \nonumber \\
&~ - 2\int_0^\st \langle u_n(s), g_n(u_n(s)) - f_n(u_n(s)) \rangle_{\rm{H}}\,ds \nonumber \\
&~ + \int_0^\st \|G_n(s,u_n(s))\|^2_{\mathcal{L}_2(\ell^2;\rH)}\,ds \nonumber \\
&~  + 2 \int_0^\st  \left\langle  u_n(s), G_n(s,u_n(s)) \, dW_s \right\rangle_{\rm{H}} \,.
\end{align}
Using Lemma~\ref{lemma2.1}, the Young's inequality, assumption $(\textbf{H1})$, boundedness of $P_n$ in $\rH$, we get
\begin{align}
\label{eq:6.32}
&|u_n(\st)|^2_\rH  \le |u_0|^2_\rH - 2 \int_0^\st |\nabla \uns |^2_{L^2}\,ds + C_f \int_0^\st |\uns|^2_\rH\,ds   \nonumber \\
&\; + \int_0^\st |\uns|^2_\rH\,ds - 2 \int_0^\st \langle\uns, g(|\uns|^2)\uns\rangle_\rH\,ds  \nonumber \\
&\; + \frac14 \int_0^\st |\nabla \uns|^2_{L^2}\,ds   + 2 \int_0^\st \langle u_n(s), G(s, u_n(s))\,dW_s \rangle_{\rm{H}} \,.
\end{align}
Using the \eqref{eq:6.13}$_2$, we get
\begin{align*}
& |u_n(\st)|^2_\rH   \le |u_0|^2_\rH - 2 \int_0^\st |\nabla \uns|^2_{L^2} \,ds - 2 \int_0^\st \|\uns\|^4_{L^4}\,ds   \nonumber \\
&\; +2 C_N \int_0^\st |\uns|^2_\rH\,ds  + \int_0^\st \left(C_f|\uns|^2_\rH + |b_f|_{L^1}\right)ds + \int_0^\st |\uns|^2_\rH\,ds  \nonumber\\
&\; + \frac14 \int_0^\st |\nabla \uns |^2_{L^2}\,ds + 2 \int_0^\st \langle u_n(s), G(s,u_n(s))\,dW_s \rangle_{\rm{H}} \,. \nonumber
\end{align*}
On rearranging we get
\begin{align}
\label{eq:6.33}
&|u_n(\st)|^2_\rH  + \frac74 \int_0^\st |\nabla \uns |^2_{L^2}\,ds + 2 \int_0^\st \|\uns\|^4_{L^4}\,ds \nonumber \\
 & \le |u_0|^2_\rH + T |b_f|_{L^1} + C_{f,N} \int_0^\st |\uns|^2_\rH\,ds  \nonumber \\
 & \quad + 2 \int_0^\st \langle u_n(s), G(s,u_n(s))\,dW_s \rangle_{\rm{H}}\,.
\end{align}
Now since
\[\mu_n(t) = \int_0^\st \langle u_n(s), G(s, u_n(s))\,dW_s \rangle_{\rm{H}}\,, \quad t \in [0,T]\]
is a $\mathbb{F}$-martingale, as by Lemma~\ref{lemma2.1} and \eqref{eq:6.27} we have the following inequalities
\begin{align*}
\E & \int_0^{t \wedge \tau_R^n} \big |\langle u_n(s), G(s,u_n(s))\rangle_{\rH} \big |^2\,ds \\
&\le \E \int_0^\st|u_n(s)|_{\rH}^2 \|G(s,u_n(s))\|^2_{\mathcal{L}_2(\ell^2; \rH)}\,ds \\
& \le \frac14 \E \int_0^\st |u_n(s)|^2_{\rm{H}}|\nabla u_n(s)|^2_{L^2}\,ds < \infty,
\end{align*}
where to establish the last inequality we have used the equivalences of norm from Proposition~\ref{prop6.9}. Thus, $\E [ \mu_n(t)] = 0$.

Hence applying Lemma~\ref{lemma2.2} for the following three processes$\colon$
\[
\begin{split}
&X(t) = |u_n(\st)|^2_\rH\,, \\
&Y(t) = \frac74 \int_0^\st |\nabla \uns |^2_{L^2}\,ds + 2 \int_0^\st \|\uns\|^4_{L^4}\,ds \\
&\mbox{and}\\
&I(t) = 2\mu_n(t) = 2\int_0^\st \langle u_n(s), G(s, u_n(s))\,dW_s \rangle_{\rm{H}}\,,
\end{split}
\]
we obtain from \eqref{eq:6.33}, \eqref{eq:2.18} is satisfied for $\alpha = 1$ , $Z = |u_0|^2_\rH + T|b_f|_{L^1}$ and $\phi(r) = C_{f,N}$. Since $\E(I(t)) = 0$, \eqref{eq:2.19} is satisfied and hence all inequalities for the parameters (see \eqref{eq:2.17}) are trivially satisfied. Thus, if $|u_0|_\rH \le \delta$, we have
\begin{align}
\label{eq:6.34}
\sup_{n \in \N} \E \Big[& |u_n(\st)|^2_\rH + \frac74 \int_0^\st |\nabla \uns|^2_{L^2}\,ds \nonumber \\
& \quad + 2 \int_0^\st \|\uns\|^4_{L^4}\,ds \Big] \le C_T(\delta)\,.
\end{align}
In particular,
\begin{equation}
\label{eq:6.35}
\sup_{n \in \N} \left(\sup_{t \in [0,T]}\E \, |u_n(t\wedge \tau_R^n)|^2_\rH \right) \le C_{T}(\delta)\,.
\end{equation}
Hence, using \eqref{eq:6.34} and \eqref{eq:6.35} we infer that
\begin{equation}
\label{eq:6.36}
\sup_{n \in \N}\E \int_0^{T \wedge \tau_R^n} \|\uns\|^4_{L^4}\,ds \le \widetilde{C}_{T} (|u_0|^2_\rH) =: C_3(\delta)\,.
\end{equation}

Since we are interested in the estimates involving $\rV$ norm of $u$. We apply the It\^o formula to $\phi(\xi) = |\nabla \xi|_{L^2}^2$ and the process $u_n(t)$, obtaining
\begin{align}
\label{eq:6.37}
& |\nabla u_n(\st)|_{L^2}^2 = |\nabla (P_n u_0)|_{L^2}^2 + 2 \int_0^\st \big(\big(  u_n(s), - \A_n  u_n(s) \big)\big) ds  \nonumber \\
&~ - 2 \int_0^\st \big(\big( u_n(s), B_n(u_n(s)) \big)\big)ds - 2\int_0^\st \big(\big(u_n(s), g_n(u_n(s)) \big)\big) ds  \nonumber \\
&~ + 2\int_0^\st \big(\big( u_n(s), f_n(u_n(s)) \big)\big) ds + \int_0^\st \|\nabla(G_n(s,u_n(s)))\|^2_{\mathcal{L}_2(\ell^2;\rH)}ds \nonumber \\
&~ + 2 \int_0^\st  \big(\big(  u_n(s), G_n(s,u_n(s)) \, dW_s \big)\big) \,,
\end{align}
where $((\cdot, \cdot))$ is as defined in \eqref{eq:2.1}.\\
Using Lemma~\ref{lemma2.1}, assumptions $(\textbf{H1}) - (\textbf{H2})$, boundedness of $P_n$ in $\rH$, estimates \eqref{eq:6.8}, \eqref{eq:6.13}, the Cauchy-Schwarz and the Young's inequality, we get
\begin{align*}
&|\nabla u_n(\st)|^2_{L^2}  \le |\nabla u_0|^2_{L^2} - 2 \int_0^\st |\A \,u_n(s)|^2_{L^2}\, ds + \int_0^\st |\A \,u_n(s)|^2_{L^2}    \nonumber \\
&\; + \int_0^\st \big | |u_n(s)| \cdot |\nabla u_n(s)| \big |^2_{L^2}\,ds  +  2 C_N \int_0^\st |\nabla u_n(s)|^2_{L^2}\,ds  \nonumber \\
&\; - 4 \int_0^\st \big||u_n(s)| \cdot |\nabla u_n(s)|\big|^2_{L^2}\,ds  + 2\int_0^\st |\A \,u_n(s)|_{L^2}|f(u_n(s))|_{L^2}\,ds \nonumber \\
&\; + \frac12 \int_0^\st |\A \,u_n(s)|^2_{L^2}\,ds + C_{T, \sigma} \int_0^\st |\nabla u_n(s)|^2_{L^2}\,ds  \nonumber \\
&\; + 2 \int_0^\st ((  u_n(s), G(s,u_n(s))\,dW_s )) \nonumber
\end{align*}
i.e.,
\begin{align*}
&|\nabla u_n(\st)|^2_{L^2}  \le |\nabla u_0|^2_{L^2} - \frac12 \int_0^\st |\A \,u_n(s)|^2_{L^2}\, ds   \nonumber \\
&\; - 3 \int_0^\st \big | |u_n(s)| \cdot |\nabla u_n(s)| \big |^2_{L^2}\,ds + C_{T,\sigma,N} \int_0^\st |\nabla u_n(s)|^2_{L^2}\,ds   \nonumber \\
&\; + \frac14 \int_0^\st |\A \,u_n(s)|^2_{L^2}\,ds + \int_0^\st \left(C_f|\uns|^2_\rH + |b_f|_{L^1}\right)ds \nonumber \\
&\; + 2 \int_0^\st (( u_n(s), G(s, u_n(s))\,dW_s ))\,.
\end{align*}
On rearranging, we have
\begin{align}
\label{eq:6.38}
&|\nabla u_n(\st)|^2_{L^2} + 3 \int_0^\st \big | |u_n(s)| \cdot |\nabla u_n(s)| \big |^2_{L^2}\,ds    \nonumber \\
& \quad+  \frac14 \int_0^\st |\A \,u_n(s)|^2_{L^2}\, ds \nonumber \\
&\; \le |\nabla u_0|^2_{L^2} + T|b_f|_{L^1} + C_{T,\sigma,N} \int_0^\st |\nabla u_n(s)|^2_{L^2}\,ds  \nonumber \\
& \quad  + C_{f} \int_0^\st |u_n(s)|^2_{\rH}\,ds + 2 \int_0^\st (( u_n(s), G(s, u_n(s))\,dW_s ))\,.
\end{align}
Now since the process
\[\mu_n(t) = \int_0^\st (( u_n(s), G(s, u_n(s))\,dW_s )) \,, \quad t \in [0,T]\]
is a $\mathbb{F}$-martingale, as by Lemma~\ref{lemma2.1} and \eqref{eq:6.27} we have the following inequalities
\begin{align*}
& \E \int_0^{t \wedge \tau_R^n} \big |(( u_n(s), G(s,u_n(s)))) \big |^2\,ds \\
&\; \le \E \int_0^\st|\A \,u_n(s)|_{L^2}^2 \|G(s,u_n(s))\|^2_{\mathcal{L}_2(\ell^2; \rH)}\,ds \\
&\; \le \frac14 \E \int_0^\st |\A \,u_n(s)|^2_{L^2}|\nabla u_n(s)|^2_{L^2}\,ds < \infty,
\end{align*}
where to establish the last inequality we have used the equivalences of norm from Proposition~\ref{prop6.9}. Thus, $\E [ \mu_n(t)] = 0$.

Again as before, by applying Lemma~\ref{lemma2.2} to processes
\[
\begin{split}
& X(t) = |\nabla u_n(\st)|^2_{L^2}\,,\\
& Y(t) = \frac14 \int_0^\st |\A \,u_n(s)|^2_{L^2}\, ds + 3 \int_0^\st \big | |u_n(s)| \cdot |\nabla u_n(s)| \big |^2_{L^2}\,ds \\
&\mbox{and} \\
&I(t) = 2 \mu_n(t) = 2 \int_0^\st (( u_n(s), G(s, u_n(s))\,dW_s )), \quad t \ge 0\,,
\end{split}\]
the inequalities \eqref{eq:2.18} and \eqref{eq:2.19} are satisfied. Thus, from \eqref{eq:6.35} and \eqref{eq:6.37}, if $\|u_0\|_\rV \le \rho$, then
\begin{align}
\label{eq:6.39}
\sup_{n \in \N} \mathbb{E} \Big[ |\nabla u_n(\st)|_{L^2}^2 + \frac14 \int_0^\st |\A \,u_n(s)|^2_{L^2}\, ds \nonumber \\
 \qquad \qquad + 3 \int_0^\st \big | |u_n(s)| \cdot |\nabla u_n(s)| \big |^2_{L^2}\,ds \Big]  \leq C_T(\rho)\,.
\end{align}
In particular,
\begin{equation}
\label{eq:6.40}
\sup_{n \in \N} \left(\sup_{t \in [0,T]} \E\,|\nabla u_n(\st)|_{L^2}^2 \right) \le C_{T}(\rho)\,.
\end{equation}
From \eqref{eq:6.40} and \eqref{eq:6.39}, we have the following estimate
\begin{equation}
\label{eq:6.41}
\sup_{n \in \N} \E\, \int_0^{T \wedge \tau_R^n} |\A \,u_n(t)|^2_{L^2}\,dt \le C_{T}(\rho)\,.
\end{equation}
Note that $|u|^2_{\rm{D}(\A )} := |u|^2_{L^2} + |\A \,u|^2_{L^2}$. Thus from \eqref{eq:6.35} and \eqref{eq:6.41} we can infer \eqref{eq:6.29}. On combining \eqref{eq:6.35} and \eqref{eq:6.40}, we get
\begin{equation}
\label{eq:6.42}
\sup_{n \in \N} \left(\sup_{t \in [0,T]}\E\, \|u_n(\st)\|^2_{\rV} \right) \le C_{T}(\rho)\,.
\end{equation}

Using the Burkholder-Davis-Gundy inequality, the definition of $\langle\cdot, \cdot\rangle_\rV$ and the Young's inequality, for every $ \varepsilon > 0$ we obtain
\begin{align}
\label{eq:6.43}
\E & \sup_{t \in [0,T]}  \int_0^\st \langle u_n(s), G(s, u_n(s))\,dW_s\rangle_{\rV} \nonumber \\
& \le \E \left[ \int_0^{T\wedge \tau_R^n} \big |\langle u_n(s), G(s,u_n(s))\rangle_{\rH} + \big(\big(u_n(s), G(s,u_n(s))\big)\big) \big |^2\,ds \right]^{1/2} \nonumber \\
& \le \E \left[ \int_0^{T\wedge \tau_R^n} \|G(s,u_n(s))\|^2_{\mathcal{L}_2(\ell^2;\rH)} |u_n(s)|^2_{\rm{D}(\A )}\,ds \right]^{1/2} \nonumber \\
& \le \E \left[\frac14 \int_0^{T\wedge \tau_R^n} |\nabla u_n(s)|^2_{L^2}|u_n(s)|^2_{\rm{D}(\A )} \,ds \right]^{1/2}\nonumber \\
& \le \frac12  \E \left[ \sup_{s \in [0,T]} |\nabla u_n(s \wedge \tau_R^n)|^2_{L^2} \int_0^{T\wedge \tau_R^n} |u_n(s)|^2_{\rm{D}(\A )}\,ds \right]^{1/2} \nonumber \\
& \le \varepsilon\, \E \sup_{s \in [0,T]} |\nabla u_n(s \wedge \tau_R^n)|^2_{L^2} + C_\varepsilon \E \int_0^{T\wedge \tau_R^n} |u_n(s)|^2_{\rm{D}(\A )}\,ds\,.
\end{align}
On combining \eqref{eq:6.33} and \eqref{eq:6.38}, then using \eqref{eq:6.29}, \eqref{eq:6.35}, \eqref{eq:6.40}, \eqref{eq:6.43} and Lemma~\ref{lemma2.2}, we can infer \eqref{eq:6.28}.
\end{proof}

In the next lemma we will use the estimates from Lemma~\ref{lemma6.13} to establish higher order estimates.

\begin{lemma}
\label{lemma6.14}
Let $\tau_R^n$ be as defined in \eqref{eq:6.27}. For all $\rho > 0$ and $p \in [1,3]$ there exist positive constants $C_1(p, \rho)$ , $C_2(p, \rho)$ such that if $\|u_0\|_\rV \le \rho$, then
\begin{align}
\label{eq:6.44}
&\sup_{n \in \N}\mathbb{E}\left(\sup_{t \in [0,T]} \|u_n(\st)\|_{\rm{V}}^{2p}\right)  \le C_1(p, \rho)\,,\\
\label{eq:6.45}
&\sup_{n \in \N} \E \int_0^{T \wedge \tau_R^n} \|\uns\|_{\rm{V}}^{2(p-1)} |\A \,u_n(s)|_{L^2}^2 ~ds \le C_2(p, \rho)\,.
\end{align}
\end{lemma}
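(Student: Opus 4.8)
The plan is to apply the It\^o formula to $\psi^p$, where I abbreviate $\psi(t) := \|u_n(\st)\|_\rV^2 = |u_n(\st)|_\rH^2 + |\nabla u_n(\st)|_{L^2}^2$, starting from the two It\^o expansions already produced in the proof of Lemma~\ref{lemma6.13}. Adding the differential forms underlying \eqref{eq:6.33} and \eqref{eq:6.38}, and absorbing every nonlinear contribution into the dissipation exactly as there, I obtain a differential inequality of the schematic shape
\begin{equation*}
d\psi \le \Big[-\tfrac14|\A u_n|_{L^2}^2 - 3\big||u_n|\cdot|\nabla u_n|\big|_{L^2}^2 - 2\|u_n\|_{L^4}^4 + C(1+\psi)\Big]\,dt + dM,
\end{equation*}
with $C=C_{T,\sigma,N,f}$ and martingale part $M(t)=2\int_0^{\st}\langle u_n(s),G(s,u_n(s))\,dW_s\rangle_\rV$, the $\rV$-pairing arising because the $\langle\cdot,\cdot\rangle_\rH$-martingale of \eqref{eq:6.33} and the $((\cdot,\cdot))$-martingale of \eqref{eq:6.38} combine. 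The chain rule $d\psi^p = p\psi^{p-1}\,d\psi + \tfrac{p(p-1)}{2}\psi^{p-2}\,d\langle M\rangle$ then yields, besides the drift multiplied by $p\psi^{p-1}$ (which remains dominated by the dissipation plus $C(1+\psi^p)$) and the new local martingale $p\int\psi^{p-1}\,dM$, the It\^o correction $\tfrac{p(p-1)}{2}\psi^{p-2}\,d\langle M\rangle$.

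Controlling this correction is the crux, and it is where the naive argument fails. Since $d\langle M\rangle = 4\sum_{j}\langle u_n,G_j(u_n)\rangle_\rV^2\,dt$, a Cauchy--Schwarz bound in $\rV$ (equivalently the estimate used in \eqref{eq:6.43}) gives only $\sum_j\langle u_n,G_j\rangle_\rV^2\lesssim |u_n|_{\rD(\A)}^2|\nabla u_n|_{L^2}^2$, so the correction contributes $\sim p(p-1)\psi^{p-1}|\A u_n|_{L^2}^2$; as its coefficient grows quadratically in $p$ while the available dissipation is merely $\tfrac{p}{4}\psi^{p-1}|\A u_n|_{L^2}^2$, this route closes only for $p<\tfrac32$. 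The remedy is to exploit the transport structure $G_j(u)=\Pi[(\sigma_j\cdot\nabla)u]$. Using Lemma~\ref{lemma6.2} to write $\A u_n=-\Delta u_n$ and splitting $\langle u_n,G_j\rangle_\rV=\langle u_n,G_j\rangle_\rH+\langle \A u_n,G_j\rangle_\rH$, the self-adjointness of $\Pi$ reduces the second term to $\langle-\Delta u_n,(\sigma_j\cdot\nabla)u_n\rangle_{L^2}$; integrating by parts, its top-order part equals $-\tfrac12\int_{\dom}(\divv\sigma_j)|\nabla u_n|^2\,dx$ and carries \emph{no} factor of $\A u_n$, while the remainder is bounded by $C\int_{\dom}|\nabla\sigma_j|\,|\nabla u_n|^2\,dx$; the first term is handled identically, $\langle u_n,G_j\rangle_\rH=-\tfrac12\int_\dom(\divv\sigma_j)|u_n|^2\,dx$.

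Summing over $j$ by Cauchy--Schwarz and invoking $\sup_x\sum_j|\nabla\sigma_j(x)|^2\le 3C_{\sigma,T}^2$ together with the analogous bound for $\divv\sigma_j$, both immediate consequences of $(\mathbf{H2})$, I obtain the clean estimate
\begin{equation*}
\sum_{j}\langle u_n,G_j(u_n)\rangle_\rV^2 \le C\,\|u_n\|_\rV^4 = C\psi^2,
\end{equation*}
with no surviving $|\A u_n|_{L^2}^2$. Consequently $d\langle M\rangle\le C\psi^2\,dt$, the It\^o correction is at most $Cp(p-1)\psi^p\,dt$, and the full dissipation $\tfrac{p}{4}\psi^{p-1}|\A u_n|_{L^2}^2$ survives on the left-hand side. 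Taking expectations annihilates the martingale (its integrand is bounded on $[0,\st]$ via the norm equivalences of Proposition~\ref{prop6.9}), giving
\begin{equation*}
\E\psi(t)^p + \tfrac{p}{4}\,\E\int_0^{\st}\psi(s)^{p-1}|\A u_n(s)|_{L^2}^2\,ds \le \|P_n u_0\|_\rV^{2p} + C\int_0^t\E\psi(s)^p\,ds + C,
\end{equation*}
and since $\|P_n u_0\|_\rV\le\|u_0\|_\rV\le\rho$, the generalised Gronwall Lemma~\ref{lemma2.2} (or a direct Gronwall argument) yields $\sup_{n}\sup_{t\le T}\E\psi(t)^p\le C_1(p,\rho)$; the retained dissipation integral then furnishes \eqref{eq:6.45}.

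Finally, to move the supremum inside the expectation for \eqref{eq:6.44}, I apply the Burkholder--Davis--Gundy inequality to $p\int\psi^{p-1}\,dM$, bounding its bracket by $\int_0^{\st}\psi^{2(p-1)}\,d\langle M\rangle\le C\int_0^{\st}\psi^{2p}\,ds\le C\,\sup_{s\le T}\psi(s)^p\int_0^{\st}\psi(s)^p\,ds$; Young's inequality absorbs $\tfrac12\,\E\sup_{t}\psi^p$ on the left, while the residual $\E\int_0^T\psi^p\,ds$ is controlled by the bound on $\sup_t\E\psi^p$ already obtained, delivering \eqref{eq:6.44}. The whole scheme is uniform in $n$ and valid throughout $p\in[1,3]$ (indeed for any finite $p$), the cap being dictated only by the later use of these estimates. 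I expect the single genuine difficulty to be the cancellation estimate for $\sum_j\langle u_n,G_j\rangle_\rV^2$: without exploiting the antisymmetry of the transport operator one cannot remove the spurious $|\A u_n|_{L^2}^2$, and the argument degenerates at $p=\tfrac32$.
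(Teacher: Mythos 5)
Your proof is correct, but at the decisive step it takes a genuinely different route from the paper. Both arguments apply the It\^o formula to $\|u_n(\st)\|_\rV^{2p}$ and absorb the nonlinear drift terms into the dissipation exactly as in Lemma~\ref{lemma6.13}; the divergence is in the treatment of the quadratic-variation correction $p(p-1)\psi^{p-2}\,d\langle M\rangle$. The paper handles it by Cauchy--Schwarz in $\rV$ combined with \eqref{eq:2.16}, $\|G(t,u)\|^2_{\mathcal{L}_2(\ell^2;\rV)}\le\frac12|\A u|^2_{L^2}+C_{\sigma,T}|\nabla u|^2_{L^2}$, which rests on the smallness assumption $\|\sigma\|^2_{\ell^2}\le\frac14$ in $(\mathbf{H2})$; each application of the bracket then re-injects a fraction of $|\A u_n|^2_{L^2}$ with a coefficient growing like $p(p-1)$, and the bookkeeping in \eqref{eq:6.47} leaves the dissipation coefficient $\frac{p(3-p)}{2}$ --- this is precisely the origin of the restriction $p\in[1,3]$ in the statement (note the coefficient degenerates at $p=3$). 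You instead exploit the transport structure of the noise: writing $\langle u_n,G_j(u_n)\rangle_\rV=\langle u_n,G_j(u_n)\rangle_\rH+\langle\A u_n,G_j(u_n)\rangle_\rH$ via Lemma~\ref{lemma6.2} and integrating by parts, the top-order part reduces to $-\frac12\int(\divv\sigma_j)|\nabla u_n|^2\,dx$, so that $\sum_j\langle u_n,G_j(u_n)\rangle_\rV^2\le C\|u_n\|_\rV^4$ with no surviving $|\A u_n|^2_{L^2}$; your pointwise computation is correct, the summation over $j$ is justified by the uniform bounds on $\|\partial_{x^i}\sigma\|_{\ell^2}$ in $(\mathbf{H2})$, and the cancellation survives the projection $P_n$ since $P_n$ is self-adjoint and, being a Fourier multiplier, commutes with $\nabla$ (so $\langle u_n,P_nG_j\rangle_\rV=\langle u_n,G_j\rangle_\rV$ for $u_n\in\rH_n$ --- a point worth stating explicitly). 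What your approach buys is that the bracket contributes only $Cp(p-1)\psi^p$, the full dissipation $\frac{p}{4}\psi^{p-1}|\A u_n|^2_{L^2}$ survives, and the estimates hold uniformly for \emph{all} finite $p$, not merely $p\in[1,3]$; the Gronwall and Burkholder--Davis--Gundy closing steps are then the same as the paper's (the absorption of $\E\sup_t\psi^p$ is legitimate because the stopped process is bounded via Proposition~\ref{prop6.9}).

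One small inaccuracy in your commentary: you assert the ``naive'' Cauchy--Schwarz route closes only for $p<\frac32$. With the paper's sharper bracket bound via \eqref{eq:2.16} (rather than the cruder $\sum_j\langle u_n,G_j\rangle_\rV^2\lesssim|u_n|^2_{\rD(\A)}|\nabla u_n|^2_{L^2}$ you tested) it in fact closes for all $p\le 3$, which is how the paper proves the stated range. This does not affect the validity of your argument --- it only means the paper's method is less restrictive than you estimated, while yours is less restrictive still.
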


\begin{proof}
Let $p \in [1, \infty)$. Then by using the It\^o formula for $\xi(t) = \|u_n(t)\|_{\rm{V}}^2$, $\phi(x) = x^p,$ equations \eqref{eq:6.31}, \eqref{eq:6.37} and the definition of $\|\cdot\|_\rV$, we obtain
\begin{align}
\label{eq:6.46}
 & \|u_n(\st)\|_{\rm{V}}^{2p} \nonumber \\
 &  = \|u_n(0)\|_{\rm{V}}^{2p} - 2 p \int_0^\st \|\uns\|_{\rm{V}}^{2(p-1)} \left(|\nabla \uns|^2_{L^2} + |\A \,u_n(s)|_{L^2}^2 \right)\,ds  \nonumber \\
&~ - 2p \int_0^\st \|\uns\|_{\rm{V}}^{2(p-1)} \langle u_n(s), B_n(u_n(s))\rangle_{\rm{V}} \,ds    \nonumber \\
&~ -2p \int_0^\st \|u_n(s)\|^{2(p-1)}_{\rV} \langle u_n(s) , g_n(u_n(s)) \rangle_{\rV}\,ds \nonumber \\
&~ -2p \int_0^\st \|u_n(s)\|^{2(p-1)}_{\rV} \langle u_n(s) , f_n(u_n(s)) \rangle_{\rV}\,ds \nonumber \\
&~  + p \int_0^\st \|u_n(s)\|^{2(p-1)}_{\rV}\|G_n(s,u_n(s))\|^2_{\mathcal{L}_2(\ell^2;\rV)}\,ds \nonumber \\
&~ + 2p(p-1) \int_0^\st \|\uns\|_{\rm{V}}^{2(p-2)} \langle \uns, G_n(s,\uns) \rangle_{\rm{V}}^2\,ds \nonumber \\
&~ + 2p  \int_0^\st \|\uns\|_{\rm{V}}^{2(p-1)} \langle \uns, G_n(s,\uns) dW_s\rangle_{\rm{V}}\,.
\end{align}
Using Lemma~\ref{lemma2.1}, boundedness of $P_n$ in $\rm{V}$ and assumption $(\textbf{H1})$, we can simplify \eqref{eq:6.46}
\begin{align*}
&\|u_n(\st)\|_{\rm{V}}^{2p} \nonumber \\
&\, \leq \|u_n(0)\|_{\rm{V}}^{2p} - 2 p \int_0^\st \|\uns\|_{\rm{V}}^{2(p-1)} \left(|\nabla \uns|^2_{L^2} + |\A \,u_n(s)|_{L^2}^2 \right) \,ds  \\
&~~~ + 2p \int_0^\st \|\uns\|_{\rm{V}}^{2(p-1)} \left[ \frac12 |\A  \uns|^2_{L^2} + \frac12 \big | |\uns|\cdot|\nabla \uns |\big|^2_{L^2} \right]\,ds \\
&~~~ - 2p \int_0^\st \|u_n(s)\|^{2(p-1)}_{\rV} \langle u_n(s), g(|\uns|^2) \uns \rangle_{\rV}\,ds \\
&~~~ + 2p \int_0^\st \|\uns\|^{2(p-1)}_\rV \left(C_f |\uns|^2_\rH + |b_f|_{L^1} + \frac14 |\A \,\uns|^2_{L^2} \right)\,ds \\
&~~~ + p \int_0^\st \|\uns\|^{2(p-1)}_{\rV} \left[\frac12 |\A \,\uns|^2_{L^2} + C_{T,\sigma} |\nabla \uns|^2_{L^2} \right]\,ds \\
&~~~ + 2p(p-1) \int_0^\st \|\uns\|^{2(p-2)}\|G(s,\uns)\|^2_{\mathcal{L}_2(\ell^2;\rV)}\|\uns\|^2_\rV \,ds \\
&~~~  + 2p  \int_0^\st \|\uns\|_{\rm{V}}^{2(p-1)} \langle \uns, G(s, \uns)\,dW_s\rangle_{\rm{V}}\,.
\end{align*}
On rearranging, we get
\begin{align*}
& \|u_n(\st)\|_{\rm{V}}^{2p} \nonumber \\
&\, \leq \|u_n(0)\|_{\rm{V}}^{2p} - \frac{p}{2} \int_0^\st \|\uns\|_{\rm{V}}^{2(p-1)} |\A \,\uns|_{L^2}^2 \,ds  \\
&~~~ -2p \int_0^\st \|\uns\|^{2(p-1)}|\nabla \uns|^2_{L^2}\,ds \nonumber\\
&~~~ + p \int_0^\st \|\uns\|_{\rm{V}}^{2(p-1)} \big| |\uns|\cdot|\nabla \uns |\big |^2_{L^2}\,ds  \\
&~~~ - 2p \int_0^\st \|\uns\|_\rV^{2(p-1)} \big||\sqrt{g(|\uns|^2)}|\cdot|\uns| \big|^2_{L^2}\,ds \\
&~~~ + C_N \cdot 2p \int_0^\st \|u_n(s)\|^{2(p-1)}_{\rV}|\nabla \uns|^2_{L^2}\,ds   \\
&~~~ - 4p \int_0^\st \|\uns\|_{\rm{V}}^{2(p-1)} \big | |\uns|\cdot|\nabla \uns |\big |^2_{L^2}\,ds \\
&~~~ + 2p\,C_f\int_0^\st \|\uns\|^{2(p-1)}_\rV|\uns|^2_\rH\,ds + 2p|b_f|_{L^1}\int_0^\st \|\uns\|^{2(p-1)}_\rV\,ds \\
&~~~ + p\,C_{T,\sigma} \int_0^\st \|\uns\|^{2(p-1)}_{\rV}|\nabla \uns|^2_{L^2}\,ds \\
&~~~ + 2p  \int_0^\st \|\uns\|_{\rm{V}}^{2(p-1)} \langle \uns, G(s,\uns)\,dW_s\rangle_{\rm{V}} \\
&~~~ + 2p(p-1) \int_0^\st \|\uns\|^{2(p-1)}\left[\frac14 |\A \,\uns|^2_{L^2} + C_{T, \sigma} |\nabla \uns|^2_{L^2} \right] \,ds
\end{align*}
which on further simplification yields
\begin{align}
\label{eq:6.47}
& \|u_n(\st)\|_{\rm{V}}^{2p}  + \frac{p(3-p)}{2} \int_0^\st \|\uns\|_{\rm{V}}^{2(p-1)} |\A \,\uns|_{L^2}^2 \,ds  \nonumber \\
&~ + 3 p \int_0^\st \|\uns\|_{\rm{V}}^{2(p-1)} \big | |\uns|\cdot|\nabla \uns |\big |^2_{L^2}\,ds \nonumber \\
&~ + 2p \int_0^\st \|\uns\|_\rV^{2(p-1)} \big||\sqrt{g(|\uns|^2)}|\cdot|\uns| \big|^2_{L^2}\,ds \nonumber \\
&~~ \leq \|u_n(0)\|_{\rm{V}}^{2p} + C_{T,\sigma,N,p} \int_0^\st \|u_n(s)\|^{2(p-1)}_{\rV}|\nabla \uns|^2_{L^2}\,ds  \nonumber\\
&\quad + C_{f} \int_0^\st \|u_n(s)\|^{2(p-1)}_{\rV}|\uns|^2_{\rH}\,ds + 2p|b_f|_{L^1}\int_0^\st \|\uns\|^{2(p-1)}_\rV\,ds \nonumber \\
&\quad + 2p  \int_0^\st \|\uns\|_{\rm{V}}^{2(p-1)} \langle \uns, G(s,\uns)\,dW_s\rangle_{\rm{V}}\,.
\end{align}
As before we will show that
\[\mu_n(\st) = \int_0^\st \|\uns\|^{2(p-1)}_\rV \langle  u_n(s), G(s,u_n(s)) \, dW_s\rangle_{\rm{V}}\\, \quad t \in [0,T]\]
is a $\mathbb{F}$-martingale. By Lemma~\ref{lemma2.1} and \eqref{eq:6.27} we have the following inequalities
\begin{align*}
\E\int_0^\st & \|\uns\|_\rV^{4(p-1)}\big|\langle \uns, G(s, \uns) \rangle_\rV\big|^2\,ds \\
&  \le \E\int_0^\st\|\uns\|_\rV^{4(p-1)}|\uns|^2_{\rm{D}(\A )}\|G(s,\uns)\|^2_{\mathcal{L}_2(\ell^2;\rH)}\,ds \\
& \le \frac14 \E \int_0^\st\|\uns\|_\rV^{4(p-1)}|\uns|^2_{\rm{D}(\A )}|\nabla \uns|^2_{L^2}\,ds < \infty\,,
\end{align*}
where the finiteness of the integral follows from Proposition~\ref{prop6.9}. Hence, $\E [ \mu_n(t)] = 0$.

Since $|\uns|_\rH \le \|\uns\|_\rV$ and $|\nabla \uns|_{L^2} \le \|\uns\|_\rV$ on applying the modified version of the Gronwall Lemma (Lemma~\ref{lemma2.2}) for
\[
\begin{split}
&X(t) = \|u_n(\st)\|_{\rm{V}}^{2p}\,, \\
&I(t) = 2p  \int_0^\st \|\uns\|_{\rm{V}}^{2(p-1)} \langle \uns, G(s,\uns)\,dW_s\rangle_{\rm{V}}
\end{split}\]
and
\begin{align*}
Y(t) & = \frac{p(3-p)}{2} \int_0^\st \|\uns\|_{\rm{V}}^{2(p-1)} |\A \,\uns|_{L^2}^2 \,ds \\
&+ 3 p \int_0^\st \|\uns\|_{\rm{V}}^{2(p-1)} \big | |\uns|\cdot|\nabla \uns |\big |^2_{L^2}\,ds \\
&~~~ + 2p \int_0^\st \|\uns\|_\rV^{2(p-1)} \big||\sqrt{g(|u(s)|^2)}|\cdot|u(s)| \big|^2_{L^2}\,ds,
\end{align*}
we have
\begin{equation}
\label{eq:6.48}
\sup_{n \in \N}\left(\sup_{t \in [0,T ]} \E\, \|u_n(\st)\|^{2p}_\rV\right) \le C_{T,p}\|u_0\|^{2p}_{\rV}, \quad p \in [1,3]\,.
\end{equation}
Using \eqref{eq:6.48} in \eqref{eq:6.46}, we also obtain
\begin{align}
\label{eq:6.49}
& \sup_{n \in \N} \E \int_0^\st \|\uns\|^{2(p-1)}_\rV |\A \,\uns|^2_{L^2}\,ds \nonumber \\
& \quad \le C_{T,p} \|u_0\|^{2p}_\rV := C_2(p, \rho), \quad p \in [1,3]\,.
\end{align}

Now we are left to show the estimate \eqref{eq:6.44}. Using the Burkholder- Davis- Gundy inequality, Lemma~\ref{lemma2.1}, we get
\begin{align*}
\E \sup_{t \in [0,T]} & \int_0^\st \|\uns\|^{2(p-1)}_\rV \langle u_n(s), G(s,u_n(s))dW_s\rangle_{\rV} \nonumber \\
& \le \E \left[ \int_0^{T \wedge \tau_R^n} \|\uns\|^{4(p-1)}_\rV \big |\langle u_n(s), G(s,u_n(s)) \rangle_{\rV} \big |^2\,ds \right]^{1/2} \nonumber \\
& \le \E \left[ \int_0^{T \wedge \tau_R^n} \|\uns\|^{4(p-1)}_\rV \|G(s,u_n(s))\|^2_{\mathcal{L}_2(\ell^2;\rH)} |u_n(s)|^2_{\rm{D}(\A )}\,ds \right]^{1/2} \nonumber \\
& \le \frac14 \E \left[ \int_0^{T \wedge \tau_R^n} \|\uns\|^{4(p-1)}_\rV |\nabla u_n(s)|^2_{L^2}|u_n(s)|^2_{\rm{D}(\A )} \,ds \right]^{1/2}.\nonumber
\end{align*}
Using the definition of $\rV$-norm, the H\"older inequality and the Young's inequality, for $\varepsilon > 0$ we obtain
\begin{align}
\label{eq:6.50}
& \E \sup_{t \in [0,T]}  \int_0^\st \|\uns\|^{2(p-1)}_\rV \langle u_n(s), G(s,u_n(s))dW_s\rangle_{\rV} \nonumber \\
&\; \le \frac14 \E \left[ \int_0^{T \wedge \tau_R^n} \|\uns\|^{2p}_\rV \|u_n(s)\|^{2(p-1)}_{\rV}|u_n(s)|^2_{\rm{D}(\A )} \,ds \right]^{1/2}\nonumber \\
&\; \le \frac14  \E \left[ \sup_{t \in [0,T]} \|u_n(\st)\|^{2p}_{\rV} \int_0^{T \wedge \tau_R^n} \|\uns\|^{2(p-1)}_\rV |u_n(s)|^2_{\rm{D}(\A )}\,ds \right]^{1/2} \nonumber \\
&\; \le \varepsilon\, \E \sup_{t \in [0,{T }]} \|u_n(\st)\|^{2p}_{\rV} + C_\varepsilon \E \int_0^{T \wedge \tau_R^n} \|\uns\|^{2(p-1)} |u_n(s)|^2_{\rm{D}(\A )}\,ds\,.
\end{align}
Thus from \eqref{eq:6.47} and using \eqref{eq:6.49}, \eqref{eq:6.50} and Lemma~\ref{lemma2.2}, we have
\begin{align*}
&\E \left(\sup_{t \in [0,T]} \|u_n(\st)\|^{2p}_{\rV} \right) \\
&\quad  \le C_{T,p} \left(\E \,\|u_0\|^{2p}_\rV \right) + \varepsilon\, \E \left(\sup_{t \in [0,T]} \|u_n(\st)\|^{2p}_\rV \right) + C_{T,p, \varepsilon}\,.
\end{align*}
Choosing $\varepsilon$ small enough we get
\begin{align}
\label{eq:6.51}
\sup_{n \in \N}\E \left(\sup_{t \in [0,T]} \|u_n(\st)\|^{2p}_\rV \right) \le C_{T,p}\left(\|u_0\|^{2p}_{\rV}\right) := C_1(p, \rho), \quad p \in [1,3]\,.
\end{align}
\end{proof}

\subsection{Tightness of measures}
For each $n \in \mathbb{N}$, the solution $u_n$ of the truncated equation \eqref{eq:6.26} defines a measure $\emph{law}(u_n)$ on $(\mathcal{Z}_{T}, \mathcal{T})$, where $\mathcal{Z}_T$ was defined in \eqref{eq:3.1} as
\[\mathcal{Z}_T = \ccal([0,T]; \mathrm{U}^\prime) \cap L^2_w(0,T; \mathrm{D}(\A)) \cap L^2(0,T; \rH_{loc}) \cap \ccal([0,T]; \mathrm{V}_w).\]
In this subsection we will prove that this sequence of measures defined on $\mathcal{Z}_{T}$ is tight.

\begin{lemma}
\label{lemma6.15}
The set of measures $\{\emph{law}(u_n), n \in \N\}$ is tight on $(\mathcal{Z}_{T}, \mathcal{T})$.
\end{lemma}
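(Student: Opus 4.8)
The plan is to apply the tightness criterion of Corollary~\ref{cor3.6} to the processes $(u_n)_{n \in \N}$, which are continuous, $\mathbb{F}$-adapted and $\rH_n \subset \rH$-valued; it then suffices to verify its three hypotheses (a), (b) and (c). The first two are the global (unstopped) versions of the a'priori bounds of Lemma~\ref{lemma6.13}. Since $u_n$ has continuous $\rV$-valued paths we have $\tau_R^n \nearrow \infty$ as $R \to \infty$, so $\st \nearrow t$, and the monotone convergence theorem applied to \eqref{eq:6.28} and \eqref{eq:6.29} yields
\begin{equation*}
\sup_{n \in \N} \E \Big[\sup_{t \in [0,T]} \|u_n(t)\|^2_\rV\Big] \le C_1(\rho), \qquad \sup_{n \in \N} \E \int_0^T |u_n(t)|^2_{\rD(\A)}\,dt \le C_2(\rho),
\end{equation*}
which are exactly (a) and (b). In the same way, taking $p=3$ in \eqref{eq:6.44} and letting $R \to \infty$ gives the higher moment bound $\sup_{n \in \N} \E[\sup_{t \in [0,T]} \|u_n(t)\|^6_\rV] \le C$, which I will use in verifying (c).

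It remains to check the Aldous condition $[\mathbf{A}]$ in $\rH$ (Definition~\ref{defn3.4}). Fix a sequence $(\tau_n)$ of $\mathbb{F}$-stopping times with $\tau_n \le T$ and $\theta \in [0,\delta]$, and using \eqref{eq:6.26} decompose the increment as $u_n(\tau_n+\theta) - u_n(\tau_n) = \sum_{i=1}^{5} \Jn{i}$, where $\Jn{1},\dots,\Jn{4}$ are the time integrals over $[\tau_n,\tau_n+\theta]$ of $-\A_n u_n$, $-B_n(u_n)$, $-g_n(u_n)$, $f_n(u_n)$, and $\Jn{5}=\int_{\tau_n}^{\tau_n+\theta} G_n(u_n)\,dW$. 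I would bound each term in $\rH$ by a power of $\theta$ with an $n$-independent constant. For $\Jn{1}$, since $|\A_n u_n|_\rH \le |u_n|_{\rD(\A)}$, the Cauchy--Schwarz inequality in time and in $\Omega$ together with (b) give $\E|\Jn{1}|_\rH \le C\theta^{1/2}$. For $\Jn{2}$ I would use $|B_n(u_n)|_\rH \le |B(u_n)|_\rH \le \|u_n\|_{L^\infty}|\nabla u_n|_{L^2} \le C\,|u_n|_{\rD(\A)}\|u_n\|_\rV$, where the embedding $H^2 \hookrightarrow L^\infty$ and the bound $\|u_n\|_{H^2} \le C|u_n|_{\rD(\A)}$ (both with $n$-independent constants, the latter from Lemma~\ref{lemma6.2} and the Fourier characterisation of $H^2$) are used; Cauchy--Schwarz, (a) and (b) then give $\E|\Jn{2}|_\rH \le C\theta^{1/2}$. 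The zero-order terms are better behaved: the estimate in the proof of Lemma~\ref{lemma6.6} gives $|g_n(u_n)|_\rH \le \|u_n\|^3_{L^6} \le C\|u_n\|^3_\rV$, so the $p=3$ moment bound yields $\E|\Jn{3}|_\rH \le C\theta$, while $(\mathbf{H1})$ gives $|f_n(u_n)|_\rH \le C(1+\|u_n\|_\rV)$ and hence $\E|\Jn{4}|_\rH \le C\theta$. Finally, by the It\^o isometry and \eqref{eq:2.15},
\begin{equation*}
\E|\Jn{5}|^2_\rH \le \E \int_{\tau_n}^{\tau_n+\theta} \|G_n(u_n(s))\|^2_{\mathcal{L}_2(\ell^2;\rH)}\,ds \le \tfrac14\,\theta\,\E\Big[\sup_{s \in [0,T]} \|u_n(s)\|^2_\rV\Big] \le C\theta,
\end{equation*}
so that $\E|\Jn{5}|_\rH \le (\E|\Jn{5}|^2_\rH)^{1/2} \le C\theta^{1/2}$ by Jensen's inequality.

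Summing these, I obtain $\sup_{n \in \N} \E|u_n(\tau_n+\theta)-u_n(\tau_n)|_\rH \le C(\theta^{1/2}+\theta) \le C\theta^{1/2}$ for $\theta \le \delta \le T$, with $C$ independent of $n$ and of the chosen stopping times. The Chebyshev inequality then gives, for all $\eta > 0$,
\begin{equation*}
\sup_{n \in \N} \sup_{0 \le \theta \le \delta} \Pb\big\{ |u_n(\tau_n+\theta)-u_n(\tau_n)|_\rH \ge \eta \big\} \le \frac{C\delta^{1/2}}{\eta},
\end{equation*}
which can be made $\le \varepsilon$ by choosing $\delta$ small, establishing $[\mathbf{A}]$. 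With (a), (b) and (c) in hand, Corollary~\ref{cor3.6} produces, for each $\varepsilon > 0$, a compact $K_\varepsilon \subset \mathcal{Z}_T$ with $\sup_{n \in \N} \mathrm{law}(u_n)(K_\varepsilon) \ge 1-\varepsilon$, i.e. the family $\{\mathrm{law}(u_n)\}$ is tight. The main point requiring care is keeping every constant independent of $n$: this is precisely where the uniform (not $\rH_n$-dependent) embedding $H^2 \hookrightarrow L^\infty$ for the bilinear term and the $p=3$ moment bound of Lemma~\ref{lemma6.14} for the cubic taming term are essential, so that the $n$-dependent equivalences of Proposition~\ref{prop6.9} are avoided in the increment estimates.
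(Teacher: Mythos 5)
Your proposal is correct and follows essentially the same route as the paper's proof: it invokes Corollary~\ref{cor3.6}, obtains conditions (a) and (b) from the a'priori bounds of Lemma~\ref{lemma6.13}, and verifies the Aldous condition $[\mathbf{A}]$ term by term on the decomposition of \eqref{eq:6.26} with $\theta^{1/2}$-type increment bounds, exactly as in the paper. The only cosmetic deviations are that you make explicit the removal of the stopping times via $\tau_R^n \nearrow \infty$ and monotone convergence (which the paper leaves implicit), you use $p=3$ instead of the paper's $p=2$ in Lemma~\ref{lemma6.14} to control the cubic taming term (both lie in the admissible range), and you reduce the stochastic-integral term to a first-moment Chebyshev bound via Jensen where the paper uses the second moment directly --- all harmless.
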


\begin{proof}
We recall the definition of the stopping time, $\tau_R^n$
\[\tau_R^n := \inf \{ t \in [0,T] : \|u_n(t)\|_\rV  \ge R\}.\]
We will use Corollary~\ref{cor3.6} to prove the tightness of measures. According to estimates \eqref{eq:6.28} and \eqref{eq:6.29}, conditions $(a)$ and $(b)$ are satisfied. Thus it is sufficient to prove that the sequence $(u_n)_{n \in \mathbb{N}}$ satisfies the Aldous condition $[\textbf{A}]$ in $\rH$. By \eqref{eq:6.26}, for $t \in [0,T \wedge \tau_R^n]$ we have
\begin{align*}
\un &= u_n(0) - \int_0^t \A _n \uns\,ds - \int_0^t B_n (\uns)\,ds - \int_0^t g_n(\uns)\,ds \\
&~~~ + \int_0^t f_n(\uns)\,ds  + \int_0^t G_n(s, \uns)\,dW(s)\\
&:= J_1^n + J_2^n(t) + J_3^n(t) +J_4^n(t) + J_5^n(t) + J_6^n(t),\quad t \in [0,T \wedge \tau_R^n]\,.
\end{align*}
Let $s , t \in [0,T], s< t$ and $\theta:= t - s$. First we will establish estimates for each term of the above equality.
\noindent \textbf{Ad.} $J_2^n$. Since $\A  : \rV \to \rV^\prime$, then by the H\"older inequality and \eqref{eq:6.29}, we have the following inequalities
\begin{align}
\label{eq:6.52}
& \Eb \left[ |J_2^n(\st) - J_2^n(s \wedge \tau_R^n)|_{\rH} \right] = \Eb \left| \int_{s \wedge \tau_R^n}^{\st} \A _n \unr\,dr \right|_{\rH}
 \nonumber \\
& \qquad  \leq c\Eb \int_{s \wedge \tau_R^n}^{\st} |\A \unr|_{\rH}\,dr \leq c\theta^{\frac12} \left(\E \int_{s \wedge \tau_R^n}^{\st} |\unr|^2_{\rD(\A)}\,dr \right)^{\frac12} \nonumber \\
&\qquad \leq c (C_2(R))^\frac12 \cdot \theta^{1/2} := c_2 \cdot \theta^{1/2}\,.
\end{align}
\noindent \textbf{Ad.} $J^n_3$. $B: \rD(\A) \times \rV \rightarrow \rH$ is bilinear and continuous and $P_n : \rH \to \rH$ is bounded then by Lemma~\ref{lemma6.5}, the Cauchy-Schwarz inequality and \eqref{eq:6.28} we have
\begin{align}
\label{eq:6.53}
& \Eb  \left[ | J^n_3(\st)  - J^n_3(s \wedge \tau_R^n)|_{\rH} \right]  = \Eb \left|\int_{s \wedge \tau_R^n}^{\st} P_n B(\unr)\,dr \right|_{\rH} \nonumber \\
 & \; \leq  \Eb \int_{s \wedge \tau_R^n}^{\st} |P_n B(\unr, \unr)|_{\rH}\,dr \nonumber \\
& \; \leq  \Eb \int_{s \wedge \tau_R^n}^{\st} \|B\|\cdot |\unr|_{\rm{D}(\A )}\|\unr\|_\rV \, dr
 \nonumber \\
& \; \leq  C \Eb \left( \left[ \sup_{t \in [0,T ]} \|u_n(\st)\|^2_{\rm{V}} \right]^{1/2} \cdot \theta^{1/2} \left[\int_0^\st|\unr|^2_{\rm{D}(\A )}\,dr \right]^{1/2} \right) \nonumber \\
& \; \leq  (C_1(R))^{1/2} (C_2(R))^{1/2} \cdot \theta^{1/2} := c_3 \cdot \theta^{1/2}\,.
\end{align}
\noindent \textbf{Ad.} $J^n_4$. Since $H^1 \hookrightarrow L^6$ then by the definition of $g$ and estimate \eqref{eq:6.44} (for $p = 2$), we have
\begin{align}
\label{eq:6.54}
\Eb & \left[|J^n_4(\st) - J^n_4(s \wedge \tau_R^n)|_{\rH} \right]  = \Eb \left| \int_{s \wedge \tau_R^n}^{\st} g_n(\unr)\,dr \right|_{\rH}  \nonumber \\
&\leq \Eb \int_{s \wedge \tau_R^n}^{\st} |P_n(\Pi g(|\unr|^2)\unr)|_{\rH}\,dr\leq  \Eb \int_{s \wedge \tau_R^n}^{\st} |g(|\unr|^2)\unr|_{L^2}\,dr \nn \\
& \leq  \Eb \int_{s \wedge \tau_R^n}^{\st} \left(\int_{\R^3}|u_n(r,x)|^6\,dx\right)^{1/2}\,dr = \Eb  \int_{s \wedge \tau_R^n}^{\st} \|\unr\|_{L^6}^3 ds \nonumber
\\
& \leq C \, \E \int_{s \wedge \tau_R^n}^{\st} \|\unr\|^3_\rV\,dr \le C T^{1/2} \left[\E \left(\sup_{t \in [0,T]} \|u_n(\st)\|^4_{\rV}\right)\right]^{3/4} \theta^{1/2} \nn \\
& \leq C \cdot (C_1(2,R))^{3/4} \cdot \theta^{1/2} := c_4 \cdot \theta^{1/2}\,.
\end{align}
\noindent \textbf{Ad.} $J^n_5$. Using the assumption $\mathbf{H1}$, \eqref{eq:6.35} and the Cauchy-Schwarz inequality, we obtain the following inequalities
\begin{align}
\label{eq:6.55}
\Eb & \left[ | J^n_5(\st) - J^n_5(s \wedge \tau_R^n)|_{\rH} \right]  = \Eb \left|  \int_{s \wedge \tau_R^n}^{\st} P_n (\Pi f(\unr)\,dr) \right|_{\rH} \nn \\
& \leq \Eb \int_{s \wedge \tau_R^n}^{\st} |f(\unr)|_{\rH}\,dr \leq  \Eb \left(\int_{s \wedge \tau_R^n}^{\st} |f(\unr)|^2_{\rH}\,dr \right)^{1/2} \cdot \theta^{1/2}  \nn \\
& \leq  \, \left(C_f \int_{s \wedge \tau_R^n}^{\st} |\unr|^2_\rH \,dr + T|b_f|_{L^1} \right)^{1/2} \theta^{\frac12} \nn\\
& \leq\,  \left(C_f+|b_f|_{L^1}\right)^{1/2} T^{1/2} \left(C(R)\right)^{1/2} \theta^{1/2} := c_5 \cdot \theta^{1/2}\,.
\end{align}
\noindent \textbf{Ad.} $J^n_6$. Using the It\^o isometry, Lemma~\ref{lemma2.1} and \eqref{eq:6.28}, we obtain the following
\begin{align}
\label{eq:6.56}
\Eb & \left[ | J^n_6(\st) - J^n_6(s \wedge \tau_R^n)|^2_{\rH} \right] = \Eb \left| \int_{s \wedge \tau_R^n}^{\st} G_n(r, \unr)\,dW(r) \right|^2_{\rH}   \nonumber \\
& = \Eb \int_{s \wedge \tau_R^n}^{\st} \|P_n G(r, \unr)\|^2_{\mathcal{L}_2(\ell^2;\rH)}\,dr \leq \frac{1}{4} \E \int_{s \wedge \tau_R^n}^{\st} |\nabla \unr|^2_{L^2}\,dr  \nonumber \\
& \leq \frac{1}{4} \Eb \left(\sup_{t \in [0,T]}\|u_n(\st)\|_\rV^2 \right) \theta  \leq \frac{1}{4} C_1(R) \cdot \theta := c_6 \cdot \theta\,.
\end{align}

\noindent Let us fix $\kappa > 0$ and $\varepsilon > 0$. By the Chebyshev's inequality and estimates \eqref{eq:6.52} - \eqref{eq:6.55}, we obtain
\begin{align*}
& \mathbb{P}( \{|J_i^n(\st) - J^n_i(s \wedge \tau_R^n)|_{\rH} \geq \kappa \}) \\
& \quad  \leq \dfrac{1}{\kappa} \mathbb{E} \left[|J_i^n(\st) - J^n_i(s \wedge \tau_R^n)|_{\rH} \right] \leq \dfrac{c_i \theta^{1/2}}{\kappa}; \quad n \in \mathbb{N},
\end{align*}
where $i = 2, \dots, 5$. Let $\delta_i = \dfrac{\kappa^2}{c_i^2} \varepsilon^2$. Then
\[ \sup_{n \in \mathbb{N}} \sup_{0 \leq \theta \leq \delta_i} \mathbb{P}( \{|J_i^n(\st) - J^n_i(s \wedge \tau_R^n)|_{\rH} \geq \kappa \}) \leq \varepsilon,~~~ i = 2 \dots 5\,.\]
By the Chebyshev inequality and \eqref{eq:6.56}, we have
\begin{align*}
&\mathbb{P}( \{|J_6^n(\st) - J^n_6(s \wedge \tau_R^n)|_{\rH} \geq \kappa \}) \\
& \quad \leq \dfrac{1}{\kappa^2} \mathbb{E} \left[|J_6^n(\st) - J^n_6(s \wedge \tau_R^n)|_{\rH}^2 \right] \leq \dfrac{c_6 \theta}{\kappa^2}; \quad n \in \mathbb{N}
\end{align*}
Let $\delta_6 = \dfrac{\kappa^2}{c_6} \varepsilon$. Then
\[ \sup_{n \in \mathbb{N}} \sup_{0 \leq \theta \leq \delta_6} \mathbb{P}( \{|J_6^n(\st) - J^n_6(s \wedge \tau_R^n)|_{\rH} \geq \kappa \}) \leq \varepsilon\,.\]
Since $[\textbf{A}]$ holds for each term $J_i^n,~i= 1,2, \dots, 6$; we infer that it holds also for $(u_n)_{n \in \N}$. Thus, the proof of lemma can be concluded by invoking Corollary~\ref{cor3.6}.
\end{proof}

Now we will state the main theorem of this section.

\begin{theorem}
\label{thm6.16}
Let assumptions $(\mathbf{H1})$ and $(\mathbf{H2})$ be satisfied. Then there exists a martingale solution $(\widehat{\Omega}, \widehat{\mathcal{F}}, \widehat{\mathbb{F}}, \widehat{\mathbb{P}}, \widehat{u})$ of problem \eqref{eq:2.13} such that
\begin{equation}
\label{eq:6.57}
\hat{\E} \left[\sup_{t \in [0,T]}\|\hu(t)\|^2_{\rm{V}} + \int_0^T |\hu(t)|^2_{\rm{D}(\A )}\,dt \right] < \infty\,.
\end{equation}
\end{theorem}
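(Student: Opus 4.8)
The plan is to obtain the martingale solution as a limit of the approximations $u_n$ from Theorem~\ref{thm6.12}, exploiting the tightness already proved in Lemma~\ref{lemma6.15}. First I would invoke the Skorohod--Jakubowski theorem. By Lemma~\ref{lemma6.15} the family $\{\mathrm{law}(u_n)\}_{n\in\N}$ is tight on $(\zcal_T,\mathcal{T})$, so by Corollary~\ref{cor3.8} there exist a probability space $(\tOmega,\tf,\tp)$, a subsequence (still indexed by $n$), and $\zcal_T$-valued random variables $\tun,\tu$ with $\mathrm{law}(\tun)=\mathrm{law}(u_n)$ on $\zcal_T$ and $\tun\to\tu$ almost surely in $\zcal_T$. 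Because $\mathcal{T}$ is the supremum of the four topologies in \eqref{eq:3.1}, this single a.s.\ convergence encodes simultaneously $\tun\to\tu$ in $\ccal([0,T];\rU')$ and in $\ccal([0,T];\rV_w)$, the weak convergence $\tun\rightharpoonup\tu$ in $L^2(0,T;\rD(\A))$, and the \emph{strong} convergence $\tun\to\tu$ in $L^2(0,T;\rH_{loc})$. The last mode is the only strong one, and it is the workhorse for all the nonlinear passages below.

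Next I would pass to the limit in the weak formulation tested against $\v\in\vcal$. Since each drift integral and each quadratic-variation integral is a measurable path-functional on $\zcal_T$, equality of laws transfers them from $u_n$ to $\tun$, so $\tun$ satisfies the projected equation \eqref{eq:6.26} on the new space. Fix $\v\in\vcal$ with $\mathrm{supp}\,\v\subset\ocal_R$. The Stokes term $\int_0^t\langle\A\tun(s),\v\rangle\,ds\to\int_0^t\langle\A\tu(s),\v\rangle\,ds$ follows from the weak $L^2(0,T;\rD(\A))$ convergence, and the $f$-term is controlled by the Lipschitz bound in $(\mathbf{H1})$ together with the local strong convergence. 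The convection term is the \textbf{main obstacle}: integrating by parts and using $\divv\,\tun=0$,
\[
\langle B_n(\tun(s)),\v\rangle=b(\tun(s),\tun(s),\v)=-b(\tun(s),\v,\tun(s))=-\int_{\ocal_R}(\tun(s)\cdot\nabla\v)\cdot\tun(s)\,dx,
\]
which is quadratic in $\tun(s)$ localised to $\ocal_R$ and involves only the smooth bounded field $\nabla\v$; hence the strong $L^2(0,T;\rH_{loc})$ convergence gives $\tun\otimes\tun\to\tu\otimes\tu$ in $L^1((0,T)\times\ocal_R)$ and so convergence of the time-integrated term. For the tamed term, $|g(|\tun|^2)\tun|\le|\tun|^3$ by \eqref{eq:g_bounded}; the local strong convergence yields, along a further subsequence, a.e.\ convergence $g(|\tun|^2)\tun\to g(|\tu|^2)\tu$ on $\ocal_R$, while the uniform $L^4$-estimate \eqref{eq:6.30} together with the higher bounds of Lemma~\ref{lemma6.14} supply the uniform integrability needed to apply Vitali's theorem. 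This is exactly the point at which the $\R^3$ version of the $L^4$-bound is indispensable.

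Finally I would identify the stochastic term and verify the regularity. Setting
\[
\tMn(t):=\tun(t)-\tun(0)+\int_0^t\left[\A_n\tun(s)+B_n(\tun(s))+g_n(\tun(s))-f_n(\tun(s))\right]ds,
\]
equality of laws shows that $\tMn$ is a continuous square-integrable martingale (with respect to its natural filtration) with quadratic variation $\int_0^t G_n(s,\tun(s))G_n(s,\tun(s))^{*}\,ds$. Passing to the limit in the drift as above and checking the martingale identity against bounded continuous functionals of the past by means of the higher moment bounds of Lemma~\ref{lemma6.14}, one finds that $\tMn\to\tM$, where $\tM$ is a continuous martingale whose quadratic variation is $\int_0^t G(s,\tu(s))G(s,\tu(s))^{*}\,ds$; here the diffusion coefficient is again handled by the integration-by-parts/localisation device so that only the strong local convergence of $\tun$ (not of $\nabla\tun$) is used. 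The martingale representation theorem then furnishes a cylindrical Wiener process $\widehat W$ on a possibly enlarged space with $\tM(t)=\int_0^t G(s,\tu(s))\,d\widehat W(s)$, giving the system $(\widehat\Omega,\widehat{\mathcal F},\widehat{\mathbb F},\widehat{\mathbb P},\widehat W,\widehat u)$ with $\widehat u=\tu$ satisfying \eqref{eq:3.4}. The bound \eqref{eq:6.57} is inherited from the uniform a'priori estimates \eqref{eq:6.28}--\eqref{eq:6.29}: since $\tun$ and $u_n$ have the same law these estimates hold for $\tun$, and by weak lower semicontinuity of the $\rV$- and $\rD(\A)$-norms together with Fatou's lemma they pass to $\tu$, after first letting $R\to\infty$ to remove the localising stopping times $\tau_R^n$.
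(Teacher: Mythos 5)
Your proposal is correct and follows essentially the same route as the paper's own proof: Skorohod--Jakubowski via Corollary~\ref{cor3.8}, transfer of the a priori and $L^4$ estimates by equality of laws, identification of the nonlinear drift limits through the strong $L^2(0,T;\rH_{loc})$ convergence with the $L^4$ bound doing the decisive work for the tamed term (the paper's Lemmas~\ref{lemma6.18}--\ref{lemma6.20}), the martingale and quadratic-variation convergence via Vitali together with the integration-by-parts/localisation device for $G$ (Lemmas~\ref{lemma6.21}--\ref{lemma6.24}), the martingale representation theorem, and lower semicontinuity plus removal of the stopping times for \eqref{eq:6.57}. The one local imprecision is your intermediate claim that $\tun$ ``satisfies'' \eqref{eq:6.26} on the new probability space --- the stochastic integral is not a measurable functional of the solution path alone, so equality of laws cannot transfer the equation itself before a Wiener process is constructed --- but you immediately replace this by the correct formulation, namely that $\widetilde{M}_n$ is a continuous square-integrable martingale with the stated quadratic variation, exactly as in the paper's Lemma~\ref{lemma6.17}, so no genuine gap results.
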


In the following subsection we will prove Theorem~\ref{thm6.16} in several steps.

\subsection{Proof of Theorem~\ref{thm6.16}}
\label{s:6.3}

By Lemma~\ref{lemma6.15} the set of measures $\{\emph{law}(u_n), n \in \N \}$ is tight on the space $(\mathcal{Z}_{T}, \mathcal{T})$ defined by \eqref{eq:3.1}. Hence by Corollary~\ref{cor3.8} there exists a subsequence $(n_k)_{k \in \N}$, a probability space $(\widetilde{\Omega}, \widetilde{\mathcal{F}}, \widetilde{\mathbb{P}})$ and, on this space, $\mathcal{Z}_{T}$-valued random variables $\widetilde{u}, \widetilde{u}_{n_k}, k \ge 1$ such that
\begin{equation}
\label{eq:6.58}
\widetilde{u}_{n_k}\,\mbox{ has the same law as }\, u_{n_k}\,\mbox{ and }\, \widetilde{u}_{n_k} \to \widetilde{u} \,\mbox{ in }\,\mathcal{Z}_{T},\quad \tp\mbox{-a.s.}
\end{equation}
$\tunk \to \tu \,\mbox{ in }\,\mathcal{Z}_{T}$ $\tp\mbox{-a.s.}$ precisely means that
\begin{align*}
\tu_{n_k} &\to \tu \, \mbox{ in } \ccal([0,T]; {\rm U}^\prime),\\
\tu_{n_k} &\rightharpoonup \tu \, \mbox{ in } L^2(0,T; \rm{D}(\A)),\\
\tu_{n_k} &\to \tu \, \mbox{ in } L^2(0,T; \rH_{loc}),\\
\tu_{n_k} &\to \tu \, \mbox{ in } \ccal([0,T]; \rV_w).
\end{align*}
Let us denote the subsequence $(\widetilde{u}_{n_k})$ again by $(\tun)_{n \in \N}$.

By Theorem~B.\ref{thmb.1}, $\ccal([0,T]; \rH_n)$ is a Borel subset of $\ccal([0,T];{\rm U}^\prime) \cap L^2(0,T; \rH_{loc})$. Since $u_n \in \ccal([0,T]; \rm{H}_n)$, $\mathbb{P}$-a.s., and $\tun$, $u_n$ have the same laws on $\mathcal{Z}_T$, thus
\begin{equation}
\label{eq:equal_laws}
\emph{law}(\tun)\left(\ccal([0,T]; \rm{H}_n)\right) = 1, \quad n \in \N\,.
\end{equation}

Since $\ccal([0,T]; \rV) \cap \zcal_T$ and $L^2(0,T; \rm{D}(\A )) \cap \mathcal{Z}_T$ are Borel subsets of $\zcal_T$ (Theorem~B.\ref{thmb.1}) and $\tun$ and $u_n$ have the same laws on $\zcal_T$; from \eqref{eq:6.44} and \eqref{eq:6.29}, we have for $p \in [1,3]$
\begin{align}
\label{eq:6.59}
&\sup_{n \in \N} \widetilde{\E} \left(\sup_{0 \le s \le T} \|\tuns\|^{2p}_{\rm{V}} \right) \le C_1(p)\,,\\
\label{eq:6.60}
&\sup_{n \in \N} \widetilde{\E} \left[ \int_0^T |\tuns|^2_{\rm{D}(\A )}\,ds \right] \le C_2(\|u_0\|^2_\rV)\,.
\end{align}
Also, $\ccal([0,T]; \rH_n)$ is continuously embedded in $L^4(0,T; L^4)$ and $\tun$, $u_n$ have same law $\mu$ on $\ccal([0,T]; \rH_n)$, therefore we have
\begin{align*}
& \widetilde{\E} \int_0^T \|\tuns\|^4_{L^4}\,ds  = \int_{\widetilde{\Omega}} \left[ \int_0^T \|\widetilde{u}_n(s,\omega)\|^4_{L^4}\,ds\right] \,d\widetilde{\mathbb{P}}(\omega) \\
& \quad = \int_{L^4(0,T; L^4)}\left[\int_0^T\|y\|_{L^4}^4\,ds\right]\,d\mu(y) = \int_{\ccal([0,T]; \rH_n)}\left[\int_0^T\|y\|_{L^4}^4\,ds\right]\,d\mu(y) \\
& \quad = \int_{L^4(0,T; L^4)}\left[\int_0^T\|y\|_{L^4}^4\,ds\right]\,d\mu(y) = \int_{\Omega}\left[ \int_0^T \|{u}_n(s,\omega)\|^4_{L^4}\,ds \right]\,d{\mathbb{P}}(\omega)\\
& \quad = \E \int_0^T \|u_n(s)\|^4_{L^4}\,ds \,.
\end{align*}
Thus, by estimate \eqref{eq:6.30} we infer
\begin{equation}
\label{eq:L4estimate}
\sup_{n \in N}\widetilde{\E}\int_0^T\|\tuns\|^4_{L^4}\,ds \le C_3(|u_0|^2_{\rH})\,.
\end{equation}

By inequality \eqref{eq:6.60} we infer that the sequence $(\tun)$ contains a subsequence, still denoted by $(\tun)$ convergent weakly in $L^2([0,T] \times \tom; \rm{D}(\A ))$. Since by \eqref{eq:6.58} $\tp$-a.s $\tun \to \widetilde{u}$ in $\mathcal{Z}_{T}$, we conclude that $\widetilde{u} \in L^2([0,T] \times \tom; \rm{D}(\A ))$, i.e.
\begin{equation}
\label{eq:6.61}
\widetilde{\E} \left[\int_0^T |\tus|^2_{\rm{D}(\A )}\,ds \right] < \infty\,.
\end{equation}
Similarly by inequality \eqref{eq:6.59} for $p = 1$ we can choose a subsequence of $(\tun)$ convergent weak star in the space $L^2(\tom; L^\infty(0,T; \rm{V}))$ and, using \eqref{eq:6.58}, we infer that
\begin{equation}
\label{eq:6.62}
\widetilde{\E} \left(\sup_{0 \le s \le T} \|\tus\|^2_{\rm{V}} \right) < \infty\,.
\end{equation}

For each $n \geq 1$, let us consider a process $\widetilde{M}_n$ with trajectories in $\ccal([0,T]; \rH_n)$ in particular in $\ccal([0,T];\rm{H})$ defined by
\begin{align}
\label{eq:6.63}
\widetilde{M}_n(t) & = \widetilde{u}_n(t) - P_n \widetilde{u}(0) + \int_0^t \A \tuns\,ds + \int_0^t B_n(\tuns)\,ds  \nonumber \\
&~~~ + \int_0^t g_n(\tuns) \, ds - \int_0^t f_n(\tuns) \, ds, \quad \quad \quad  t \in [0,T]\,.
\end{align}

\begin{lemma}
\label{lemma6.17}
$\widetilde{M}_n$ is a square integrable martingale with respect to the filtration $\widetilde{\mathbb{F}}_n = (\widetilde{\mathcal{F}}_{n,t})$, where $\widetilde{\mathcal{F}}_{n,t} = \sigma\{\tuns, s \leq t\}$, with the quadratic variation
\begin{equation}
\label{eq:6.64}
 \langle \langle \widetilde{M}_n \rangle \rangle_t = \int_0^t \|G_n(s, \tuns)\|^2_{\mathcal{L}_2(\ell^2; \rH)}\, ds\,.
\end{equation}
\end{lemma}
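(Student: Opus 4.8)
The plan is to transport both the martingale property and the quadratic variation from the original probability space, where $u_n$ genuinely solves \eqref{eq:6.26}, to the new space by exploiting the equality of laws \eqref{eq:equal_laws}. First, on $(\Omega,\mathcal F,\mathbb F,\mathbb P)$ I would define the process
$$M_n(t) := u_n(t) - P_n u_0 + \int_0^t\big[\A_n u_n(s) + B_n(u_n(s)) + g_n(u_n(s)) - f_n(u_n(s))\big]\,ds .$$
Comparing with \eqref{eq:6.26} and using $u_n(0) = P_n u_0$, one sees at once that $M_n(t) = \int_0^t G_n(s,u_n(s))\,dW(s)$, i.e. $M_n$ is simply an It\^o stochastic integral. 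By Lemma~\ref{lemma6.8} together with the a'priori bounds \eqref{eq:6.28}--\eqref{eq:6.29} we have $\E\int_0^T\|G_n(s,u_n(s))\|^2_{\mathcal L_2(\ell^2;\rH)}\,ds<\infty$, so $M_n$ is a square integrable $\mathbb F$-martingale whose quadratic variation is $\int_0^t\|G_n(s,u_n(s))\|^2_{\mathcal L_2(\ell^2;\rH)}\,ds$; equivalently, both $M_n$ and $|M_n(\cdot)|^2_\rH - \int_0^\cdot\|G_n(s,u_n(s))\|^2_{\mathcal L_2(\ell^2;\rH)}\,ds$ are $\mathbb F$-martingales.

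Next I would recast both facts as identities between expectations of \emph{path} functionals. The right-hand side of \eqref{eq:6.63} defines, for any path $y\in\ccal([0,T];\rH_n)$, a map
$$\Psi_n[y](t) := y(t) - y(0) + \int_0^t\big[\A_n y(s) + B_n(y(s)) + g_n(y(s)) - f_n(y(s))\big]\,ds ,$$
which, by Proposition~\ref{prop6.9} (equivalence of the $L^2$, $H^1$ and $\rD(\A)$ norms on $\rH_n$) and the local Lipschitz continuity of $\A_n,B_n,g_n,f_n$ from Lemmas~\ref{lemma6.4}--\ref{lemma6.7}, is continuous from $\ccal([0,T];\rH_n)$ into $\ccal([0,T];\rH_n)$; moreover $M_n = \Psi_n[u_n]$ and $\widetilde M_n = \Psi_n[\tun]$. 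Fixing $0\le s\le t\le T$, $\phi\in\rH_n$, times $s_1,\dots,s_m\le s$ and a bounded continuous $\gamma:\R^m\to\R$, the two martingale properties of $M_n$ are encoded by
$$\E\Big[\langle \Psi_n[u_n](t) - \Psi_n[u_n](s),\phi\rangle_\rH\;\gamma\big(\langle u_n(s_1),\phi\rangle_\rH,\dots,\langle u_n(s_m),\phi\rangle_\rH\big)\Big]=0$$
and by the analogous scalar identity for $|\Psi_n[u_n]|^2_\rH - \int_0^\cdot\|G_n\|^2_{\mathcal L_2(\ell^2;\rH)}\,ds$. Since the integrands are measurable functionals of the path and $\tun$, $u_n$ share the same law on $\ccal([0,T];\rH_n)$ by \eqref{eq:equal_laws}, these expectations are unchanged on replacing $u_n$ by $\tun$; a monotone-class argument then upgrades the finite-dimensional weights $\gamma$ to arbitrary bounded $\widetilde{\mathcal F}_{n,s}$-measurable ones, yielding the $\widetilde{\mathbb F}_n$-martingale property of $\widetilde M_n$ and the quadratic variation \eqref{eq:6.64}.

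The step I expect to be the main technical obstacle is the integrability needed to push expectations through the equality of laws, since $\Psi_n[y](t)$ grows polynomially in $\sup_{[0,t]}\|y\|_{\rH_n}$ (the term $g_n$ is cubic) and is therefore not bounded. The clean remedy is truncation: transfer instead the bounded continuous functionals obtained by multiplying by a cut-off $\chi\big(\|y\|_{\ccal([0,T];\rH_n)}\big)$, $\chi$ continuous with compact support, and then remove the cut-off by dominated convergence. The requisite uniform integrability is supplied by the higher moment estimates \eqref{eq:6.59} (valid up to $p=3$) and \eqref{eq:6.60}, which bound $\widetilde{\E}\sup_{[0,T]}\|\tun\|^{2p}_\rV$ and $\widetilde{\E}\int_0^T|\tun|^2_{\rD(\A)}\,ds$ and hence every term appearing in $\Psi_n[\tun]$ as well as $\int_0^T\|G_n(s,\tun(s))\|^2_{\mathcal L_2(\ell^2;\rH)}\,ds$. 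Finally, square integrability of $\widetilde M_n$ itself, $\widetilde{\E}\,|\widetilde M_n(t)|^2_\rH<\infty$, follows from the same bounds, which completes the proof.
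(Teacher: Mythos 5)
Your proposal is correct and follows essentially the same route as the paper: on the original space $M_n=\int_0^\cdot G_n(s,u_n(s))\,dW(s)$ is a square integrable martingale with the stated quadratic variation, and both properties are transported to $\widetilde{M}_n$ through the equality of laws \eqref{eq:equal_laws}, tested against bounded functionals of the path up to time $s$ (the paper uses bounded continuous $h$ on $\ccal([0,s];\rV_w)$ and test vectors $\psi,\zeta\in\rV_\gamma$, which is exactly the content of the identities \eqref{eq:6.65}--\eqref{eq:6.66}). The truncation and uniform-integrability details you supply via \eqref{eq:6.59}--\eqref{eq:6.60} are left implicit in the paper's very terse proof (they surface later in Lemmas~\ref{lemma6.21} and \ref{lemma6.22}), so your write-up is a fleshed-out version of the same argument.
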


\begin{proof}
Indeed since $\tun$ and $u_n$ have the same laws, for all $s, t \in [0,T]$, $s \leq t$, then for all bounded continuous functions $h$ on $\ccal([0,s]; \rV_w)$, and all $\psi, \zeta \in \rV_\gamma$ $(\gamma > \frac{d}{2})$, we have

\begin{equation}
\label{eq:6.65}
\widetilde{\Eb} \left[ \langle \widetilde{M}_n(t) - \widetilde{M}_n(s) , \psi \rangle h(\widetilde{u}_{n|[0,s]}) \right] = 0
\end{equation}
and
\begin{align}
\label{eq:6.66}
\widetilde{\Eb} \Big[ &\Big( \langle \widetilde{M}_n(t), \psi \rangle \langle \widetilde{M}_n(t), \zeta \rangle - \langle \widetilde{M}_n(s), \psi \rangle \langle \widetilde{M}_n(s), \zeta \rangle \nonumber \\
& - \int_s^t \langle \left(G(\sigma, \tunsi)\right)^\ast P_n \psi , \left(G(\sigma, \tunsi)\right)^\ast P_n \zeta \rangle_{\ell^2}\, d \sigma \Big) \cdot h(\widetilde{u}_{n|[0,s]}) \Big] = 0\,.
\end{align}
\end{proof}

\begin{lemma}
\label{lemma_cont_martingale}
Let us define a process $\widetilde{M}$ for $t \in [0,T]$ by
\begin{align}
\label{eq:6.67}
\widetilde{M}(t) & = \widetilde{u}(t) - \widetilde{u}(0) + \int_0^t \A  \tus\,ds + \int_0^t B(\tus)\,ds  \nonumber\\
&~~~ + \int_0^t \Pi(g(|\tus|^2)\tus) \,ds - \int_0^t \Pi\,f(\tus)\,ds\,.
\end{align}
Then $\widetilde{M}$ is a $\rH$-valued continuous process.
\end{lemma}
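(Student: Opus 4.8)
The plan is to split $\widetilde M$ into the weakly continuous part $\tu(t)-\tu(0)$ and the four Bochner integrals, to dispose of the integrals by showing their integrands lie in $L^1(0,T;\rH)$ almost surely, and then to extract the genuine strong continuity of the remaining part from the martingale structure of the approximations $\tMn$ supplied by Lemma~\ref{lemma6.17}. I expect the main obstacle to be exactly this last point: on $\R^3$ the embedding $\rV\hookrightarrow\rH$ is \emph{not} compact, so the weak continuity of $\tu$ in $\rV$ (all that $\tu\in\ccal([0,T];\rV_w)$ provides) cannot be upgraded to strong $\rH$-continuity by any compactness argument, and the strong continuity must instead be squeezed out of a uniform increment moment bound.

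First I would verify that $\widetilde M$ is well defined, $\rH$-valued, and that its four integral terms are (absolutely) continuous in $\rH$, by checking that each integrand belongs to $L^1(0,T;\rH)$ $\tp$-a.s. Indeed $\A\tus\in L^2(0,T;\rH)$ by \eqref{eq:6.61}; $|B(\tus)|_\rH\le c\,|\tus|_{\rD(\A)}\|\tus\|_\rV$ as in the proof of Lemma~\ref{lemma6.5}, which is integrable after Cauchy--Schwarz using \eqref{eq:6.61} and \eqref{eq:6.62}; $|\Pi(g(|\tus|^2)\tus)|_\rH\le\|\tus\|_{L^6}^3\le c\|\tus\|_\rV^3$ as in \eqref{eq:6.14}, integrable by \eqref{eq:6.62}; and $|\Pi f(\tus)|_\rH\le(C_f|\tus|_\rH^2+|b_f|_{L^1})^{1/2}$, integrable by $(\textbf{H1})$ and \eqref{eq:6.62}. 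Hence $t\mapsto\int_0^t[\,\cdots\,]\,ds$ is a continuous $\rH$-valued process, and it remains only to show that $t\mapsto\tu(t)$ — equivalently $\widetilde M$ itself — is strongly continuous in $\rH$.

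For the decisive step I would pass through the approximations. Let $M_n$ be the process on the original space $\Omega$ defined by the formula \eqref{eq:6.63} with $u_n$ in place of $\tun$; by the equation \eqref{eq:6.26} one has $M_n(t)-M_n(s)=\int_s^t G_n(r,u_n(r))\,dW(r)$. The Burkholder--Davis--Gundy inequality together with \eqref{eq:2.15} and the global bound obtained by letting $R\to\infty$ in \eqref{eq:6.44} then gives, for $p=2$,
\[
\E\,|M_n(t)-M_n(s)|_\rH^{4}\le C_p\,\E\Big(\int_s^t\|G_n(r,u_n(r))\|^2_{\mathcal{L}_2(\ell^2;\rH)}\,dr\Big)^{2}\le C\,|t-s|^{2},
\]
with $C$ independent of $n$. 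Since $\tun$ and $u_n$ have the same law, $\tMn$ and $M_n$ have the same law on $\ccal([0,T];\rH_n)$, so the same estimate holds for $\tMn$. Finally, for each fixed $s,t$ one has $\tMn(t)-\tMn(s)\rightharpoonup\widetilde M(t)-\widetilde M(s)$ weakly in $\rH$ — this follows from $\tun\to\tu$ in $\zcal_T$ (giving $\tun(t)\rightharpoonup\tu(t)$ in $\rV$ for each $t$ and $\tun\rightharpoonup\tu$ in $L^2(0,T;\rD(\A))$) together with the convergence of the drift terms, exactly as in the identification of the limiting equation. By weak lower semicontinuity of the $\rH$-norm and Fatou's lemma this yields $\te\,|\widetilde M(t)-\widetilde M(s)|_\rH^{4}\le C|t-s|^{2}$, so Kolmogorov's continuity theorem provides a continuous $\rH$-valued modification of $\widetilde M$; since $\widetilde M$ is already weakly continuous in $\rH$, this modification coincides with $\widetilde M$, which is therefore a continuous $\rH$-valued process. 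The crux, as anticipated, is this passage from the weak to the strong topology on $\rH$, achieved by the uniform increment bound rather than by any compact embedding.
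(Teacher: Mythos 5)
Your proposal is correct, and its first half is essentially the paper's entire proof: the paper reduces the lemma to showing that the four integrands $\A\tus$, $B(\tus)$, $\Pi(g(|\tus|^2)\tus)$ and $\Pi f(\tus)$ lie in $L^1(0,T;\rH)$ (in fact it bounds their expected $L^1$-norms), using exactly the estimates you give: Cauchy--Schwarz with \eqref{eq:6.61} for the Stokes term, $|B(\tu)|_\rH\le C\,|\tu|_{\rD(\A)}\|\tu\|_\rV$ via $H^{2,2}\hookrightarrow L^\infty$ together with \eqref{eq:6.61}--\eqref{eq:6.62}, the $L^6$-embedding bound $|\Pi(g(|\tu|^2)\tu)|_\rH\le C\|\tu\|^3_\rV$ with \eqref{eq:6.59}, and $(\textbf{H1})$ for $f$. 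Where you genuinely diverge is the continuity of $\tu$ itself: the paper disposes of it in one line, ``Since $\tu\in\ccal([0,T];\rV)$\dots'', i.e.\ it invokes strong $\rV$-continuity, although at this stage the Skorokhod construction only yields $\tu\in\ccal([0,T];\rV_w)$ (strong continuity appears only later, in Theorem~\ref{thm6.27}, and there for the strong solution). Your second half supplies precisely what that line glosses over: the identity $M_n(t)-M_n(s)=\int_s^t G_n(r,u_n(r))\,dW(r)$ from \eqref{eq:6.26} and \eqref{eq:6.63}, the Burkholder--Davis--Gundy bound $\E|M_n(t)-M_n(s)|^4_\rH\le C|t-s|^2$ uniformly in $n$ (using \eqref{eq:2.15} and \eqref{eq:6.44} with $p=2$ after letting $R\to\infty$, which is legitimate since $\tau_R^n\nearrow\infty$; the paper performs the same removal of stopping times implicitly when passing to \eqref{eq:6.59}), the transfer to $\tMn$ by equality of laws as in Lemma~\ref{lemma6.17}, the a.s.\ weak convergence of increments, Fatou, Kolmogorov's criterion, and the identification of the continuous modification with $\widetilde{M}$ via its weak continuity. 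This is a sound, self-contained way to obtain strong $\rH$-continuity without any compact embedding, which, as you rightly stress, is unavailable on $\R^3$. The one detail worth spelling out is that the a.s.\ weak convergence $\tMn(t)-\tMn(s)\rightharpoonup\widetilde{M}(t)-\widetilde{M}(s)$ in $\rH$ requires $\omega$-wise bounds, uniform in $n$, on $\sup_{t\in[0,T]}\|\tun(t)\|_\rV$ and on $\|\tun\|_{L^2(0,T;\rD(\A))}$; these follow from the a.s.\ convergences in $\ccal([0,T];\rV_w)$ and $L^2_w(0,T;\rD(\A))$ by Banach--Steinhaus, the same device the paper uses in the proof of Lemma~\ref{lemma6.24}. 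In short: your route costs an extra uniform moment estimate but buys a complete proof of the stated strong continuity, whereas the paper's shorter argument tacitly assumes it (or, read charitably, establishes only weak continuity of $\widetilde{M}$ at this point and leans on the martingale structure afterwards).
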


\begin{proof}
Since $\widetilde{u} \in \ccal([0,T]; \rV)$ we just need to show that the remaining terms on the r.h.s. of \eqref{eq:6.67} are $\rH$-valued a.s. and well-defined.

Using the Cauchy-Schwarz inequality repeatedly and \eqref{eq:6.61} we have the following inequalities
\begin{align*}
\widetilde{\E} \int_0^T \left| \A  \tus \right|_\rH\,ds \le T^{1/2} \left(\widetilde{\E} \int_0^T |\tus|^2_{\rm{D}(\A )}\,ds \right)^{1/2} < \infty\,.
\end{align*}
Since $H^{k,p}(\R^d) \hookrightarrow L^\infty(\R^d)$ for every $k > d/p$, hence there exists a $C > 0$ such that
$\|u\|_{L^\infty} \le C \,\|u\|_{H^{2,2}}$ for every $u \in H^{2,2}(\R^3)$. Thus by the Cauchy-Schwarz inequality, \eqref{eq:6.61} and \eqref{eq:6.62} we obtain the following estimate

\begin{align*}
\widetilde{\E} & \int_0^T  \left| B (\tus) \right|_\rH\,ds \le T^{1/2} \widetilde{\E} \left(\int_0^T |\tus \cdot \nabla \tus|^2_{L^2}\,ds \right)^{1/2}  \\
& \le T^{1/2}\widetilde{\E} \left(\int_0^T \|\tus\|^2_{L^\infty} |\nabla \tus|_{L^2}^2\,ds \right)^{1/2} \\
& \le T^{1/2}\,C\, \widetilde{\E} \left( \int_0^T |\tus|^2_{\rm{D}(\A )}\|\tus\|^2_{\rV}\,ds \right)^{1/2}\\
& \le T^{1/2}\,C\, \left[\widetilde{\E} \sup_{s \in [0,T]}\|\tus\|^2_\rV  \right]^{1/2} \left[\widetilde{\E} \int_0^T |\tus|^2_{\rm{D}(\A )}\,ds \right]^{1/2} < \infty\,.
\end{align*}
We know that for $d= 3$, $H^{1,2} \hookrightarrow L^6$, thus using \eqref{eq:g_bounded}, \eqref{eq:6.58} and \eqref{eq:6.59}, we get
\begin{align*}
\widetilde{\E} \int_0^T & \left| \Pi g(|\tus|^2)\tus \right|_\rH\,ds \le \widetilde{\E} \int_0^T |g(|\tus|^2)\tus|_{L^2}\,ds  \\
&  \le \widetilde{\E} \int_0^T \|\tus\|_{L^6}^3\,ds  \le C\, \widetilde{\E} \int_0^T \|\tus\|^3_\rV\,ds \\
& \le C \left(\widetilde{\E} \sup_{s \in [0,T]}\|\tus\|^4_\rV \right)^{3/4} T < \infty\,.
\end{align*}
Using the assumptions $(\textbf{H1})$ and \eqref{eq:6.62} we can show that
\begin{align*}
& \widetilde{\E} \int_0^T  \left| \Pi f(\tus) \right|_\rH\,ds \le \widetilde{\E} \int_0^T |f(\tus)|_{L^2}\,ds   \\
&\; \le T^{1/2} \,\widetilde{\E} \left( \int_0^T |f(\tus)|^2_{L^2}\,ds\right)^{1/2} \le T^{1/2}\left(\widetilde{\E} \int_0^T \left(C_f|\tus|^2_\rH + |b_f|_{L^1}\right)\,ds\right)^{1/2} < \infty\,.
\end{align*}
This concludes the proof of the lemma.
\end{proof}

\begin{lemma}
\label{lemma6.18}
Let $\gamma > \frac32$, $u \in L^2(0,T; \rH) \cap L^4(0,T; L^4)$ and $(u_n)_{n \in \N}$ be a bounded sequence in $L^2(0,T; \rH) \cap L^4(0,T; L^4)$ such that $u_n \to u$ in $L^2(0,T; \rH_{loc})$. Then for all $r,t \in [0,T]$ and all $\psi \in \rV_\gamma \colon$

\begin{equation}
\label{eq:6.68}
\lim_{n \to \infty} \int_r^t \langle g(|\uns|^2)\uns , \psi \rangle \,ds = \int_r^t \langle g(|u(s)|^2) u(s), \psi\rangle \,ds\,.
\end{equation}
Here $\langle \cdot, \cdot \rangle$ denotes the duality pairing between $\rV_\gamma$ and $\rV_{\gamma}^\prime$.
\end{lemma}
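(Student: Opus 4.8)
The plan is to reduce the convergence of the cubic tamed term to the local $L^2$ convergence that is assumed, by combining a pointwise Lipschitz-type bound on the map $a \mapsto g(|a|^2)a$ with the uniform $L^4(0,T;L^4)$ bounds. First I would record the elementary pointwise estimate
\[
|g(|a|^2)a - g(|b|^2)b| \le C\,(|a|^2 + |b|^2)\,|a - b|, \qquad a,b \in \R^3,
\]
which follows by writing $g(|a|^2)a - g(|b|^2)b = g(|a|^2)(a-b) + \big(g(|a|^2)-g(|b|^2)\big)b$ and applying $|g(r)| \le r$ from \eqref{eq:g_bounded} together with $|g(r)-g(r')| \le \lipzc\,|r-r'|$ from \eqref{eq:g_lipschitz} (and $\big||a|^2-|b|^2\big| \le (|a|+|b|)|a-b|$). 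Since $\gamma > \tfrac32$, the embedding \eqref{eq:2.7} gives $\psi \in \rV_\gamma \hookrightarrow L^\infty$, and as $\psi \in H^\gamma \subset L^2$ interpolation yields $\psi \in L^4$; this also makes the duality pairing meaningful, since $|g(|u|^2)u| \le |u|^3 \in L^{4/3}$.

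Next I would estimate the difference directly,
\[
\left|\int_r^t \langle g(|\uns|^2)\uns - g(|u(s)|^2)u(s), \psi\rangle\,ds\right| \le C\int_r^t\!\!\int_{\R^3}\big(|\uns|^2 + |u(s)|^2\big)\,|\uns - u(s)|\,|\psi(x)|\,dx\,ds,
\]
and split the spatial integral over $\ocal_R$ and its complement. On the complement I would discard the difference via $(|\uns|^2+|u(s)|^2)|\uns-u(s)| \le C(|\uns|^3 + |u(s)|^3)$, and apply H\"older in space ($\tfrac34+\tfrac14=1$) and then in time to bound this piece by
\[
C\,\|\psi\|_{L^4(\R^3\setminus\ocal_R)}\,T^{1/4}\Big(\|u_n\|^3_{L^4(0,T;L^4)} + \|u\|^3_{L^4(0,T;L^4)}\Big).
\]
By the assumed uniform boundedness of $(u_n)$ in $L^4(0,T;L^4)$ this is $\le C\|\psi\|_{L^4(\R^3\setminus\ocal_R)}$, uniformly in $n$, and tends to $0$ as $R \to \infty$ because $\psi \in L^4(\R^3)$.

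On $\ocal_R$ I would instead retain the difference and use $\psi \in L^\infty$, followed by H\"older in space ($\tfrac12+\tfrac12$) and in time, to obtain the bound
\[
C\,\|\psi\|_{L^\infty}\Big(\int_r^t\big(\|\uns\|^2_{L^4}+\|u(s)\|^2_{L^4}\big)^2\,ds\Big)^{1/2}\, q_{T,R}(u_n - u),
\]
where the first factor is controlled uniformly in $n$ by the $L^4(0,T;L^4)$ bounds and $q_{T,R}(u_n-u) = \|u_n-u\|_{L^2(0,T;H_{\ocal_R})} \to 0$ as $n \to \infty$ by the hypothesis $u_n \to u$ in $L^2(0,T;\rH_{loc})$. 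Combining the two pieces gives, for every fixed $R$, $\limsup_{n\to\infty}|\cdots| \le C\|\psi\|_{L^4(\R^3\setminus\ocal_R)}$; letting $R \to \infty$ then yields \eqref{eq:6.68}. The main obstacle is the mismatch between the cubic growth of the nonlinearity and the merely local, merely $L^2$ nature of the available strong convergence: it is resolved by absorbing the spatial tail into the integrability of $\psi$ against the uniform $L^4$ bound, while on bounded domains the quadratic prefactor is paid for by the $L^4$ bound and the smallness comes from the local $L^2$ convergence.
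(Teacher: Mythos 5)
Your proof is correct, and it takes a genuinely different route from the paper's. The paper argues in two steps: Step I proves \eqref{eq:6.68} only for $\psi \in \mathcal{V}$, where the compact support of $\psi$ localizes the whole pairing to a single $\ocal_R$, and the splitting $g(|u_n|^2)u_n - g(|u|^2)u = g(|u_n|^2)(u_n-u) + \left[g(|u_n|^2)-g(|u|^2)\right]u$ is then paid for exactly as in your estimate on $\ocal_R$ (uniform $L^4(0,T;L^4)$ bounds times the vanishing seminorm $q_{T,R}(u_n-u)$); Step II passes to general $\psi \in \rV_\gamma$ by density of $\mathcal{V}$ in $\rV_\gamma$, controlling the approximation error through the dual-norm bound $\|g(|u|^2)u\|_{\rV_\gamma^\prime} \le C\|u\|_{L^4}^2 |u|_{\rH}$ of \eqref{eq:6.71}. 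You instead keep $\psi \in \rV_\gamma$ fixed throughout and truncate the spatial domain rather than the test function: the chain $\rV_\gamma \hookrightarrow L^\infty \cap L^2 \hookrightarrow L^4$ (valid since $\gamma > \tfrac32$) lets you absorb the far-field part of the cubic term into $\|\psi\|_{L^4(\R^3\setminus \ocal_R)}$, uniformly in $n$, so the smallness at infinity comes from the integrability of $\psi$ instead of its approximability. Both proofs rest on the same two ingredients, namely the Lipschitz-type pointwise bound derived from \eqref{eq:g_bounded} and \eqref{eq:g_lipschitz}, and the interplay between the uniform $L^4(0,T;L^4)$ bounds and the local $L^2$ convergence; your Hölder bookkeeping ($\tfrac34+\tfrac14$ off $\ocal_R$, $\tfrac12+\tfrac12$ on $\ocal_R$) and the uniformity in $R$ of the constant in the $\limsup$ all check out. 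What your version buys is self-containedness: no density step and no separate dual-norm estimate are needed. What the paper's version buys is reusability: the Step-I estimates \eqref{eq:6.69} and \eqref{eq:6.71} are invoked again in Corollary~\ref{cor6.19} to handle the $P_n\psi - \psi$ terms, and the same truncate-the-test-function scheme recurs in Lemma~\ref{lemma6.24}, so the two-step structure amortizes across the paper. One small remark: the vanishing of your tail term as $R \to \infty$ implicitly uses that the sets $\ocal_R$ exhaust $\R^3$, which is the intended reading of the definition of $L^2(0,T;\rH_{loc})$ and is harmless, but is worth stating.
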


\begin{proof} We will prove the lemma in two steps.

\noindent \textit{Step I}\\
Let us fix $\gamma > \frac{3}{2}$ and $r,t \in [0,T]$. Assume first that $\psi \in \mathcal{V}$. Then there exists a $R > 0$ such that $\text{supp}(\psi)$ is a compact subset of $\ocal_R$. There exists a constant $C \ge 0$ such that
\begin{align}
\label{eq:6.69}
& |\langle g(|u|^2)u , \psi \rangle|  = \left|\int_{\ocal_R} g(|u(x)|^2)u(x) \psi(x) \,dx\right| \nn\\
&\;  \le |g(|u|^2)|_{L^2(\ocal_R)}|u|^2_{L^2(\ocal_R)} \|\psi\|_{L^\infty(\ocal_R)} \le \big||u|^2 \big |_{L^2(\ocal_R)} |u|_{L^2(\ocal_R)}\|\psi\|_{L^\infty} \nn \\
&\;  \le C \|u\|^2_{L^4}|u|_{L^2(\ocal_R)}\|\psi\|_{\rV_\gamma}, \quad \quad u \in \rH \cap L^4,
\end{align}
where we used \eqref{eq:2.7} to establish the last inequality. We have
\[g(|u_n|^2)u_n - g(|u|^2)u = g(|u_n|^2)(u_n - u) + \left[g(|u_n|^2) - g(|u|^2) \right]u\,.\]
Thus using the estimate \eqref{eq:6.69}, the H\"older inequality, \eqref{eq:g_lipschitz} and the Cauchy-Schwarz inequality, we obtain
\begin{align*}
&\left| \int_r^t \langle g(|\uns|^2)\uns , \psi \rangle \,ds - \int_r^t \langle g_n(|u(s)|^2)u(s), \psi \rangle\,ds \right| \\
& \quad \le \left| \int_r^t \langle g(|\uns|^2)(\uns - u(s)), \psi \rangle\,ds \right| \\
& \qquad + \left| \int_r^t \left\langle \left(g(|\uns|^2) - g(|u(s)|^2)\right)u(s) , \psi \right\rangle \,ds \right| \\
& \quad \le C \int_r^t \|u_n(s)\|^2_{L^4} |u_n(s) - u(s)|_{L^2(\ocal_R)} \|\psi\|_{\rV_\gamma} \,ds \\
& \qquad + \lipzc \int_r^t \left| \langle |u_n(s) - u(s)| \left(|u_n(s)| + |u(s)| \right) u(s) , \psi \rangle \right|\,ds \\
& \quad \le C\|\psi\|_{\rV_\gamma} \int_r^t \|\uns\|^2_{L^4} |u_n(s) - u(s)|_{L^2(\ocal_R)} \,ds \\
& \qquad + \lipzc C\,\|\psi\|_{\rV_\gamma} \int_r^t |\uns - u(s)|_{L^2(\ocal_R)}\big | u(s) \left[ |u_n(s)| + |u(s)| \right] \big |_{L^2(\ocal_R)}\,ds.\\
& \quad \le C \|\psi\|_{\rV_\gamma} \Bigg[|u_n|_{L^4(0,T; L^4)}^2 |u_n - u|_{L^2(0,T; L^2({\ocal_R}))} \\
& \qquad   + \lipzc |u_n - u|_{L^2(0,T; L^2({\ocal_R}))} \left[ \int_r^t \big |u(s) \left[|u_n(s)| + |u(s)| \right] \big |^2_{L^2(\ocal_R)}\,ds \right]^{1/2} \Bigg]\\
& \quad \le C \left[ |u_n|_{L^4(0,T; L^4)}^2 + \lipzc |u|_{L^4(0,T; L^4)}^2 \left( |u_n|_{L^4(0,T; L^4)}^2 + |u|_{L^4(0,T; L^4)}^2 \right)^{1/2} \right] \\
& \qquad \times |u_n - u|_{L^2(0,T; L^2({\ocal_R}))}\|\psi\|_{\rV_\gamma}\,.
\end{align*}
Since $u_n \to u$ in $L^2(0,T; \rH_{loc})$ we infer that \eqref{eq:6.68} holds for every $\psi \in \mathcal{V}$.

\noindent\textit{Step II}\\
Let $\psi \in \rV_\gamma$ and $\varepsilon > 0$. Then there exists a $\psi_\varepsilon \in \mathcal{V}$ such that $\|\psi_\varepsilon - \psi\|_{\rV_\gamma} < \varepsilon.$ Hence, we get
\begin{align}
\label{eq:6.70}
& \left| \langle g(|u_n|^2)u_n - g(|u|^2)u , \psi \rangle \right| \nonumber \\
& \quad \le \left| \langle g(|u_n|^2)u_n - g(|u|^2)u , \psi_\varepsilon \rangle \right| + \left| \langle g(|u_n|^2)u_n - g(|u|^2)u , \psi - \psi_\varepsilon \rangle \right| \nonumber \\
& \quad \le \left| \langle g(|u_n|^2)u_n - g(|u|^2)u , \psi_\varepsilon \rangle \right| \nn \\
& \qquad  + \left[\|g(|u_n|^2)u_n\|_{\rV^\prime_\gamma} + \|g(|u|^2)u\|_{\rV^\prime_\gamma} \right]\|\psi - \psi_\varepsilon\|_{\rV_\gamma}\,.
\end{align}
Since $ \mathcal{V}$ is dense in $\rV_\gamma$, \eqref{eq:6.69} holds for all $\psi \in \rV_\gamma$. In particular, there exists a constant $C > 0$ such that
\begin{equation}
\label{eq:6.71}
\|g(|u|^2)u \|_{\rV^\prime_\gamma} \le C \|u\|^2_{L^4}|u|_{\rH}, \quad \quad u \in \rH \cap L^4\,.
\end{equation}
Using \eqref{eq:6.70}, \eqref{eq:6.71} and the Cauchy-Schwarz inequality we have following inequalities
\begin{align*}
& \left| \int_r^t \langle g(|u_n(s)|^2) u_n(s) - g(|u(s)|^2)u(s) , \psi \rangle \, d s \right| \\
&\le \varepsilon\,C \int_r^t \left(\|u_n(s)\|^2_{L^4}|u_n(s)|_{\rH} + \|u(s)\|^2_{L^4}|u(s)|_{\rH} \right)\, ds\\
& \quad~~ + \left| \int_r^t \langle g(|u_n(s)|^2)u_n(s) - g(|u(s)|^2)u(s) , \psi_\varepsilon \rangle\,ds \right| \\
&  \quad \le  \varepsilon C \left[ \|u_n\|_{L^4(0,T; L^4)}^2\|u_n\|_{L^2(0,T; \rH)} + \|u\|_{L^4(0,T; L^4)}^2\|u\|_{L^2(0,T; \rH)}\right] \\
& \quad~~ + \left| \int_r^t \langle g(|u_n(s)|^2)u_n(s) - g(|u(s)|^2)u(s) , \psi_\varepsilon \rangle\,ds \right|\,.
\end{align*}

Hence by Step I and the assumptions on $u, u_n$ there exists a $M > 0$ such that
\[\limsup_{n \to \infty} \left| \int_r^t \langle g(|u_n(s)|^2) u_n(s) - g(|u(s)|^2)u(s) , \psi \rangle \, d s \right| \le M \varepsilon. \]
Since $\varepsilon > 0$ is arbitrary we conclude the proof.
\end{proof}

\begin{corollary}
\label{cor6.19}
Let $\gamma > \frac{3}{2}$, $u \in L^2(0,T; \rH) \cap L^4(0,T; L^4)$ and $(u_n)_{n \in \N}$ be a bounded sequence in $L^2(0,T; \rH) \cap L^4(0,T; L^4)$ such that $u_n \to u$ in $L^2(0,T; \rH_{loc})$. Then for all $r,t \in [0,T]$ and all $\psi \in \rm{V}_\gamma$
\begin{equation}
\label{eq:6.72}
\lim_{n \to \infty} \int_r^t \langle g(|u_n(s)|^2)u_n(s) , P_n\psi \rangle \,ds = \int_r^t \langle g(|u(s)|^2) u(s), \psi\rangle \,ds\,.
\end{equation}
Here $\langle \cdot, \cdot \rangle$ denotes the dual pairing between $\rm{V}_\gamma$ and $\rm{V}_\gamma^\prime$.
\end{corollary}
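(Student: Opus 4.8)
The plan is to reduce the claim to Lemma~\ref{lemma6.18} by inserting and subtracting the fixed test function $\psi$ inside the pairing. Writing, for each $s$,
\[
\langle g(|u_n(s)|^2)u_n(s), P_n\psi\rangle = \langle g(|u_n(s)|^2)u_n(s), \psi\rangle + \langle g(|u_n(s)|^2)u_n(s), P_n\psi - \psi\rangle,
\]
one obtains after integration
\[
\int_r^t \langle g(|u_n(s)|^2)u_n(s), P_n\psi\rangle\,ds = \int_r^t \langle g(|u_n(s)|^2)u_n(s), \psi\rangle\,ds + R_n,
\]
where $R_n := \int_r^t \langle g(|u_n(s)|^2)u_n(s), P_n\psi - \psi\rangle\,ds$. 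The first term on the right-hand side converges, as $n\to\infty$, to $\int_r^t \langle g(|u(s)|^2)u(s),\psi\rangle\,ds$ directly by Lemma~\ref{lemma6.18}, since the hypotheses on $u$ and $(u_n)$ here are exactly those of that lemma. Hence it remains only to show that $R_n \to 0$.

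To control $R_n$ I would first observe that $P_n\psi \in \rV_\gamma$: its Fourier transform $\ind_{B_n}\hat\psi$ has compact support and still satisfies the divergence-free constraint $\xi\cdot\widehat{P_n\psi}(\xi) = \ind_{B_n}(\xi)\,\xi\cdot\hat\psi(\xi) = 0$, so that $P_n\psi - \psi \in \rV_\gamma$ and the pairing defining $R_n$ is legitimate. Then, using the duality estimate \eqref{eq:6.71} together with the Cauchy--Schwarz inequality,
\[
|R_n| \le \|P_n\psi - \psi\|_{\rV_\gamma} \int_0^T \|g(|u_n(s)|^2)u_n(s)\|_{\rV_\gamma^\prime}\,ds \le C\,\|P_n\psi - \psi\|_{\rV_\gamma}\,\|u_n\|_{L^4(0,T;L^4)}^2\,\|u_n\|_{L^2(0,T;\rH)}.
\]
Because $(u_n)_{n\in\N}$ is bounded in $L^2(0,T;\rH)\cap L^4(0,T;L^4)$, the last two factors are bounded uniformly in $n$ by a constant $M$.

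Finally I would note that $P_n\psi\to\psi$ in $\rV_\gamma$: since the $\rV_\gamma$-norm coincides with the $H^\gamma$-norm,
\[
\|P_n\psi - \psi\|_{\rV_\gamma}^2 = \int_{|\xi|>n} (1+|\xi|^2)^\gamma |\hat\psi(\xi)|^2\,d\xi \longrightarrow 0 \quad (n\to\infty)
\]
by the dominated convergence theorem, as $\psi\in\rV_\gamma\subset H^\gamma$. Combining this with the uniform bound $M$ yields $|R_n|\le C M\,\|P_n\psi-\psi\|_{\rV_\gamma}\to 0$, which gives \eqref{eq:6.72}. The only genuinely delicate point is the uniform-in-$n$ bound on $\int_0^T \|g(|u_n(s)|^2)u_n(s)\|_{\rV_\gamma^\prime}\,ds$; this is supplied entirely by the growth estimate \eqref{eq:6.71} and the assumed boundedness of $(u_n)$ in $L^2(0,T;\rH)\cap L^4(0,T;L^4)$, so no a'priori information beyond the stated hypotheses is required.
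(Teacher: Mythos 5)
Your proposal is correct and takes essentially the same route as the paper's own proof: the identical decomposition into $\int_r^t \langle g(|u_n(s)|^2)u_n(s), \psi \rangle\,ds$ (handled by Lemma~\ref{lemma6.18}) plus a remainder term bounded via the growth estimate \eqref{eq:6.71}, the boundedness of $(u_n)$ in $L^2(0,T;\rH)\cap L^4(0,T;L^4)$, and the convergence $P_n\psi \to \psi$ in $\rV_\gamma$ (which the paper delegates to Lemma A.1 of its appendix). Your explicit verification that $P_n\psi$ remains divergence-free, so that the pairing with $P_n\psi - \psi$ is legitimate, is a detail the paper leaves implicit and is a welcome addition.
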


\begin{proof}
Let us fix $\gamma > \frac{3}{2}$ and take $r, t \in [0,T]$ and $\psi \in \rm{V}_\gamma$. We have
\begin{align*}
& \int_r^t \langle g(|\uns|^2)\uns , P_n \psi \rangle \,ds \\
& \quad = \int_r^t \langle g(|\uns|^2)\uns , P_n \psi - \psi \rangle \,ds + \int_r^t \langle g(|\uns|^2)\uns , \psi \rangle \,ds \\
 & \quad := I_1(n) + I_2(n)\,.
\end{align*}
We will consider each of these integrals individually. Using the estimate from \eqref{eq:6.71}, we have
\begin{align*}
|I_1(n)| & \le \int_r^t \|g(|\uns|^2)\uns\|_{\rV_\gamma^\prime}\|P_n \psi - \psi\|_{\rV_\gamma}\,ds \\
&\le \|P_n \psi - \psi \|_{\rV_\gamma}\int_r^t \|u_n(s)\|_{L^4}^2|u_n(s)|_{\rH}\,ds\,.
\end{align*}
Since the sequence $(u_n)_{n \in \N}$ is bounded in $L^2(0,T; \rH) \cap L^4(0,T; L^4)$ and $P_n \psi \to \psi$ in $V_\gamma$, we infer that $\lim_{n \to \infty} I_1(n) = 0$. By Lemma~\ref{lemma6.18} we infer that
\[\lim_{n \to \infty} I_2(n) = \int_r^t \langle g(|u(s)|^2)u(s), \psi \rangle \,ds\,.\]
\end{proof}

\begin{lemma}
\label{lemma6.20}
For all $s, t \in [0,T]$ such that $s \leq t$ and $\gamma > 3/2 \colon$
\begin{itemize}
\item[(a)] $\lim_{n \rightarrow \infty}\langle \widetilde{u}_n(t), P_n \psi\rangle = \langle\widetilde{u}(t), \psi \rangle,~~ \widetilde{\mathbb{P}}$-a.s., $ \psi \in \rV,$
\item[(b)] $\lim_{n \rightarrow \infty} \int_s^t \langle \A  \tunsi, P_n \psi \rangle_\rH\,ds = \int_s^t \langle \A  \tusi, \psi \rangle_\rH\,d \sigma,~~ \widetilde{\mathbb{P}}$-a.s., $\psi \in \rH$,
\item[(c)]
 $\lim_{n \rightarrow \infty} \int_s^t \langle B(\tunsi), P_n \psi \rangle\,d\sigma = \int_s^t\langle B(\tusi), \psi \rangle\,d \sigma,~~ \widetilde{\mathbb{P}}$-a.s., $\psi \in \rV_\gamma$,
\item[(d)] $\lim_{n \rightarrow \infty} \int_s^t  \langle g(|\tunsi|^2) \tunsi, P_n \psi \rangle\,d \sigma = \int_s^t \langle g(|\tusi|^2) \tusi, \psi \rangle\,d \sigma,~ \widetilde{\mathbb{P}}$-a.s., $\psi \in \rV_\gamma$,
\item[(e)] $\lim_{n \rightarrow \infty} \int_s^t \langle f(\tunsi), P_n \psi \rangle\,d \sigma = \int_s^t \langle f(\tusi), \psi \rangle\,d \sigma,~~ \widetilde{\mathbb{P}}$-a.s., $\psi \in \rV_\gamma$,
\end{itemize}
where $\langle \cdot, \cdot \rangle$ denotes the duality pairing between appropriate spaces.
\end{lemma}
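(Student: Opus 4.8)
The plan is to prove all five limits $\widetilde{\mathbb{P}}$-a.s. by fixing $\omega$ outside a common null set and reading each assertion as a deterministic statement, exploiting the four convergence modes encoded in \eqref{eq:6.58} together with pathwise uniform bounds. First I would record the auxiliary facts used throughout. Since $P_n\psi=\invf(\ind_{B_n}\widehat\psi)$, dominated convergence gives $P_n\psi\to\psi$ in $H^s$ for every $s\ge0$; in particular $P_n\psi\to\psi$ in $\rH$, in $\rV$ and in $\rV_\gamma$. From $\tun\to\tu$ in $\ccal([0,T];\rV_w)$ and the Banach--Steinhaus theorem applied to the family of functionals $\{\langle\tuns,\cdot\rangle_\rV\}_{n,s}$ one obtains, for a.e.\ $\omega$, a finite bound $M(\omega):=\sup_{n}\sup_{s\in[0,T]}\|\tuns\|_\rV<\infty$; combined with the Gagliardo--Nirenberg inequality \eqref{eq:2.9} in the form $\|u\|_{L^4}\le C\|u\|_\rV$ this yields pathwise boundedness of $(\tun)$ in $L^2(0,T;\rH)\cap L^4(0,T;L^4)$, consistent with \eqref{eq:L4estimate}. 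Finally, \eqref{eq:6.58} gives $\tun\rightharpoonup\tu$ in $L^2(0,T;\rD(\A))$ and $\tun\to\tu$ in $L^2(0,T;\rH_{loc})$, while \eqref{eq:equal_laws} gives $\tunsi,\A\tunsi\in\rH_n$ for all $\sigma$.

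For the two linear terms (a) and (b) the projection can simply be removed. Since $\widetilde u_n(t)\in\rH_n$ and $P_n$ is the orthogonal projection of $\rH$ onto $\rH_n$, self-adjointness gives $\langle\widetilde u_n(t),P_n\psi\rangle_\rH=\langle P_n\widetilde u_n(t),\psi\rangle_\rH=\langle\widetilde u_n(t),\psi\rangle_\rH$; the weak $\rV$-convergence at each fixed $t$ (hence weak $\rH$-convergence, as $\rV\embed\rH$) then yields (a). For (b), Lemma~\ref{lemma6.3} gives $\A\tunsi\in\rH_n$, so again $\langle\A\tunsi,P_n\psi\rangle_\rH=\langle\A\tunsi,\psi\rangle_\rH$, and testing the weak convergence $\A\tun\rightharpoonup\A\tu$ in $L^2(0,T;\rH)$ (a consequence of $\tun\rightharpoonup\tu$ in $L^2(0,T;\rD(\A))$) against $\ind_{[s,t]}\psi\in L^2(0,T;\rH)$ gives (b).

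The three nonlinear terms (c), (d), (e) share the template $\langle\Phi(\tun),P_n\psi\rangle=\langle\Phi(\tun),P_n\psi-\psi\rangle+\langle\Phi(\tun),\psi\rangle$. The first summand is controlled by a uniform dual bound times $\|P_n\psi-\psi\|_{\rV_\gamma}\to0$: for (c) by $\|B(\tunsi)\|_{\rV_\gamma'}\le c|\tunsi|_\rH^2$ from \eqref{eq:2.8}, and for (e) by $|f(\tunsi)|_\rH\le C(C_f|\tunsi|_\rH+|b_f|_{L^1}^{1/2})$ from $(\textbf{H1})$; in both cases the time integral of the bound is finite by the pathwise estimates above. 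The second summand is the genuine passage to the limit in the nonlinearity. Part (d) is exactly the statement of Corollary~\ref{cor6.19}, whose hypotheses ($\tun\to\tu$ in $L^2(0,T;\rH_{loc})$ and boundedness in $L^2(0,T;\rH)\cap L^4(0,T;L^4)$) hold pathwise by the above, so (d) follows by a direct pathwise application. For (c) I would use the cancellation $b(u,u,\psi)=-b(u,\psi,u)$ valid for divergence-free fields and, after reducing to $\psi\in\mathcal{V}$ by density in $\rV_\gamma$ (using the dual bound to control the remainder exactly as in Step~II of Lemma~\ref{lemma6.18}), exploit that $\nabla\psi$ is then compactly supported and bounded, so that $\int_s^t(\tunsi\cdot\nabla\psi)\cdot\tunsi\,d\sigma\to\int_s^t(\tusi\cdot\nabla\psi)\cdot\tusi\,d\sigma$ follows from the strong $L^2(0,T;\rH_{loc})$ convergence together with the $L^4$ bound. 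Part (e) is handled in the same two-step fashion, the Lipschitz bound $|f(\tun)-f(\tu)|\le C_f|\tun-\tu|$ from $(\textbf{H1})$ reducing the $\psi$-term to the local $L^2$ convergence.

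The main obstacle is the nonlinear $\psi$-terms in (c) and (d): only \emph{local} strong $L^2$ convergence of $\tun$ is available, so the passage to the limit must be localized through the compact support of test functions in $\mathcal{V}$ and then extended to $\rV_\gamma$ by density, with the pathwise $L^4$ bound supplying the integrability needed for dominated convergence in time. Securing those pathwise bounds a.s.\ (via Banach--Steinhaus and Gagliardo--Nirenberg) from the merely weak and weak-$\ast$ convergences is the other delicate point; the remaining manipulations are the routine splitting and the elementary limit $P_n\psi\to\psi$.
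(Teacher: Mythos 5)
Your proposal is correct, and its overall architecture coincides with the paper's: remove or split off the projection using $P_n\psi\to\psi$ (Lemma~A.1), exploit the four a.s.\ convergence modes from \eqref{eq:6.58}, invoke Corollary~\ref{cor6.19} pathwise for part (d), and treat (c) and (e) by a localization-plus-density scheme in $\rV_\gamma$. Three of your local choices genuinely differ from the paper, and each is defensible or even preferable. First, for (a) and (b) you remove $P_n$ \emph{exactly} via $\langle u,P_n\psi\rangle_{\rH}=\langle u,\psi\rangle_{\rH}$ for $u\in\rH_n$, using \eqref{eq:equal_laws} and Lemma~\ref{lemma6.3} (so that $\A\tunsi\in\rH_n$); the paper instead splits off $\langle\tun(t),P_n\psi-\psi\rangle$ and controls it by a uniform bound, so your version avoids the remainder term altogether at the cost of invoking the Borel-subset/equal-laws fact. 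Second, you derive the pathwise bound $\sup_n\sup_{s}\|\tuns\|_{\rV}<\infty$ a.s.\ by Banach--Steinhaus from the a.s.\ convergence in $\ccal([0,T];\rV_w)$, whereas the paper appeals to the moment estimates \eqref{eq:6.59} and \eqref{eq:L4estimate}; strictly speaking those bound expectations only, so for the a.s.\ assertions your uniform-boundedness argument (together with $\|u\|_{L^4}\le C\|u\|_{\rV}$ from \eqref{eq:2.9}) is the cleaner way to secure the hypotheses of Corollary~\ref{cor6.19} path by path -- it is in effect what the paper uses implicitly when it fixes $\omega$ with $(\tun(\cdot,\omega))_n$ bounded in $\ccal([0,T];\rV)$ in the proof of Lemma~\ref{lemma6.24}. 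Third, for the convective term (c) the paper delegates to \cite[Lemma~B.1]{[BM13]} plus a modification of Corollary~\ref{cor6.19}, while you give a self-contained argument via the antisymmetry $b(u,u,\psi)=-b(u,\psi,u)$, the compact support of $\psi\in\mathcal{V}$, the strong $L^2(0,T;\rH_{loc})$ convergence, and density in $\rV_\gamma$ controlled through \eqref{eq:2.8}; this reproduces the content of the cited lemma (note that with $\nabla\psi\in L^\infty$ compactly supported, the pathwise $L^2(0,T;\rH)$ bound already suffices there -- the $L^4$ bound is only genuinely needed for (d)). No step of your argument fails; the only point worth making explicit in a write-up is that all five assertions are collected on a single full-measure set of $\omega$'s, which is immediate since only countably many a.s.\ conditions are involved.
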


\begin{proof}
Let us fix $s, t \in [0,T]$, $s \leq t$ and $\gamma > \frac{3}{2}$. By \eqref{eq:6.58} we know that $\mathbb{P}$-a.s.
\begin{equation}
\label{eq:6.73}
\widetilde{u}_n \rightarrow \widetilde{u}~in~\ccal([0,T];{\rm U}^\prime) \cap L^2_w(0,T; {\rm D}(\A )) \cap L^2(0,T; \rH_{loc}) \cap \ccal([0,T]; V_w).
\end{equation}

Let $\psi \in \rV$. Since $\widetilde{u}_n \rightarrow \widetilde{u}$ in $\ccal([0,T]; \rV_w)$ $\widetilde{\mathbb{P}}$-a.s., from \eqref{eq:6.59} $\widetilde{u}_n$ is uniformly  bounded in $\ccal([0,T]; \rV_w)$ and $P_n \psi \rightarrow \psi$ in $\rm{V}$, thus
\begin{align*}
& \lim_{n \to \infty} \langle\widetilde{u}_n(t), P_n \psi\rangle - \langle\widetilde{u}(t), \psi\rangle  \\
&\; = \lim_{n \to \infty}\langle\widetilde{u}_n(t) - \widetilde{u}(t), \psi\rangle + \lim_{n \to \infty}\langle\widetilde{u}_n(t), P_n \psi - \psi\rangle = 0 \quad \widetilde{\mathbb{P}}\text{-a.s.}
\end{align*}
Hence we infer that assertion $(a)$ holds.

Let $\psi \in \rH$. Since by \eqref{eq:6.73} $\widetilde{u}_n \rightarrow \widetilde{u}$ in $L^2_w(0,T; \rm{D}(\A ) )$ $\widetilde{\mathbb{P}}$-a.s., from \eqref{eq:6.60} $\tun$ is uniformly bounded in $L_w^2(0,T; \rm{D}(\A ))$ and $P_n \psi \rightarrow \psi$ in $\rm{H}$. Thus, we have, $\widetilde{\mathbb{P}}$-a.s.,
\begin{align*}
&\lim_{n \to \infty} \int_s^t \langle \A  \tunsi, P_n \psi \rangle_{\rm{H}} ~ d \sigma - \int_s^t \langle \A  \tusi, \psi\rangle_{\rm{H}}~d \sigma \\
& = \lim_{n \to \infty} \int_s^t \langle \A  \tunsi - \A  \tusi, \psi\rangle_{\rm{H}}~d \sigma + \lim_{n \to \infty} \int_s^t \langle \A  \tunsi, P_n \psi - \psi\rangle_{\rm{H}}~d \sigma \\
&= 0\,.
\end{align*}
Hence, we have shown that assertion $(b)$ is true.

Assertion $(c)$ follows directly for every $\psi \in \rV_\gamma$ from \cite[Lemma~B.1]{[BM13]} and a modification of Corollary~\ref{cor6.19}.

By \eqref{eq:6.73} $\widetilde{u}_n \rightarrow \widetilde{u}$ in $L^2(0,T; \rH_{loc})$. From Lemma~\ref{lemma6.13}, \eqref{eq:6.58} and \eqref{eq:L4estimate} the sequence ($\widetilde{u}_n$) is bounded in $L^2(0,T; \rH) \cap L^4(0,T; L^4)$ and $\widetilde{u} \in L^2(0,T; \rH) \cap L^4(0,T; L^4)$. Thus, using Corollary~\ref{cor6.19} we infer that $(d)$ holds for every $\psi \in \rV_\gamma$.

Now we are left to deal with $(e)$. Let $\psi \in \rV_\gamma$,
\begin{align*}
& \int_s^t \langle f(\tunsi) , P_n \psi \rangle \,ds  - \int_0^t \langle f(\widetilde{u}(\sigma), \psi \rangle\,d\sigma \\
& \quad = \int_0^t \langle f(\tunsi) - f(\widetilde{u}(\sigma)), \psi \rangle\,d \sigma + \int_0^t \langle f(\widetilde{u}_n(\sigma)), P_n \psi - \psi \rangle\,d\sigma\,.
\end{align*}
Since $\rV_\gamma \hookrightarrow \rH$, by the Cauchy-Schwarz inequality we have
\begin{align*}
& \int_s^t \langle f(\tunsi) , P_n \psi \rangle \,ds  - \int_0^t \langle f(\widetilde{u}(\sigma), \psi \rangle\,d\sigma \\
& \quad \le \int_0^t \langle f(\tunsi) - f(\widetilde{u}(\sigma)), \psi \rangle \,d\sigma + \int_s^t \|f(\uns)\|_{\rV_\gamma^\prime} \|P_n \psi - \psi\|_{\rm{V}_\gamma} \,d\sigma \\
& \quad \le \int_0^t \langle f(\tunsi) - f(\widetilde{u}(\sigma)), \psi \rangle\,d\sigma + \|P_n \psi - \psi\|_{\rm{V}_\gamma} \int_s^t \left(C_f\|\tunsi\|_\rH + |b_f|_{L^1}\right)\,d\sigma \\
& \quad := I_1(n) + I_2(n)\,.
\end{align*}
Since $\widetilde{u}_n \to  \widetilde{u}$ in $L^2(0,T; \rH_{loc})$ and $\widetilde{u}_n$ is a bounded sequence in $L^2(0,T; \rH)$. $I_1(n)$ can be shown to converge to zero as $n \to \infty$ following the methodology of Lemma~\ref{lemma6.18} and Corollary~\ref{cor6.19}. Since $P_n \psi \to \psi$ in $\rm{V}_\gamma$, $I_2(n) \to 0$ as $n \to \infty$. This completes the proof of Lemma~\ref{lemma6.20}.
\end{proof}

The proofs of Lemmas~\ref{lemma6.21}, \ref{lemma6.22} and \ref{lemma6.23} follow the similar methodology as that of Lemmas 5.6 - 5.8 \cite{[BM13]} and Lemmas~5.9 - 5.11 \cite{[BD16]}.

Let $h$ be the bounded continuous function on $\ccal([0,T]; \rV_w)$.

\begin{lemma}
\label{lemma6.21}
Let $\gamma > \frac32$. For all $s, t \in [0,T]$, such that $s \leq t$ and all $\psi \in \rm{V}_\gamma$
\begin{align}
\label{eq:convg_martingale}
 &\lim_{n \rightarrow \infty} \widetilde{\Eb} \left[ \langle\widetilde{M}_n(t) - \widetilde{M}_n(s), \psi\rangle h(\widetilde{u}_{n|[0,s]})\right]\nn \\
 &\; = \widetilde{\Eb} \left[ \langle\widetilde{M}(t) - \widetilde{M}(s), \psi\rangle h(\widetilde{u}_{|[0,s]}) \right]\,.
 \end{align}
\end{lemma}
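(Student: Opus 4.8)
The plan is to split the increment $\langle \widetilde M_n(t)-\widetilde M_n(s),\psi\rangle$ into the five contributions dictated by the definition \eqref{eq:6.63}, namely the boundary term $\langle \widetilde u_n(t)-\widetilde u_n(s),\psi\rangle_\rH$ together with the four integrals over $[s,t]$ of $\A\widetilde u_n$, $B_n(\widetilde u_n)$, $g_n(\widetilde u_n)$ and $f_n(\widetilde u_n)$, and to handle each separately. A preliminary observation makes the link with Lemma~\ref{lemma6.20}: by \eqref{eq:equal_laws} the process $\widetilde u_n$ is $\rH_n$-valued and every operator appearing in \eqref{eq:6.63} maps into $\rH_n$, so $\widetilde M_n$ is $\rH_n$-valued and $\langle \widetilde M_n(\cdot),\psi\rangle_\rH=\langle \widetilde M_n(\cdot),P_n\psi\rangle_\rH$; moreover $\langle B_n(\widetilde u_n),P_n\psi\rangle_\rH=\langle B(\widetilde u_n),P_n\psi\rangle$ and similarly for the $g_n$- and $f_n$-terms. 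Hence each piece is exactly of the form appearing in parts (a)--(e) of Lemma~\ref{lemma6.20}.

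First I would establish the $\widetilde{\mathbb{P}}$-a.s.\ convergence of the full integrand. Since $\widetilde u_n\to\widetilde u$ in $\ccal([0,T];\rV_w)$ $\widetilde{\mathbb{P}}$-a.s.\ by \eqref{eq:6.58}, and $h$ is bounded and continuous on $\ccal([0,T];\rV_w)$, one has $h(\widetilde u_{n|[0,s]})\to h(\widetilde u_{|[0,s]})$ $\widetilde{\mathbb{P}}$-a.s. Applying Lemma~\ref{lemma6.20}(a)--(e) term by term yields $\langle \widetilde M_n(t)-\widetilde M_n(s),\psi\rangle\to\langle \widetilde M(t)-\widetilde M(s),\psi\rangle$ $\widetilde{\mathbb{P}}$-a.s., and therefore the product $\langle \widetilde M_n(t)-\widetilde M_n(s),\psi\rangle\,h(\widetilde u_{n|[0,s]})$ converges $\widetilde{\mathbb{P}}$-a.s.\ to $\langle \widetilde M(t)-\widetilde M(s),\psi\rangle\,h(\widetilde u_{|[0,s]})$.

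To upgrade this to convergence of the expectations I would invoke the Vitali convergence theorem, for which it suffices to bound the integrands in $L^q(\widetilde\Omega)$ uniformly in $n$ for some $q>1$. As $|h|\le\|h\|_\infty$, this reduces to bounding the five terms above. The boundary term is dominated by $\|\psi\|_\rV\sup_{r\in[0,T]}\|\widetilde u_n(r)\|_\rV$ and the $\A$-term by $|\psi|_\rH(t-s)^{1/2}\big(\int_0^T|\widetilde u_n|^2_{\rD(\A)}\,dr\big)^{1/2}$, both of which are bounded in $L^2(\widetilde\Omega)$ by \eqref{eq:6.59} and \eqref{eq:6.60}. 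For the nonlinear terms I would use $|B(u)|_\rH\le C|u|_{\rD(\A)}\|u\|_\rV$ (via $H^{2,2}\hookrightarrow L^\infty$) and, for the $g_n$-term, \eqref{eq:g_bounded} together with $H^1\hookrightarrow L^6$, obtaining the bounds $C\|\psi\|_{\rV_\gamma}(t-s)^{1/2}\big(\int_0^T|\widetilde u_n|^2_{\rD(\A)}\|\widetilde u_n\|^2_\rV\,dr\big)^{1/2}$ and $C\|\psi\|_{\rV_\gamma}\int_s^t\|\widetilde u_n\|^3_\rV\,dr$; finally the $f_n$-term is controlled through (\textbf{H1}) by $C\|\psi\|_{\rV_\gamma}(t-s)^{1/2}\big(\int_0^T(C_f|\widetilde u_n|^2_\rH+|b_f|_{L^1})\,dr\big)^{1/2}$. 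Taking $L^q$-norms and applying the H\"older inequality, the higher moment estimates \eqref{eq:6.44}--\eqref{eq:6.45} of Lemma~\ref{lemma6.14} (valid up to $p=3$) give bounds independent of $n$; for instance $\widetilde{\Eb}\int_0^T|\widetilde u_n|^2_{\rD(\A)}\|\widetilde u_n\|^2_\rV\,dr$ is controlled by splitting $|\widetilde u_n|^2_{\rD(\A)}=|\widetilde u_n|^2_\rH+|\A\widetilde u_n|^2_{L^2}$ and using \eqref{eq:6.44} and \eqref{eq:6.45} with $p=2$, so the $B_n$-term is in fact bounded in $L^2(\widetilde\Omega)$, while the cubic $g_n$-term is bounded in $L^2(\widetilde\Omega)$ through \eqref{eq:6.44} with $p=3$. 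The main obstacle is precisely this uniform integrability of the nonlinear $B_n$- and $g_n$-contributions, which is why the higher order a'priori bounds of Lemma~\ref{lemma6.14} are indispensable; once they are in place, Vitali's theorem combined with the a.s.\ convergence of the previous step gives \eqref{eq:convg_martingale}.
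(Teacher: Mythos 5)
Your proposal is correct, and its first two steps (the five-term decomposition of $\langle\widetilde{M}_n(t)-\widetilde{M}_n(s),P_n\psi\rangle$ via \eqref{eq:6.63}, the $\widetilde{\mathbb{P}}$-a.s.\ convergence from Lemma~\ref{lemma6.20}(a)--(e) together with $h(\widetilde{u}_{n|[0,s]})\to h(\widetilde{u}_{|[0,s]})$, and the closing appeal to Vitali) coincide with the paper's proof. Where you genuinely diverge is in the uniform integrability step. The paper does not estimate the drift terms at all: it bounds $\widetilde{\Eb}[|f_n|^2]$ by $\widetilde{\Eb}\bigl[|\widetilde{M}_n(t)|^2_{\rH}+|\widetilde{M}_n(s)|^2_{\rH}\bigr]$ (inequality \eqref{eq:6.77}) and then exploits the fact, from Lemma~\ref{lemma6.17}, that $\widetilde{M}_n$ is a square integrable martingale with quadratic variation \eqref{eq:6.64}; the Burkholder--Davis--Gundy inequality and \eqref{eq:2.15} then reduce everything to the $p=1$ case of \eqref{eq:6.44}, i.e.\ to $\widetilde{\Eb}\,\sup_{\sigma\in[0,T]}\|\widetilde{u}_n(\sigma)\|^2_{\rV}$. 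Your route instead estimates each drift integral pathwise, which is more elementary (no martingale property of $\widetilde{M}_n$, no BDG is needed for this step) but more expensive in moments: the cubic $g_n$-term forces you up to $p=3$ in \eqref{eq:6.44}/\eqref{eq:6.59}, and the $B_n$-term requires the weighted estimate \eqref{eq:6.45} with $p=2$. Both demands are met by Lemma~\ref{lemma6.14}, so your argument closes, with one point you should make explicit: \eqref{eq:6.45} is proved for $u_n$ on the original probability space, and unlike \eqref{eq:6.44} and \eqref{eq:6.29} it is not transferred to $\widetilde{u}_n$ in the text (the paper only records \eqref{eq:6.59}, \eqref{eq:6.60} and \eqref{eq:L4estimate}). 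The transfer follows by the same equality-of-laws/Borel-measurability argument (Theorem~B.\ref{thmb.1}) used for those estimates, since $u\mapsto\int_0^T\|u(s)\|^2_{\rV}|\A\,u(s)|^2_{L^2}\,ds$ is Borel on $L^2(0,T;\rD(\A))\cap\zcal_T$, but as written this is a small unacknowledged step. The trade-off, then: the paper's argument is shorter and needs only second moments, at the price of invoking the martingale structure; yours avoids that structure entirely but leans on the full strength of the higher order a'priori bounds --- which, as you note, is precisely why Lemma~\ref{lemma6.14} goes up to $p=3$.
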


\begin{proof}
Let us fix $s, t \in [0,T], s \leq t$ and $\psi \in \rm{V}_\gamma$. By Eq. \eqref{eq:6.63}, we have
\begin{align*}
&\langle\widetilde{M}_n(t) - \widetilde{M}_n(s), \psi\rangle = \langle \widetilde{u}_n(t) - \widetilde{u}_n(s), P_n \psi \rangle + \int_s^t \langle \A  \tunsi, P_n \psi \rangle \,d \sigma\\
&~~~~  + \int_s^t \langle B(\tunsi), P_n \psi \rangle \,d \sigma + \int_s^t \langle g(|\tunsi|^2)\,\tunsi, P_n \psi \rangle \,d \sigma \\
&~~~~ - \int_s^t \langle f(\tunsi), P_n \psi \rangle \,d \sigma\,.
\end{align*}
By Lemma \ref{lemma6.20}, we infer that
\begin{equation}
\label{eq:6.74}
\lim_{n \rightarrow \infty} \langle \widetilde{M}_n(t) - \widetilde{M}_n(s), \psi \rangle = \langle \widetilde{M}(t) - \widetilde{M}(s), \psi \rangle, \quad \widetilde{\mathbb{P}}\text{-a.s.}
\end{equation}
In order to prove \eqref{eq:convg_martingale} we first observe that since $\tun \to \tu$ in $\mathcal{Z}_{T}$, in particular in $\ccal([0,T]; \rV_w)$ and $h$ is a bounded continuous function on $\ccal([0,T]; \rV_w)$, we get
\begin{equation}
\label{eq:6.75}
\lim_{n \to \infty} h(\widetilde{u}_{n|[0,s]}) = h(\tu_{|[0,s]}),
\end{equation}
and
\begin{equation}
\label{eq:h_bounded}
 \sup_{n \in \N} \|h(\widetilde{u}_{n|[0,s]})\|_{L^\infty} < \infty\,.
 \end{equation}
Let us define a sequence of $\R$-valued random variables $\colon$
\[ f_n(\omega):= \left[ \langle \widetilde{M}_n(t, \omega), \psi \rangle - \langle \widetilde{M}_n(s, \omega), \psi \rangle \right]h(\widetilde{u}_{n|[0,s]}), ~~~~ \omega \in \widetilde{\Omega}\,.\]
We will prove that the functions $\{f_n\}_{n \in \mathbb{N}}$ are uniformly integrable in order to apply the Vitali's convergence theorem. We claim that
\begin{equation}
\label{eq:6.76}
\sup_{n \in \N} \widetilde{\Eb}[|f_n|^2] < \infty\,.
\end{equation}
Since, $\rH \hookrightarrow \rV_\gamma^\prime$ then by the Cauchy-Schwarz inequality, for each $n \in \mathbb{N}$ we have
\begin{equation}
\label{eq:6.77}
\widetilde{\Eb} [|f_n|^2] \leq 2c \|h\circ \widetilde{u}_n\|^2_{L^{\infty}}|\psi|^2_{\rV_\gamma} \widetilde{\Eb}\left[|\widetilde{M}_n(t)|^2_H + |\widetilde{M}_n(s)|^2_H \right]\,.
\end{equation}
Since, $\widetilde{M}_n$ is a continuous martingale with quadratic variation defined in \eqref{eq:6.64}, by the\\ Burkholder-Davis-Gundy inequality we obtain
\begin{equation}
\label{eq:6.78}
\widetilde{\Eb} \left[ \sup_{t \in [0,T]} |\widetilde{M}_n(t)|_{\rm{H}}^2 \right] \leq c \widetilde{\Eb} \left[  \int_0^T \|G_n(\sigma, \tunsi)\|_{\mathcal{L}_2(\ell^2; \rH)}^2\,d \sigma \right]\,.
\end{equation}
Since, $P_n \colon \rH \to \rH$ is a contraction and by Lemma~\ref{lemma2.1}, \eqref{eq:6.44} for $p = 1$, we have
\begin{align}
\label{eq:6.79}
\widetilde{\Eb} & \left[\int_0^T \|G_n(\sigma, \tunsi)\|_{\mathcal{L}_2(\ell^2; \rH)}^2\, d \sigma  \right] \leq \widetilde{\Eb} \left[ \int_0^T \|G(\sigma, \tunsi)\|_{\mathcal{L}_2(\ell^2; \rH)}^2\,d \sigma\right] \nonumber \\
& \leq \widetilde{\Eb} \left[ \int_0^T \frac14 |\nabla \tunsi |^2_{L^2} \,d \sigma \right] \leq \widetilde{\Eb}\left[ \sup_{\sigma \in [0,T]}\|\tunsi\|_{\rm{V}}^2\right]T < \infty\,.
\end{align}
Then by \eqref{eq:6.77} and \eqref{eq:6.79} we see that \eqref{eq:6.76} holds. Since the sequence $\{f_n\}_{n \in \mathbb{N}}$ is uniformly integrable and by \eqref{eq:6.74} it is $\widetilde{\mathbb{P}}$-a.s. point-wise convergent, then application of the Vitali's convergence theorem completes the proof of the Lemma.
\end{proof}

\begin{remark}
\label{rem_strong}
Using Burkholder-Davis-Gundy inequality we have proved a stronger claim \eqref{eq:6.78} than what we needed.
\end{remark}

From Lemma~\ref{lemma6.17} and Lemma~\ref{lemma6.21} we have the following corollary.
\begin{corollary}
\label{cor_martingale_6}
For all $s,t \in [0,T]$ such that $s \leq t \colon$
\[\E \left(\widetilde{M}(t) - \widetilde{M}(s) \big|\widetilde{\mathcal{F}}_t \right) = 0\,.\]
\end{corollary}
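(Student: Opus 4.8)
The plan is to deduce the martingale identity directly from the two preceding lemmas by passing to the limit $n \to \infty$. Since $\widetilde{M}$ is an $\rH$-valued continuous (hence progressively measurable and, by the bounds invoked in Lemma~\ref{lemma_cont_martingale}, integrable) process, it suffices to verify that for every $s \le t$, every $\psi \in \rV_\gamma$ with $\gamma > \tfrac32$, and every bounded continuous function $h$ on $\ccal([0,s];\rV_w)$ one has
\begin{equation}
\label{eq:cor6_key}
\widetilde{\E}\left[\langle \widetilde{M}(t) - \widetilde{M}(s), \psi\rangle\, h(\widetilde{u}_{|[0,s]})\right] = 0.
\end{equation}
Indeed, the random variables $h(\widetilde{u}_{|[0,s]})$, as $h$ ranges over such functions, generate the $\sigma$-field $\widetilde{\mathcal{F}}_s = \sigma\{\widetilde{u}(r): r \le s\}$, while $\rV_\gamma$ is dense in $\rH \hookrightarrow \rV_\gamma^\prime$; hence \eqref{eq:cor6_key} for all admissible $h$ and $\psi$ is equivalent to the asserted vanishing of the conditional expectation.

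First I would record that, by Lemma~\ref{lemma6.17}, each $\widetilde{M}_n$ is a square integrable $\widetilde{\mathbb{F}}_n$-martingale, so the level-$n$ analogue of \eqref{eq:cor6_key}, namely the identity \eqref{eq:6.65},
\[
\widetilde{\E}\left[\langle \widetilde{M}_n(t) - \widetilde{M}_n(s), \psi\rangle\, h(\widetilde{u}_{n|[0,s]})\right] = 0,
\]
holds for every $n \in \N$. Next I would invoke Lemma~\ref{lemma6.21}, which asserts precisely that the left-hand side above converges, as $n \to \infty$, to the left-hand side of \eqref{eq:cor6_key}. Combining these two facts immediately yields \eqref{eq:cor6_key} for all $\psi \in \rV_\gamma$ and all bounded continuous $h$.

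The conclusion then follows by the measure-theoretic argument sketched above. For fixed $\psi \in \rV_\gamma$ the scalar random variable $\langle \widetilde{M}(t) - \widetilde{M}(s), \psi\rangle$ is integrable, and \eqref{eq:cor6_key} states that its product with every generator of $\widetilde{\mathcal{F}}_s$ has zero mean; a monotone class argument then upgrades this to $\widetilde{\E}\big(\langle \widetilde{M}(t) - \widetilde{M}(s), \psi\rangle \mid \widetilde{\mathcal{F}}_s\big) = 0$ for each such $\psi$. Using the density of $\rV_\gamma$ in $\rH$ and the countable separating family furnished by Corollary~\ref{cor3.8} to discard null sets simultaneously over $\psi$, one obtains $\widetilde{\E}\big(\widetilde{M}(t) - \widetilde{M}(s) \mid \widetilde{\mathcal{F}}_s\big) = 0$, which is the claim.

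The main obstacle is this final step, rather than any analytic estimate: one must ensure that the generating class $\{h(\widetilde{u}_{|[0,s]})\}$ forms a $\pi$-system generating $\widetilde{\mathcal{F}}_s$ and that the null sets arising from the identity for each individual $\psi$ can be controlled on a single countable dense set of test functions, so that the pointwise-in-$\psi$ identities combine into the vector-valued conditional expectation statement. By contrast, the entire probabilistic and analytic content—the martingale property at level $n$ together with the convergence of the defining expectations—is already contained in Lemmas~\ref{lemma6.17} and \ref{lemma6.21}, so no further computation with the nonlinear or stochastic terms is required here.
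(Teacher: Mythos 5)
Your proposal is correct and follows essentially the same route as the paper: the corollary is obtained there precisely by combining the level-$n$ identity \eqref{eq:6.65} from Lemma~\ref{lemma6.17} with the limit passage of Lemma~\ref{lemma6.21} to arrive at \eqref{eq:6.91}, followed by the (implicit) monotone-class and density argument that you spell out explicitly. Note only that the conditioning $\sigma$-field $\widetilde{\mathcal{F}}_t$ in the statement is evidently a typo for $\widetilde{\mathcal{F}}_s$, and your proof correctly establishes $\widetilde{\E}\bigl(\widetilde{M}(t)-\widetilde{M}(s)\,\big|\,\widetilde{\mathcal{F}}_s\bigr)=0$, which is the intended martingale property.
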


\begin{lemma}
\label{lemma6.22}
For all $s,t \in [0,T]$ such that $s \leq t$ and all $\psi, \zeta \in \rm{V}_\gamma$
\begin{align*}
\lim_{n \rightarrow \infty} & \widetilde{\Eb} \Big[ \Big( \langle \widetilde{M}_n(t), \psi \rangle \langle \widetilde{M}_n(t), \zeta \rangle - \langle \widetilde{M}_n(s), \psi \rangle \langle \widetilde{M}_n(s), \zeta \rangle \Big) h(\widetilde{u}_{n|[0,s]}) \Big] \\
& = \widetilde{\Eb} \Big[\Big( \langle \widetilde{M}(t), \psi \rangle \langle \widetilde{M}(t), \zeta \rangle - \langle \widetilde{M}(s), \psi \rangle \langle \widetilde{M}(s), \zeta \rangle \Big) h(\widetilde{u}_{|[0,s]}) \Big],
\end{align*}
where $\langle \cdot, \cdot \rangle$ denotes the appropriate duality pairing.
\end{lemma}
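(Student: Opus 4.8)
The plan is to combine the $\widetilde{\mathbb{P}}$-a.s. pointwise convergence of the integrand with a uniform integrability bound and then invoke the Vitali convergence theorem, exactly along the lines of the proof of Lemma~\ref{lemma6.21}. Writing
\[
f_n := \Big(\langle \widetilde{M}_n(t),\psi\rangle \langle \widetilde{M}_n(t),\zeta\rangle - \langle \widetilde{M}_n(s),\psi\rangle \langle \widetilde{M}_n(s),\zeta\rangle\Big) h(\widetilde{u}_{n|[0,s]}),
\]
I would first record the pointwise convergence. From \eqref{eq:6.74}, established in the proof of Lemma~\ref{lemma6.21}, we have $\langle \widetilde{M}_n(\tau),\psi\rangle \to \langle \widetilde{M}(\tau),\psi\rangle$ and $\langle \widetilde{M}_n(\tau),\zeta\rangle \to \langle \widetilde{M}(\tau),\zeta\rangle$ $\widetilde{\mathbb{P}}$-a.s. for $\tau \in \{s,t\}$, while \eqref{eq:6.75} gives $h(\widetilde{u}_{n|[0,s]}) \to h(\widetilde{u}_{|[0,s]})$ $\widetilde{\mathbb{P}}$-a.s. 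Since products and differences of a.s.-convergent sequences converge a.s., $f_n$ converges $\widetilde{\mathbb{P}}$-a.s. to the integrand appearing on the right-hand side of the claim.

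The main work, and the only genuinely new point compared with Lemma~\ref{lemma6.21}, is uniform integrability, which now demands a fourth-moment bound on $\widetilde{M}_n$ because the integrand is quadratic in the martingale. I would show $\sup_n \widetilde{\Eb}[|f_n|^2] < \infty$. Using $\rH \hookrightarrow \rV_\gamma^\prime$ we bound $|\langle \widetilde{M}_n(\tau),\psi\rangle| \le c\,|\widetilde{M}_n(\tau)|_\rH\,\|\psi\|_{\rV_\gamma}$, whence
\[
|f_n|^2 \le c\,\|h\circ\widetilde{u}_n\|_{L^\infty}^2\,\|\psi\|_{\rV_\gamma}^2\,\|\zeta\|_{\rV_\gamma}^2\, \sup_{\tau\in[0,T]}|\widetilde{M}_n(\tau)|_\rH^4.
\]
Since by Lemma~\ref{lemma6.17} $\widetilde{M}_n$ is a continuous square-integrable martingale with quadratic variation \eqref{eq:6.64}, the Burkholder-Davis-Gundy inequality yields
\[
\widetilde{\Eb}\Big[\sup_{\tau\in[0,T]}|\widetilde{M}_n(\tau)|_\rH^4\Big] \le c\,\widetilde{\Eb}\Big[\Big(\int_0^T \|G_n(\sigma,\widetilde{u}_n(\sigma))\|^2_{\mathcal{L}_2(\ell^2;\rH)}\,d\sigma\Big)^2\Big].
\]

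Then I would use that $P_n$ is a contraction on $\rH$ together with \eqref{eq:2.15} of Lemma~\ref{lemma2.1} to estimate $\|G_n(\sigma,\widetilde{u}_n(\sigma))\|^2_{\mathcal{L}_2(\ell^2;\rH)} \le \tfrac14 |\nabla \widetilde{u}_n(\sigma)|^2_{L^2} \le \tfrac14 \|\widetilde{u}_n(\sigma)\|_\rV^2$, so that the right-hand side above is dominated by $c\,T^2\,\widetilde{\Eb}[\sup_{\sigma\in[0,T]}\|\widetilde{u}_n(\sigma)\|_\rV^4]$. This is precisely where the higher-order estimate \eqref{eq:6.59} (equivalently \eqref{eq:6.44}) for $p=2$ enters, bounding the last quantity uniformly in $n$ by $C_1(2) < \infty$. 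Having both the $\widetilde{\mathbb{P}}$-a.s. pointwise convergence of $f_n$ and the uniform integrability furnished by $\sup_n \widetilde{\Eb}[|f_n|^2]<\infty$, the Vitali convergence theorem permits passing to the limit under $\widetilde{\Eb}$, which gives the asserted identity. The only delicate step is the fourth-moment bound; everything else is a direct adaptation of Lemma~\ref{lemma6.21}, and the reason the a priori estimate of Lemma~\ref{lemma6.14} was proved for $p$ up to $3$ is exactly to make the $p=2$ case available here.
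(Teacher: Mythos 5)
Your proof is correct and takes essentially the same route as the paper: $\widetilde{\mathbb{P}}$-a.s. pointwise convergence of $f_n$ via \eqref{eq:6.74} and \eqref{eq:6.75}, uniform integrability from a higher-moment bound obtained through the Burkholder-Davis-Gundy inequality, the contraction property of $P_n$ on $\rH$, Lemma~\ref{lemma2.1} and the a'priori estimate \eqref{eq:6.59}, and finally the Vitali convergence theorem. The only cosmetic difference is that the paper keeps a generic exponent $r \in (1,3]$ in \eqref{eq:6.80}--\eqref{eq:6.83} while you fix $r=2$; your fourth-moment bound is exactly the paper's chain of estimates specialized to that case.
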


\begin{proof}
Let us fix $s, t \in [0,T]$ such that $s \leq t$ and $\psi, \zeta \in \rV_\gamma$ and define $\R$-valued random variables $f_n$ and $f$ for $\omega \in \widetilde{\Omega}$ by
\begin{align*}
&f_n(\omega) := \Big( \langle \widetilde{M}_n(t, \omega), \psi \rangle \langle \widetilde{M}_n(t, \omega), \zeta \rangle - \langle \widetilde{M}_n(s, \omega), \psi \rangle \langle \widetilde{M}_n(s, \omega), \zeta \rangle \Big) h(\widetilde{u}_{n|[0,s]}(\omega)),\\
&f(\omega) := \Big( \langle \widetilde{M}(t, \omega), \psi \rangle \langle \widetilde{M}(t, \omega), \zeta \rangle - \langle \widetilde{M}(s, \omega), \psi \rangle \langle \widetilde{M}(s, \omega), \zeta \rangle \Big) h(\widetilde{u}_{|[0,s]}(\omega))\,.
\end{align*}
By Lemma \ref{lemma6.20} or more precisely by \eqref{eq:6.74} and \eqref{eq:6.75} we infer that $\lim_{n \rightarrow \infty} f_n(\omega) = f(\omega)$, for $\widetilde{\mathbb{P}}$ almost all $\omega \in \tom$.\\
We will prove that the functions $\{f_n\}_{n \in \mathbb{N}}$ are uniformly integrable. We claim that for some $r > 1$,
\begin{equation}
\label{eq:6.80}
\sup_{n \in \N} \widetilde{\Eb} \left[|f_n|^r\right] < \infty\,.
\end{equation}
For each $n \in \mathbb{N}$ as before we have
\begin{equation}
\label{eq:6.81}
\widetilde{\Eb} \left[ |f_n|^r\right] \leq C \|h \circ \widetilde{u}_n\|_{L^{\infty}}^r \|\psi\|^r_{\rm{V}_\gamma} \|\zeta\|^r_{\rm{V}_\gamma} \widetilde{\Eb} \left[|\widetilde{M}_n(t)|_\rH^{2r} + |\widetilde{M}_n(s)|_\rH^{2r} \right]\,.
\end{equation}
Since, $\widetilde{M}_n$ is a continuous martingale with quadratic variation defined in \eqref{eq:6.64}, by the Burkholder-Davis-Gundy inequality we obtain
\begin{equation}
\label{eq:6.82}
\widetilde{\Eb} \left[ \sup_{t \in [0,T]} |\widetilde{M}_n(t)|_\rH^{2r} \right] \leq c \widetilde{\Eb} \left[  \int_0^T \|G_n(\sigma, \tunsi)\|_{\mathcal{L}_2(\ell^2; \rH)}^2\,d \sigma \right]^r\,.
\end{equation}
Since, $P_n \colon \rH \to \rH$ is a contraction and by Lemma~\ref{lemma2.1} we have
\begin{align}
\label{eq:6.83}
\widetilde{\Eb} & \left[  \int_0^T \|G_n(\sigma, \tunsi)\|_{\mathcal{L}_2(\ell^2; \rH)}^2\,d\sigma \right]^r  \leq \widetilde{\E}\left[  \int_0^T \|G(\sigma, \tunsi)\|_{\mathcal{L}_2(\ell^2; \rH)}^2\,d\sigma \right]^r \nonumber \\
& \leq \widetilde{\Eb} \left[ \int_0^T \frac14 |\nabla \tunsi|_{L^2}^2\,d \sigma \right]^r \leq C_r T^{r-1}  \widetilde{\Eb}\left[\int_0^T \|\tunsi\|^{2r}_\rV \, d \sigma\right]\nonumber \\
& \leq C_r T^r \widetilde{\E} \left[\sup_{\sigma \in [0,T]} \|\tunsi\|^{2r}_\rV \right]\,.
\end{align}
Thus if $r \in [1,3]$ then by \eqref{eq:6.44}, \eqref{eq:h_bounded} and \eqref{eq:6.81} - \eqref{eq:6.83} we infer that \eqref{eq:6.80} holds. Hence by application of the Vitali's convergence theorem
\begin{equation}
\label{eq:6.84}
\lim_{n \rightarrow \infty} \widetilde{\Eb}[f_n] = \widetilde{\Eb}[f]\,.
\end{equation}
\end{proof}

We will be using the following notations in following lemmata. $\rV^\prime(\ocal_R)$ is the dual space to $\rV(\ocal_R)$, where
\[\rV({\mathcal{O}_R}) := \, \mbox{the closure of}\, \mathcal{V}({\mathcal{O}_R})\, \mbox{in}\, H^1({\mathcal{O}_R}, \R^3),\]
where
\[\mathcal{V}({\mathcal{O}_R}) := \{u \in C_0^\infty(\R^3;\R^3) \colon {\rm div}\,u = 0,\, {\rm supp}\,u \subset \mathcal{O}_R\}.\]
We recall that $\rH_{\mathcal{O}_R}$ is the space of restrictions to the subset ${\mathcal{O}_R}$ of elements of the space $\rH$ i.e.,
\[\rH_{\mathcal{O}_R} := \left\{u_{|{\mathcal{O}_R}} \colon u \in \rH \right\},\]
with the scalar product defined by
\[\langle u , \v \rangle_{\rH_{\mathcal{O}_R}} :=  \int_{\mathcal{O}_R} u(x)\,\v(x)\,dx, \quad u, \v \in \rH_{\mathcal{O}_R}\,.\]

\begin{lemma}
\label{lemma6.23}
The map $G \colon \rH_{\mathcal{O}_R} \to \mathcal{L}_2(\ell^2; \rV^\prime({\mathcal{O}_R}))$ given by \eqref{eq:2.12} is well defined and there exists some constant $C_R > 0$ such that
\begin{equation}
\label{eq:6.85}
\|G(u)\|_{\mathcal{L}_2(\ell^2;\rV^\prime(\ocal_R))} \le C_R\|u\|_{\rH_{\mathcal{O}_R}}, \quad u \in \rH\,.
\end{equation}
Moreover, for every $\psi \in \vcal$ the mapping $\rH \ni u \mapsto \langle G(u), \psi \rangle \in \ell^2$ is continuous, if in the space $\rH$ we consider the Fr\'echet topology inherited from the space $L^2_{loc}(\R^3,\R^3)$.
\end{lemma}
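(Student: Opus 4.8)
The plan is to exploit that, in contrast to the $\mathcal{L}_2(\ell^2;\rH)$–bound of Lemma~\ref{lemma2.1}(iii), the dual space $\rV'(\ocal_R)$ absorbs one derivative, so that an integration by parts will let us control $G(u)$ by the $L^2$–norm of $u$ on $\ocal_R$ alone. First I would record the weak form of $G_j(u)$. Fix $R$ and let $\psi\in\vcal(\ocal_R)$; since $\Pi$ is a self-adjoint projection with $\Pi\psi=\psi$, for $u\in\rH$ one has
\[
\langle G_j(u),\psi\rangle=\langle(\sigma_j\cdot\nabla)u,\psi\rangle=-\int_{\ocal_R}u\cdot\big[(\divv\sigma_j)\,\psi+(\sigma_j\cdot\nabla)\psi\big]\,dx,
\]
the integration by parts being legitimate because $\sigma\in C^1$ by $(\textbf{H2})$ and $\psi$ has compact support; by density the identity extends to every $\psi\in\rV(\ocal_R)$.

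The key point, and the main obstacle, is to sum the squares $\|G_j(u)\|_{\rV'(\ocal_R)}^2$ without destroying the summability in $j$: bounding each term separately would force out a factor $\|\sigma_j\|_{L^\infty}$ and require $\sum_j\|\sigma_j\|_{L^\infty}^2<\infty$, which $(\textbf{H2})$ does not provide. Instead I would use the vectorial description of the Hilbert–Schmidt norm,
\[
\|G(u)\|_{\mathcal{L}_2(\ell^2;\rV'(\ocal_R))}=\sup\Big\{\textstyle\sum_j\langle G_j(u),\psi_j\rangle:\ (\psi_j)_j\subset\rV(\ocal_R),\ \sum_j\|\psi_j\|_\rV^2\le1\Big\},
\]
which, after truncating to finitely many indices to avoid circularity, is realised at the Riesz representatives of the $G_j(u)$. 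Summing the weak form over $j$ and using Cauchy–Schwarz in $x$ gives
\[
\sum_j\langle G_j(u),\psi_j\rangle\le\|u\|_{L^2(\ocal_R)}\Big\|\sum_j(\divv\sigma_j)\psi_j+\sum_j(\sigma_j\cdot\nabla)\psi_j\Big\|_{L^2(\ocal_R)}.
\]
The two $L^2$–terms are then controlled by a \emph{pointwise} Cauchy–Schwarz in the summation index: from $(\textbf{H2})$ and \eqref{eq:2.10} one has $\sum_j|\divv\sigma_j(x)|^2\le3\sum_k\|\partial_{x^k}\sigma(x)\|_{\ell^2}^2\le9C_{\sigma,T}^2$ and $\sum_j|\sigma_j(x)|^2=\|\sigma(x)\|_{\ell^2}^2\le\tfrac14$, whence
\[
\Big\|\sum_j(\divv\sigma_j)\psi_j\Big\|_{L^2}\le3C_{\sigma,T}\Big(\sum_j\|\psi_j\|_{L^2}^2\Big)^{1/2},\qquad\Big\|\sum_j(\sigma_j\cdot\nabla)\psi_j\Big\|_{L^2}\le\tfrac12\Big(\sum_j\|\nabla\psi_j\|_{L^2}^2\Big)^{1/2}.
\]
Since $\sum_j\|\psi_j\|_\rV^2\le1$, this yields $\sum_j\langle G_j(u),\psi_j\rangle\le C_R\|u\|_{\rH_{\ocal_R}}$, and taking the supremum proves simultaneously that $G(u)\in\mathcal{L}_2(\ell^2;\rV'(\ocal_R))$ and the bound \eqref{eq:6.85}.

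For the continuity statement I would fix $\psi\in\vcal$ with $\mathrm{supp}\,\psi\subset\ocal_R$ and read off from the same weak form that
\[
\langle G(u)-G(\v),\psi\rangle=-\Big(\big\langle u-\v,(\divv\sigma_j)\psi+(\sigma_j\cdot\nabla)\psi\big\rangle_{L^2(\ocal_R)}\Big)_{j}\in\ell^2.
\]
Running the estimate above with the single fixed field $\psi$ in place of $\psi_j$ shows $\sum_j\|(\divv\sigma_j)\psi+(\sigma_j\cdot\nabla)\psi\|_{L^2(\ocal_R)}^2\le C_\psi<\infty$, so Cauchy–Schwarz in $x$ gives
\[
\|\langle G(u),\psi\rangle-\langle G(\v),\psi\rangle\|_{\ell^2}\le C_\psi^{1/2}\,\|u-\v\|_{L^2(\ocal_R)}.
\]
As $\|\cdot\|_{L^2(\ocal_R)}$ is one of the seminorms generating the $L^2_{loc}$ Fréchet topology on $\rH$, the map $u\mapsto\langle G(u),\psi\rangle$ is Lipschitz, hence continuous, for that topology, which finishes the proof.
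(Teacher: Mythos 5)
Your proof is correct, and while its skeleton coincides with the paper's (integrate by parts to throw the derivative onto a divergence-free, compactly supported test function, then get Fr\'echet continuity from linearity plus a Lipschitz estimate in one seminorm), it genuinely diverges at the summation over $j$ — and in a way that actually tightens the paper's argument. The paper proves the scalar estimate $\|(\sigma\cdot\nabla)u\|_{\rV'(\ocal_R)}\le(\|\sigma\|_{L^\infty}+\|\divv\sigma\|_{L^\infty})|u|_{\rH_{\ocal_R}}$ for a single vector field, applies it to each $\sigma_j$ separately, and then sums the squares, writing the constant as $\|\sigma\|_{\ell^2}+\|\divv\sigma\|_{\ell^2}$; read literally, this per-index route needs the $\ell^2(L^\infty)$ quantity $\sum_j\|\sigma_j\|_{L^\infty}^2$ to be finite, whereas $(\textbf{H2})$ supplies only the pointwise $L^\infty(\ell^2)$ bounds $\sup_x\|\sigma(x)\|_{\ell^2}^2\le\frac14$ and $\sup_x\|\partial_{x^k}\sigma(x)\|_{\ell^2}\le C_\sigma$ — exactly the obstacle you name. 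Your fix, keeping the sum over $j$ inside the $x$-integral so that Cauchy--Schwarz in $j$ can be applied pointwise, is the right mechanism, and your supremum characterisation of the Hilbert--Schmidt norm is legitimate since $\rV'(\ocal_R)$ is a Hilbert space and you truncate to finitely many indices before passing to the limit. The one remark is that the duality detour is heavier than necessary: from your weak form one gets directly, for each fixed $j$,
\[
\|G_j(u)\|_{\rV'(\ocal_R)}^2\le \big\||u|\,|\divv\sigma_j|\big\|_{L^2(\ocal_R)}^2+\big\||u|\,|\sigma_j|\big\|_{L^2(\ocal_R)}^2,
\]
and summing over $j$ with Tonelli and the pointwise bounds $\sum_j|\divv\sigma_j(x)|^2\le 9C_\sigma^2$, $\sum_j|\sigma_j(x)|^2\le\frac14$ yields $\sum_j\|G_j(u)\|_{\rV'(\ocal_R)}^2\le\big(9C_\sigma^2+\tfrac14\big)\|u\|_{\rH_{\ocal_R}}^2$ — the same repair without the supremum formula. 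Your continuity argument is in substance identical to the paper's: both reduce to linearity of $G$ and the Lipschitz bound $\|\langle G(u)-G(w),\psi\rangle\|_{\ell^2}\le C_\psi\,\|u-w\|_{L^2(\ocal_R)}$ for $\mathrm{supp}\,\psi\subset\ocal_R$, which gives continuity because $\|\cdot\|_{L^2(\ocal_R)}$ is one of the generating seminorms of the Fr\'echet topology inherited from $L^2_{loc}(\R^3,\R^3)$.
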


\begin{proof}
Let $\sigma=({\sigma}^1, ..., {\sigma}^{d}) : \overline{\ocal} \to \rd $ and fix $R > 0$. Let $u \in \mathcal{V}(\ocal_R)$. Then
\begin{equation}  \label{E:differentiation}
\sum_{j=1}^{d} \frac{\partial }{\partial {x}_{j}} ({\sigma}^{j} u)
 = \sum_{j=1}^{d} \biggl(  \frac{\partial {\sigma}^{j} }{\partial {x}_{j}} u
    + {\sigma}^{j} \frac{\partial u }{\partial {x}_{j}} \biggr)
 = (\divv \sigma ) u + \sum_{j=1}^{d} {\sigma}^{j} \frac{\partial u }{\partial {x}_{j}}\,.
\end{equation}
Let $\v \in \mathcal{V}(\ocal_R)$. Since, $\v$ on the boundary $\partial {\ocal }_{R}$ is equal to zero, thus using the integration by parts formula, we obtain for $\v \in \mathcal{V}(\ocal_R)$
\begin{align*}
 \int_{\ocal_R} \bigl( \sum_{j=1}^{d} {\sigma}^{j} \frac{\partial u }{\partial {x}_{j}}  \bigr) \v \, dx
 & = \sum_{j=1}^{d} \int_{\ocal_R} \frac{\partial }{\partial {x}_{j}} ({\sigma}^{j} u) \v \, dx  - \int_{\ocal_R} (\divv \sigma) u\, \v \, dx   \\
 & = - \sum_{j=1}^{d} \int_{\ocal_R} ({\sigma}^{j} u) \frac{\partial \v}{\partial {x}_{j}}  \, dx  - \int_{\ocal_R} (\divv \sigma) u\, \v \, dx\,.
\end{align*}
Using the H\"{o}lder inequality, we obtain
\begin{align}
\label{E:Holder_est_O_R}
& \bigl| \int_{{\ocal }_{R}} \bigl( \sum_{j=1}^{d} {\sigma}^{j} \frac{\partial u }{\partial {x}_{j}}  \bigr) \v \, dx  \bigr| \nn \\
  &\quad \le \| \sigma {\| }_{{L}^{\infty }} {| u | }_{{\rH}_{{\ocal }_{R}}} {\| \v \| }_{\rV ({\ocal }_{R})}
 + \| \divv \sigma {\| }_{{L}^{\infty }} {| u | }_{{\rH}_{{\ocal }_{R}}} {\| \v \| }_{\rV({\ocal }_{R})}.
\end{align}
Therefore, if we define a linear functional ${\hat{B}}_{R}$ by
\[  {\hat{B}}_{R} \v :=\int_{{\ocal}_{R}} \bigl( \sum_{j=1}^{d} {\sigma}^{j} \frac{\partial u }{\partial {x}_{j}} \bigr) \v \, dx,
 \qquad \v \in \mathcal{V}({\ocal }_{R})\,,\]
we infer that it is bounded in the norm of the space $\rV({\ocal }_{R})$. Thus it can be uniquely extended to a linear bounded functional (denoted also by ${\hat{B}}_{R}$) on $\rV({\ocal }_{R})$. Moreover, by estimate \eqref{E:Holder_est_O_R} we have the following inequality
\[ \|\hat{B}_R \|_{\rV^\prime(\ocal_R)} \le \bigl(  \| \sigma \|_{L^\infty} + \| \divv \sigma \|_{L^\infty} \bigr) | u |_{\rH_{\ocal_{R}}} \]
or equivalently
\begin{equation}  \label{E:hat_B_R_est}
\| (\sigma \cdot \nabla ) u {\| }_{\rV^{\prime }({\ocal }_{R})}
\le \bigl( \|  \sigma {\| }_{{L}^{\infty }} + \| \divv \sigma {\| }_{{L}^{\infty }} \bigr) \cdot {| u | }_{{\rH}_{{\ocal }_{R}}}\,.
\end{equation}
Since by equality \eqref{eq:2.12}, $G(u)(e_j) = \Pi\left[(\sigma_j \cdot \nabla) u\right]$, where $\left\{e_j\right\}_{j=1}^\infty$ is an orthonormal basis of $\ell^2$, we get by estimate \eqref{E:hat_B_R_est}
\begin{align*}
& \|G(u)\|_{{\mathcal{L}_2}(\ell^2;\rV^\prime(\ocal_R))} = \left[\sum_{j=1}^\infty \|G(u)(e_j)\|^2_{\rV^\prime(\ocal_R)}\right]^{1/2} \nn \\
& \quad \le \bigl( \|  \sigma {\| }_{\ell^2} + \| \divv \sigma {\| }_{\ell^2} \bigr) \cdot {| u | }_{{\rH}_{{\ocal }_{R}}}\,.
\end{align*}
Therefore, using the assumption $\mathbf{(H2)}$, $G(u) \in \mathcal{L}_2(\ell^2,\rV^{\prime }({\ocal }_{R}))$ and
\[
\| G(u)  {\| }_{\mathcal{L}_2 (\ell^2 ,\rV^{\prime }({\ocal }_{R}))}  \le {C}_{R} \cdot  {|u|}_{{\rH}_{{\ocal }_{R}}}\,.
\]
By estimate \eqref{eq:6.85} and the continuity of the embedding $\mathcal{L}_2(\ell^2, \rV^{\prime }({\ocal }_{R})) \hookrightarrow \mathcal{L}(\ell^2, \rV^{\prime }({\ocal }_{R}))$, we obtain
\[ \|G(u)y{\|}_{\rV^{\prime }({\ocal }_{R})} \le C(R) {|u|}_{{\rH}_{{\ocal }_{R}}}^{} \|y\|_{\ell^2} , \qquad u \in \rH , \quad y \in \ell^2
\]
for some constant $C(R)>0$. Thus, for any $\psi  \in \rV({\ocal }_{R})$
\begin{equation}
\label{eq:G_estimate}
|(G(u)y)\psi | \le C(R) {|u|}_{{\rH}_{{\ocal }_{R}}}^{} \|y\|_{\ell^2} \|\psi\|_{\rV({\ocal }_{R})} , \qquad u \in \rH , \quad y \in \ell^2\,.
\end{equation}
Now we identify ${}_{\rV^\prime}\langle G(\cdot), \psi \rangle_{\rV}$ with the mapping $\psi^{\ast \ast}G \colon \rH \to (\ell^2)^\prime$ defined by
\[({\psi }^{\ast \ast }G(u))y := (G(u)y)\psi\,\, \in\, \R \,, \qquad u \in \rH\,, \quad y \in \ell^2\,. \]
Thus, from the inequality \eqref{eq:G_estimate}, we infer that
\begin{equation} \label{E:psi**_est}
  \|{\psi }^{\ast \ast }G(u){\|}_{\ell^2} \le C(R) \|\psi\|_\rV {|u|}_{{\rH}_{{\ocal }_{R}}}^{}.
\end{equation}
Therefore, if we fix $\psi \in \vcal $ then, there exists ${R}_{0}>0$
such that $\mbox{supp}\, \psi $ is a compact subset of ${\ocal }_{{R}_{0}}$.
Since $G$ is linear,  estimate (\ref{E:psi**_est}) with $R:={R}_{0}$ yields that the mapping
\[
     {L}^{2}_{loc}(\R^3 , \R^3 ) \supset \rH \ni u \mapsto {\psi }^{\ast \ast }G(u) \in \ell^2
\]
is continuous in the Fr\'{e}chet topology inherited on the space $\rH$ from the space ${L}^{2}_{loc}(\R^3 , \R^3 )$, concluding the proof of the lemma.
\end{proof}

\begin{lemma}[Convergence of quadratic variations]
\label{lemma6.24} For any $s, t \in [0, T]$ and $\psi, \zeta \in \rm{V}_\gamma$, we have
\begin{align*}
\lim_{n \rightarrow \infty} & \widetilde{\Eb}\left[ \left( \int_s^t \langle  \left(G(\sigma, \tunsi) \right)^\ast P_n \psi , \left(G(\sigma,\tunsi) \right)^\ast P_n \zeta \rangle\,d \sigma \right) \cdot h(\widetilde{u}_{n|[0,s]})\right]\\
& = \widetilde{\Eb} \left[ \left( \int_s^t \langle  \left(G(\sigma, \tusi) \right)^\ast\,\psi , \left(G (\sigma, \tusi\right)^\ast\, \zeta \rangle \,d \sigma \right) \cdot h(\widetilde{u}_{|[0,s]})\right],
\end{align*}
where $\langle \cdot, \cdot \rangle$ is the inner product in $\ell^2$.
\end{lemma}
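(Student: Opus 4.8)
The plan is to mimic the proof of Lemma~\ref{lemma6.22}: establish $\widetilde{\mathbb{P}}$-almost sure convergence of the integrand random variables together with a uniform $L^r$-bound, and then pass to the limit under $\widetilde{\Eb}$ by Vitali's convergence theorem. Write
\[
\Phi_n := \left(\int_s^t \langle (G(\sigma,\tunsi))^\ast P_n\psi,\,(G(\sigma,\tunsi))^\ast P_n\zeta\rangle_{\ell^2}\,d\sigma\right)h(\widetilde{u}_{n|[0,s]}),
\]
and let $\Phi$ denote the corresponding expression with $\tunsi,P_n\psi,P_n\zeta,\widetilde{u}_{n|[0,s]}$ replaced by $\tusi,\psi,\zeta,\widetilde{u}_{|[0,s]}$; the claim is then $\widetilde{\Eb}[\Phi_n]\to\widetilde{\Eb}[\Phi]$.

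For the uniform integrability, I would first note that since $G(\sigma,\tunsi)\in\mathcal{L}_2(\ell^2;\rH)$, its adjoint obeys $\|(G(\sigma,\tunsi))^\ast\phi\|_{\ell^2}\le\|G(\sigma,\tunsi)\|_{\mathcal{L}_2(\ell^2;\rH)}\,|\phi|_\rH$; combining this with \eqref{eq:2.15} and the fact that $P_n$ is a contraction on $\rH$ and $\rV_\gamma\hookrightarrow\rH$ gives, by Cauchy--Schwarz in $\ell^2$ and in $\sigma$,
\[
|\Phi_n|\le \tfrac14\,|\psi|_\rH|\zeta|_\rH\,\|h\|_{L^\infty}\int_0^T|\nabla\tunsi|^2_{L^2}\,d\sigma\le \tfrac14\,T\,|\psi|_\rH|\zeta|_\rH\,\|h\|_{L^\infty}\sup_{\sigma\in[0,T]}\|\tunsi\|^2_\rV.
\]
Raising this to a power $r\in(1,3]$, taking expectations and invoking \eqref{eq:6.59} then yields $\sup_n\widetilde{\Eb}[|\Phi_n|^r]<\infty$, exactly as in \eqref{eq:6.80}--\eqref{eq:6.83}, which gives uniform integrability of $\{\Phi_n\}$.

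The substantive part is the $\widetilde{\mathbb{P}}$-a.s. convergence $\Phi_n\to\Phi$, and the difficulty is that the strong convergence $\tun\to\tu$ provided by \eqref{eq:6.58} is only \emph{local in space}, namely in $L^2(0,T;\rH_{loc})$, whereas $G$ depends nonlinearly on $u$. Fix $\omega$ in the full-measure set of \eqref{eq:6.58}. Since $h(\widetilde{u}_{n|[0,s]})\to h(\widetilde{u}_{|[0,s]})$ by \eqref{eq:6.75}, it suffices to treat the inner integral. I would first reduce to $\psi,\zeta\in\vcal$ by density: for $\psi_\varepsilon,\zeta_\varepsilon\in\vcal$ with $\|\psi-\psi_\varepsilon\|_{\rV_\gamma}+\|\zeta-\zeta_\varepsilon\|_{\rV_\gamma}<\varepsilon$ the resulting error is bounded, uniformly in $n$, by $C\varepsilon\int_0^T|\nabla\tunsi|^2_{L^2}\,d\sigma$, which is finite for the fixed $\omega$ because the convergent sequence $\tun$ is bounded in $L^2(0,T;\rV)$. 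For $\psi,\zeta\in\vcal$ with $\mathrm{supp}\,\psi\cup\mathrm{supp}\,\zeta\subset\ocal_{R_0}$, I would set $a_n:=(G(\sigma,\tunsi))^\ast P_n\psi$, $b_n:=(G(\sigma,\tunsi))^\ast P_n\zeta$, $a:=(G(\sigma,\tusi))^\ast\psi$, $b:=(G(\sigma,\tusi))^\ast\zeta$, and use the bilinear splitting
\[
\langle a_n,b_n\rangle_{\ell^2}-\langle a,b\rangle_{\ell^2}=\langle a_n-a,b_n\rangle_{\ell^2}+\langle a,b_n-b\rangle_{\ell^2},
\]
so that it is enough to prove $a_n\to a$ and $b_n\to b$ in $L^2(s,t;\ell^2)$ while $\|b_n\|_{L^2(s,t;\ell^2)}$ stays bounded. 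Splitting further $a_n-a=(G(\sigma,\tunsi))^\ast(P_n\psi-\psi)+[(G(\sigma,\tunsi))^\ast-(G(\sigma,\tusi))^\ast]\psi$, the first summand is controlled by \eqref{eq:6.85} together with $\|P_n\psi-\psi\|_\rV\to0$ and the boundedness of $\tun$ in $L^2(0,T;\rH_{\ocal_{R_0}})$, while the second summand is precisely where Lemma~\ref{lemma6.23} enters: its continuity statement for the map $u\mapsto(G(u))^\ast\psi$ in the Fr\'echet topology of $L^2_{loc}(\R^3;\R^3)$, applied along a subsequence for which $\tunsi\to\tusi$ in $\rH_{\ocal_{R_0}}$ for a.e.\ $\sigma$, yields a.e.\ convergence of the integrand, which I would upgrade to $L^2(s,t;\ell^2)$ convergence by a generalised (Pratt) dominated convergence argument, the $\sigma$-integrable majorant being furnished by $\|\tunsi\|^2_{\rH_{\ocal_{R_0}}}\to\|\tusi\|^2_{\rH_{\ocal_{R_0}}}$ in $L^1(s,t)$, and then to the whole sequence by the standard subsequence principle.

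Once $a_n\to a$ and $b_n\to b$ in $L^2(s,t;\ell^2)$ with $\sup_n\|b_n\|_{L^2(s,t;\ell^2)}<\infty$ are established, the splitting above gives $\int_s^t\langle a_n,b_n\rangle_{\ell^2}\,d\sigma\to\int_s^t\langle a,b\rangle_{\ell^2}\,d\sigma$ for the fixed $\omega$, whence $\Phi_n\to\Phi$ $\widetilde{\mathbb{P}}$-a.s.; Vitali's theorem then delivers $\widetilde{\Eb}[\Phi_n]\to\widetilde{\Eb}[\Phi]$. I expect the main obstacle to be the second summand $[(G(\sigma,\tunsi))^\ast-(G(\sigma,\tusi))^\ast]\psi$: because no global strong convergence of $\tun$ in $\rH$ is available, its control rests entirely on the local-in-space continuity of $G$ encoded in Lemma~\ref{lemma6.23}, and on matching the compact support of the test functions with the domains $\ocal_{R}$.
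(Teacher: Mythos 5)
Your proposal is correct and follows essentially the same route as the paper's proof: uniform integrability via the bound $\|(G(\sigma,\tunsi))^\ast P_n\psi\|_{\ell^2}\le \tfrac{c}{2}|\nabla\tunsi|_{L^2}|\psi|_\rH$ together with \eqref{eq:6.59} for $r\in(1,3]$, then $\widetilde{\mathbb{P}}$-a.s.\ convergence reduced to showing $(G(\cdot,\tun))^\ast P_n\psi\to (G(\cdot,\tu))^\ast\psi$ in $L^2(s,t;\ell^2)$, split into a $P_n\psi-\psi$ term and a $[(G(\cdot,\tun))^\ast-(G(\cdot,\tu))^\ast]\psi$ term handled via Lemma~\ref{lemma6.23}, a.e.-in-$\sigma$ subsequences, the subsequence principle, and density of $\vcal$ in $\rV_\gamma$, with Vitali closing the argument. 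Your only deviations are cosmetic: you perform the $\vcal$-density reduction before rather than after the $P_n$-splitting, and you upgrade the a.e.\ convergence to $L^2(s,t;\ell^2)$ by Pratt's generalised dominated convergence (with majorant $\|\tunsi\|^2_{\rH_{\ocal_{R_0}}}$ convergent in $L^1(s,t)$) where the paper invokes Vitali's theorem.
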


\begin{proof}
Let us fix $\psi, \zeta \in \rV_\gamma$ and let us denote for $\omega \in \widetilde{\Omega}$
\[ f_n(\omega) := \left(\int_s^t \langle \left(G(\sigma,\tunsi)\right)^\ast P_n \psi,\left (G(\sigma,\tunsi)\right)^\ast P_n \zeta  \rangle_{\ell^2}\,d \sigma \right) \cdot h(\widetilde{u}_{n|[0,s]})\,.\]
We will prove that the functions are uniformly integrable and convergent $\widetilde{\mathbb{P}}$-a.s. We start by proving that for some $r > 1$,
\begin{equation}
\label{eq:6.89}
\sup_{n \in \N} \widetilde{\Eb}[|f_n|^r] < \infty\,.
\end{equation}
Since, $\mathcal{L}_2(\ell^2;H)$ is continuously embbeded in $\mathcal{L}(\ell^2; H)$, then by \eqref{eq:2.15} there exists some $c > 0$ such that
\begin{align*}
|\left(G(\sigma,\widetilde{u}_n(\sigma, \omega))\right)^\ast P_n \psi|_{\ell^2} &\leq \|G(\sigma,\widetilde{u}_n(\sigma, \omega))\|_{\lcal(\ell^2; \rH)}|P_n \psi|_\rH \\
& \le \frac{c}{2} |\nabla \widetilde{u}_n(\sigma, \omega)|_{L^2}|\psi|_{\rH}\,,
\end{align*}
and thus
\begin{align*}
&\E\,|f_n|^r = \E\,\left| \left( \int_s^t \langle \left(G(\sigma,\tunsi)\right)^\ast P_n \psi,\left (G(\sigma,\tunsi)\right)^\ast P_n \zeta \rangle_{\ell^2} d \sigma \right) \cdot h(\widetilde{u}_{n|[0,s]})\right|^r \\
&\, \leq \|h \circ \widetilde{u}_n\|^{r}_{{L}^\infty} \E\,\left(\int_s^t |\left(G(\sigma,\widetilde{u}_n(\sigma, \omega))\right)^\ast P_n \psi|_{\ell^2}  |\left(G(\sigma,\widetilde{u}_n(\sigma, \omega))\right)^\ast P_n \zeta|_{\ell^2}  \, d\sigma \right)^r \\
&\, \leq \frac{c^{2r}}{4^r} \|h \circ \widetilde{u}_n\|_{{L}^\infty} ^{r}|\psi|_{\rm{H}}^r |\zeta|_{\rm{H}}^r \E\,\left(\int_s^t \|\widetilde{u}_n(\sigma, \omega)\|_{\rm{V}}^2 \, d \sigma \right)^r\,.
\end{align*}
Using the H\"older inequality, we get
\begin{align*}
& \E\,\left(\int_s^t \|\widetilde{u}_n(\sigma, \omega)\|_\rV^2 d \sigma\right)^r \leq (t-s)^{r-1} \E\, \int_s^t \|\widetilde{u}_n(\sigma, \omega)\|_{\rm{V}}^{2r}\, d \sigma \\
&\; \leq T^r \E \left(\sup_{\sigma \in [0,T]} \|\widetilde{u}_n(\sigma, \omega)\|_{\rm{V}}^{2r}\right).
\end{align*}
Thus
\[\E\,|f_n|^r \leq \widetilde{C} \E\left(\sup_{ \sigma \in [0,T]}\|\widetilde{u}_n(\sigma, \omega)\|_{\rm{V}}^{2r}\right) \]
for some $\widetilde{C} > 0$. Hence by \eqref{eq:6.59} for some $r \in (1,3]$
\[ \sup_{n \in \N} \widetilde{\Eb} |f_n|^r \leq \widetilde{C} \sup_{n \in \N} \widetilde{\Eb}\left[\sup_{\sigma \in [0,T]}\|\widetilde{u}_n(\sigma, \omega)\|_{\rm{V}}^{2r} \right] \leq \widetilde{C} C_1(r) < \infty,\]
inferring \eqref{eq:6.89}.

\noindent \textbf{Pointwise convergence $\colon$} Next, we have to prove the following pointwise convergence for a fix $\omega \in \tom$, i.e. we will show that for a fix $\omega \in \tom$
\begin{align}
\label{eq:6.90}
\lim_{n \rightarrow \infty} &\int_s^t \left\langle  \left(G(\sigma, \tunsi) \right)^\ast P_n \psi , \left(G(\sigma,\tunsi) \right)^\ast P_n \zeta \right\rangle_{\ell^2}d \sigma \nonumber \\
 & = \int_s^t \left\langle  \left(G(\sigma, \tusi) \right)^\ast\,\psi , \left(G (\sigma, \tusi\right)^\ast\, \zeta \right\rangle_{\ell^2} d \sigma\,.
\end{align}
Let us fix $\omega \in \tOmega $ such that
\begin{itemize}
\item[(i)] $\tun (\cdot ,\omega) \to \tu (\cdot ,\omega)$ in ${L}^{2}(0,T, {H}_{loc})$,
\item[(ii)]  $\tu (\cdot ,\omega )\in {L}^{2}(0,T;H)$ and the sequence $(\tun (\cdot ,\omega ){)}_{n\in \N}$ is bounded in $\ccal([0,T]; \rV)$.
\end{itemize}
Notice that, in order to prove \eqref{eq:6.90} it is sufficient to prove that
\begin{equation} \label{E:L^2(s,t;Y)_conv}
  {G(\cdot, \tun (\cdot , \omega  ))}^\ast \Pn \psi  \to {G(\cdot, \tu (\cdot , \omega  ))}^\ast  \psi
 \quad \mbox{in} \quad {L}^{2}(s,t;\ell^2)\,.
\end{equation}
We have
\begin{align}
&\int_{s}^{t}  \bigl\| {G(\sigma, \tun (\sigma , \omega  ))}^\ast \Pn \psi
 - {G(\sigma, \tu (\sigma , \omega ))}^\ast  \psi  {\bigr\| }_{\ell^2}^{2}  \, d\sigma  \nonumber \\
&\le \int_{s}^{t}  \biggl( \bigl\| {G(\sigma, \tun (\sigma , \omega  ))}^\ast ( \Pn \psi  -\psi ) {\bigr\| }_{\ell^2} \nn \\
& \qquad \quad + \bigl\|  {G(\sigma, \tun (\sigma , \omega ))}^\ast \psi - {G(\sigma, \tu (\sigma , \omega  ))}^\ast  \psi  {\bigr\| }_{\ell^2}^{} {\biggr) }^{2}  \, d\sigma  \nonumber \\
&\le 2 \int_{s}^{t}   \bigl\| {G(\sigma, \tun (\sigma , \omega  ))}^\ast {\bigl\|}_{\mathcal{L} (\rH,\ell^2)}^{2}
 \cdot |\Pn \psi  -\psi|_\rH^2 \, d \sigma \nonumber \\
& \quad  +  2 \int_{s}^{t}
\bigl\|  {G(\sigma, \tun (\sigma , \omega  ))}^\ast \psi
- {G(\sigma, \tu (\sigma , \omega ))}^\ast  \psi  {\bigr\| }_{\ell^2}^{2}   \, d\sigma \nn \\
& =: 2 \left\{ {I}_{1}(n) + {I}_{2}(n)\right\}    \label{E:2_(I1+I2)}\,.
\end{align}
Let us consider the term ${I}_{1}(n)$. Since $\psi \in \rV_\gamma $, we have
\[\lim_{n\to \infty } | \Pn \psi - \psi |_\rH = 0\,.\]
By Lemma~\ref{lemma2.1}, the continuity of the embedding $\mathcal{L}_2 (\ell^2, \rH) \hookrightarrow \mathcal{L} (\ell^2, \rH)$ and (ii),
we infer that
\begin{align*}
\int_{s}^{t}
  \bigl\| {G(\sigma,\tun (\sigma , \omega  ))}^\ast {\bigl\|}_{\mathcal{L}(\rH, \ell^2)}^{2} \, d \sigma
  &\le C  \int_{s}^{t}  {|\nabla \tun(\sigma, \omega)|}_{L^2}^{2}\, d \sigma \\
&  \le \widetilde{C} T \sup_{n \in \N} \|\tun (\omega )\|_{\ccal([0,T]; \rV)} \le K
\end{align*}
for some constant $K>0$. Thus
\[
\lim_{n \to \infty } {I}_{1}(n)=
\lim_{n \to \infty }  \int_{s}^{t}   \bigl\| {G(\sigma, \tun (\sigma , \omega  ))}^\ast {\bigl\|}_{\mathcal{L} (\rH,\ell^2)}^{2}
 \cdot |\Pn \psi  -\psi|_\rH^2 \, d \sigma  = 0\,.
\]

Let us move to the  term ${I}_{2}(n)$ in \eqref{E:2_(I1+I2)}. We will prove that for every $\psi \in \rV_\gamma $ the term ${I}_{2}(n)$ tends to zero  as $n \to \infty $.
 Assume first that $\psi \in \mathcal{V} $. Then there exists $R>0$ such that
$\mbox{supp } \psi $ is a compact subset of ${\ocal }_{R}$.
Since $\tun (\cdot ,\omega ) \to \tu (\cdot ,\omega )$ in ${L}^{2}(0,T;{H}_{loc})$, then in particular
\[\lim_{n\to \infty } {q}_{T,R} \bigl( \tun (\cdot ,\omega ) - \tu (\cdot ,\omega ) \bigr) =0\,,\]
where ${q}_{T,R}$ is the seminorm defined by \eqref{E:seminorms}. In other words,
$\tun (\cdot ,\omega ) \to \tu (\cdot ,\omega )$ in ${L}^{2}(0,T;$ ${H}_{{\ocal }_{R}})$. Therefore, there exists a subsequence $(\tunk (\cdot ,\omega ){)}_{k} $ such that
\[ \tunk (\sigma ,\omega ) \to \tu (\sigma ,\omega )
   \quad \mbox{ in } {H}_{{\ocal }_{R}} \mbox{ for almost all } \sigma \in [0,T] \mbox{ as } k\to \infty  \,.\]
Hence by Lemma~\ref{lemma6.23} as $k \to \infty$
\[  G \bigl(\sigma, \tunk (\sigma ,\omega ){\bigr) }^\ast \psi \to G \bigl(\sigma, \tu (\sigma ,\omega ) {\bigr) }^\ast \psi
   \mbox{ in } \ell^2 \mbox{ for almost all } \sigma \in [0,T]\,. \]
In conclusion, by the Vitali's convergence theorem
\[      \lim_{k\to \infty } \int_{s}^{t} \|G \bigl( \tunk (\sigma ,\omega ){\bigr) }^\ast \psi - G \bigl( \tu (\sigma ,\omega ) {\bigr) }^\ast \psi\|_{\ell^2}^{2} \, d\sigma =0 \qquad \mbox{for } \psi \in \mathcal{V} \,. \]
Repeating the above reasoning for all subsequences, we infer that from every subsequence of the sequence $\bigl( G \bigl( \sigma, \tun (\sigma ,\omega ){\bigr) }^\ast \psi {\bigr) }_{n}$ we can choose the subsequence convergent in ${L}^{2}(s,t; \ell^2)$ to the same limit. Thus, the whole sequence
$\bigl( G \bigl( \sigma, \tun (\sigma ,\omega ){\bigr) }^\ast \psi {\bigr) }_{n}$ is convergent to $G \bigl(\sigma, \tu (\sigma ,\omega ){\bigr) }^\ast \psi  $ in ${L}^{2}(s,t; \ell^2)$. At the same time
\[ \lim_{n\to\infty } {I}_{2}(n) =0 \qquad \mbox{for every } \psi \in \mathcal{V}\,.\]
If $\psi \in \rV_\gamma$ then for every $\varepsilon > 0$ we can find ${\psi }_{\varepsilon } \in \mathcal{V} $ such that $\|\psi - {\psi }_{\varepsilon }\|_{V_\gamma} < \varepsilon $.
By the continuity of embeddings $\mathcal{L}_2 (\ell^2, \rH) \hookrightarrow \mathcal{L} (\ell^2, \rH) \hookrightarrow \mathcal{L}(\ell^2, \rV_\gamma^\prime)$, Lemma~\ref{lemma2.1} and (ii), we obtain
\begin{align*}
&\int_{s}^{t} \bigl\|  {G(\sigma, \tun (\sigma , \omega ))}^\ast \psi
- {G(\sigma, \tu (\sigma ,\omega ))}^\ast  \psi  {\bigr\| }_{\ell^2}^{2}   \, d\sigma  \\
&\le 2  \int_{s}^{t} \!  \bigl\| [ {G(\sigma, \tun (\sigma , \omega ))}^\ast
- {G(\sigma, \tu (\sigma ,\omega ))}^\ast ] ( \psi - {\psi }_{\varepsilon } ) {\bigr\| }_{\ell^2}^{2}   \,d\sigma \\
&\qquad  + 2  \int_{s}^{t}  \bigl\| [ {G(\sigma, \tun (\sigma , \omega ))}^\ast - {G(\tu (\sigma ,\omega ))}^\ast ]  {\psi }_{\varepsilon }  {\bigr\| }_{\ell^2}^{2} \,d\sigma \\
&\le 4 \int_{s}^{t}    \left[\|G(\sigma, \tun (\sigma , \omega ))\|_{\mathcal{L} (\ell^2, \rV^{\prime }_\gamma)}^{2}
+ \|G(\sigma, \tu (\sigma , \omega ))\|_{\mathcal{L} (\ell^2, \rV^{\prime }_\gamma)}^{2} \right] \|\psi - {\psi }_{\varepsilon } \|_{\rV_\gamma}^{2}\,d\sigma \\
&\qquad + 2\int_{s}^{t}   \bigl\| [ {G(\sigma, \tun (\sigma , \omega ))}^\ast
- {G(\sigma, \tu (\sigma ,\omega ))}^\ast ]  {\psi }_{\varepsilon }  {\bigr\| }_{\ell^2}^{2}   \, d\sigma \\
& \le c\, T \bigl( \|\tun (\cdot , \omega )\|_{\ccal(0,T; \rV)}^{2}
  + \|\tu (\cdot , \omega )\|_{\ccal(0,T; \rV)}^{2}\bigr) \cdot {\varepsilon }^{2} \\
  & \qquad + 2\int_{s}^{t}   \bigl\| [ {G(\sigma, \tun (\sigma , \omega ))}^\ast
- {G(\sigma, \tu (\sigma ,\omega ))}^\ast ]  {\psi }_{\eps }  {\bigr\| }_{\ell^2}^{2}   \, d\sigma  \\
&\le C  {\varepsilon }^{2} + 2\int_{s}^{t}   \bigl\| [ {G(\sigma, \tun (\sigma , \omega ))}^\ast
- {G(\sigma, \tu (\sigma ,\omega ))}^\ast ]  {\psi }_{\varepsilon }  {\bigr\| }_{\ell^2}^{2}   \, d\sigma ,
\end{align*}
for some positive constants $c$ and $C$. Passing to the upper limit as $n \to \infty $, we infer that
\[
 \limsup_{n \to \infty } \int_{s}^{t}   \bigl\|  {G(\sigma, \tun (\sigma , \omega ))}^\ast \psi
- {G(\sigma, \tu (\sigma ,\omega ))}^\ast  \psi  {\bigr\| }_{\ell^2}^{2}   \, d\sigma
\le C  {\varepsilon }^{2} .
\]
In conclusion, we proved that
\[
 \lim_{n \to \infty } \int_{s}^{t}   \bigl\|  {G(\sigma, \tun (\sigma , \omega ))}^\ast \psi
- {G(\sigma, \tu (\sigma ,\omega ))}^\ast  \psi  {\bigr\| }_{\ell^2}^{2}   \, d\sigma   = 0
\]
which completes the proof of (\ref{E:L^2(s,t;Y)_conv}). Thus, by \eqref{eq:6.89}, convergence \eqref{eq:6.90} and the Vitali's convergence theorem, we conclude the proof of Lemma~\ref{lemma6.24}.
\end{proof}

By Lemma \ref{lemma6.21} we can pass to the limit in \eqref{eq:6.65}. By Lemmas \ref{lemma6.22} and \ref{lemma6.24} we can pass to the limit in \eqref{eq:6.66} as well. After passing to the limits we infer that for all $\psi, \zeta \in \rm{V}_\gamma$ and all bounded continuous functions $h$ on $\ccal([0,T]; \rV_w)$:
\begin{equation}
\label{eq:6.91}
\widetilde{\Eb} \left[ \langle \widetilde{M}(t) - \widetilde{M}(s) , \psi \rangle h(\widetilde{u}_{|[0,s]}) \right] = 0\,,
\end{equation}
and
\begin{align}
\label{eq:6.92}
\widetilde{\Eb} \Big[ &\Big( \langle \widetilde{M}(t), \psi \rangle \langle \widetilde{M}(t), \zeta \rangle - \langle \widetilde{M}(s), \psi \rangle \langle \widetilde{M}(s), \zeta \rangle \nonumber \\
  & - \int_s^t \langle \left(G(r, \widetilde{u}(r))\right)^\ast\psi , \left(G(r, \widetilde{u}(r))\right)^\ast \zeta \rangle_{\ell^2}\,dr \Big) \cdot h(\widetilde{u}_{|[0,s]}) \Big] = 0\,,
\end{align}
where $\langle \cdot, \cdot \rangle$ is the dual pairing between $\rV_\gamma^\prime$ and $\rV_\gamma$.

From Lemma~\ref{lemma6.17}, Lemma~\ref{lemma6.22} and Lemma~\ref{lemma6.24}, we infer the following corollary.
\begin{corollary}
\label{cor_quadratic_6}
For $t \in [0,T]$
\[\langle \langle \widetilde{M}\rangle \rangle_t = \int_0^t \|G(s,\tus)\|_{\mathcal{L}_2(\ell^2; \rH)}\,ds\,,\quad \quad t \in [0,T]\,.\]
\end{corollary}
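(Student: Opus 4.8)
The plan is to read off the corollary from the two limiting martingale identities \eqref{eq:6.91} and \eqref{eq:6.92}, which were obtained by passing to the limit in Lemmas~\ref{lemma6.17}, \ref{lemma6.21}, \ref{lemma6.22} and \ref{lemma6.24}. First I would record that $\widetilde{M}$ is a continuous, square-integrable $\rH$-valued martingale with respect to the filtration $\widetilde{\mathbb{F}} = (\widetilde{\mathcal{F}}_t)$, $\widetilde{\mathcal{F}}_t = \sigma\{\tus : s \le t\}$: continuity and $\rH$-valuedness are the content of Lemma~\ref{lemma_cont_martingale}, square-integrability follows from the a'priori bounds \eqref{eq:6.61}--\eqref{eq:6.62} together with the estimates proved there, and the martingale property is exactly \eqref{eq:6.91}. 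Indeed, since $h$ ranges over all bounded continuous functions on $\ccal([0,s]; \rV_w)$, identity \eqref{eq:6.91} yields $\widetilde{\Eb}\bigl[\langle \widetilde{M}(t) - \widetilde{M}(s), \psi \rangle \mid \widetilde{\mathcal{F}}_s\bigr] = 0$ for every $\psi \in \rV_\gamma$; because $\rV_\gamma$ is dense in $\rH$ and the increments lie in $L^2(\widetilde{\Omega}; \rH)$, this extends to all $\psi \in \rH$, i.e. $\widetilde{\Eb}(\widetilde{M}(t) - \widetilde{M}(s) \mid \widetilde{\mathcal{F}}_s) = 0$, which is Corollary~\ref{cor_martingale_6}. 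Note also $\widetilde{M}(0) = 0$.

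Second, I would exploit \eqref{eq:6.92} with arbitrary $h$: for all $\psi, \zeta \in \rV_\gamma$ the process $\langle \widetilde{M}(t), \psi \rangle \langle \widetilde{M}(t), \zeta \rangle - \int_0^t \langle (G(r, \widetilde{u}(r)))^\ast \psi, (G(r, \widetilde{u}(r)))^\ast \zeta \rangle_{\ell^2}\,dr$ is an $\widetilde{\mathbb{F}}$-martingale. Writing $M^\psi(t) := \langle \widetilde{M}(t), \psi \rangle$, this says precisely that the cross quadratic variation of the real martingales $M^\psi$ and $M^\zeta$ is $\int_0^t \langle (G)^\ast \psi, (G)^\ast \zeta \rangle_{\ell^2}\,dr$; specialising to $\zeta = \psi$ gives
\[
\langle\langle M^\psi \rangle\rangle_t = \int_0^t \|(G(r, \widetilde{u}(r)))^\ast \psi\|^2_{\ell^2}\,dr, \qquad \psi \in \rV_\gamma .
\]

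Third, I would choose an orthonormal basis $\{\psi_k\}_{k \in \N}$ of $\rH$ contained in $\rV_\gamma$ (possible since $\rV_\gamma$ is dense in $\rH$). By Parseval, $|\widetilde{M}(t)|^2_\rH = \sum_k (M^{\psi_k}(t))^2$, and the scalar quadratic variation of the $\rH$-valued martingale $\widetilde{M}$ satisfies $\langle\langle \widetilde{M} \rangle\rangle_t = \sum_k \langle\langle M^{\psi_k} \rangle\rangle_t$. Summing the displayed identity over $k$ and interchanging the sum with the time integral by Tonelli's theorem yields
\[
\langle\langle \widetilde{M} \rangle\rangle_t = \int_0^t \sum_{k} \|(G(r, \widetilde{u}(r)))^\ast \psi_k\|^2_{\ell^2}\,dr = \int_0^t \|G(r, \widetilde{u}(r))\|^2_{\mathcal{L}_2(\ell^2; \rH)}\,dr,
\]
where the last equality uses that the Hilbert--Schmidt norm of an operator equals that of its adjoint, $\sum_k \|(G)^\ast \psi_k\|^2_{\ell^2} = \|G\|^2_{\mathcal{L}_2(\ell^2; \rH)}$. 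This is the assertion of the corollary.

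The main obstacle is not the limit passage, which has already been carried out in Lemmas~\ref{lemma6.22} and \ref{lemma6.24}, but rather the bookkeeping surrounding the very definition of $\langle\langle \widetilde{M} \rangle\rangle$: one must ensure that $\widetilde{M}$ is genuinely a square-integrable continuous $\rH$-valued martingale (so that its tensor quadratic variation exists and is trace class), justify extending \eqref{eq:6.91}--\eqref{eq:6.92} from the dense test space $\rV_\gamma$ to an $\rH$-orthonormal basis, and legitimately interchange the summation over $k$ with the time integration, which is permitted by Tonelli together with the bound $\sum_k \|(G)^\ast \psi_k\|^2_{\ell^2} = \|G\|^2_{\mathcal{L}_2(\ell^2; \rH)} \le \tfrac14 |\nabla \widetilde{u}|^2_{L^2} < \infty$ coming from Lemma~\ref{lemma2.1}. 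Each of these is standard once the two martingale identities are available.
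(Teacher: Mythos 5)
Your proof is correct and takes essentially the same route as the paper, which infers the corollary directly from the limiting identity \eqref{eq:6.92} (obtained via Lemmas~\ref{lemma6.17}, \ref{lemma6.22} and \ref{lemma6.24}) without spelling out details; you simply make explicit the standard bookkeeping the paper leaves implicit, namely conditioning through the test functions $h$, density of $\rV_\gamma$ in $\rH$, summation over an orthonormal basis justified by Tonelli, and the Hilbert--Schmidt adjoint identity $\sum_k \|(G)^\ast \psi_k\|^2_{\ell^2} = \|G\|^2_{\mathcal{L}_2(\ell^2;\rH)}$. Note in passing that your argument produces the correct squared norm $\|G(s,\tus)\|^2_{\mathcal{L}_2(\ell^2;\rH)}$ in the integrand, whereas the exponent is missing in the paper's statement of the corollary and in \eqref{eq:6.93} --- a typographical slip on the paper's side, not a discrepancy in your proof.
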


\noindent \textbf{Continuation of the proof of Theorem~\ref{thm6.16}.}
Now we apply the idea analogous to that used by Da Prato and Zabczyk, see \cite[Section~8.3]{[DZ92]}. By Lemma~\ref{lemma_cont_martingale}, and Corollary~\ref{cor_martingale_6}, we infer that $\widetilde{M}(t), t \in [0, T]$ is an $\rH$-valued continuous square integrable martingale with respect to the filtration $\widetilde{\mathbb{F}} = (\widetilde{\mathcal{F}_t})$. Moreover, by Corollary~\ref{cor_quadratic_6} the quadratic variation of $\widetilde{M}$ is given by
\begin{equation}
\label{eq:6.93}
\langle \langle \widetilde{M}\rangle \rangle_t = \int_0^t \|G (s, \tus)\|_{\mathcal{L}_2(\ell^2;\rH)}\,ds\,.
\end{equation}
Therefore by the martingale representation theorem, there exist
\begin{itemize}
\item a stochastic basis $(\widetilde{\widetilde{\Omega}}, \widetilde{\widetilde{\mathcal{F}}}, \widetilde{\widetilde{\mathbb{F}}}, \widetilde{\widetilde{\mathbb{P}}})$,
\item a cylindrical Wiener process $\widetilde{\widetilde{W}}(t)$,
\item and a progressively measurable process $\widetilde{\tu}(t)$ such that for all $t \in [0,T]$ and all $\v \in \mathcal{V} $
\begin{align*}
& \langle \widetilde{\tu}(t) , \v \rangle  - \langle \widetilde{\widetilde{u}}(0) , \v \rangle + \int_0^t \langle \A  \widetilde{\tu}(s), \v \rangle\,ds + \int_0^t \langle B(\widetilde{\tu}(s), \widetilde{\tu}(s)), \v \rangle \, ds \\
& \quad=   \int_0^t \langle f(\widetilde{\tu}(s)) - g(|\widetilde{\tu}(s)|^2) \widetilde{\tu}(s), \v \rangle \,ds + \left \langle \int_0^t G(s, \widetilde{\tu}(s))\,d\widetilde{\widetilde{W}}(s), \v \right \rangle.
\end{align*}
\end{itemize}
\noindent Thus, the conditions from Definition~\ref{defn3.9} hold with $(\widehat{\Omega}, \widehat{\mathcal{F}}, \widehat{\mathbb{F}}, \widehat{\mathbb{P}}) = (\widetilde{\widetilde{\Omega}}, \widetilde{\widetilde{\mathcal{F}}}, \widetilde{\widetilde{\mathbb{F}}}, \widetilde{\widetilde{\mathbb{P}}})$, $\widehat{W} = \widetilde{\widetilde{W}}$ and $\widehat{u} = \widetilde{\widetilde{u}}$.
The proof of Theorem~\ref{thm6.16} is thus complete.

\subsection{Uniqueness and strong solutions}
\label{s:6.4}

In this subsection we will show that the solutions of \eqref{eq:2.13} are pathwise unique and that the martingale solution of \eqref{eq:2.13} is the strong solution. Let us recall the definition of pathwise unique solutions.

\begin{definition}
\label{defn6.25}
Let $(\Omega, \mathcal{F}, \mathbb{F}, \mathbb{P}, W, u^i), i = 1,2$ be the martingale solutions of \eqref{eq:2.13} with $u^i(0) = u_0, i= 1,2$. Then we say that the solutions are {\bf pathwise unique} if $\mathbb{P}$-a.s. for all $t \in [0,T],$ $u^1(t) = u^2(t)$.
\end{definition}

\begin{theorem}
\label{thm6.26}
Assume that the assumptions $({\bf H1})$ and $({\bf H2})$ are satisfied. If $u_1, u_2$ are two solutions of \eqref{eq:2.13} defined on the same filtered probability space $(\widehat{\Omega}, \widehat{\mathcal{F}}, \widehat{\mathbb{F}}, \hp)$ then $\hp$-a.s. for all $t \in [0,T]$, $u_1(t) = u_2(t)$.
\end{theorem}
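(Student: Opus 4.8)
The plan is to estimate the difference $w:=u_1-u_2$ in the $\rH$-norm and to close the resulting inequality by Gronwall's lemma, using the cubic dissipation hidden in the taming term to dominate the convective nonlinearity. Since $u_1,u_2$ solve \eqref{eq:2.13} on the same stochastic basis $(\ho,\hf,\widehat{\mathbb{F}},\hp)$, driven by the same Wiener process and with the same initial value, $w$ satisfies $w(0)=0$ and
\[
dw = -\bigl[\A w + B(u_1)-B(u_2) + \Pi(g(|u_1|^2)u_1 - g(|u_2|^2)u_2) - \Pi(f(u_1)-f(u_2))\bigr]\,dt + G(w)\,dW,
\]
where linearity of $u\mapsto G(u)$ gives $G(u_1)-G(u_2)=G(w)$. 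For $R>0$ set $\tau_R:=\inf\{t\in[0,T]:\|u_1(t)\|_\rV\vee\|u_2(t)\|_\rV\ge R\}$; by \eqref{eq:6.57} we have $\tau_R\to T$ $\hp$-a.s. as $R\to\infty$. Since both solutions lie in $\ccal([0,T];\rV_w)\cap L^2(0,T;\rD(\A))$ with the integrability \eqref{eq:6.57}, the drift above is $\rH$-valued and the It\^o formula applied to $\xi\mapsto|\xi|_\rH^2$ yields, for $t\in[0,T]$,
\[
|w(t\wedge\tau_R)|_\rH^2 = 2\int_0^{t\wedge\tau_R}\Phi(s)\,ds + 2\int_0^{t\wedge\tau_R}\langle w(s),G(w(s))\,dW(s)\rangle_\rH,
\]
where $\Phi$ collects the drift contributions together with $\tfrac12\|G(w)\|_{\mathcal{L}_2(\ell^2;\rH)}^2$.

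Next I would bound the individual terms of $\Phi$. The Stokes term contributes $-|\nabla w|_{L^2}^2$, while the noise term satisfies $\tfrac12\|G(w)\|_{\mathcal{L}_2(\ell^2;\rH)}^2\le\tfrac18|\nabla w|_{L^2}^2$ exactly as in Lemma~\ref{lemma2.1}(iii), using linearity of $G$ and $\|\sigma\|_{\ell^2}^2\le\tfrac14$. The drift $f$ is globally Lipschitz by $(\textbf{H1})$, so $\langle w,\Pi(f(u_1)-f(u_2))\rangle_\rH\le C_f|w|_\rH^2$. For the convective term, the antisymmetry relations $b(u_1,w,w)=0$ and $b(w,w,w)=0$ give $\langle B(u_1)-B(u_2),w\rangle_\rH=b(w,u_1,w)=-b(w,w,u_1)$, hence, by \eqref{eq:2.3} and Young's inequality,
\[
-\langle B(u_1)-B(u_2),w\rangle_\rH = b(w,w,u_1)\le\int_\dom|w|\,|\nabla w|\,|u_1|\,dx\le\tfrac34|\nabla w|_{L^2}^2+\tfrac13\big||w|\cdot|u_1|\big|_{L^2}^2.
\]
The three gradient contributions then carry the non-positive coefficient $-1+\tfrac18+\tfrac34=-\tfrac18$ in front of $|\nabla w|_{L^2}^2$, leaving the genuinely dangerous term $\tfrac13\big||w|\cdot|u_1|\big|_{L^2}^2$.

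The crux, and the step where the damping structure is essential, is to absorb this term via the taming nonlinearity. Writing $g(r)=r-\phi(r)$ as in the proof of Lemma~\ref{lemma2.1}, I would split
\[
g(|u_1|^2)u_1-g(|u_2|^2)u_2=\bigl(|u_1|^2u_1-|u_2|^2u_2\bigr)-\bigl(\phi(|u_1|^2)u_1-\phi(|u_2|^2)u_2\bigr).
\]
For the cubic part I would use the elementary pointwise monotonicity inequality, valid for all $a,b\in\R^3$,
\[
\bigl(|a|^2a-|b|^2b\bigr)\cdot(a-b)\ge\tfrac12\bigl(|a|^2+|b|^2\bigr)|a-b|^2,
\]
which follows by expanding both sides and using $\tfrac12(|a|^2-|b|^2)^2\ge0$; integrating gives $\langle|u_1|^2u_1-|u_2|^2u_2,w\rangle_{L^2}\ge\tfrac12\big||w|\cdot|u_1|\big|_{L^2}^2$. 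The correction $u\mapsto\phi(|u|^2)u$ is globally Lipschitz on $\R^3$ with a constant $C_N$ (its Jacobian is bounded, since $|\phi'(r)r|$ and $|\phi(r)|$ are bounded, as recorded in the proof of Lemma~\ref{lemma2.1}), so $|\langle\phi(|u_1|^2)u_1-\phi(|u_2|^2)u_2,w\rangle_{L^2}|\le C_N|w|_\rH^2$. Consequently
\[
-\langle g(|u_1|^2)u_1-g(|u_2|^2)u_2,w\rangle_\rH\le-\tfrac12\big||w|\cdot|u_1|\big|_{L^2}^2+C_N|w|_\rH^2,
\]
and the gain $-\tfrac12\big||w|\cdot|u_1|\big|_{L^2}^2$ strictly dominates the convective leftover $+\tfrac13\big||w|\cdot|u_1|\big|_{L^2}^2$.

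Collecting the estimates gives $\Phi(s)\le(C_N+C_f)|w(s)|_\rH^2$. Taking expectations (the stopped stochastic integral is a true martingale on $[0,\tau_R]$, since $\|u_i\|_\rV\le R$ there bounds its quadratic variation, so its mean vanishes) we obtain
\[
\E|w(t\wedge\tau_R)|_\rH^2\le 2(C_N+C_f)\int_0^t\E|w(s\wedge\tau_R)|_\rH^2\,ds,\qquad t\in[0,T].
\]
Since $w(0)=0$, Gronwall's lemma forces $\E|w(t\wedge\tau_R)|_\rH^2=0$ for every $t$ and $R$; letting $R\to\infty$ with $\tau_R\to T$ a.s. and invoking the path regularity of $w$ yields $u_1(t)=u_2(t)$ for all $t\in[0,T]$, $\hp$-a.s. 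The main obstacle is exactly the convective term $\int|w|\,|\nabla w|\,|u_1|$, which in three dimensions cannot be closed by the viscous dissipation alone; the whole argument hinges on the monotonicity of the cubic taming term supplying the missing $\big||w|\cdot|u_1|\big|_{L^2}^2$ dissipation, and on the bounded correction $\phi(|u|^2)u$ being Lipschitz so that it costs only the harmless $C_N|w|_\rH^2$.
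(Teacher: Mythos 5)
Your proof is correct. Note that the paper does not actually prove Theorem~\ref{thm6.26} in-text: it delegates it entirely to \cite[Theorem~3.7]{[RZ09]}. Your argument therefore supplies the missing self-contained proof, and its engine is the same one that makes the tamed equation work in R\"ockner--Zhang: the pointwise monotonicity $\bigl(|a|^2a-|b|^2b\bigr)\cdot(a-b)\ge\frac12\bigl(|a|^2+|b|^2\bigr)|a-b|^2$ (which indeed reduces, upon expansion, to $\frac12(|a|^2-|b|^2)^2\ge 0$), the global Lipschitz continuity of the correction $u\mapsto\phi(|u|^2)u$ (justified by the bounds $|\phi(r)|\le N+1$ and $|\phi'(r)r|\le N+1$ recorded in the proof of Lemma~\ref{lemma2.1}), and the absorption of the convective leftover $\frac13\bigl\||w|\,|u_1|\bigr\|_{L^2}^2$ by the cubic dissipation $-\frac12\bigl\||w|\,|u_1|\bigr\|_{L^2}^2$. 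The bookkeeping checks out: the gradient terms carry the coefficient $-1+\frac18+\frac34=-\frac18\le 0$, the local terms carry $\frac13-\frac12<0$, the stopped stochastic integral is a genuine martingale since $\|u_i\|_\rV\le R$ up to $\tau_R$ bounds its quadratic variation, the It\^o formula for $|w|_\rH^2$ is legitimate because under \eqref{eq:6.57} each drift term is an $\rH$-valued process in $L^2(0,T)$ (exactly the integrability verified in Lemma~\ref{lemma_cont_martingale}), and the passage $R\to\infty$ via $\tau_R\to T$ and Gronwall is sound.

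One cosmetic slip: you invoke the antisymmetry $b(u_1,w,w)=0$, but that relation belongs to the decomposition $B(u_1)-B(u_2)=B(u_1,w)+B(w,u_2)$, which leaves $b(w,u_2,w)$; the decomposition $B(u_1)-B(u_2)=B(w,u_1)+B(u_2,w)$, which leaves $b(w,u_1,w)$ as you wrote, uses $b(u_2,w,w)=0$ instead. This is immaterial: your monotonicity step actually yields the full dissipation $\frac12\int_\dom\bigl(|u_1|^2+|u_2|^2\bigr)|w|^2\,dx$, which dominates $\frac13\bigl\||w|\,|u_i|\bigr\|_{L^2}^2$ for either $i=1$ or $i=2$, so the Gronwall estimate closes either way.
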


This theorem has been proved in \cite[Theorem~3.7]{[RZ09]}.

\begin{theorem}
\label{thm6.27}
Assume that assumptions $(\mathbf{H1})$ and $(\mathbf{H2})$ are satisfied. Then there exists a path-wise unique strong solution $u \in \ccal([0,T]; \rV) \cap L^2(0,T; \rm{D}(\A ))$ of \eqref{eq:2.13} such that
\[ \sup_{t \in [0,T]} \|u(t)\|^2_\rV + \int_0^T |u(t)|^2_{\rm{D}(\A )}\,dt < \infty\,. \]
\end{theorem}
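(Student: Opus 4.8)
The plan is to invoke the Yamada--Watanabe theorem to pass from a martingale solution to a probabilistically strong one, and then to upgrade the regularity of that solution from weak to strong continuity in $\rV$.

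First I would assemble the two ingredients already established. Theorem~\ref{thm6.16} provides a martingale solution $(\widehat{\Omega}, \widehat{\mathcal{F}}, \widehat{\mathbb{F}}, \widehat{\mathbb{P}}, \widehat{W}, \widehat{u})$ of \eqref{eq:2.13} satisfying the a priori bound \eqref{eq:6.57}, while Theorem~\ref{thm6.26} asserts pathwise uniqueness. By the Yamada--Watanabe theorem in the form of \cite{[Ondrejat04],[YW71]}, existence of a martingale solution together with pathwise uniqueness yields a probabilistically strong solution: on every prescribed stochastic basis $(\Omega, \mathcal{F}, \mathbb{F}, \mathbb{P})$ carrying an $\ell^2$-valued cylindrical Wiener process $W$ there is an $\mathbb{F}$-adapted process $u$, unique up to indistinguishability, solving \eqref{eq:3.4} for all $\v \in \mathcal{V}$ and $\mathbb{P}$-a.s., and inheriting the regularity of the martingale solution. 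In particular $u(\cdot,\omega) \in \ccal([0,T]; \rV_w) \cap L^2(0,T; \rD(\A))$ and, since $u$ has the same law as $\widehat{u}$, the bound \eqref{eq:6.57} gives $\sup_{t \in [0,T]}\|u(t)\|_\rV^2 + \int_0^T |u(t)|^2_{\rD(\A)}\,dt < \infty$ a.s.

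It then remains to improve $\ccal([0,T]; \rV_w)$ to $\ccal([0,T]; \rV)$. The key step is to show that the real-valued process $t \mapsto \|u(t)\|_\rV^2 = |u(t)|_\rH^2 + |\nabla u(t)|_{L^2}^2$ has $\mathbb{P}$-a.s. continuous paths. For this I would apply the It\^o formula to the functional $\xi \mapsto \|\xi\|_\rV^2$ in the Gelfand triple $\rD(\A) \hookrightarrow \rV \hookrightarrow \rD(\A)'$, exactly as in the derivation of \eqref{eq:6.31} and \eqref{eq:6.37}, but now for the limit process $u$. Because $u \in L^2(0,T; \rD(\A))$ a.s., the estimates of Lemma~\ref{lemma2.1}, the continuity \eqref{eq:2.6} of $B$, the growth \eqref{eq:g_bounded} of $g$ combined with the embedding $H^1 \hookrightarrow L^6$, and assumption $(\textbf{H1})$ ensure that every drift term on the right-hand side is a.s. integrable on $[0,T]$; hence each indefinite integral is a continuous function of $t$, while the stochastic integral is a continuous local martingale by \eqref{eq:2.15}. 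Consequently $t \mapsto \|u(t)\|_\rV^2$ is a.s. continuous on $[0,T]$.

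Finally I would use the elementary Hilbert-space fact that a weakly continuous map into $\rV$ whose norm is continuous is strongly continuous: if $t_n \to t$ then $u(t_n) \rightharpoonup u(t)$ in $\rV$ and $\|u(t_n)\|_\rV \to \|u(t)\|_\rV$, so $\|u(t_n) - u(t)\|_\rV^2 = \|u(t_n)\|_\rV^2 - 2\langle u(t_n), u(t)\rangle_\rV + \|u(t)\|_\rV^2 \to 0$. This gives $u \in \ccal([0,T]; \rV)$, and pathwise uniqueness in this class is precisely Theorem~\ref{thm6.26}, completing the proof. I expect the main obstacle to be the rigorous justification of the It\^o formula for $\|u(t)\|_\rV^2$ applied to the limit process $u$, which a priori is only weakly continuous in $\rV$ and square integrable into $\rD(\A)$: one must check that $u$ solves the equation in the variational sense in $\rD(\A)'$ and that all terms meet the integrability hypotheses of the infinite-dimensional It\^o lemma, which is exactly where the bound \eqref{eq:6.57} and the operator estimates of Section~\ref{s:2} are needed.
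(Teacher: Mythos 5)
Your proposal is correct and follows essentially the same route as the paper: the paper's proof consists precisely of invoking Theorems~\ref{thm6.16} and \ref{thm6.26} together with the Yamada--Watanabe theorem in the form of \cite[Theorem~2]{[Ondrejat04]}, and then upgrading the regularity to $\ccal([0,T];\rV)$ by citing \cite[Lemma~3.6]{[BD16]}, whose content (the It\^o formula for $\|\cdot\|_\rV^2$ in the Gelfand triple $\rD(\A)\hookrightarrow\rV\hookrightarrow\rD(\A)'$, continuity of the norm process, and the standard fact that weak continuity plus norm continuity implies strong continuity) is exactly what you spell out. You even correctly identify the rigorous justification of the It\^o formula for the limit process as the delicate point that the cited lemma handles.
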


\begin{proof}
Since by Theorem~\ref{thm6.16} there exists a martingale solution and by Theorem~\ref{thm6.26} it is pathwise unique, the existence of strong solution follows from \cite[Theorem~2]{[Ondrejat04]}. Moreover, it can be shown that $u \in \ccal([0,T];\rV) \cap L^2(0,T; \rD(\A))$ by following the proof of \cite[Lemma~3.6]{[BD16]}.
\end{proof}

\section{Invariant measures}
\label{s:5}

In this section, we consider time homogeneous damped tamed NSEs, i.e. the coefficients $f, \sigma$ are independent of $t$ and furthermore $f \in \rH$ is not dependent on $u$. The time homogeneous damped tamed NSEs in abstract form are given by
\begin{equation}
\label{eq:5.1}
\begin{split}
&du(t) = \left[- \A _{\alpha}u(t) - B(u(t)) - \Pi[g(|u(t)|^2)u(t)] + \Pi f \right]dt \\
& \hspace{1.5truecm} + \sum_{j=1}^\infty G_j(u(t))\,dW_t^j\,,\\
&u(0) = u_0 \in \rV ,
\end{split}
\end{equation}
where $\A _\alpha = \alpha I - \nu \Delta$ for some $\alpha \in \R$ and $\nu > 0$ is the viscosity. The operator $B$ and the cylindrical Wiener process $W = \left(W_j\right)_{j=1}^\infty$ on $\ell^2$ is same as defined in Section~\ref{s:2} and $G_j$ are as defined in \eqref{eq:2.11}.

Let $ \mathcal{B}_b(\rV)$ denote the set of all bounded and Borel measurable functions on $\rV$. For any $\varphi \in \bcal_b(\rV)$, $t \ge 0$, we define a function $T_t \varphi \colon \rV \to \R$ by
\begin{equation}
\label{eq:5.2}
T_t \varphi(\v) := \E \left(\varphi(u(t;\v))\right), \quad \v \in \rV\,.
\end{equation}
It follows from Theorem~\ref{thm_damped_exist} and Ondrejat \cite{[Ondrejat05]} (see also \cite{[BF17]}) that $T_t \varphi \in \bcal_b(\rV)$ and $\left\{T_t\right\}_{t \ge 0}$ is a semigroup on $\bcal_b(\rV)$. Also since this unique solution to \eqref{eq:5.1} has $a.e.$ path in $\ccal([0,T]; \rV)$, it is also a Markov semigroup (see \cite[Theorem~27]{[Ondrejat05]}). Moreover, $\left\{T_t \right\}_{t \ge 0}$ is a Feller semigroup, i.e. $T_t$ maps $C_b(\rV)$ into itself.

Next we state the main result of this section, regarding the existence of an invariant measure.

\begin{theorem}
\label{thm5.4}
Let for every $\alpha > 0$, the assumptions $(\mathbf{H1})^\prime - (\mathbf{H3})^\prime$ be satisfied. Then there exists an invariant measure $\mu \in \mathcal{P}(\rV)$ of the semigroup $(T_t)_{t \ge 0}$ defined by \eqref{eq:5.2}, such that for any $t \ge 0$ and $\varphi \in {\rm SC}_{b}(\rV_w)$
\[\int_\rV T_t \varphi(u) \mu(du) = \int_\rV \varphi(u) \mu (du).\]
\end{theorem}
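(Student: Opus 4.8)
The plan is to verify the two hypotheses of the abstract criterion of Maslowski and Seidler, Theorem~\ref{thm5.3}: namely that the Markov semigroup $(T_t)_{t\ge 0}$ defined by \eqref{eq:5.2} is \emph{bounded in probability} and is \emph{sequentially weakly Feller} on $\rV_w$. Once both are established, Theorem~\ref{thm5.3} yields directly an invariant measure $\mu \in \mathcal{P}(\rV)$ with the stated invariance against every $\varphi \in \mathrm{SC}_b(\rV_w)$. Throughout one uses the global well-posedness of \eqref{eq:5.1} in $\ccal([0,T];\rV)\cap L^2(0,T;\rD(\A))$ provided by Theorem~\ref{thm_damped_exist}, so that $u(\cdot;\v)$ is a well-defined $\rV$-valued Markov process for each initial datum $\v \in \rV$.

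For boundedness in probability I would first derive a dissipative, \emph{uniform-in-time} a priori bound. Applying the It\^o formula to $|u(t)|_\rH^2$ and to $|\nabla u(t)|_{L^2}^2$ as in Lemma~\ref{lemma6.13}, but now exploiting the shift $\A_\alpha = \alpha I - \nu\Delta$ with $\alpha>0$, the term $\langle u, -\A_\alpha u\rangle_\rH = -\alpha|u|_\rH^2 - \nu|\nabla u|_{L^2}^2$ furnishes a genuine damping. Combining this with the estimates of Lemma~\ref{lemma2.1}(ii) on the tamed nonlinearity (which absorb the cube term at the cost of $+C_N|u|_\rH^2$ and $+C_N|\nabla u|_{L^2}^2$), with $\langle u,\Pi f\rangle_\rH \le \tfrac{\alpha}{2}|u|_\rH^2 + \tfrac1{2\alpha}|f|_\rH^2$, and with the noise bounds \eqref{eq:2.15}--\eqref{eq:2.16}, the choice of $\alpha$ dictated by the assumptions $(\mathbf{H1})'-(\mathbf{H3})'$ makes the net coefficient of $\|u\|_\rV^2$ strictly negative. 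A Gronwall argument then gives
\[
\sup_{t\ge 0}\E\,\|u(t;\v)\|_\rV^2 \le C\bigl(1+\|\v\|_\rV^2\bigr).
\]
By the Chebyshev inequality, $\mathbb{P}\bigl(\|u(t;\v)\|_\rV > R\bigr) \le C(1+\|\v\|_\rV^2)/R^2$ uniformly in $t$, and since closed balls of $\rV$ are compact in $\rV_w$ by the Banach--Alaoglu theorem, the family $\{\mathrm{law}(u(t;\v))\}_{t\ge 0}$ is tight on $\rV_w$; this is exactly boundedness in probability.

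The sequentially weakly Feller property is the substantive step. Fix $t\ge 0$, $\varphi \in \mathrm{SC}_b(\rV_w)$ and a sequence $\v_n \rightharpoonup \v$ in $\rV$; I must show $T_t\varphi(\v_n)\to T_t\varphi(\v)$. Since weakly convergent sequences are bounded, $\sup_n\|\v_n\|_\rV<\infty$, so the a priori estimates of Lemmas~\ref{lemma6.13} and~\ref{lemma6.14} hold uniformly in $n$ for the solutions $u(\cdot;\v_n)$. Hence, exactly as in Lemma~\ref{lemma6.15}, the laws of $u(\cdot;\v_n)$ are tight on $(\mathcal{Z}_T,\mathcal{T})$, and Corollary~\ref{cor3.8} provides a new probability space on which a subsequence of copies converges almost surely in $\mathcal{Z}_T$. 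Passing to the limit in the nonlinear terms with the arguments of Lemmas~\ref{lemma6.18}--\ref{lemma6.24}, and using $\v_n\rightharpoonup\v$ to identify the initial datum, the limit is a martingale solution of \eqref{eq:5.1} started from $\v$. Pathwise uniqueness (Theorem~\ref{thm6.26}) then forces the whole sequence to converge and identifies $u(t;\v_n)\rightharpoonup u(t;\v)$ in law on $\rV_w$. Since $\varphi$ is bounded and sequentially weakly continuous, the bounded convergence theorem gives $\E\varphi(u(t;\v_n))\to\E\varphi(u(t;\v))$, i.e. $T_t\varphi(\v_n)\to T_t\varphi(\v)$.

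I expect the main obstacle to lie in this second step, and more precisely in the identification of the Skorohod limit as the solution \emph{with the prescribed initial value} $\v$: because $\v_n\to\v$ only weakly, the convergence of the initial data must be reconciled with the strong (local $L^2$) convergence delivered by the compactness in $\mathcal{Z}_T$, and the limit passage in the tamed term $\Pi[g(|u|^2)u]$ requires the uniform bound \eqref{eq:L4estimate} together with Corollary~\ref{cor6.19}. A secondary technical point is securing the dissipativity of the uniform-in-time estimate: this needs $\alpha$ large enough to dominate the constant $C_N$ generated by the taming term, which is precisely what the hypotheses $(\mathbf{H1})'-(\mathbf{H3})'$ on $\alpha$, $f$ and $\sigma$ are designed to guarantee.
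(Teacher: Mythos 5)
Your overall architecture is the paper's: verify the two hypotheses of the Maslowski--Seidler criterion (Theorem~\ref{thm5.3}) and conclude, and your weak-Feller step is essentially identical to the paper's Lemma~\ref{lemma5.9} (tightness uniform in $n$, Skorohod via Corollary~\ref{cor3.8}, identification of the limit as a martingale solution, then uniqueness in law -- Lemma~\ref{lemma5.8}, deduced from pathwise uniqueness, Theorem~\ref{thm6.26} -- to upgrade subsequence convergence to the full sequence, and bounded convergence; the paper packages the compactness part as Theorem~\ref{thm5.6}). The genuine gap is in your boundedness-in-probability step. You claim $\sup_{t\ge 0}\E\,\|u(t;\v)\|^2_{\rV}\le C(1+\|\v\|^2_{\rV})$ on the grounds that ``the choice of $\alpha$ dictated by $(\mathbf{H1})'-(\mathbf{H3})'$ makes the net coefficient of $\|u\|^2_{\rV}$ strictly negative,'' and later that these hypotheses guarantee $\alpha$ dominates the taming constant $C_N$. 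They do not: none of $(\mathbf{H1})'-(\mathbf{H3})'$ constrains $\alpha$ at all -- the theorem is asserted for \emph{every} $\alpha>0$ -- and $(\mathbf{H3})'$ only balances the noise against the viscous dissipation at the $L^2$-gradient level. In the It\^o computation for $|\nabla u(t)|^2_{L^2}$ the tamed term contributes, by Lemma~\ref{lemma2.1}(ii), $((-g(|u|^2)u,u))\le C_N|\nabla u|^2_{L^2}-2\big||u|\cdot|\nabla u|\big|^2_{L^2}$, so the net coefficient of $|\nabla u|^2_{L^2}$ is of the form $-\alpha+C_N+C_{\sigma}$, which is positive for small $\alpha$; your Gronwall argument breaks exactly there. (The estimate is probably salvageable, e.g.\ by absorbing $C_N|\nabla u|^2_{L^2}\le \tfrac18|\A u|^2_{L^2}+C|u|^2_{\rH}$ into the leftover $|\A u|^2_{L^2}$ dissipation and using the uniform $\rH$-bound, but you would have to supply that argument; as written the step fails.)

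The paper sidesteps this entirely, and it is worth seeing why. In Lemma~\ref{lemma5.5} the It\^o formula is applied \emph{only} to $|u(t)|^2_{\rH}$: at that level the tamed term enters with the favorable sign $-\big||\sqrt{g(|u|^2)}|\cdot|u|\big|^2_{L^2}\le 0$ and no constant $C_N$ ever appears. The damping yields $-\alpha|u|^2_{\rH}-\nu|\nabla u|^2_{L^2}$, the forcing is split with $\beta=\alpha/2$, and $(\mathbf{H3})'$ controls the It\^o correction, giving
\[
\frac12\,\E\,|u(t)|^2_{\rH}+\gamma\,\E\int_0^t\|u(s)\|^2_{\rV}\,ds\le \frac12|u_0|^2_{\rH}+\frac{T}{2\alpha}|f|^2_{\rH},
\qquad \gamma=\min\Big\{\frac{\alpha}{2},\delta\Big\},
\]
so the dissipation integral -- not any sup-in-time bound -- controls the \emph{time average} of $\E\,\|u\|^2_{\rV}$, and Chebyshev delivers exactly condition (ii) of Theorem~\ref{thm5.3}, which is deliberately stated as a Ces\`aro average $\sup_{T\ge1}\frac1T\int_0^T\mathbb{P}(\|u(t;u_0)\|_{\rV}>R)\,dt<\varepsilon$ so that no uniform-in-time $\rV$-estimate (and hence no largeness of $\alpha$) is required. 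You should either adopt this weaker, purely $\rH$-level route, or honestly prove your stronger uniform bound via the interpolation absorption indicated above; the appeal to the hypotheses as a largeness condition on $\alpha$ is not available.
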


If $T_t$ is sequentially weakly Feller Markov semigroup then for every $\varphi \in {\rm SC}_b(\rV_w)$, $T_t \varphi \in {\rm SC}_b(\rV_w) \subset B_b(\rV)$ (see \cite{[BF17],[MS99]} for the definitions and inclusion of the spaces); therefore the integral on the l.h.s. in Theorem~\ref{thm5.4} makes sense.

Next we list the assumptions that we make on the coefficients $f$ and $\sigma$ along with a coercivity type assumption, see \cite{[Pardoux79]}.

\begin{trivlist}
\item{(\textbf{H1})$^\prime$} The function $f:\R^3 \to \R^3$ is time independent and $\rH$-valued.
\item{(\textbf{H2})$^\prime$} A  measurable  function $\sigma :\R^3 \to \ell^2 $ of $C^1$ class with respect to the $x$-variable and for all $x \in \R^3$ there exists a constant $C_\sigma > 0$ such that
\[\|\partial_{x^j} \sigma(x)\|_{\ell^2} \le C_{\sigma}, \quad j = 1,2,3\]
and, for all $x \in \R^3$,
\begin{equation*}
\|\sigma(x)\|^2_{\ell^2} \le \frac{1}{4}\,.
\end{equation*}
\item{(\textbf{H3})$^\prime$} there exists a $\delta > 0$ such that
\[ 2 \nu |\nabla u |^2_{L^2} - \|G(u)\|^2_{\mathcal{L}_2(\ell^2; \rH)} \ge 2\delta |\nabla u|^2_{L^2}, \quad u \in \rV\,.\]
\end{trivlist}

The following theorem regarding the existence of a pathwise unique strong solution to the time homogeneous damped tamed NSEs \eqref{eq:5.1} can be proved by modifying the proofs of Theorem~\ref{thm6.16} and Theorem~\ref{thm6.26} to incorporate the extra linear term $\alpha\,u$.

\begin{theorem}
\label{thm_damped_exist}
Assume that assumptions $(\mathbf{H1})^\prime$ and $(\mathbf{H2})^\prime$ are satisfied. Then for every $u_0 \in \rV$, there exists a path-wise unique strong solution $u$ of \eqref{eq:5.1} for every $T > 0$ such that $u \in \ccal([0,T]; \rV) \cap L^2(0,T; \mathrm{D}(\A))$, $\mathbb{P}$-a.s.
\end{theorem}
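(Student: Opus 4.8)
The plan is to reproduce, with minor modifications, the three-part argument already carried out for \eqref{eq:2.13}: construction of a martingale solution by truncation and compactness (Theorem~\ref{thm6.16}), pathwise uniqueness (Theorem~\ref{thm6.26}), and the Yamada--Watanabe passage to a strong solution (Theorem~\ref{thm6.27}). The only structural differences between \eqref{eq:5.1} and \eqref{eq:2.13} are the replacement of $\A$ by $\A_\alpha = \alpha I - \nu\Delta$ and the fact that, under $(\mathbf{H1})^\prime$, the forcing $\Pi f$ is a fixed element of $\rH$ independent of $u$. Both changes affect only lower-order, \emph{linear} contributions, so the core estimates survive essentially unchanged.

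First I would set up the truncated equation on $\rH_n$ exactly as in \eqref{eq:6.26}, with the drift augmented by the bounded linear operator $-\alpha \Pn(\cdot)$ and the forcing replaced by the constant $\Pn\Pi f$. Since the added term is globally Lipschitz and linear, hypothesis $(i)$ of Theorem~\ref{thm6.11} is unaffected, while in the growth bound \eqref{eq:6.25} the extra contribution $2\langle u, -\alpha u\rangle_\rH = -2\alpha|u|^2_\rH$ is bounded by $2|\alpha|\,\|u\|^2_{\rH_n}$ and hence absorbed into $K_1(1+\|u\|^2_{\rH_n})$ irrespective of the sign of $\alpha$; the constant forcing contributes $2\langle u,\Pi f\rangle_\rH \le \|u\|^2_{\rH_n}+|f|^2_\rH$. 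Theorem~\ref{thm6.11} therefore still yields a unique global solution $u_n$.

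Next, I would re-derive the a priori bounds of Lemmas~\ref{lemma6.13} and \ref{lemma6.14}. Applying the It\^o formula to $|u_n|^2_\rH$ and to $|\nabla u_n|^2_{L^2}$ now produces the additional deterministic terms $-2\alpha|u_n|^2_\rH$ and $-2\alpha|\nabla u_n|^2_{L^2}$, together with the viscosity factor $\nu$ in front of the dissipative terms; since $f$ is a fixed $\rH$-element the forcing is handled by a single application of Young's inequality. All of these are controlled by the generalised Gronwall Lemma~\ref{lemma2.2}, with constants now depending additionally on $\alpha$ and $\nu$ but still finite and uniform in $n$; the martingale property of the stochastic integral and the Burkholder--Davis--Gundy estimates are untouched by the linear perturbation. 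This gives the analogues of \eqref{eq:6.28}--\eqref{eq:6.30} and of \eqref{eq:6.44}--\eqref{eq:6.45}, whence tightness of $\{\mathrm{law}(u_n)\}$ on $(\zcal_T,\mathcal{T})$ follows from Corollary~\ref{cor3.6} as in Lemma~\ref{lemma6.15}; the verification of the Aldous condition $[\mathbf{A}]$ acquires one extra drift term $\int \alpha u_n\,ds$, estimated directly by $|\alpha|\,\theta\,(\Eb\sup_{[0,T]}\|u_n\|^2_\rV)^{1/2}$.

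Finally, I would invoke Corollary~\ref{cor3.8} (Jakubowski--Skorohod), construct the limiting martingale as in Lemmas~\ref{lemma6.17}--\ref{lemma6.24}, and apply the martingale representation theorem. In the identification of the limit the convergence $\int_s^t\langle \alpha\tunsi,\Pn\psi\rangle\,d\sigma \to \int_s^t\langle\alpha\tusi,\psi\rangle\,d\sigma$ is immediate (it is the analogue of item (b) of Lemma~\ref{lemma6.20} with $\A$ replaced by $\alpha I$), and the constant forcing term converges trivially because $\Pn\psi\to\psi$. Pathwise uniqueness is obtained by repeating the argument behind Theorem~\ref{thm6.26}: for the difference $w=u_1-u_2$ the energy identity gains only the term $-2\alpha|w|^2_\rH$, which is absorbed by Gronwall. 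Existence of a martingale solution together with pathwise uniqueness then yields, via Yamada--Watanabe \cite{[Ondrejat04]}, a pathwise unique strong solution, and the improved regularity $u\in\ccal([0,T];\rV)\cap L^2(0,T;\rD(\A))$ follows as in Theorem~\ref{thm6.27} and \cite[Lemma~3.6]{[BD16]}. The only point requiring genuine care is ensuring that the a priori estimates close \emph{uniformly in $n$} for an arbitrary sign of $\alpha$: this is where I expect the main (though mild) obstacle to lie, since for $\alpha<0$ the term $-2\alpha|u_n|^2_\rH$ is anti-dissipative and must be reabsorbed through Gronwall rather than discarded.
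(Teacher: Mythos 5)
Your proposal is correct and takes essentially the same approach as the paper: the paper proves Theorem~\ref{thm_damped_exist} only by remarking that the proofs of Theorem~\ref{thm6.16} and Theorem~\ref{thm6.26} can be modified to incorporate the extra linear term $\alpha u$, which is precisely the programme you carry out in detail (truncation via Theorem~\ref{thm6.11}, a priori bounds via Lemma~\ref{lemma2.2}, tightness, limit identification, pathwise uniqueness, Yamada--Watanabe, and regularity as in Theorem~\ref{thm6.27}). Your added observations --- absorbing $-2\alpha|u_n|^2_\rH$ through the Gronwall lemma for either sign of $\alpha$, and handling the constant forcing $\Pi f$ by Young's inequality --- are exactly the routine modifications the paper leaves implicit.
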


For fixed initial value $u_0 = \v \in \rV$ we denote the (path-wise) unique solution of \eqref{eq:5.1}, whose existence is proved in Theorem~\ref{thm_damped_exist} by $u(t;\v)$.

\begin{definition}
\label{defn5.2}
We say that a family $\{T_t\}_{t \ge 0}$ is sequentially weakly Feller iff
\[T_t : {\rm SC}_b(\rV_w) \to {\rm SC}_b(\rV_w), \quad t \ge 0.\]
\end{definition}

For a metric space $\mathbb{U}$, we use $\mathcal{P}(\mathbb{U})$ to denote the family of all Borel probability measures on $\mathbb{U}$. We will use the following theorem from Maslowski-Seidler \cite{[MS99]} to prove the existence of invariant measures.

\begin{theorem}
\label{thm5.3}
Assume that
\begin{itemize}
\item[(i)] the semigroup $\{T_t\}_{t \ge 0}$, defined by \eqref{eq:5.2} is sequentially weakly Feller in $\rV$,
\item[(ii)] for any $\varepsilon > 0$ there exists $R > 0$ such that
\[ \sup_{ T \ge 1} \frac{1}{T} \int_0^T \mathbb{P}(\{\|u(t;u_0)\|_\rV > R\})\,dt < \varepsilon\,.\]
\end{itemize}
Then there exists at least one invariant measure for \eqref{eq:5.1}.
\end{theorem}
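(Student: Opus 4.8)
The plan is to prove this abstract criterion by the Krylov--Bogoliubov averaging method, carried out in the \emph{weak} topology of $\rV$ rather than the norm topology, following Maslowski and Seidler \cite{[MS99]}.

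\textbf{Step 1 (time averages).} Fix $u_0 \in \rV$. Since by Theorem~\ref{thm_damped_exist} the solution $u(\cdot\,;u_0)$ has $\mathbb{P}$-a.s. continuous $\rV$-valued paths, the map $t \mapsto \mathrm{law}(u(t;u_0))$ is a measurable family of Borel probability measures on $\rV$, so for each $T \ge 1$ the Krylov--Bogoliubov average
\[
\mu_T(A) := \frac1T\int_0^T \mathbb{P}\big(\{u(t;u_0) \in A\}\big)\,dt, \qquad A \in \mathcal{B}(\rV),
\]
is a well-defined element of $\mathcal{P}(\rV)$. Using that $(T_t)_{t\ge0}$ is a Markov semigroup, for every $\varphi \in {\rm SC}_b(\rV_w)$ and $s\ge0$ one has $\int_\rV T_s\varphi\,d\mu_T = \frac1T\int_0^T T_{t+s}\varphi(u_0)\,dt$ and $\int_\rV \varphi\,d\mu_T = \frac1T\int_0^T T_t\varphi(u_0)\,dt$.

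\textbf{Step 2 (tightness) and Step 3 (extraction).} For $R>0$ put $B_R := \{x \in \rV : \|x\|_\rV \le R\}$. Because $\rV$ is a separable Hilbert space, $B_R$ is compact and metrizable in the weak topology $\rV_w$ by the Banach--Alaoglu theorem \cite{[Rudin91]}. Hypothesis (ii) is exactly the statement that $\sup_{T\ge1}\mu_T(\rV\setminus B_R)$ can be made smaller than any prescribed $\varepsilon>0$ by enlarging $R$, so $\{\mu_T : T\ge1\}$ is tight on $\rV_w$, the balls $B_R$ furnishing the compact sets. Equipping $\rV_w$ with the countable separating family $x \mapsto \langle x, h_m\rangle_\rV$ ($\{h_m\}$ dense in $\rV$) exactly as in the proof of Corollary~\ref{cor3.8}, the Jakubowski--Skorohod framework of Theorem~\ref{thm3.7} together with Prokhorov's theorem on the metrizable compacta $B_R$ lets us extract a sequence $T_n \nearrow \infty$ and a Radon measure $\mu \in \mathcal{P}(\rV)$ with $\int_\rV \psi\,d\mu_{T_n} \to \int_\rV \psi\,d\mu$ for every $\psi \in {\rm SC}_b(\rV_w)$.

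\textbf{Step 4 (invariance).} Subtracting the two identities of Step 1 and substituting $r=t+s$ gives
\[
\int_\rV T_s\varphi\,d\mu_T - \int_\rV \varphi\,d\mu_T = \frac1T\left[\int_T^{T+s}T_t\varphi(u_0)\,dt - \int_0^s T_t\varphi(u_0)\,dt\right],
\]
whose modulus is at most $2s\,\|\varphi\|_\infty/T \to 0$. By hypothesis (i) the semigroup is sequentially weakly Feller, so $\varphi \in {\rm SC}_b(\rV_w)$ forces $T_s\varphi \in {\rm SC}_b(\rV_w)$; hence both integrals converge along $T_n$ by Step 3. Letting $T=T_n\to\infty$ yields $\int_\rV T_s\varphi\,d\mu = \int_\rV\varphi\,d\mu$ for all $\varphi \in {\rm SC}_b(\rV_w)$ and all $s\ge0$, which is precisely the invariance asserted in Theorem~\ref{thm5.4}.

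The hard part will be Step 3: the weak topology on all of $\rV$ is not metrizable, so Prokhorov's theorem is not available off the shelf. The resolution is to exploit that tightness concentrates the mass on the metrizable weakly compact balls $B_R$ and to rely on a separating sequence of weakly continuous functionals (Theorem~\ref{thm3.7}); one must then verify that the resulting limit is a genuine Radon Borel probability measure on $\rV$ and that convergence can be upgraded from the separating functionals to the full class ${\rm SC}_b(\rV_w)$. It is exactly this last point where the sequentially weakly Feller hypothesis (i) enters as the correct compatibility condition permitting passage to the limit in Step 4.
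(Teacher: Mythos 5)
Your proof is sound, but it is worth saying up front that the paper does not prove Theorem~\ref{thm5.3} at all: the result is imported from Maslowski--Seidler \cite{[MS99]} (a detailed proof in exactly this weak-topology setting is in \cite{[BMO17]}), and the paper's own effort goes entirely into verifying hypotheses (i) and (ii) for the tamed system, in Lemmas~\ref{lemma5.9} and~\ref{lemma5.5} respectively. What you have reconstructed, essentially correctly, is the Maslowski--Seidler argument itself: Krylov--Bogoliubov averages $\mu_T$, tightness on $\rV_w$ via the weakly compact metrizable balls $B_R$ (your reading of hypothesis (ii) as $\sup_{T\ge 1}\mu_T(\rV\setminus B_R)\le\varepsilon$ is exactly right), extraction of a limit, and the standard estimate $\bigl|\int_\rV T_s\varphi\,d\mu_T-\int_\rV\varphi\,d\mu_T\bigr|\le 2s\|\varphi\|_\infty/T$ combined with hypothesis (i), which is precisely what allows $T_s\varphi$ to be integrated against the limit. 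Two refinements close the gap you flag in Step 3. First, no hybrid of Prokhorov with Jakubowski is needed: applying Theorem~\ref{thm3.7}(b) to $\mathcal{X}=\rV_w$ with the separating functionals $x\mapsto\langle x,h_m\rangle_\rV$ (exactly as in the proof of Corollary~\ref{cor3.8}) yields a subsequence $T_{n_k}$ and random variables $\xi_k\to\xi$ almost surely with $\mathrm{law}(\xi_k)=\mu_{T_{n_k}}$; since every $\psi\in{\rm SC}_b(\rV_w)$ is bounded and sequentially weakly continuous, dominated convergence gives $\int_\rV\psi\,d\mu_{T_{n_k}}\to\int_\rV\psi\,d\mu$ with $\mu=\mathrm{law}(\xi)$ Radon, which upgrades convergence to the full class ${\rm SC}_b(\rV_w)$ in one stroke. (Here one uses that on the separable Hilbert space $\rV$ the norm-Borel $\sigma$-field coincides with the cylindrical $\sigma$-field generated by the $f_m$, so the $\mu_T$ live on the right $\sigma$-field and the balls $B_R$ are Borel.) Second, your conclusion $\int_\rV T_s\varphi\,d\mu=\int_\rV\varphi\,d\mu$ for all $\varphi\in{\rm SC}_b(\rV_w)$ is exactly the invariance as formulated in Theorem~\ref{thm5.4}; if one wants $T_s^{*}\mu=\mu$ in the usual sense, it suffices to note that cylindrical functions $x\mapsto f(\langle x,h_1\rangle_\rV,\dots,\langle x,h_k\rangle_\rV)$ with $f\in C_b(\R^k)$ belong to ${\rm SC}_b(\rV_w)$ and are measure-determining on $\rV$. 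In short: your route makes the paper self-contained and exhibits hypothesis (i) as precisely the compatibility condition needed in the limit passage, while the paper's route buys brevity by treating the criterion as a black box and concentrating on the model-specific verifications.
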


\subsection{Boundedness in probability}
\label{s:5.1}
\begin{lemma}
\label{lemma5.5}
Let $u_0 \in \rV$. Then, under the assumptions of Theorem~\ref{thm5.4}, for every $\varepsilon > 0$, there exists $R > 0$ such that
\begin{equation}
\label{eq:5.3}
 \sup_{ T \ge 1} \frac{1}{T} \int_0^T \mathbb{P}\left(\left\{\|u(t;u_0)\|_\rV > R\right\}\right)\,dt < \varepsilon\,.
\end{equation}
\end{lemma}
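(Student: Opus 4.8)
The plan is to establish \eqref{eq:5.3} through a \emph{time-uniform} bound on the second moment of the $\rV$-norm of the solution, and then to invoke the Chebyshev inequality. Concretely, I would first prove that
\[
M := \sup_{t \ge 0} \E\,\|u(t;u_0)\|_\rV^2 < \infty ,
\]
and then, since $\mathbb{P}(\{\|u(t;u_0)\|_\rV > R\}) \le M/R^2$ for every $t \ge 0$, deduce that for all $T \ge 1$
\[
\frac{1}{T}\int_0^T \mathbb{P}(\{\|u(t;u_0)\|_\rV > R\})\,dt \le \frac{M}{R^2} ,
\]
which is $< \varepsilon$ as soon as $R > \sqrt{M/\varepsilon}$. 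Thus the entire difficulty is concentrated in the uniform energy estimate $M < \infty$.

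To obtain it, I would apply the It\^o formula to $\|u\|_\rV^2 = |u|_\rH^2 + |\nabla u|_{L^2}^2$ along the strong solution $u = u(t;u_0)$, whose regularity $u \in \ccal([0,T];\rV)\cap L^2(0,T;\rD(\A))$ is guaranteed by Theorem~\ref{thm_damped_exist}, treating the two pieces $|u|_\rH^2$ and $|\nabla u|_{L^2}^2$ separately exactly as in the proof of Lemma~\ref{lemma6.13} and localising with $\tau_R := \inf\{t : \|u(t)\|_\rV \ge R\}$ so that the stochastic integrals are genuine mean-zero martingales. For the $\rH$-part the damped Stokes term contributes $\langle u, -\A_\alpha u\rangle_\rH = -\alpha|u|_\rH^2 - \nu|\nabla u|_{L^2}^2$, the nonlinearity drops out since $\langle B(u),u\rangle_\rH = 0$, the taming term is controlled by the second inequality in \eqref{eq:g_estimate} as $\langle -g(|u|^2)u,u\rangle_{L^2} \le -\|u\|_{L^4}^4 + C_N|u|_\rH^2$, the force is handled by Young's inequality (using $f \in \rH$), and assumption $(\mathbf{H3})^\prime$ converts $-2\nu|\nabla u|_{L^2}^2 + \|G(u)\|_{\mathcal{L}_2(\ell^2;\rH)}^2$ into the strictly dissipative $-2\delta|\nabla u|_{L^2}^2$. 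For the gradient part I would use Lemma~\ref{lemma2.1}(i) to bound $((B(u),u))$, the first inequality in \eqref{eq:g_estimate} to gain $-2\big||u|\cdot|\nabla u|\big|_{L^2}^2$, estimate \eqref{eq:2.16} for $\|G(u)\|_{\mathcal{L}_2(\ell^2;\rV)}^2$, and Young's inequality on $\langle \A u, f\rangle$, while $\A_\alpha$ supplies $-2\alpha|\nabla u|_{L^2}^2 - 2\nu|\A u|_{L^2}^2$. Balancing the Young constants precisely as in Lemma~\ref{lemma6.13} renders the coefficients of $|\A u|_{L^2}^2$ and of $\big||u|\cdot|\nabla u|\big|_{L^2}^2$ non-positive.

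Summing the two balances I expect a differential inequality of the form
\[
\frac{d}{dt}\,\E\,\|u(t\wedge\tau_R)\|_\rV^2 \le -c\,\E\,\|u(t\wedge\tau_R)\|_\rV^2 + K ,
\]
with $K$ controlled by $|f|_\rH^2$. The decisive point, and the main obstacle, is the positivity of $c$: on $\R^3$ there is no Poincar\'e inequality, so the low-frequency $L^2$-mass $|u|_\rH^2$ cannot be absorbed by $|\nabla u|_{L^2}^2$; the $L^2$-coercivity needed to dominate the taming constant $C_N$ (and the constant $C_\sigma$ originating from $\sigma$) must instead be furnished by the linear damping, i.e. by the $\alpha I$ in $\A_\alpha = \alpha I - \nu\Delta$, in combination with $(\mathbf{H3})^\prime$. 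Once $c>0$ is secured, letting $R \to \infty$ (by Fatou's lemma, using the a priori integrability of the solution from Theorem~\ref{thm_damped_exist}) and a Gronwall argument give
\[
\E\,\|u(t;u_0)\|_\rV^2 \le e^{-ct}\|u_0\|_\rV^2 + \frac{K}{c}, \qquad t \ge 0 ,
\]
whence the uniform bound $M$ above, and thereby \eqref{eq:5.3}.
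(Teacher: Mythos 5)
Your reduction of \eqref{eq:5.3} to a moment bound plus the Chebyshev inequality is sound, but you have placed the entire weight on the time-uniform bound $M=\sup_{t\ge 0}\E\,\|u(t;u_0)\|^2_\rV<\infty$, and the step you yourself flag as decisive --- positivity of $c$ in $\frac{d}{dt}\,\E\,\|u\|^2_\rV\le -c\,\E\,\|u\|^2_\rV+K$ --- cannot be secured from $(\mathbf{H1})^\prime$--$(\mathbf{H3})^\prime$. The coercivity hypothesis $(\mathbf{H3})^\prime$ acts only in the $\rH$-energy balance, compensating $\|G(u)\|^2_{\mathcal{L}_2(\ell^2;\rH)}$ against $2\nu|\nabla u|^2_{L^2}$. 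In the gradient-level It\^o balance, the taming term contributes $+2C_N|\nabla u|^2_{L^2}$ (first inequality of \eqref{eq:g_estimate}) and the noise contributes, via \eqref{eq:2.16}, a further $+C_{\sigma}|\nabla u|^2_{L^2}$ beyond the $\tfrac12|\A u|^2_{L^2}$ absorbed by the Stokes dissipation. The only negative multiples of $|\nabla u|^2_{L^2}$ available against these are $-2\alpha|\nabla u|^2_{L^2}$ from the damping and the $-2\delta|\nabla u|^2_{L^2}$ inherited from the $\rH$-level; since Theorem~\ref{thm5.4} imposes no relation between $\alpha,\delta$ on one side and $N,\sigma$ on the other (and $C_N$ grows with the taming threshold $N$), you cannot conclude $c>0$ for an arbitrary $\alpha>0$. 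The good-signed terms $\big||u|\cdot|\nabla u|\big|^2_{L^2}$ and $\|u\|^4_{L^4}$ do not help, as they fail to dominate $|\nabla u|^2_{L^2}$ on regions where $|u|$ is small. (A smaller instance of the same problem occurs already at the $\rH$-level: by invoking the second inequality of \eqref{eq:g_estimate} you pick up another $+C_N|u|^2_\rH$ to be absorbed, whereas the exact identity \eqref{eq:5.7} shows the taming term is nonnegative and may simply be dropped.) So it is not established --- and is not needed --- that a uniform-in-time $\rV$-moment bound holds under the stated assumptions.

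The paper's proof avoids the gradient-level computation entirely. It applies the It\^o formula only to $|u|^2_\rH$, uses the identities \eqref{eq:5.5}--\eqref{eq:5.7}, Young's inequality on the force with $\beta=\alpha/2$, and $(\mathbf{H3})^\prime$, to obtain $\tfrac12\,\E\,|u(t)|^2_\rH+\delta\,\E\int_0^t|\nabla u(s)|^2_{L^2}\,ds+\tfrac{\alpha}{2}\,\E\int_0^t|u(s)|^2_\rH\,ds\le\tfrac12|u_0|^2_\rH+\tfrac{T}{2\alpha}|f|^2_\rH$. The key observation is that the \emph{dissipation integral} on the left already controls $\gamma\,\E\int_0^t\|u(s)\|^2_\rV\,ds$ with $\gamma=\min\{\alpha/2,\delta\}$, which yields the time-averaged bound \eqref{eq:5.10} and then \eqref{eq:5.3} by Chebyshev --- exactly the Ces\`aro-type boundedness in probability required by Theorem~\ref{thm5.3}(ii). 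In short, only a time average of $\E\,\|u\|^2_\rV$ is needed, and the lowest-level energy estimate provides it for free, for every $\alpha>0$, with no largeness condition and with dependence only on $|u_0|_\rH$ and $|f|_\rH$; your route demands strictly more than the hypotheses deliver.
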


\begin{proof}
Using the It\^o formula for the function $\phi(\xi) = |\xi|^2_\rH$ and the process $u(t)$, we have
\begin{align}
\label{eq:5.4}
\frac12 |u(t)|_\rH^2 & = \frac12 |u_0|^2_\rH +  \int_0^t \langle -\A _\alpha u(s) - B(u(s)) - \Pi (g(|u(s)|^2)u(s)) , u(s) \rangle_\rH ds \nonumber \\
&~~ + \int_0^t \langle \Pi f , u(s)\rangle_\rH ds +  \int_0^t \sum_{j=1}^\infty \langle G_j(u(s))dW_s^j, u(s) \rangle_\rH \nn \\
&~~ + \frac12 \int_0^t \|G(u(s))\|^2_{\mathcal{L}_2(\ell^2;\rH)}\,ds\,.
\end{align}
Now we deal with each term on the r.h.s. of \eqref{eq:5.4} one by one. Firstly let us notice that we have
\begin{equation}
\label{eq:5.5}
\langle \A _\alpha u, u \rangle_\rH = \alpha |u|^2_{\rH} + \nu |\nabla u|^2_{L^2}\,,
\end{equation}
\begin{equation}
\label{eq:5.6}
\langle B(u), u \rangle_\rH  = 0\,,
\end{equation}
\begin{equation}
\label{eq:5.7}
\langle \Pi\left(g(|u|^2) u\right) , u \rangle_\rH = \big| |\sqrt{g(|u|^2)}|\cdot|u| \big|^2_{L^2}\,.
\end{equation}
Using the assumptions on $f$, for any $\beta > 0$ we obtain the following estimate
\begin{equation}
\label{eq:5.8}
\langle \Pi\,f, u \rangle_\rH \le \|f\|_{\rH}\|u\|_\rH \le \frac{1}{4 \beta}|f|^2_{\rH} + \beta \,|u|^2_\rH\,.
\end{equation}
Since $u$ is the solution of \eqref{eq:5.1} and satisfies the estimates \eqref{eq:6.25} (courtsey, Theorem~\ref{thm_damped_exist}), we can show that the process
\[M(t) = \int_0^t \langle u(s), \sum_{j=1}^\infty G_j(u(s))\,dW_s^j\rangle_\rH,\]
is a martingale. Thus, taking expectation in \eqref{eq:5.4} and using the estimates \eqref{eq:5.5} -- \eqref{eq:5.8}, we infer
\begin{align*}
\frac12 \E\, |u(t)|^2_\rH & \le  \frac12 |u_0|^2_\rH - \alpha\, \E \int_0^t |u(s)|^2_{\rm{H}}\,ds - \nu\, \E \int_0^t |\nabla u(s)|^2_{L^2}\,ds  \nonumber \\
&~~ -  \E \int_0^t \big||\sqrt{g(|u(s)|^2)}|\cdot|u(s)| \big|^2_{L^2}\,ds + \frac{1}{4 \beta} \E \int_0^t |f|^2_{\rH}\,ds \\
& ~~ + \beta\, \E \int_0^t |u(s)|^2_{\rH}\,ds + \frac12 \int_0^t \|G(u(s))\|^2_{\mathcal{L}_2(\ell^2;\rH)}\,ds\,.
\end{align*}
On rearranging, we get
\begin{align}
\label{eq:5.9}
& \frac12 \E \,|u(t)|^2_\rH + \frac12 \E \int_0^t \left(2 \nu |\nabla u(s)|^2_{L^2} - \|G(u(s))\|^2_{\mathcal{L}_2(\ell^2;\rH)}\right)\,ds \nn \\
&\quad + \E \int_0^t \big||\sqrt{g(|u(s)|^2)}|\cdot|u(s)| \big|^2_{L^2}\,ds \nonumber \\
&\le \frac12 |u_0|^2_\rH  + \frac{1}{4 \beta} T |f|^2_{\rH} %+ \beta\E \int_0^t |\nabla u(s)|^2_{L^2}\,ds \nn \\
%&\quad
+ (\beta - \alpha) \E \int_0^t |u(s)|^2_{\rH}\,ds\,.
\end{align}
Now using the assumption $(\textbf{H3})^\prime$ in \eqref{eq:5.9}, we obtain
\begin{align*}
&\frac12 \E \,|u(t)|^2_\rH + \delta\E \int_0^t |\nabla u(s)|^2_{L^2}\,ds +  (\alpha - \beta)\, \E \int_0^t | u(s)|^2_{\rH}\,ds \nn \\
 &\; \le \frac12 |u_0|^2_\rH  + \frac{1}{4 \beta} T |f|^2_{\rH}\,.
\end{align*}
Choosing $\beta = \frac{\alpha}{2}$ yields
\begin{align*}
&\frac12 \E \,|u(t)|^2_\rH + \delta \E \int_0^t |\nabla u(s)|^2_{L^2}\,ds +  \frac{\alpha}{2} \E \int_0^t | u(s)|^2_{\rH}\,ds  \\
& \; \le \frac12 |u_0|^2_\rH  + \frac{1}{2 \alpha} T |f|^2_{\rH} .
\end{align*}
Therefore for $\gamma = \min{\{\frac{\alpha}{2}, \delta\}} > 0$,
\[\frac12 \E \,|u(t)|^2_\rH  + \gamma \E \int_0^t \|u(s)\|^2_{\rV}\,ds \le \frac12 |u_0|^2_\rH  + \frac{1}{2 \alpha} T |f|^2_{\rH} .\]
Thus, for any $T > 0$, we infer that
\begin{equation}
\label{eq:5.10}
 \frac{1}{T} \int_0^T \E\,\|u(s)\|^2_\rV\,ds \le \frac{1}{2 \gamma T} |u_0|^2_\rH + \frac{1}{4 \gamma^2} |f|^2_{\rH}.
 \end{equation}
Using the Chebyshev inequality and inequality \eqref{eq:5.10}, we infer that for every $T \ge 0$
\begin{align*}
\frac{1}{T} \int_0^T \mathbb{P}(\{\|u(t,u_0)\|_\rV > R\})\,dt & \le \frac{1}{T R^2} \int_0^T \E\,\|u(t)\|^2_\rV\,dt \\
& \le \frac{1}{R^2} \left[ \frac{1}{2 \gamma T} |u_0|^2_\rH + \frac{1}{4 \gamma^2} |f|^2_{\rH}\right].
\end{align*}
Now for sufficiently large $R > 0$ depending on $\varepsilon, |u_0|_\rH$ and $|f|_{\rH}$, the assertion follows.

\end{proof}

\subsection{Sequentially weak Feller property}
\label{s:5.2}
We are left to verify the assumption $(i)$ of Theorem~\ref{thm5.3}, i.e. the Markov semigroup $\{T_t\}_{t \ge 0}$ is sequentially weakly Feller in $\rV$. In other words we want to show that for any $t > 0$ and any bounded and weakly continuous $\varphi : \rV \to \R$, if $\xi_n \to \xi$ weakly in $\rV$, then
\begin{equation}
\label{eq:6.6.11}
T_t \varphi(\xi_n) \to T_t\varphi(\xi)\;.
\end{equation}

The second named author in his PhD thesis proved that the martingale solutions of stochastic constrained Navier-Stokes equations continuously depend on the initial data \cite[Theorem~5.7.7]{[Dhariwal17]}. We have a similar result for time homogeneous damped tamed NSEs, which can be proved analogously, see also \cite[Theorem~4.11]{[BMO17]}.

\begin{theorem}
\label{thm5.6}
Assume that $(u_{0,n})_{n=1}^\infty$ is a $\rV$-valued sequence that is convergent weakly to $u_0 \in \rV$. Let
\[ \left(\Omega_n, \mathcal{F}_n, \mathbb{F}_n, \mathbb{P}_n, W_n, u_n\right)\]
be a martingale solution of problem \eqref{eq:5.1} on $[0, \infty)$ with the initial data ${u}_{0,n}$. Then for every $T > 0$ there exist
\begin{itemize}
\item a subsequence $({n}_{k}{)}_{k}$,
\item a stochastic basis $\bigl( \tOmega , \tfcal ,  {\widetilde{\mathbb{F}}} , \tp  \bigr) $,
\item an $\ell^2$-valued cylindrical Wiener process $\widetilde{W}(t) = \left(\widetilde{W}^j(t)\right)_{j=1}^\infty$,
\item and $\widetilde{\mathbb{F}}$-progressively measurable processes $\tu$,  $\big(\tunk\big)_{k \ge 1} $ (defined on this basis) with  laws supported in $ \mathcal{Z}_{T}$ such that
\begin{equation}   \label{eq:5.11}
  \tunk \mbox{ has the same law as } \unk \mbox{ on } \mathcal{Z}_{T}
    \mbox{ and } \tunk \to \tu \mbox{ in } \mathcal{Z}_{T},
    \quad  \tp\mbox{-a.s.}
\end{equation}
and  the  system
\[\bigl( \tOmega , \tfcal ,  {\widetilde{\mathbb{F}}}, \tp , \tW,\tu  \bigr)\]
is a martingale solution to problem \eqref{eq:5.1} on the interval $[0,T]$ with the initial data $u_0$. In particular, for all $t \in [0,T]$ and all $\mathrm{v} \in \mathcal{V} $
\begin{align*}
 &\ilsk{\tu(t)}{\v}{} +  \int_{0}^{t} \dual{\A _\alpha \tu(s)}{\v}{} \, ds
+ \int_{0}^{t} \dual{B(\tu(s))}{\v}{} \, ds   \nonumber \\
& \quad + \int_{0}^{t} \dual{g(|\tu(s)|^2) \tu(s)}{\v}{} \, ds \nn \\
  &  = \ilsk{\tu(0)}{\v}{\rV} + \int_{0}^{t} \dual{f}{\v}{} \, ds
 + \Dual{\int_{0}^{t} \sum_{j=1}^\infty G_j(s,\tu(s))\,dW^j(s)}{\v}{},  \quad \widehat{\p }\mbox{-a.s.}
\end{align*}
Moreover,  the process $\tu $ satisfies  the following inequality
\begin{equation}  \label{eq:5.12}
  \widetilde{\mathbb{E}}\,\left[ \,\sup_{ s\in [0,T]  } {\|\tu (s)\|_\rV }^{2}\, + \int_{0}^{T} |\tu(s)|^2_{\rm{D}(\A )}\, ds \,\right] < \infty\,.
\end{equation}
\end{itemize}
\end{theorem}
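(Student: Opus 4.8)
The plan is to repeat, almost verbatim, the construction carried out in the proof of Theorem~\ref{thm6.16}, now treating the genuine martingale solutions $u_n$ of \eqref{eq:5.1} (rather than the truncated approximations of Section~\ref{s:6}) as the approximating sequence. Two features distinguish the present situation from that theorem: the drift carries the extra linear operator $\A_\alpha = \alpha I - \nu\Delta$ in place of $\A$, and the initial data $u_{0,n}$ now vary with $n$ and converge only weakly. The extra term is harmless: in the It\^o expansion of $|u_n|^2_\rH$ one has $\langle\A_\alpha u_n, u_n\rangle_\rH = \alpha|u_n|^2_\rH + \nu|\nabla u_n|^2_{L^2}$, so $\A_\alpha$ contributes only coercive terms (favourable for $\alpha>0$, and in any case absorbed by the Gronwall Lemma~\ref{lemma2.2}); likewise in the expansion of $|\nabla u_n|^2_{L^2}$. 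Hence the a'priori estimates of Lemmas~\ref{lemma6.13} and~\ref{lemma6.14} survive with constants of the same form. Since $u_{0,n}\rightharpoonup u_0$ in $\rV$, the Banach--Steinhaus theorem yields $\rho:=\sup_n\|u_{0,n}\|_\rV<\infty$, so these estimates are uniform in $n$, depending on the data only through $\rho$.

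First I would record, for $p\in[1,3]$, the uniform bounds
\[\sup_{n}\E\Big(\sup_{t\in[0,T]}\|u_n(t)\|^{2p}_\rV\Big)\le C_1(p,\rho),\qquad \sup_n\E\int_0^T|u_n(s)|^2_{\rD(\A)}\,ds\le C_2(\rho),\]
exactly as in \eqref{eq:6.44}--\eqref{eq:6.45} (the expectation being taken on the respective basis $(\Omega_n,\mathbb{P}_n)$). These give conditions $(a)$ and $(b)$ of the tightness criterion Corollary~\ref{cor3.6}; the Aldous condition $[\mathbf A]$ in $\rH$ is verified term by term as in Lemma~\ref{lemma6.15}, the estimates on $J^n_2$--$J^n_6$ being unchanged and the new contribution of $\alpha u_n$ controlled by $\alpha\,\theta^{1/2}(\E\int|u_n|^2_\rH)^{1/2}$. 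Consequently $\{\mathrm{law}(u_n)\}$ is tight on $(\mathcal{Z}_T,\mathcal{T})$, and Corollary~\ref{cor3.8} supplies a subsequence $(n_k)$, a probability space $(\tOmega,\tfcal,\tp)$ and $\mathcal{Z}_T$-valued variables $\tunk,\tu$ with $\tunk$ having the same law as $\unk$ and $\tunk\to\tu$ in $\mathcal{Z}_T$ $\tp$-a.s. Passing to weak limits in the uniform bounds by lower semicontinuity of the norms yields \eqref{eq:5.12}.

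The identification of $\tu$ as a solution proceeds as in Section~\ref{s:6.3}: one forms the martingales $\widetilde{M}_{n_k}$ and passes to the limit. Since here the $u_n$ are full solutions carrying no projection $P_n$, the convergence of the drift and noise terms follows from the analogues of Lemmas~\ref{lemma6.18}, \ref{lemma6.20}, \ref{lemma6.21}, \ref{lemma6.22} and~\ref{lemma6.24} in which $P_n$ is replaced by the identity, while Lemma~\ref{lemma6.23} applies verbatim. These identify $\widetilde{M}$ as an $\rH$-valued continuous square integrable martingale with quadratic variation $\int_0^t\|G(s,\tu(s))\|^2_{\mathcal{L}_2(\ell^2;\rH)}\,ds$. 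The martingale representation theorem then furnishes the cylindrical Wiener process $\tW$ (on a possibly enlarged basis, relabelled $(\tOmega,\tfcal,\widetilde{\mathbb{F}},\tp)$) for which $\tu$ satisfies \eqref{eq:5.1} in the sense of Definition~\ref{defn3.9}, the same steps producing the weak form displayed in the statement.

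The genuinely new point — and the step I expect to require the most care — is the identification of the initial value as the weak limit $u_0$. Because each $u_{0,n}$ is deterministic, the evaluation map $\mathcal{Z}_T\ni w\mapsto w(0)\in\rV_w$, well defined through the $\ccal([0,T];\rV_w)$ component, transports the Dirac law of $u_{n_k}(0)=u_{0,n_k}$ onto the law of $\tunk(0)$, whence $\tunk(0)=u_{0,n_k}$ $\tp$-a.s. Since $\tunk\to\tu$ in $\ccal([0,T];\rV_w)$, we obtain $\tunk(0)\rightharpoonup\tu(0)$ weakly in $\rV$, while by hypothesis $u_{0,n_k}\rightharpoonup u_0$; uniqueness of weak limits then forces $\tu(0)=u_0$ $\tp$-a.s. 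This is precisely what makes $(\tOmega,\tfcal,\widetilde{\mathbb{F}},\tp,\tW,\tu)$ a martingale solution carrying the prescribed initial datum $u_0$, completing the argument.
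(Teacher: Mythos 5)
Your proposal is correct and follows essentially the same route the paper intends: the paper gives no detailed proof of Theorem~\ref{thm5.6}, stating only that it ``can be proved analogously'' to \cite[Theorem~5.7.7]{[Dhariwal17]} and \cite[Theorem~4.11]{[BMO17]}, i.e.\ by rerunning the machinery of Theorem~\ref{thm6.16} (uniform a'priori estimates, tightness via Corollary~\ref{cor3.6}, Jakubowski--Skorokhod, martingale identification and representation) with the linear term $\alpha I$ absorbed and the initial data controlled uniformly. Your two additions beyond a verbatim repetition --- the Banach--Steinhaus bound $\rho=\sup_n\|u_{0,n}\|_{\rV}<\infty$ making the estimates uniform, and the identification $\tu(0)=u_0$ via equality of laws at $t=0$ together with convergence in $\ccal([0,T];\rV_w)$ and uniqueness of weak limits --- are exactly the points the cited references handle, so the plan is sound as it stands.
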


We will need the uniqueness in law of solutions of \eqref{eq:5.1}, which is defined next.

\begin{definition}
\label{defn5.7}
Let $(\Omega^i, \mathcal{F}^i, \mathbb{F}^i, \mathbb{P}^i, W^i, u^i), i = 1,2$ be martingale solutions of \eqref{eq:5.1} with $u^i(0) = u_0, i=1,2$. Then we say that the solutions are {\bf unique in law} if
\[law_{\mathbb{P}^1}(u^1) = law_{\mathbb{P}^2}(u^2)\,\mbox{ on }\, \ccal([0, \infty); \rV_w) \cap L^2([0, \infty); \rD(\A )),\]
where $law_{\mathbb{P}^i}(u^i), i = 1,2$ are by definition probability measures on \newline $\ccal([0, \infty); \rV_w) \cap L^2([0, \infty); \rD(\A ))$.
\end{definition}

\begin{lemma}
\label{lemma5.8}
Assume that assumptions $(\mathbf{H1})^\prime - (\mathbf{H3})^\prime$ are satisfied. Then the martingale solution of \eqref{eq:5.1} is unique in law.
\end{lemma}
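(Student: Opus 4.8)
The plan is to deduce uniqueness in law from \emph{pathwise uniqueness} together with the \emph{existence} of a martingale solution, via the Yamada--Watanabe theorem in the infinite-dimensional form of Ondrej\'at \cite{[Ondrejat04]} (see also \cite{[YW71]}), exactly the tool already used in Section~\ref{s:6.4} to pass from martingale to strong solutions.

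First I would establish pathwise uniqueness for \eqref{eq:5.1}: if $(\Omega, \mathcal{F}, \mathbb{F}, \mathbb{P}, W, u^1)$ and $(\Omega, \mathcal{F}, \mathbb{F}, \mathbb{P}, W, u^2)$ are two martingale solutions defined on the \emph{same} stochastic basis, driven by the \emph{same} cylindrical Wiener process $W$ and issued from the same datum $u_0 \in \rV$, then $u^1 = u^2$ on $[0,T]$, $\mathbb{P}$-a.s. This is precisely Theorem~\ref{thm6.26}, modified to account for the extra linear term $\alpha u$ arising from $\A_\alpha = \alpha I - \nu\Delta$ in place of $\A$, as already noted before Theorem~\ref{thm_damped_exist}. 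Concretely, setting $w := u^1 - u^2$, one applies the It\^o formula to $|w(t)|_\rH^2$; the convective difference reduces, via the identity $\langle B(u^1) - B(u^2), w\rangle_\rH = b(w, u^2, w)$ and the antisymmetry of $b$, to a term controlled by \eqref{eq:2.4}, the tamed term is handled through its Lipschitz property \eqref{eq:g_lipschitz}, the diffusion term through assumption $(\mathbf{H2})^\prime$, and the new term $-\alpha|w|^2_\rH$ is dissipative and only helps. Since both solutions lie in $\ccal([0,T];\rV) \cap L^2(0,T;\rD(\A))$, all these quantities are integrable and the stochastic integral is a genuine martingale, so a Gronwall argument forces $w \equiv 0$.

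Second, I would invoke the existence of a martingale solution guaranteed by Theorem~\ref{thm_damped_exist}. With pathwise uniqueness in hand and a martingale solution existing, the Yamada--Watanabe theorem \cite{[Ondrejat04],[YW71]} yields joint uniqueness in law: any two martingale solutions of \eqref{eq:5.1} issued from $u_0$ induce the same law on $\ccal([0,T];\rV_w) \cap L^2(0,T;\rD(\A))$ for every $T>0$, which is the assertion of Definition~\ref{defn5.7}. To pass from each finite horizon to $[0,\infty)$ I would use that the finite-horizon laws are consistent and that the restriction maps to $\ccal([0,T];\rV_w) \cap L^2(0,T;\rD(\A))$ are measurable, so the half-line law is determined by a standard projective-limit (Kolmogorov consistency) argument.

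The main obstacle is the pathwise uniqueness step. In three dimensions the bare convective term does not, in general, permit a closed Gronwall estimate; the decisive gain here is the taming term, which, together with the $\rD(\A)$-regularity $u^i \in L^2(0,T;\rD(\A))$ of the solutions and the favourable sign of the $\alpha$-contribution, supplies exactly the coercivity needed to absorb the critical part of $b(w, u^2, w)$. Verifying that the energy identity for $|w|^2_\rH$ is fully justified in the unbounded-domain $\rH$/$\rV$/$\rD(\A)$ scale, and that replacing $\A$ by $\A_\alpha$ introduces no new difficulty, are the points requiring most care; once this is secured, the implication ``pathwise uniqueness plus existence $\Rightarrow$ uniqueness in law'' is automatic.
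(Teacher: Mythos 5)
Your proposal follows exactly the paper's route: the paper proves Lemma~\ref{lemma5.8} by directly applying Theorems~2 and~11 of Ondrej\'at \cite{[Ondrejat04]} (the infinite-dimensional Yamada--Watanabe machinery), given the pathwise uniqueness and existence furnished by Theorem~\ref{thm_damped_exist}, which is obtained by modifying Theorems~\ref{thm6.16} and~\ref{thm6.26} to absorb the extra linear term $\alpha u$. Your additional sketch of the pathwise-uniqueness Gronwall argument and the extension from $[0,T]$ to $[0,\infty)$ merely fills in details the paper delegates to \cite{[RZ09]} and to Ondrej\'at's results, so the approach is essentially the same and correct.
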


The proof of the above lemma is the direct application of Theorems~2 and 11 of \cite{[Ondrejat04]} once we have proved the existence of a pathwise unique martingale solution of \eqref{eq:5.1}; which follows from Theorem~\ref{thm_damped_exist}.

\begin{lemma}
\label{lemma5.9}
The semigroup $\{T_t\}_{t \ge 0}$ is sequentially weakly Feller in $\rV$.
\end{lemma}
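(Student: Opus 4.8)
The plan is to verify the sequential weak Feller property straight from Definition~\ref{defn5.2}: I fix $t > 0$ and $\varphi \in {\rm SC}_b(\rV_w)$, take a sequence $(\xi_n)_n$ in $\rV$ with $\xi_n \rightharpoonup \xi$ weakly in $\rV$, and aim to prove \eqref{eq:6.6.11}, i.e. $T_t\varphi(\xi_n) \to T_t\varphi(\xi)$. Since $\varphi$ is bounded, the real sequence $(T_t\varphi(\xi_n))_n$ is bounded, so by the standard subsequence principle it suffices to show that from every subsequence one can extract a further subsequence $(n_k)$ along which $T_t\varphi(\xi_{n_k}) \to T_t\varphi(\xi)$. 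Organizing the argument this way is what lets me exploit the continuous-dependence result, which is itself stated only up to a subsequence.

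The central tool is Theorem~\ref{thm5.6}. Applying it to the martingale solutions $u_n := u(\cdot\,;\xi_n)$ with weakly convergent initial data $\xi_n \rightharpoonup \xi$, I obtain along a subsequence (relabelled $(n_k)$) a single stochastic basis $(\tOmega, \tfcal, \widetilde{\mathbb{F}}, \tp)$, a cylindrical Wiener process $\tW$, and processes $\tunk, \tu$ such that $\tunk$ has the same law as $\unk$ on $\mathcal{Z}_T$, such that $\tunk \to \tu$ in $\mathcal{Z}_T$ $\tp$-a.s., and such that $(\tOmega, \tfcal, \widetilde{\mathbb{F}}, \tp, \tW, \tu)$ is a martingale solution of \eqref{eq:5.1} with initial datum $\xi$. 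By the in-law uniqueness of Lemma~\ref{lemma5.8}, this limiting process $\tu$ has the same law on $\mathcal{Z}_T$ as the pathwise unique solution $u(\cdot\,;\xi)$ furnished by Theorem~\ref{thm_damped_exist}.

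The heart of the proof is then the passage to the limit at the fixed time $t$. Convergence in $\mathcal{Z}_T$ includes convergence in $\ccal([0,T]; \rV_w)$, which by the very definition of that space yields $\tunk(t) \rightharpoonup \tu(t)$ weakly in $\rV$, $\tp$-a.s. As $\varphi$ is sequentially weakly continuous on $\rV$, this gives $\varphi(\tunk(t)) \to \varphi(\tu(t))$ $\tp$-a.s., and since $\varphi$ is bounded the dominated convergence theorem yields $\te[\varphi(\tunk(t))] \to \te[\varphi(\tu(t))]$. Identifying the two ends of this chain via the equality of laws, I would conclude $\te[\varphi(\tunk(t))] = \E[\varphi(u(t;\xi_{n_k}))] = T_t\varphi(\xi_{n_k})$ and $\te[\varphi(\tu(t))] = \E[\varphi(u(t;\xi))] = T_t\varphi(\xi)$, which closes the subsequence argument and proves \eqref{eq:6.6.11}; boundedness of $T_t\varphi$ is inherited from that of $\varphi$, so $T_t\varphi \in {\rm SC}_b(\rV_w)$, as required.

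I expect the main obstacle to lie in the transfer of equality-in-law on $\mathcal{Z}_T$ to equality of the scalar expectations $\te[\varphi(\,\cdot\,(t))]$. One must check that the time-$t$ evaluation $w \mapsto w(t)$ is measurable as a map from $\ccal([0,T];\rV_w)$ into $\rV_w$, and that its composition with the weakly continuous, bounded $\varphi$ is a genuinely measurable (and bounded) functional on $\mathcal{Z}_T$, so that the law-transport formula applies to both $\tunk$ versus $\unk$ and $\tu$ versus $u(\cdot\,;\xi)$. A secondary but essential bookkeeping point is that Theorem~\ref{thm5.6} produces its limit only along a subsequence, which is precisely why the whole argument must be routed through the subsequence principle rather than asserting convergence of the full sequence directly.
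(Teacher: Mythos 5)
Your proposal is correct and follows essentially the same route as the paper's own proof: both apply Theorem~\ref{thm5.6} to pass (along a subsequence) to a new stochastic basis on which $\tunk \to \tu$ in $\mathcal{Z}_T$, use convergence in $\ccal([0,T];\rV_w)$ to get $\tunk(t) \rightharpoonup \tu(t)$ weakly in $\rV$ almost surely, combine sequential weak continuity and boundedness of $\varphi$ with dominated convergence, transport the expectations via equality of laws, identify the limit law through the uniqueness-in-law Lemma~\ref{lemma5.8}, and conclude for the whole sequence by the subsequence principle. Your explicit observation that Theorem~\ref{thm5.6} yields its limit only along a subsequence, so the argument must be routed through an arbitrary subsequence first, is precisely the ``subsequence argument'' the paper invokes in its final line.
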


\begin{proof}
Let us choose and fix $ 0 < t \leq T, \xi \in \rV$ and  $\varphi \colon \rV \to \R$ be a bounded weakly continuous function. Need to show that $T_t \varphi$ is sequentially weakly Feller in $\rV$. For this aim let us choose a $\rV$-valued sequence $(\xi_n)$ weakly convergent to $\xi$ in $\rV$. Since the function $T_t \varphi \colon \rV \to \R$ is bounded, we only need to prove \eqref{eq:6.6.11}.

Let $u_n(\cdot) = u(\cdot; \xi_n)$ be a strong solution of \eqref{eq:5.1} on $[0,T]$ with the initial data $\xi_n$ and let $u(\cdot) = u(\cdot; \xi)$ be a strong solution of \eqref{eq:5.1} with the initial data $\xi$. We assume these processes are defined on the stochastic basis $(\Omega, \mathcal{F}, \mathbb{F}, \mathbb{P})$. By Theorem~\ref{thm5.6} there exist
\begin{itemize}
\item a subsequence $(n_k)_k$,
\item a stochastic basis $(\tom, \tf, \widetilde{\mathbb{F}}, \tp)$, where $\widetilde{\mathbb{F}} = \{\tf_s\}_{s \in [0,T]}$,
\item an $\ell^2$-valued cylindrical Wiener process $\widetilde{W}(t)$ on $(\tom, \tf, \widetilde{\mathbb{F}}, \tp)$,% = \left(\widetilde{W}^j(t)\right)_{j=1}^\infty$,
\item and progressively measurable processes $\widetilde{u}(s), (\widetilde{u}_{n_k}(s))_{k \ge 1}$, $s \in [0,T]$ (defined on this basis) with laws supported in $\mathcal{Z}_T$, where
\[\zcal_T = \ccal([0,T]; \mathrm{U}^\prime) \cap L^2_w(0,T; \mathrm{D}(\A)) \cap L^2(0,T; \rH_{loc}) \cap \ccal([0,T]; \mathrm{V}_w),\]
such that
\begin{equation}
\label{eq:5.13}
\tu_{n_k} \mbox{ has the same law as } u_{n_k} \mbox{ on } \mathcal{Z}_T \mbox{ and } \tu_{n_k} \to \tu  \mbox{ in } \mathcal{Z}_T, \; \tp\mbox{-a.s.}
\end{equation}
and the system
\begin{equation}
\label{eq:5.14}
(\tom, \tf, \widetilde{\mathbb{F}}, \tp, \widetilde{W}, \tu)
\end{equation}
is a martingale solution to \eqref{eq:5.1} on the interval $[0,T]$ with the initial data $\xi$.
\end{itemize}
In particular, by \eqref{eq:5.13}, $\tp$-almost surely
\[ \tu_{n_k}(t) \to \tu(t)~\mbox{weakly in}~ \rV\,.\]
Since the function $\varphi : \rV \to \R$ is sequentially weakly continuous, we infer that $\tp$-a.s.,
\[ \varphi(\tu_{n_k}(t)) \to \varphi(\tu(t))~\mbox{in}~\R\,.\]
Since the function $\varphi$ is also bounded, by the Lebesgue dominated convergence theorem we infer that
\begin{equation}
\label{eq:5.15}
\lim_{k \to \infty} \widetilde{\E} \left[ \varphi(\tu_{n_k}(t)) \right] = \widetilde{\E} \left[ \varphi(\tu(t)) \right]\,.
\end{equation}
From the equality of laws of $\tu_{n_k}$ and $u_{n_k}, k \in \mathbb{N}$, on the space $\mathcal{Z}_T$ we infer that $\tu_{n_k}$ and $u_{n_k}$ have same laws on $\rV_w$ and so
\begin{equation}
\label{eq:5.16}
\widetilde{\E} \left[ \varphi(\tu_{n_k}(t)) \right] = \E \left[ \varphi(u_{n_k}(t)) \right] \,.
\end{equation}
On the other hand, the r.h.s. of \eqref{eq:5.16} is equal by \eqref{eq:5.2}, to $T_t\varphi(\xi_{n_k})$.

Since $u$, by assumption, is a martingale solution of \eqref{eq:5.1} with the initial data $\xi$ and from the above, $\tu$ is also a solution of \eqref{eq:5.1} with the initial data $\xi$. Thus, by Lemma~\ref{lemma5.8}, we infer that
\[\mbox{the processes } u \mbox{ and } \tu \mbox{ have same law on the space } \mathcal{Z}_T\,.\]
Hence
\begin{equation}
\label{eq:5.17}
\widetilde{\E} \left[ \varphi(\tu(t)) \right] = \E \left[ \varphi(u(t)) \right] \,.
\end{equation}
As before, the r.h.s. of \eqref{eq:5.17} is equal by \eqref{eq:5.2}, to $T_t\varphi(\xi)$. Thus, by \eqref{eq:5.15}, \eqref{eq:5.16} and \eqref{eq:5.17}, we infer
\[\lim_{k \to \infty} T_t \varphi(\xi_{n_k}) = T_t \varphi(\xi)\,.\]
Using the subsequence argument, we infer that the whole sequence $(T_t \varphi(\xi_n))_{n \in \mathbb{N}}$ is convergent and
\[\lim_{n \to \infty} T_t \varphi(\xi_n) = T_t \varphi(\xi)\,.\]
\end{proof}

\begin{proof}[Proof of Theorem \ref{thm5.4}]
The existence of an invariant measure is established by using Theorem~\ref{thm5.3}, Lemmas~\ref{lemma5.5} and \ref{lemma5.9}.  Hence the proof of Theorem~\ref{thm5.4} is complete.

\end{proof}

\begin{remark}\label{rem-referee} It has been suggested to the authors by the referee, that there should exist an invariant measure for the original non-damped, tamed 3-D Navier-Stokes equations
(\ref{eq:1.1}-\ref{eq:1.3}). We agree with the premises of this suggestion for which we are grateful, yet we have decided to postpone a detailed study of this issue till another publication. Note that the existence of a unique invariant measure for the non-damped tamed 3-D Navier-Stokes equations was established by R\"ockner and Zhang \cite{[RZ09]} in a bounded domain (3-D torus).
\end{remark}

\appendix
\section{Convergence of \texorpdfstring{$P_n$}{P_n}}
\label{s:a}

\begin{lemmaA}
Let $\gamma > \frac{3}{2}$ and $P_n \colon \rH \to \rH_n$ be the orthogonal projection as given by \eqref{eq:6.1} (for  more details see Section~\ref{s:6}). Then as $n \to \infty$
\begin{itemize}
\item[(i)] $P_n \psi \to \psi$ in $\rH$ for $\psi \in \rH$,
\item[(ii)] $P_n \psi \to \psi$ in $\rV$ for $\psi \in \rV$,
\item[(iii)] $P_n \psi \to \psi$ in $\rV_\gamma$ for $\psi \in \rV_\gamma\,.$
\end{itemize}
\end{lemmaA}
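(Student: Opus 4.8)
The plan is to exploit the explicit Fourier-multiplier form of $P_n$ and to reduce all three assertions to a single tail-of-integral estimate, handling (i), (ii) and (iii) at once by letting the Sobolev exponent $s$ run through the values $0$, $1$ and $\gamma$ respectively. The key observation is simply that, by the definition \eqref{eq:6.1}, $P_n$ acts on the Fourier side as multiplication by the indicator $\ind_{B_n}$ of the ball $B_n=\{\xi\in\R^3:|\xi|\le n\}$.

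First I would record that, for any $\psi$ in the relevant space,
\[
\fcal\left(P_n\psi-\psi\right)(\xi)=\left(\ind_{B_n}(\xi)-1\right)\widehat{\psi}(\xi)=-\ind_{\{|\xi|>n\}}(\xi)\,\widehat{\psi}(\xi),
\]
so that $P_n\psi-\psi$ has Fourier transform supported in the complement of $B_n$. Since multiplication by the radial cut-off $\ind_{B_n}$ preserves both the divergence-free constraint $\xi\cdot\widehat{\psi}(\xi)=0$ and membership in every Bessel-potential space, the truncation $P_n\psi$ still belongs to $\rV_\gamma$ (and a fortiori to $\rV$ and $\rH$); hence each difference $P_n\psi-\psi$ is legitimately measured in the corresponding norm.

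Next, applying Plancherel's identity through the $H^s$-norm formula recalled in Section~\ref{s:2}, I obtain for each $s\in\{0,1,\gamma\}$
\[
\|P_n\psi-\psi\|_{H^s}^2=\int_{\R^3}\left(1+|\xi|^2\right)^s\left|\fcal\left(P_n\psi-\psi\right)(\xi)\right|^2\,d\xi=\int_{\{|\xi|>n\}}\left(1+|\xi|^2\right)^s\,|\widehat{\psi}(\xi)|^2\,d\xi.
\]
Because $\psi\in H^s$ means precisely that the nonnegative function $\xi\mapsto(1+|\xi|^2)^s|\widehat{\psi}(\xi)|^2$ lies in $L^1(\R^3)$, the right-hand side is the tail of a convergent integral and therefore tends to $0$ as $n\to\infty$; equivalently, writing the integrand as $\ind_{\{|\xi|>n\}}(1+|\xi|^2)^s|\widehat{\psi}|^2$, it is dominated by the fixed integrable function $(1+|\xi|^2)^s|\widehat{\psi}|^2$ and converges pointwise to $0$, so the Lebesgue dominated convergence theorem yields $\|P_n\psi-\psi\|_{H^s}\to0$. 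Taking $s=0$, $s=1$ and $s=\gamma$ proves (i), (ii) and (iii) respectively, recalling that the norms of $\rH$, $\rV$ and $\rV_\gamma$ are exactly the $H^0$-, $H^1$- and $H^\gamma$-norms.

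There is no genuine obstacle in this argument; the only point that requires a moment's care is verifying that $P_n\psi$ remains in the space in which convergence is claimed, which I would settle by the preservation of the divergence-free condition under the radial frequency cut-off, as noted above.
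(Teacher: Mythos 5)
Your proof is correct and follows essentially the same route as the paper's: Plancherel reduces $\|P_n\psi-\psi\|_{H^s}^2$ to the tail integral $\int_{|\xi|>n}(1+|\xi|^2)^s|\widehat{\psi}(\xi)|^2\,d\xi$, which vanishes by dominated convergence for $s=0,1,\gamma$. Your extra remark that the radial frequency cut-off preserves the divergence-free constraint, so that $P_n\psi$ indeed lies in the claimed space, is a small but sound addition that the paper leaves implicit.
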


\begin{proof}
Let $\psi \in \rH$, then by \eqref{eq:6.1} and Parseval's equality we have
\begin{align*}
&|P_n \psi - \psi|_\rH^2 = \int_{\R^3}|\fcal(P_n\psi)(\xi) - \hat{\psi}(\xi)|^2\,d\xi \\
& \;= \int_{\R^3}|\ind_{B_n}(\xi)\hat{\psi}(\xi) - \hat{\psi}(\xi)|^2\,d\xi = \int_{|\xi| > n}|\hat{\psi}(\xi)|^2\,d\xi\,.
\end{align*}
Now since $\psi \in \rH$ using Lebesgue dominated convergence theorem it can be shown that
\[\lim_{n \to \infty} \int_{|\xi| > n}|\hat{\psi}(\xi)|^2\,d\xi = 0\,,\]
which infers (i).

Let $\psi \in \rV$, then by \eqref{eq:6.1} and the definition of $\rV$-norm we get
\begin{align*}
& \|P_n \psi - \psi\|_\rV^2  = \int_{\R^3}(1+|\xi|^2)\,\left|\fcal(P_n\psi)(\xi) - \hat{\psi}(\xi)\right|^2\,d\xi\\
 &\; = \int_{\R^3} (1+|\xi|^2)\, \left|\ind_{B_n}(\xi)\hat{\psi}(\xi) - \hat{\psi}(\xi)\right|^2\,d\xi  = \int_{|\xi| > n}(1+|\xi|^2)\,|\hat{\psi}(\xi)|^2\,d\xi\,.
\end{align*}
Again using the Lebesgue dominated convergence theorem for $\psi \in \rV$, we get
\[\lim_{n \to \infty} \int_{|\xi| > n}(1+|\xi|^2)\,|\hat{\psi}(\xi)|^2\,d\xi = 0\,,\]
thus proving (ii).

Let $\psi \in \rV_\gamma$, then by \eqref{eq:6.1} and the definition of $\rV_\gamma$-norm we get
\begin{align*}
& \|P_n \psi - \psi\|_{\rV_\gamma}^2 = \int_{\R^3}(1+|\xi|^2)^\gamma\,\left|\fcal(P_n\psi)(\xi) - \hat{\psi}(\xi)\right|^2\,d\xi \\
&\, = \int_{\R^3} (1+|\xi|^2)^\gamma \left|\ind_{B_n}(\xi)\hat{\psi}(\xi) - \hat{\psi}(\xi)\right|^2 d\xi = \int_{|\xi| > n}(1+|\xi|^2)^\gamma |\hat{\psi}(\xi)|^2 d\xi\,.
\end{align*}
Similarly as before it can be shown that for $\psi \in \rV_\gamma$,
\[\lim_{n \to \infty} \int_{|\xi| > n}(1+|\xi|^2)^\gamma\,|\hat{\psi}(\xi)|^2\,d\xi = 0\,,\]
which concludes the proof.
\end{proof}

\section{Kuratowski theorem}
\label{s:b}
The main objective of this appendix is to prove the following theorem (see \cite[Lemma~4.2]{[BMO17]}).

\begin{theoremB}
\label{thmb.1}
Let $T > 0$ and
\[ \mathcal{Z}_T = \mathcal{C}([0,T]; {\rm U}^\prime) \cap L^2(0,T; \rH_{loc}) \cap L^2_w(0,T; {\rm D}(\A )) \cap \ccal([0,T]; \rV_w)\,.\]
Then the sets $\ccal([0,T];\rV) \cap \mathcal{Z}_T$, $\ccal([0,T]; \rH_n) \cap \mathcal{Z}_T$ and $L^2(0,T; \rm{D}(\A )) \cap \mathcal{Z}_T$ are Borel subsets of $\mathcal{Z}_T$.
\end{theoremB}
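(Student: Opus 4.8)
The plan is to deduce all three assertions from the classical Kuratowski theorem: if $E_1,E_2$ are Polish spaces and $\varphi\colon E_1\to E_2$ is continuous and injective, then $\varphi(A)$ is a Borel subset of $E_2$ for every Borel set $A\subset E_1$. The obstruction to a direct application is that $(\mathcal{Z}_T,\mathcal{T})$ is \emph{not} Polish, the weak topologies on $L^2_w(0,T;\rD(\A))$ and on $\ccal([0,T];\rV_w)$ being non-metrizable. To circumvent this I would factor through the Polish space $\R^\N$ by means of the countable family $\{f_m\}_{m\in\N}$ of $\mathcal{T}$-continuous functions separating the points of $\mathcal{Z}_T$ that was constructed in the proof of Corollary~\ref{cor3.8}. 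Writing $F:=(f_m)_{m\in\N}\colon\mathcal{Z}_T\to\R^\N$, the map $F$ is continuous and injective, and since each $f_m$ is $\mathcal{T}$-continuous one has $\sigma(F)=F^{-1}(\mathcal{B}(\R^\N))\subset\mathcal{B}(\mathcal{Z}_T)$.

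First I would reduce the three sets to plain function spaces. Because $\rV\embed\rU^\prime$, $\rD(\A)\embed\rH\embed\rH_{loc}$, and strong continuity in $\rV$ implies weak continuity in $\rV$, one checks that a function $u\in\ccal([0,T];\rV)$ belongs to $\mathcal{Z}_T$ if and only if $u\in L^2(0,T;\rD(\A))$; hence $\ccal([0,T];\rV)\cap\mathcal{Z}_T=E_1:=\ccal([0,T];\rV)\cap L^2(0,T;\rD(\A))$. Since on $\rH_n$ the $L^2$, $H^1$ and $\rD(\A)$ norms are equivalent (Proposition~\ref{prop6.9}) and $\rH_n\subset\rD(\A)$ (Lemma~\ref{lemma6.3}), every $u\in\ccal([0,T];\rH_n)$ automatically lies in $\mathcal{Z}_T$, so $\ccal([0,T];\rH_n)\cap\mathcal{Z}_T=E_2:=\ccal([0,T];\rH_n)$. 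Finally, every element of $\mathcal{Z}_T$ already belongs, as a set, to $L^2_w(0,T;\rD(\A))=L^2(0,T;\rD(\A))$, so $L^2(0,T;\rD(\A))\cap\mathcal{Z}_T=\mathcal{Z}_T$, which is trivially Borel in itself.

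It then remains to treat $E_1$ and $E_2$. Each is a separable Banach space, hence Polish: $E_2$ with the sup-norm, and $E_1$ with the norm $\|\cdot\|_{\ccal([0,T];\rV)}+\|\cdot\|_{L^2(0,T;\rD(\A))}$, completeness following since a sequence Cauchy in both components has a single $\rU^\prime$-limit. The natural inclusions $\iota_i\colon E_i\to(\mathcal{Z}_T,\mathcal{T})$ are injective and continuous, because convergence in $E_i$—uniform convergence in $\rV$ together with strong $L^2(0,T;\rD(\A))$ convergence for $E_1$, and uniform convergence in $\rH_n$ for $E_2$—implies convergence in each of $\mathcal{T}_1,\dots,\mathcal{T}_4$ by the above embeddings and the norm equivalence on $\rH_n$. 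Consequently $F\circ\iota_i\colon E_i\to\R^\N$ is a continuous injection between Polish spaces, so by Kuratowski's theorem $F(\iota_i(E_i))=F(S_i)$ is Borel in $\R^\N$, where $S_i$ denotes the set under consideration. Since $F$ is injective, $S_i=F^{-1}(F(S_i))\in\sigma(F)\subset\mathcal{B}(\mathcal{Z}_T)$, which is the desired conclusion.

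The main obstacle is precisely the non-metrizability of $\mathcal{Z}_T$, which blocks a one-line application of Kuratowski to the inclusions $\iota_i$; the device of composing with the point-separating map $F$ into $\R^\N$, combined with the injectivity identity $S_i=F^{-1}(F(S_i))$, is what reduces the problem to a genuine continuous injection between Polish spaces. By contrast, the three set identifications and the continuity of the $\iota_i$ are routine consequences of the embeddings $\rV\embed\rU^\prime$, $\rD(\A)\embed\rH\embed\rH_{loc}$ and of Proposition~\ref{prop6.9}.
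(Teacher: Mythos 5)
Your proposal is correct, but it runs along a genuinely different route from the paper's own proof. The paper first replaces $L^2(0,T;\rH_{loc})$ by the Fr\'echet (hence Polish) space $L^2_{loc}([0,T]\times\R^3)$, checking via Lemma~B.\ref{lemmab.4} and Remark~B.\ref{remb.5} that this does not change the topology of $\mathcal{Z}_T$; it then applies the Kuratowski theorem (Theorem~B.\ref{thmb.2}) to the continuous inclusion of the Polish space $\ccal([0,T];\rV)$ (resp.\ $\ccal([0,T];\rH_n)$, $L^2(0,T;\rD(\A))$) into the Polish space $\ccal([0,T];\rU^\prime)\cap L^2_{loc}([0,T]\times\R^3)$ (resp.\ $L^2_{loc}([0,T]\times\R^3)$), and finally transfers Borel-ness down to the non-metrizable $\mathcal{Z}_T$ by means of the set-theoretic intersection Lemma~B.\ref{lemmab.3}, whose proof (the ``good sets'' argument) is the longest part of the appendix. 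You bypass both the $L^2_{loc}$ substitution and Lemma~B.\ref{lemmab.3} by composing with the countable point-separating family $F=(f_m)$ already built for Corollary~\ref{cor3.8}: since each $f_m$ is $\mathcal{T}$-continuous (the supremum topology refines each component topology), $F$ is a continuous injection of $\mathcal{Z}_T$ into the Polish space $\R^{\N}$, Kuratowski applies to $F\circ\iota_i$ on your Polish models $E_1=\ccal([0,T];\rV)\cap L^2(0,T;\rD(\A))$ and $E_2=\ccal([0,T];\rH_n)$, and the injectivity identity $S_i=F^{-1}(F(S_i))$ pulls the conclusion back; this even shows $S_i\in\sigma(F)\subset\mathcal{B}(\mathcal{Z}_T)$, which is precisely the $\sigma$-field featuring in Jakubowski's theorem (Theorem~\ref{thm3.7}), so your argument is if anything slightly stronger. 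Two of your preliminary reductions also simplify matters relative to the paper: your observation that $L^2(0,T;\rD(\A))\cap\mathcal{Z}_T=\mathcal{Z}_T$ as sets (the underlying set of $L^2_w(0,T;\rD(\A))$ being $L^2(0,T;\rD(\A))$) is correct and makes the third assertion trivial, whereas the paper proves it by the same Kuratowski-plus-Lemma~B.\ref{lemmab.3} scheme, which is only genuinely needed when $\mathcal{Z}_T$ lacks the $L^2_w(0,T;\rD(\A))$ component; likewise your use of Proposition~\ref{prop6.9} and Lemma~\ref{lemma6.3} to show $\ccal([0,T];\rH_n)\subset\mathcal{Z}_T$ disposes of the second set without any measure-theoretic work. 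The remaining bookkeeping in your write-up is in order: completeness of $E_1$ under the sum norm (identifying the two limits through the common embedding into $\rU^\prime$), separability of both $E_i$, and the continuity of the inclusions $\iota_i$ via the embeddings $\rV\embed\rU^\prime$, $\rD(\A)\embed\rH\embed\rH_{loc}$ and the $n$-dependent norm equivalences on $\rH_n$.
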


The proof of the above theorem heavily relies on the following Kuratowski theorem \cite{[Kuratowski52]}.
\begin{theoremB}
\label{thmb.2}
Assume that $X_1, X_2$ are the Polish spaces with their Borel $\sigma$-fields denoted respectively by $\bcal(X_1), \bcal(X_2)$. If $\varphi \colon X_1 \to X_2$ is an injective Borel measurable map then for any $E_1 \in \bcal(X_1)$, $E_2 := \varphi(E_1) \in \bcal(X_2)$.
\end{theoremB}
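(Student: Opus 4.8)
The plan is to prove this as the classical Lusin--Souslin theorem, via two topological reductions followed by a Souslin-scheme separation argument.

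First I would reduce to the case of a \emph{continuous} injection. By the standard change-of-topology theorem for Borel maps between Polish spaces, there is a finer Polish topology $\tau$ on $X_1$ generating the same Borel $\sigma$-field $\bcal(X_1)$ and such that $\varphi \colon (X_1,\tau) \to X_2$ is continuous. Since $\bcal(X_1)$ is unchanged, $E_1$ stays Borel in $(X_1,\tau)$ and $\varphi$ stays injective, so it suffices to treat continuous injective $\varphi$. Second, by a further refinement of the topology (again fixing $\bcal(X_1)$ and keeping $\varphi$ continuous) one arranges that the Borel set $E_1$ becomes clopen, hence closed, in the new Polish topology. A closed subset of a Polish space is Polish and $\varphi|_{E_1}$ is a continuous injection, so the problem reduces to showing that the continuous injective image in $X_2$ of a Polish space is Borel.

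For this core step I would fix a complete compatible metric on the Polish domain (still denoted $E_1$) and a scheme of Borel sets $(F_s)_{s\in\N^{<\N}}$ with $F_\emptyset=E_1$, whose children $(F_{s\frown n})_{n\in\N}$ form a partition of $F_s$ into pairwise disjoint Borel sets of diameter $\le 2^{-(|s|+1)}$; such a scheme exists because any Borel subset of a Polish space splits into countably many pairwise disjoint Borel pieces of arbitrarily small diameter. Since $\varphi$ is injective and the $F_{s\frown n}$ are disjoint, the analytic sets $\varphi(F_{s\frown n})$, $n\in\N$, are pairwise disjoint, so by Lusin's separation theorem in its form for a sequence of pairwise disjoint analytic sets they can be enclosed in pairwise disjoint Borel sets. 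Intersecting along branches and with the closures $\overline{\varphi(F_s)}$, I obtain Borel sets $B_s$ with $\varphi(F_s)\subseteq B_s\subseteq\overline{\varphi(F_s)}$, with $B_{s\frown n}\subseteq B_s$ and $B_{s\frown n}\cap B_{s\frown m}=\emptyset$ for $n\ne m$; in particular $B_s=\emptyset$ whenever $F_s=\emptyset$. I then claim
\[
\varphi(E_1)=\bigcap_{k\ge 0}\ \bigcup_{|s|=k} B_s,
\]
which exhibits $\varphi(E_1)$ as a Borel set.

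The inclusion ``$\subseteq$'' is immediate. For ``$\supseteq$'', a point $y$ on the right lies, by disjointness, in exactly one $B_s$ at each level, and these are consistent, determining a unique branch $\alpha\in\N^\N$ with $y\in B_{\alpha|k}$ for all $k$. Since $B_{\alpha|k}\ne\emptyset$ forces $F_{\alpha|k}\ne\emptyset$, and the nested sets $F_{\alpha|k}$ have diameters tending to $0$, completeness yields a limit point $x\in E_1$ of any points $w_k\in F_{\alpha|k}$. Choosing $w_k$ with $\varphi(w_k)\to y$ (possible because $y\in\overline{\varphi(F_{\alpha|k})}$) and using continuity of $\varphi$ gives $\varphi(x)=y$, so $y\in\varphi(E_1)$. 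The only external facts used are that analytic sets are exactly the continuous images of Polish spaces and are closed under countable unions and intersections, together with Lusin separation. The main obstacle is precisely this last paragraph: arranging the separators $B_s$ to be simultaneously nested and level-wise disjoint, and verifying the reverse inclusion, which is exactly where injectivity of $\varphi$ is essential---both to make the $\varphi(F_{s\frown n})$ disjoint and Borel-separable, and to guarantee that the branch selected by $y$ pins down a single preimage.
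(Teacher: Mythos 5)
Your proposal is correct, but there is nothing in the paper to compare it against: the paper states Theorem~B.2 without proof, quoting it as a classical result from Kuratowski's monograph \cite{[Kuratowski52]} (it is then \emph{used}, via Lemma~B.3, to prove Theorem~B.1). What you have written is essentially the standard modern proof of the Lusin--Souslin theorem as found, e.g., in Kechris's \emph{Classical Descriptive Set Theory} (Theorem~15.1): two change-of-topology reductions (refine the Polish topology on $X_1$, keeping $\bcal(X_1)$ fixed, first to make $\varphi$ continuous and then to make $E_1$ clopen --- note that the second refinement automatically preserves continuity of $\varphi$, since refining the domain topology only helps), followed by the Lusin-scheme argument for a continuous injection on a Polish space. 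The scheme construction, the use of injectivity to make the sets $\varphi(F_{s\frown n})$ pairwise disjoint, the countable disjoint form of Lusin separation (obtained from the two-set version by the usual disjointification $B_n = C_n \setminus \bigcup_{m \neq n} C_m$), the recursive intersection with the parent separator and with $\overline{\varphi(F_s)}$ to get nested, level-wise disjoint Borel sets $B_s$, and the identity $\varphi(E_1) = \bigcap_k \bigcup_{|s|=k} B_s$ are all in order; in particular your reverse-inclusion argument correctly exploits that $B_{\alpha|k} \subseteq \overline{\varphi(F_{\alpha|k})}$ forces $F_{\alpha|k} \neq \emptyset$, that the vanishing diameters plus completeness pin down a unique limit $x$, and that continuity then gives $\varphi(x) = y$. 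The only ingredients left as black boxes --- the two change-of-topology lemmas and the separation theorem --- are exactly the standard ones, and you name them explicitly, so the write-up is a legitimate self-contained replacement for the paper's citation.
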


We will also need following abstract results to prove Theorem~B.\ref{thmb.1}

\begin{lemmaB}
\label{lemmab.3}
Let $X_1, X_2$ and $Z$ be topological spaces such that $X_1$ is a Borel subset of $X_2$. Then $X_1 \cap Z$ is a Borel subset of $X_2 \cap Z$, where $X_2 \cap Z$ is a topological space too, with the topology given by
\begin{equation}\label{eqn-topology on intersection}
\tau(X_2 \cap Z) = \left\{ A \cap B : A \in \tau(X_2), B \in \tau(Z)\right\}\,.
\end{equation}
\end{lemmaB}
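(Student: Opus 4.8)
The plan is to realise $X_1 \cap Z$ as the preimage of the Borel set $X_1$ under a continuous (hence Borel measurable) map, and then to invoke the elementary fact that continuous maps pull Borel sets back to Borel sets. In this way the conclusion will follow from the hypothesis $X_1 \in \mathcal{B}(X_2)$ almost formally, without any appeal to the Kuratowski theorem.

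First I would make the topology on $X_2 \cap Z$ precise. The family $\{A \cap B : A \in \tau(X_2),\, B \in \tau(Z)\}$ is stable under finite intersections, since $(A_1 \cap B_1) \cap (A_2 \cap B_2) = (A_1 \cap A_2) \cap (B_1 \cap B_2)$, and it contains the whole set $X_2 \cap Z$ (take $A = X_2$, $B = Z$); hence it is a base for a topology on $X_2 \cap Z$, namely the supremum of the two subspace topologies inherited from $X_2$ and from $Z$. I denote by $\mathcal{B}(X_2 \cap Z)$ the $\sigma$-field generated by the open sets of this topology.

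Next I would introduce the set-theoretic inclusion
\[ j \colon X_2 \cap Z \longrightarrow X_2, \qquad j(x) = x, \]
and check that it is continuous. Indeed, for any $A \in \tau(X_2)$ one has $j^{-1}(A) = A \cap (X_2 \cap Z) = A \cap Z$, the last equality using $A \subseteq X_2$; this is a base element of $\tau(X_2 \cap Z)$ (with the choice $B = Z$) and hence open. A map whose preimages of open sets are open is continuous, and every continuous map between topological spaces is Borel measurable: the collection $\{E \subseteq X_2 : j^{-1}(E) \in \mathcal{B}(X_2 \cap Z)\}$ is a $\sigma$-field containing $\tau(X_2)$, hence it contains all of $\mathcal{B}(X_2)$.

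Finally I would combine the hypothesis $X_1 \in \mathcal{B}(X_2)$ with the set identity $j^{-1}(X_1) = X_1 \cap (X_2 \cap Z) = X_1 \cap Z$, which is valid because $X_1 \subseteq X_2$ forces $X_1 \cap X_2 = X_1$. By the Borel measurability of $j$ this yields $X_1 \cap Z = j^{-1}(X_1) \in \mathcal{B}(X_2 \cap Z)$, which is exactly the assertion. The only genuinely delicate point — and the one I would spell out in full — is the continuity of $j$ for the join topology, i.e. the identity $j^{-1}(A) = A \cap Z$ resting on $A \subseteq X_2$; everything else is formal bookkeeping.
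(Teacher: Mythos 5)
Your proposal is correct, but it takes a genuinely different route from the paper's own proof. The paper establishes the (formally stronger) claim that $Y \cap Z \in \mathcal{B}(X_2 \cap Z)$ for every $Y \in \mathcal{B}(X_1)$ via a two-stage argument: first it treats $Y \in \tau(X_1)$ directly, writing $Y = C \cap X_1$ with $C \in \tau(X_2)$ and representing $X_1$ as a countable union of open subsets of $X_2$, and then it verifies by hand that the collection of good sets $\mathcal{G} = \{A \subset X_1 \colon A \cap Z \in \mathcal{B}(X_2 \cap Z)\}$ is a $\sigma$-field containing $\tau(X_1)$. You instead note that the set-theoretic inclusion $j \colon X_2 \cap Z \to X_2$ is continuous for the join topology, since $j^{-1}(A) = A \cap (X_2 \cap Z) = A \cap Z$ is a basic open set for each $A \in \tau(X_2)$, hence Borel measurable, and you conclude at once from $X_1 \in \mathcal{B}(X_2)$ and $X_1 \subseteq X_2$ that $X_1 \cap Z = j^{-1}(X_1) \in \mathcal{B}(X_2 \cap Z)$. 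Your version packages the paper's explicit $\sigma$-field bookkeeping into the single standard fact that continuous maps are Borel, and it buys two things. First, it is shorter and, with no extra work, recovers the stronger statement the paper proves and later uses: for every $Y \in \mathcal{B}(X_2)$ one has $Y \cap Z = j^{-1}(Y) \in \mathcal{B}(X_2 \cap Z)$, and this covers all $Y \in \mathcal{B}(X_1)$ because $\mathcal{B}(X_1) = \{E \in \mathcal{B}(X_2) \colon E \subseteq X_1\}$ when $X_1$ is a Borel subset of $X_2$ carrying the trace topology. Second, it is more robust: the paper's first stage invokes a representation of the Borel set $X_1$ as a countable union of open subsets of $X_2$, which cannot hold for general Borel sets (any union of open sets is open), whereas your argument needs no structural assumption on $X_1$ beyond Borel measurability. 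One small point you were right to flag explicitly: the family in \eqref{eqn-topology on intersection} is stable under finite intersections and contains $X_2 \cap Z$, but it need not be closed under arbitrary unions, so it should be read as a base generating the topology on $X_2 \cap Z$; your continuity check for $j$ works verbatim at the level of this base, which is all that is required.
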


\begin{proof}
{Since} the Borel $\sigma$-filed on $X_2 \cap Z$ is the smallest $\sigma$-field generated by $\tau(X_2 \cap Z)$, i.e. $\bcal(X_2 \cap Z) = \sigma( \tau(X_2 \cap Z))$, in order to prove the lemma it is enough to show that $\forall\, Y \in \bcal(X_1)$
\begin{equation}
\label{eq:b.1}
 Y \cap Z \in \bcal(X_2 \cap Z)\,.
\end{equation}
Firstly, we show that \eqref{eq:b.1} holds for all $Y \in \tau(X_1)$. Since $X_1 \in \bcal(X_2)$, $X_1 \subset X_2$ and has trace topology from $X_2$, i.e $\forall \, Y \in \tau(X_1)$ there exists a $C \in \tau(X_2)$ such that
\[Y = C \cap X_1\,.\]
As $X_1 \in \bcal(X_2)$ there exists a countable collection $\left\{K_i\right\}_{i \in \N}$ of open subsets of $X_2$ such that
\[X_1 = \bigcup_{i \in \N} K_i\,.\]
Therefore,
\[Y \cap Z = C \cap X_1 \cap Z = C \cap \left(\bigcup_{i \in \N}K_i\right)\cap Z = \bigcup_{i \in \N} \left(C \cap K_i \right) \cap Z\,.\]
Since $C \in \tau(X_2)$, for every $i \in \N$, $C \cap K_i$ is open in $X_2$ and there exists a collection $\left\{B_j\right\}_{j\in \N} \in \tau(X_2)$ such that
\[\bigcup_{i \in \N} \left(C \cap K_i \right) = \bigcup_{j \in \N} B_j\,.\]
Thus
\[Y \cap Z = \bigcup_{j \in \N} \left(B_j \cap Z\right)\,,\]
and for every $j \in \N$, $B_j \cap Z \in \bcal(X_2 \cap Z)$. Since $\bcal(X_2 \cap Z)$ is a $\sigma$-field, the countable union also belongs to $\bcal(X_2 \cap Z)$, proving \eqref{eq:b.1} for every $Y \in \tau(X_1)$.\\
Secondly, we implement the method of good sets to prove \eqref{eq:b.1} for { a larger class of } subsets of $X_1$. Let
\[ \mathcal{G} = \left\{A \subset X_1 \colon A \cap Z \in \bcal(X_2 \cap Z) \right \}\,.\]
\textbf{Claim}$\colon$ $\mathcal{G}$ is a $\sigma$-field.
\begin{itemize}
\item[i)] $X_1 \in \mathcal{G}$ since $X_1 \subset X_1$ and $X_1 \in \tau(X_1)$ by the definition of topology.

\item[ii)] Let $A \in \mathcal{G}$. We want to show that $A^c := X_1 \setminus A \in \mathcal{G}$, i.e. $A^c \subset X_1$ and $A^c \cap Z \in \bcal(X_2 \cap Z)$. Since $A \in \mathcal{G}$, $A \subset X_1$ and $A \cap Z \in \bcal(X_2 \cap Z)$. Clearly $A^c = X_1\setminus A \subset X_1$. \\
Since $A \cap Z \in \bcal(X_2 \cap Z)$, then by the definition of $\sigma$-field
\[{}^c \left(A \cap Z\right) := \left(X_2 \cap Z\right)\setminus(A \cap Z) \in \bcal(X_2 \cap Z)\,.\]
We have the following set relations
\begin{align*}
{}^c(A \cap Z) &= {}^c A \cup {}^c Z = \left[ (X_2 \cap Z)\setminus A \right] \cup \left[ (X_2 \cap Z) \setminus Z \right] \\
& = \left[(X_2 \setminus A) \cap Z \right] \cup \emptyset = (X_2 \setminus A ) \cap Z \\
& = \left[A^c \cup \left(X_2 \setminus X_1 \right) \right] \cap Z \\
& = \left(A^c \cap Z \right) \cup \left[\left(X_2 \setminus X_1 \right) \cap Z\right] \\
& = \left(A^c \cap Z \right) \cup {}^cX_1\;.
\end{align*}
Now in the above identity ${}^c\left(A \cap Z \right)$, ${}^cX_1$ belongs to $\bcal(X_2 \cap Z)$ and hence $A^c \cap Z \in \bcal(X_2 \cap Z)$, inferring $A^c \in \mathcal{G}$.

\item[iii)] Let $\{A_i\}_{i \in\N} \in \mathcal{G}$. Then $A_i \subset X_1$ for every $i \in \N$ hence
\[\bigcup_{i \in \N} A_i \subset X_1\,.\]
Also, the following holds
\[\left( \bigcup_{i \in \N} A_i \right) \cap Z = \bigcup_{i \in \N} (A_i \cap Z)\,.\]
Since $A_i \in \mathcal{G}$, $A_i \cap Z \in \bcal(X_2 \cap Z)$ and as $\bcal(X_2 \cap Z)$ is a $\sigma$-field
\[\bigcup_{i \in \N} \left(A_i \cap Z \right) \in \bcal(X_2 \cap Z)\,.\]
\end{itemize}
From $i) - iii)$ we can infer that $\mathcal{G}$ is a $\sigma$-field. We have already shown that $\tau(X_1) \subset \mathcal{G}$ thus
\[\mathcal{B}(X_1) =\sigma(\tau(X_1)) \subset \mathcal{G}\,.\]
Therefore, we have shown that for every $Y \in \bcal(X_1)$, $Y \cap Z \in \bcal(X_2 \cap Z)$.
\end{proof}

\begin{lemmaB}
\label{lemmab.4}
Let $X_1, X_2, Y$ be topological spaces such that $X_1 \subset X_2$, $X_1$ has trace topology from $X_2$ and $X_1 \cap Y = X_2 \cap Y$ then
\[\tau(X_1 \cap Y) = \tau(X_2 \cap Y)\,.\]
\end{lemmaB}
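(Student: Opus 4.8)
The plan is to regard both $\tau(X_1 \cap Y)$ and $\tau(X_2 \cap Y)$ as collections of subsets of the common underlying set $X_1 \cap Y = X_2 \cap Y$, and to prove the two inclusions $\tau(X_1 \cap Y) \subseteq \tau(X_2 \cap Y)$ and $\tau(X_2 \cap Y) \subseteq \tau(X_1 \cap Y)$ separately. Recall that, by the convention fixed in \eqref{eqn-topology on intersection}, a set belongs to $\tau(X_i \cap Y)$ precisely when it has the form $A \cap B$ with $A \in \tau(X_i)$ and $B \in \tau(Y)$, and that $X_1$ carrying the trace topology from $X_2$ means every $A \in \tau(X_1)$ can be written as $A = C \cap X_1$ for some $C \in \tau(X_2)$.

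The first thing I would isolate, and which does essentially all the work, is the set-theoretic identity that for any $B \subseteq Y$ one has $X_1 \cap B = X_2 \cap B$. Indeed, since $B \subseteq Y$,
\[
X_1 \cap B = X_1 \cap (Y \cap B) = (X_1 \cap Y) \cap B = (X_2 \cap Y) \cap B = X_2 \cap (Y \cap B) = X_2 \cap B,
\]
where the central equality is exactly the hypothesis $X_1 \cap Y = X_2 \cap Y$.

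For the inclusion $\tau(X_1 \cap Y) \subseteq \tau(X_2 \cap Y)$, I would take a typical open set $A \cap B$ with $A \in \tau(X_1)$ and $B \in \tau(Y)$, write $A = C \cap X_1$ with $C \in \tau(X_2)$ using the trace property, and compute
\[
A \cap B = C \cap (X_1 \cap B) = C \cap (X_2 \cap B) = (C \cap X_2) \cap B = C \cap B,
\]
using the identity above together with $C \subseteq X_2$. Since $C \in \tau(X_2)$ and $B \in \tau(Y)$, this exhibits $A \cap B$ as a member of $\tau(X_2 \cap Y)$. The reverse inclusion is handled symmetrically: given $D \cap B$ with $D \in \tau(X_2)$ and $B \in \tau(Y)$, the set $D \cap X_1$ lies in $\tau(X_1)$ by the trace property, and the same chain of equalities yields $(D \cap X_1) \cap B = D \cap B$, so $D \cap B \in \tau(X_1 \cap Y)$.

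There is no analytic content here; the statement is elementary topological bookkeeping. The only step requiring a little care — and the one I would state as a standalone claim so that both inclusions can invoke it cleanly — is the identity $X_1 \cap B = X_2 \cap B$ for $B \subseteq Y$, since it is precisely what lets one pass between the trace-topology descriptions of the open sets of $X_1$ and the open sets of $X_2$; everything else is substitution.
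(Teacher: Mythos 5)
Your proof is correct and takes essentially the same route as the paper: the paper likewise writes $A = C \cap X_1$ via the trace topology and reduces everything to the chain of set identities $C \cap X_1 \cap B = (C \cap X_1 \cap Y) \cap B = (C \cap X_2 \cap Y) \cap B = C \cap B$, which is exactly your isolated claim $X_1 \cap B = X_2 \cap B$ for $B \subseteq Y$. The only cosmetic difference is that you split the conclusion into two inclusions, while the paper observes in one stroke that the two generating families of open sets coincide.
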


\begin{proof}
The topologies of $X_1 \cap Y$ and $X_2 \cap Y$ denoted by $\tau(X_1 \cap Y)$ and $\tau(X_2 \cap Y)$ respectively {are} given by
\begin{align*}
\tau(X_1 \cap Y) = \mbox{generated by\,} \left\{A \cap B \colon A \in \tau(X_1), B \in \tau(Y) \right\}, \\
\tau(X_2 \cap Y) = \mbox{generated by\,} \left\{C \cap B \colon C \in \tau(X_2), B \in \tau(Y) \right\}.
\end{align*}
Since $X_1$ has a trace topology from $X_2$, for every $A \in \tau(X_1)$ there exists a $C \in \tau(X_2)$ such that $A = C \cap X_1$. Thus
\[\tau(X_1 \cap Y) = \mbox{generated by\,} \left\{C \cap X_1 \cap B \colon C \in \tau(X_2), B \in \tau(Y) \right\}.\]
Thus all we are left to show is $C \cap X_1 \cap B = C \cap B$ for every $C \in \tau(X_2)$ and $B \in \tau(Y).$ Since $X_1 \cap Y = X_2 \cap Y$, we have the following set relations
\begin{align*}
C \cap X_1 \cap B & = \left(C \cap X_1 \right) \cap \left(Y \cap B \right) = (C \cap X_1 \cap Y ) \cap B \\
& = (C \cap X_2 \cap Y) \cap B = (C \cap X_2) \cap (Y \cap B) = C \cap B\,.
\end{align*}
\end{proof}

We will need the following space$\colon$
\begin{align*}
L^2_{loc}([0,T] \times \R^3) = \Big\{& u \colon [0,T] \times \R^3 \to \R^3\\
&\, \mbox{s.t. } \int_0^T \int_{|x| \le R} |u(x,t)|^2 dx dt < \infty , \forall\, R > 0\Big\}.
\end{align*}
$L^2_{loc}([0,T] \times \R^3)$ is complete under the family of semi-norms
\[\rho_R := \left[\int_0^T \int_{|x| \le R} |u(x,t)|^2\,dx\,dt\right]^{1/2}\,. \]
In particular, it is a Frec{h}\'{e}t space with the metric
\[d(u,v) = \sum_{n \ge 1} \frac{1}{2^n} \frac{\rho_n(u-v)}{1+\rho_n(u-v)}\,.\]

\begin{remarkB}
\label{remb.5}
$L^2(0,T; \rH_{loc}) \subset L^2_{loc}([0,T] \times \R^3)$ and we can define a topology on $L^2(0,T; \rH_{loc})$ by restricting the metric $d$ to $L^2(0,T; \rH_{loc})$. Hence $L^2(0,T; \rH_{loc})$ {is a topological space with the} trace topology from $L^2_{loc}([0,T] \times \R^3)$.
\end{remarkB}

Let us define a new topological space$\colon$
\[\widetilde{\mathcal{Z}}_T := \ccal([0,T]; {\rm U}^\prime) \cap L^2_{loc}([0,T]\times \R^3) \cap L^2_w(0,T; {\rm D}(\A )) \cap \ccal([0,T]; \rV_w)\,.\]

Note that $\widetilde{\mathcal{Z}}_T$ and $\mathcal{Z}_T$ are same as a set. Because $L^2_{loc}([0,T] \times \R^3) \cap L^2_w(0,T; \rm{D}(\A ))$ and $L^2(0,T; \rH_{loc}) \cap L^2_w(0,T; \rm{D}(\A ))$ are same as a set. $L^2(0,T; \rH_{loc}) \subset L^2_{loc}([0,T] \times \R^3)$ and the only extra elements in $L^2_{loc}([0,T] \times \R^3)$ are the ones which are locally square integrable but have non-zero divergence. But the intersection of $L^2_{loc}([0,T]\times \R^3)$ with $L^2_w(0,T; \rm{D}(\A ))$ eliminates those elements as the divergence free condition is imposed by the second set.

By Remark~B.\ref{remb.5} and Lemma~B.\ref{lemmab.4} $\widetilde{\mathcal{Z}}_T$ and $\mathcal{Z}_T$ have the same topologies. Thus we will prove Theorem~B.\ref{thmb.1} for $\widetilde{\mathcal{Z}}_T$ instead of $\mathcal{Z}_T$.

\begin{proof}[Proof of Theorem~B.\ref{thmb.1}]
First of all $\ccal([0,T]; \rV) \subset \ccal([0,T]; {\rm U}^\prime) \cap L^2_{loc}([0,T] \times \R^3)$. Secondly, $\ccal([0,T]; \rV)$ and $\ccal([0,T]; {\rm U}^\prime) \cap L^2_{loc}([0,T] \times \R^3)$ are Polish spaces. And finally, since $\rV$ is continuously embedded in ${\rm U}^\prime$, the map
\[i \colon \ccal([0,T]; \rV) \to \ccal([0,T]; {\rm U}^\prime) \cap L^2_{loc}([0,T] \times \R^3)\,, \]
is continuous and hence Borel. Thus, by application of the Kuratowski theorem (see Theorem~B.\ref{thmb.2}), $\ccal([0,T]; \rV)$ is a Borel subset of $\ccal([0,T]; {\rm U}^\prime) \cap L^2_{loc}([0,T] \times \R^3)$. Therefore, by Lemma~B.\ref{lemmab.3} $\ccal([0,T]; \rV) \cap \widetilde{\mathcal{Z}}_T$ is a Borel subset of $\ccal([0,T]; {\rm U}^\prime) \cap L^2_{loc}([0,T] \times \R^3) \cap \widetilde{\mathcal{Z}}_T$ which is equal to $\widetilde{\mathcal{Z}}_T$. We can show in the same way in the case of $\ccal([0,T];\rH_n) \cap \mathcal{Z}_T$.

Similarly we can show that $L^2(0,T; \rm{D}(\A )) \cap \widetilde{\mathcal{Z}}_T$ is a Borel subset of $\widetilde{\mathcal{Z}}_T$. $L^2(0,T; \rm{D}(\A )) \hookrightarrow L^2_{loc}([0,T] \times \R^3)$ and both are Polish spaces thus by application of the Kuratowski theorem [Theorem~B.\ref{thmb.2}], $L^2(0,T; \rm{D}(\A ))$ is a Borel subset of $L^2_{loc}([0,T]\times \R^3)$. Finally, we can conclude the proof of the theorem by Lemma~B.\ref{lemmab.3}.
\end{proof}

\end{document}